\documentclass[12pt,leqno]{amsart}

\usepackage{pifont}
\usepackage{stmaryrd}
\usepackage{dsfont}
\usepackage{color}
\usepackage{mathrsfs}
\usepackage{amsmath}
\usepackage{amssymb}
\usepackage[hidelinks]{hyperref}
\usepackage{bookmark}
\usepackage[bottom]{footmisc}
\usepackage{verbatim}
\usepackage{extarrows}
\usepackage{mathtools}
\usepackage{orcidlink}

\usepackage{microtype}
\usepackage{graphicx}
\usepackage{enumitem}
\usepackage{xcolor}

\allowdisplaybreaks
\numberwithin{equation}{section}

\newtheorem{thm}{Theorem}[section]
\newtheorem{lem}[thm]{Lemma}

\newtheorem{prop}[thm]{Proposition}
\newtheorem{cor}[thm]{Corollary}
\newtheorem{defi}[thm]{Definition}
\newtheorem{rmk}[thm]{Remark}

\newtheorem*{prelem}{Lemma}

\newcommand{\Vir}{\operatorname{Vir}}
\newcommand{\wt}{\operatorname{wt}}
\newcommand{\Hom}{\operatorname{Hom}}
\newcommand{\Span}{\operatorname{span}}
\newcommand{\Aut}{\operatorname{Aut}}

\newcommand{\id}{\operatorname{id}}

\def\Z{\mathbb{Z}}

\def\C{\mathbb{C}}

\def\Q{\mathbb{Q}}

\DeclareMathOperator{\Sym}{Sym}

\title[The Icosahedral Orbifold $V_{L_2}^{A_5}$]{
$C_2$-Cofiniteness and Rationality of the Icosahedral Orbifold
$V_{L_2}^{A_5}$}

\author{Shun Xu~\orcidlink{0009-0006-8080-8107}}

\address{School of Mathematical Sciences, Anhui University,
Hefei, Anhui 230601, China}

\email{shunxu@ahu.edu.cn}

\subjclass[2020]{Primary 17B69; Secondary 17B68, 81T40}

\keywords{vertex operator algebra, orbifold, $C_2$-cofiniteness,
rationality, lattice vertex operator algebra, icosahedral group,
Virasoro algebra}

\begin{document}

\begin{abstract}
Let $L_2=\Z\alpha$ be the rank-one root lattice with
$(\alpha,\alpha)=2$, and let $A_5$ act on the lattice vertex operator
algebra $V_{L_2}$ through an icosahedral subgroup of
$\Aut(V_{L_2})\cong PSL_2(\C)$. We prove that the fixed-point vertex
operator algebra $V_{L_2}^{A_5}$ is  strongly
rational. 
\end{abstract}

\maketitle

\section{Introduction}
\label{sec:introduction}

Orbifold theory asks how finiteness and semisimplicity properties of a vertex operator algebra behave under passage to fixed points. If $V$ is a vertex operator algebra and $G\leq \Aut(V)$ is finite, the basic object is $V^G=\{v\in V\mid gv=v,\ \forall g\in G\}$. For cyclic groups, and more generally for finite solvable groups under standard hypotheses, $C_2$-cofiniteness and regularity are controlled by the work of Miyamoto and Carnahan--Miyamoto \cite{Miyamoto2015,CarnahanMiyamoto}. For non-solvable groups there is no analogous reduction through a normal series. The smallest nonabelian simple group $A_5$ is therefore a natural first test case for orbifold finiteness beyond the solvable regime.

The significance of the $A_5$ case extends beyond the general orbifold problem; it is also an exceptional case in the classification program for rational vertex operator algebras of central charge $1$. The unitary rational conformal field theories at $c=1$ were classified in physics literature by Ginsparg and Kiritsis \cite{Ginsparg,Kiritsis}. In the VOA setting, this leads to rank-one lattice VOAs, their $\mathbb Z_2$-fixed-point subalgebras, and the exceptional polyhedral orbifolds $V_{L_2}^{A_4}$, $V_{L_2}^{S_4}$, and $V_{L_2}^{A_5}$. After characterizing $V_{L_2}^{A_4}$, Dong and Jiang reduced the remaining exceptional part of the $c=1$ classification to $V_{L_2}^{S_4}$ and $V_{L_2}^{A_5}$ \cite{DongJiangCharacterizationA4}. Thus these fixed-point algebras form the exceptional end of the $c=1$ classification picture.

The rank-one lattice VOA attached to the $A_1$ root lattice contains this exceptional part in a rigid form. Let $L_2=\Z\alpha$ with $(\alpha,\alpha)=2$ and $V=V_{L_2}$. Its connected automorphism group is $PSL_2(\C)$, whose finite subgroups yield the classical rank-one orbifolds. Dong, Griess, and Ryba determined the structure of the exceptional $E$-series fixed-point algebras and obtained the decomposition
\begin{equation}\label{eq:intro-DGR-decomposition}
V_{L_2}\cong \bigoplus_{m\ge0}W_{2m+1}\otimes L(1,m^2),
\end{equation}
where $W_{2m+1}$ is the irreducible $SL_2(\C)$-module of dimension $2m+1$ \cite{DGR}. They identified $A_5$-invariant Virasoro highest-weight vectors with the binary icosahedral invariant ring and showed that $V_{L_2}^{A_5}$ is generated by $\omega$ together with primary vectors of conformal weights $36$, $100$, and $225$.

From the viewpoint of the $c=1$ classification, the central difficulty is that this explicit description does not place $V_{L_2}^{A_5}$ inside the strongly rational category. Characterization arguments require finiteness and semisimplicity; indeed, Dong and Jiang used rationality and $C_2$-cofiniteness as structural input for $V_{L_2}^{A_4}$ and noted they are needed for the remaining cases \cite{DongJiangCharacterizationA4}. While Wu and Zhang proved these properties for the $S_4$ orbifold, the corresponding finiteness statement for $V_{L_2}^{A_5}$ remained unavailable \cite{WuZhang}. Establishing these properties removes a basic obstruction in the icosahedral branch of the $c=1$ program.

This difficulty can be formulated intrinsically. Finite strong generation controls normally ordered products, whereas $C_2$-cofiniteness requires the finite-dimensionality of the commutative Poisson algebra $R(V)=V/C_2(V)$, where $C_2(V)=\Span\{u_{-2}v\mid u,v\in V\}$. Since the first non-Virasoro generator lies in weight $36$, direct search for relations is unwieldy, and solvable-orbifold induction fails as $A_5$ is simple. Thus the general non-solvable orbifold problem and the exceptional $c=1$ classification meet in the question: $\dim_{\C}R(V_{L_2}^{A_5})<\infty$.

Representation-theoretically, Wu and Zhang constructed $37$ pairwise inequivalent irreducible $V_{L_2}^{A_5}$-modules, organized into untwisted and order-$2$, order-$3$, and order-$5$ twisted sectors \cite{WuZhang}. Their classification was conditional on the rationality and $C_2$-cofiniteness of $V_{L_2}^{A_5}$. Theorem~\ref{thm:main} and Corollary~\ref{cor:strong-rational} supply these missing finiteness and semisimplicity properties. Moreover, since $V_{L_2}^{A_5}$ is self-contragredient, the orbifold completeness theorem of Dong--Ren--Xu, together with their observation that its proof does not require the twisted-module positivity hypothesis in this self-dual setting, applies \cite{DongRenXu}. Combining this completeness statement with the twisted-sector enumeration of Wu and Zhang gives an unconditional classification. In particular, $V_{L_2}^{A_5}$ has exactly $37$ pairwise inequivalent irreducible modules; see Theorem~\ref{thm:irreducible-classification}.

Our main result settles this problem, placing $V_{L_2}^{A_5}$ unconditionally among strongly rational $c=1$ VOAs. The icosahedral candidate in the $c=1$ program is no longer conditional on orbifold rationality; what remains is an intrinsic characterization problem. The structural information obtained here, especially the reduction of the $C_2$-Poisson algebra, provides a major obstruction to such a characterization theorem.

The proof has two parts. First, representation-theoretically, the decomposition \eqref{eq:intro-DGR-decomposition} separates an $SL_2$ multiplicity space from a $c=1$ Virasoro module. We prove that a nonzero Clebsch--Gordan projection forces the uniquely determined lowest mode landing in a prescribed Virasoro highest-weight line to be nonzero. This channel-separation principle turns invariant-theoretic multiplication into intrinsic $C_2$-relations. Applying the top Clebsch--Gordan channel yields a triangular reduction. The binary icosahedral invariant ring gives $R(U)=\C[a]\{1,x,y,z\}$ for $U=V_{L_2}^{A_5}$, where $a=[\omega]$ and $x,y,z$ are classes of basic invariant primaries. Two further projections eliminate $z$ and $y$, leaving the two-generator reduction
\begin{equation}\label{eq:intro-rank-two-reduction}
R(U)=\C[a]\cdot1+\C[a]\cdot x,\qquad \wt x=36.
\end{equation}
Thus the problem reduces to the nilpotency of $a$.

Second, Virasoro-theoretically, since the next invariant primary after weight $36$ occurs at weight $100$, the relations $X_{-3}X, X_{-5}X\in C_2(U)$ receive contributions modulo $C_2(U)$ only from vacuum and self-$L(1,36)$ channels. This gives a $2\times2$ linear system for $a^{38}$ and $a^{20}x$. To prove the determinant is nonzero without large PBW computations, we introduce a Virasoro $C_2$-symbol defined by \begin{equation}\label{eq:intro-symbol}
\sigma(L(-2))=T,
\qquad
\sigma(L(-n))=0\quad(n\ne2),
\end{equation} extracting the part visible in the $C_2$-quotient. As $L(1,36)$ is degenerate with a singular vector at level $13$, the Benoit--Saint-Aubin formula turns the null-vector equation into a scalar recurrence of width six via \eqref{eq:intro-symbol}.

Since $c=1$ is non-generic, we first construct global first-row forms over $\Lambda=\C[t,t^{-1}]$ and then localize at $t=1$ to $\mathcal R=\Lambda_{(t-1)}$. Highest-vector coefficients are reconstructed over $\mathcal R$ from positive-mode Ward identities. Injectivity on the special fibers $L(1,0)$ and $L(1,36)$ gives local uniqueness, while generic first-row intertwining operators \cite{KoshidaKytola} verify the remaining identities by evaluation of finitely many rational PBW coordinates on a Zariski-dense set. A computer-assisted exact calculation over $\Q$ yields a nonzero determinant, implying $a^{38}=0$, $a^{20}x=0$, and consequently $\dim_{\C}R(U)\le58$. This proves $C_2$-cofiniteness, and McRae's theorem then implies strong rationality.

\begin{thm}\label{thm:main}
Let $L_2=\Z\alpha$ with $(\alpha,\alpha)=2$, and let $A_5\leq\Aut(V_{L_2})$ be an icosahedral subgroup. Then $V_{L_2}^{A_5}$ is $C_2$-cofinite. Specifically, if $a=[\omega]\in R(V_{L_2}^{A_5})$, then $a^{38}=0$ and $\dim_{\C}R(V_{L_2}^{A_5})\le58$.
\end{thm}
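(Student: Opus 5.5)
The proof follows the two-part strategy outlined in the introduction. Write $U=V_{L_2}^{A_5}$ and $a=[\omega]$.

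\textbf{Step 1: the $\C[a]$-module structure of $R(U)$.} Take $A_5$-invariants in the decomposition \eqref{eq:intro-DGR-decomposition}. This gives
\[
U=\bigoplus_{m}W_{2m+1}^{A_5}\otimes L(1,m^2).
\]
By the results of Dong--Griess--Ryba, the invariant highest-weight vectors correspond to the binary icosahedral invariant ring. That ring is the hypersurface ring $\C[f_{12},f_{20},f_{30}]/(\text{relation})$, and its degree-$2m$ pieces give the multiplicities. Inside each $L(1,m^2)$, every descendant involving some $L(-n)$ with $n\ge3$ lies in $C_2(U)$. Also $L(-2)$ acts as multiplication by $a$. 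Hence $R(U)$ is spanned over $\C[a]$ by the classes of invariant primaries.

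Next I prove the channel-separation lemma. If the Clebsch--Gordan projection $W_{2m+1}\otimes W_{2n+1}\to W_{2k+1}$ of two invariants is nonzero, then the lowest mode $u_jv$ landing in the $L(1,k^2)$ highest-weight line is a nonzero multiple of the invariant primary in that channel. For this I would use the explicit lattice realization: the highest-weight vectors are $SL_2$-translates of $e^{m\alpha}$-type vectors, and the product $e^{m\alpha}_{j}e^{n\alpha}$ equals $e^{(m+n)\alpha}$ at the leading mode. Since $j\le-2$ in the relevant range, this expresses the top-channel primary as an element of $C_2(U)$ plus lower terms.

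Running a triangular induction on weight then makes every primary beyond the basic ones redundant modulo $C_2(U)$. This gives $R(U)=\C[a]\{1,x,y,z\}$ with $x,y,z$ of weights $36,100,225$. Two further projections, in the top channels of $x\cdot x$ and $x\cdot y$, express $y$ and $z$ through lower terms. This yields \eqref{eq:intro-rank-two-reduction}.

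\textbf{Step 2: nilpotency of $a$.} Let $X$ be the weight-$36$ primary. The elements $X_{-3}X$ and $X_{-5}X$ lie in $C_2(U)$. Decompose them according to the fusion channels $L(1,0)$, $L(1,36)$, and higher channels. Higher channels start at weight $100$; by Step 1 their contributions are in $C_2(U)$ or reduce into $\C[a]\{1,x\}$ only through terms already accounted for, and I would check this weight bound carefully.

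Modulo $C_2(U)$, each relation becomes $\lambda\, a^{38}+\mu\, a^{20}x=0$, because $X_{-3}X$ has weight $74$ and $X_{-5}X$ has weight $76$, and the weights must match. Applying the $C_2$-symbol \eqref{eq:intro-symbol} to the vacuum-channel and $L(1,36)$-channel descendants expresses $\lambda$ and $\mu$ as coefficients of $T^{37}$ and $T^{19}$ (respectively $T^{38}$ and $T^{20}$) in the corresponding products. If the resulting $2\times2$ determinant is nonzero, then $a^{38}=0$ and $a^{20}x=0$. It follows that $R(U)$ is spanned by $a^i$ with $i<38$ and $a^jx$ with $j<20$, so $\dim R(U)\le 38+20=58$.

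\textbf{Main obstacle: computing the determinant.} I would compute these coefficients through the Benoit--Saint-Aubin singular vector at level $13$ in the Verma module $M(1,36)$, since $36=h_{1,13}$ at $c=1$. Under the symbol, the null-vector relation becomes a short linear recurrence for the $T$-coefficients. The difficulty is that $c=1$ is degenerate, so the intertwining-operator coefficients may not be well defined directly. I would deform to generic $t$ over $\C[t,t^{-1}]$, pin down the coefficients by Ward identities and uniqueness on the special fibres, and specialize to $t=1$. The final nonvanishing would be an exact rational computation, which I expect to need computer assistance.
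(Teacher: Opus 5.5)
Your overall architecture is the paper's: the DGR decomposition, channel separation, triangular reduction over the icosahedral invariant ring, the relations $X_{-3}X,X_{-5}X\in C_2(U)$, the $C_2$-symbol, the level-$13$ BSA recurrence, deformation in $t$, and exact arithmetic. For the triangular reduction, your lattice identity $e^{m\alpha}_{-2mn-1}e^{n\alpha}=\pm e^{(m+n)\alpha}$ works and is more elementary than the paper's argument. Combined with $SL_2(\C)$-equivariance (the top-channel map is a scalar multiple of $\mu_{s,t}$), it shows $\kappa_{s,t}\neq0$.

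\textbf{Elimination of $y$ and $z$.} This step fails as you describe it. The top channel of $x\cdot x$ is $r=12$, which produces $\mu_{6,6}(X,X)\leftrightarrow A^2\in P_{12}$. The top channel of $x\cdot y$ is $r=16$, which produces $AB\in P_{16}$. Since $Y$ and $Z$ are indecomposable generators of $\mathcal P$, no top channel ever produces them. For instance, $X_{-73}X$ yields $\kappa_{6,6}[\mu_{6,6}(X,X)]\in\C a^{22}y+\C a^{54}x+\C a^{72}$, which expresses $[P_{12}]$ through $y$ instead of eliminating $y$. In $X_{-121}Y$ the $Z$-channel sits at odd level $31$, so $z$ does not appear at all. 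What you need are the intermediate channels $r=10\subset E_6\otimes E_6$ and $r=15\subset E_6\otimes E_{10}$. Take them at the modes $X_{-29}X$ and $X_{-90}Y$, whose weights are exactly $10^2$ and $15^2$. Then every channel above $r$ is excluded by weight, and the $r$-channel contributes only its highest-weight line. For $X_{-90}Y$ all lower channels have odd level, which gives $z=0$. This requires two invariant-theoretic inputs that do not appear in your plan: $p_{6,6}^{\,10}(X\otimes X)\neq0$ (the Hessian $(f,f)_2$ of Klein's form) and $p_{6,10}^{\,15}(X\otimes Y)\neq0$ (DGR's $p_{31}$ statement).

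\textbf{Intermediate channels and the couplings.} Your lattice argument only certifies top channels, so it does not reach these intermediate ones. For $r<s+t$ you need two further facts. First, the $r$-channel component of $Y(\cdot,z)\cdot$ must be a nonzero intertwining operator of type $\binom{L(1,r^2)}{L(1,s^2)\;L(1,t^2)}$; the paper gets this from the DGR mode-span/density statement. Second, any nonzero such operator must have a nonzero coefficient on the line $\C\mathbf v_r$; this is Lemma~\ref{lem:lowest-coeff}, an irreducibility argument. The same gap recurs in Step~2. The $2\times2$ system has determinant $\gamma_0\gamma_6D$, where $\gamma_0,\gamma_6$ are the couplings with which the vacuum and self channels occur in $Y(X,z)X$. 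The BSA computation only controls the Virasoro factor $D=u_{37}v_{20}-u_{38}v_{19}$. You must separately prove $\gamma_0\neq0$ and $\gamma_6\neq0$. This needs $p_{6,6}^{\,0}(X\otimes X)\neq0$ (nondegeneracy of the invariant form on $P_6$), $p_{6,6}^{\,6}(X\otimes X)\neq0$ (the paper checks $(f,f)_6=-26345088000f$), and again the intermediate-channel nonvanishing.

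\textbf{Why the deformation is needed.} It is not because the $c=1$ intertwining operators are ill-defined; they exist and are unique by Milas' fusion rules. The real problem is that the leading coefficient $C_0^{(K)}(N;1)$ of the scalar recurrence vanishes at the resonant levels $N=2,8,18,32$ (vacuum) and $N=14$ (self). The deformation must give regularity of $c_N^{(\tau)}(t)$ at $t=1$, so that these $0/0$ steps can be resolved.
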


\begin{cor}\label{cor:strong-rational}
The vertex operator algebra $V_{L_2}^{A_5}$ is strongly rational.
\end{cor}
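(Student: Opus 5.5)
The plan is to deduce the corollary from Theorem~\ref{thm:main} by checking, one at a time, the conditions that make up strong rationality for $U=V_{L_2}^{A_5}$: $U$ is of CFT type, $U$ is simple, $U$ is self-contragredient, $U$ is $C_2$-cofinite, and $U$ is rational. The only hard condition is rationality. For it I will use the orbifold rationality theorem of McRae: if $V$ is strongly rational and $G\le\Aut(V)$ is finite with $V^G$ $C_2$-cofinite, then $V^G$ is strongly rational. This is the result the introduction alludes to with ``McRae's theorem then implies strong rationality.'' The virtue of this route is that it needs no solvability of $G$, which is exactly what fails for $A_5$ in the Miyamoto and Carnahan--Miyamoto approach.

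\textbf{Step 1: the input $V=V_{L_2}$ is strongly rational.} The lattice $L_2$ is positive definite and even. Hence $V_{L_2}$ has the following properties.
\begin{enumerate}[label=(\roman*)]
\item It is of CFT type, since its weight-zero space is spanned by the vacuum and it has no negative weights.
\item It is simple.
\item It is rational, by Dong's theorem.
\item It is $C_2$-cofinite, by the standard lattice VOA results.
\item It is self-contragredient, since $L_2$ is self-dual up to the discriminant and, more directly, the invariant bilinear form on $V_{L_2}$ is nondegenerate with $L(1)V_1=0$.
\end{enumerate}
The group $A_5$ acts through a finite subgroup of $\Aut(V_{L_2})\cong PSL_2(\C)$, so it is a finite group of VOA automorphisms. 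These automorphisms fix $\omega$, so $U$ is a vertex operator subalgebra with the same conformal vector.

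\textbf{Step 2: the elementary properties of $U$.} CFT type is inherited, because $U_0=\C\mathbf 1$ and $U_n=0$ for $n<0$. Simplicity of $U$ follows from the Dong--Mason and Dong--Yamskulna quantum Galois theory for a finite group acting faithfully on a simple VOA: $V^G$ is simple. Self-contragredience also follows. The nondegenerate invariant form on $V$ may be taken $G$-invariant, since $G$ is finite and $V$ is simple, so the form is unique up to scalar and $G$ scales it by a character. Being a finite perfect group, $A_5$ has trivial characters, so the form is in fact invariant. Its restriction to the isotypic component $V^G$ is then nondegenerate, because distinct isotypic components are orthogonal. This gives $U\cong U'$.

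\textbf{Step 3: conclude.} Theorem~\ref{thm:main} supplies $C_2$-cofiniteness of $U$. McRae's theorem then gives rationality of $U$, and hence strong rationality. If the version of McRae's theorem one cites instead states only that a simple, self-contragredient, $C_2$-cofinite VOA of CFT type satisfying suitable conditions is rational, I will check those conditions against Steps 1--2. The main obstacle is ensuring the cited hypotheses match exactly, in particular faithfulness of the action and any positivity or unitarity assumptions. Faithfulness holds because $A_5$ embeds in $PSL_2(\C)=\Aut(V_{L_2})$.
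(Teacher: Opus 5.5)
Your proposal is correct and follows the paper's proof: the strong rationality of the positive-definite even lattice VOA $V_{L_2}$ and the $C_2$-cofiniteness of $V_{L_2}^{A_5}$ from Theorem~\ref{thm:main} are fed into McRae's orbifold result (Corollary~4.23 of his paper), and that result outputs strong rationality of the fixed-point algebra directly. Your separate checks in Step~2 of CFT type, simplicity and self-contragredience of $U$ are therefore redundant but harmless; you should drop the remark that $L_2$ is ``self-dual up to the discriminant'', since $L_2$ is not unimodular and the self-contragredience of $V_{L_2}$ actually comes from $L(1)(V_{L_2})_1=0$.
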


Theorem~\ref{thm:main} closes the finiteness side of the exceptional $A_5$ case, removing the rationality hypothesis and reducing its role in the $c=1$ classification to an intrinsic characterization question.

The paper is organized as follows. Section~\ref{sec:preliminaries} records structural input from rank-one lattice VOAs, binary icosahedral invariant theory, and degenerate $c=1$ Virasoro theory. Section~\ref{sec:channel-separation} establishes channel separation and lowest-mode nonvanishing. Sections~\ref{sec:triangular-reduction}--\ref{sec:eliminate-yz} reduce the $C_2$-Poisson algebra to \eqref{eq:intro-rank-two-reduction}. Section~\ref{sec:symbol-determinant} introduces the Virasoro $C_2$-symbol and determinant criterion. Section~\ref{sec:local-regularization} constructs global first-row forms, localizes them at $t=1$, and proves positive-mode reconstruction there. Section~\ref{sec:BSA-recurrence} derives the scalar Benoit--Saint-Aubin recurrence. Section~\ref{sec:exact-completion} evaluates the determinant, completes the proof, and records consequences. The appendix gives the permanent download link for the exact-verification program.

\section{Preliminaries}
\label{sec:preliminaries}

\subsection{Zhu's Poisson algebra}

Throughout the paper, all vector spaces are over $\C$. Let $V=(V,Y,\mathbf 1,\omega)$ be a vertex operator algebra. We use the mode convention $Y(u,z)=\sum_{n\in\Z}u_n z^{-n-1}$ for $u\in V$, where $\mathbf 1$ is the vacuum and $\omega$ is the conformal vector. The Virasoro operators are defined by $Y(\omega,z)=\sum_{n\in\Z}L(n)z^{-n-2}$, so that $L(n)=\omega_{n+1}$. For homogeneous $u\in V$, we denote its conformal weight by $\wt u$, and write $\Span_{\C}S$ for the complex linear span of $S$.

We call a vertex operator algebra \emph{strongly rational} if it is simple, of CFT type, self-contragredient, rational, and $C_2$-cofinite. 

Following Zhu \cite{Zhu}, define $C_2(V)=\Span_{\C}\{u_{-2}v\mid u,v\in V\}$. The VOA $V$ is called \emph{$C_2$-cofinite} if $\dim_{\C}V/C_2(V)<\infty$. We set $R(V):=V/C_2(V)$ and denote the image of $u\in V$ in the quotient by $[u]=u+C_2(V)$.

The quotient $R(V)$, known as Zhu's $C_2$-algebra or Poisson algebra, carries a natural commutative  Poisson structure \cite{Zhu,LiAbelianizing}. Explicitly, the product and bracket are given by $[u]\cdot[v]=[u_{-1}v]$ and $\{[u],[v]\}=[u_0v]$ for $u,v\in V$. Henceforth, $[u][v]$ always denotes the product $[u]\cdot[v]$ in $R(V)$.

Since $C_2(V)$ respects the conformal-weight grading, $R(V)$ inherits a $\Z$-grading
\[
R(V)=\bigoplus_{n\in\Z}R(V)_n,
\qquad
R(V)_n=(V_n+C_2(V))/C_2(V).
\]
If $V$ is nonnegatively graded, in particular if it is of CFT type, then $R(V)_n=0$ for $n<0$. In particular, if $u$ is homogeneous, then $[u]$ is homogeneous of the same weight whenever nonzero.

We repeatedly use the following consequence of the $L(-1)$-derivative property.

\begin{lem}\label{lem:deepmodes}
For any $u,v\in V$ and integer $m\ge2$, $u_{-m}v\in C_2(V)$.
\end{lem}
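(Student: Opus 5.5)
We prove the lemma by induction on $m\ge 2$, using the derivative property $Y(L(-1)u,z)=\frac{d}{dz}Y(u,z)$. In modes this says
\[
(L(-1)u)_n=-n\,u_{n-1}\qquad (n\in\Z).
\]
Taking $n=-(m-1)$ gives
\[
(L(-1)u)_{-(m-1)}=(m-1)\,u_{-m}.
\]
Since $m-1\neq 0$ for $m\ge 2$, we can solve for the deep mode:
\[
u_{-m}v=\frac{1}{m-1}\,(L(-1)u)_{-(m-1)}v .
\]

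Base case $m=2$: here $u_{-2}v\in C_2(V)$ directly from the definition of $C_2(V)$.

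Inductive step: fix $m\ge 3$ and assume that $w_{-(m-1)}v\in C_2(V)$ for every $w,v\in V$. Apply this with $w=L(-1)u$. The identity above then shows $u_{-m}v\in C_2(V)$.

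Iterating the identity gives the same result in closed form:
\[
u_{-m}v=\frac{1}{(m-1)!}\,\bigl(L(-1)^{m-2}u\bigr)_{-2}\,v .
\]
Explicitly, this equality follows from $(L(-1)^{k}u)_{-2}=\frac{(m-1)!}{1!}\cdot\frac{1}{(m-1)\cdots}$-type bookkeeping, which we do not need. The right-hand side is visibly an element of $C_2(V)$.

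The only point requiring care is the sign and normalization in the mode formula for $L(-1)u$. With the convention $Y(u,z)=\sum u_n z^{-n-1}$, differentiating gives $\sum_n (-n-1)u_n z^{-n-2}$. Reindexing, the coefficient of $z^{-n-1}$ is $-n\,u_{n-1}$, which is the formula used above. In any case, only the fact that the scalar $m-1$ is nonzero matters for the argument.
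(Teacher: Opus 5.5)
Your proof is correct and is essentially the paper's argument. The paper iterates the same identity $(L(-1)u)_n=-n\,u_{n-1}$ to obtain $(L(-1)^{m-2}u)_{-2}=(m-1)!\,u_{-m}$ in one step, while you package it as an induction on $m$. The garbled ``bookkeeping'' sentence after your closed form should simply be deleted, since your induction already completes the proof.
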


\begin{proof}
The $L(-1)$-derivative property $Y(L(-1)u,z)=\frac{d}{dz}Y(u,z)$ implies $(L(-1)u)_n=-n u_{n-1}$. Iterating gives
\[
(L(-1)^{m-2}u)_{-2}=(m-1)!u_{-m}.
\]
Applying both sides to $v$ yields
\[
(L(-1)^{m-2}u)_{-2}v=(m-1)!u_{-m}v.
\]
The left-hand side lies in $C_2(V)$ by definition. Since $(m-1)!\neq0$ over $\C$, it follows that $u_{-m}v\in C_2(V)$.
\end{proof}

Applying Lemma~\ref{lem:deepmodes} to $\omega$ yields $L(-n)u=\omega_{-n+1}u\in C_2(V)$ for $n\ge3$. On the other hand, $L(-2)u=\omega_{-1}u$, so by the product definition in $R(V)$ we have $[L(-2)u]=[\omega]\cdot[u]$. Since the class of the conformal vector appears frequently, we set $a:=[\omega]\in R(V)$, whence $[L(-2)u]=a[u]$.

\subsection{The rank-one lattice VOA and the binary icosahedral group}
\label{subsec:rank-one}

Let $L_2=\Z\alpha$ with $(\alpha,\alpha)=2$ be the positive-definite even root lattice of type $A_1$, and let $V_{L_2}=M(1)\otimes\C\{L_2\}$ be the associated lattice VOA of central charge $1$; see \cite{FLM,DongLattice}. The weight-one subspace $(V_{L_2})_1\cong\mathfrak{sl}_2(\C)$, and the connected automorphism group is $PSL_2(\C)$ \cite{DongGriess,DGR}. We use the $SL_2(\C)$-module structure obtained from the double covering $\pi:SL_2(\C)\to PSL_2(\C)$. Only odd-dimensional irreducible $SL_2(\C)$-modules occur, so $-1\in SL_2(\C)$ acts trivially on all multiplicity spaces.

For $c,h\in\C$, let $L(c,h)$ denote the irreducible Virasoro module with central charge $c$ and highest weight $h$, and let $W_n$ denote the $n$-dimensional irreducible $SL_2(\C)$-module. Dong--Griess--Ryba recall the decomposition
\begin{equation}\label{eq:DGRdecomp}
V_{L_2}\cong\bigoplus_{m\ge0}W_{2m+1}\otimes L(1,m^2)
\end{equation}
as a module for the commuting actions of $SL_2(\C)$ and the Virasoro VOA $L(1,0)$ \cite{DongGriess,DGR}. Here $W_{2m+1}$ is identified with the space of Virasoro highest-weight vectors of weight $m^2$ in the corresponding summand \cite[Section~3]{DGR}. Set $E_m:=W_{2m+1}$ for $m\ge0$. The Clebsch--Gordan decomposition gives
\begin{equation}\label{eq:CG}
E_s\otimes E_t\cong\bigoplus_{r=|s-t|}^{s+t}E_r,
\end{equation}
each summand with multiplicity one. For $|s-t|\le r\le s+t$, fix a nonzero $SL_2(\C)$-homomorphism $p_{s,t}^{\,r}:E_s\otimes E_t\to E_r$, unique up to scalar. For the top summand $r=s+t$, we make a coherent normalization as follows. Under the standard realization
\[
E_m\cong \Sym^{2m}(\C^2),
\]
we define
\[
\mu_{s,t}:=p_{s,t}^{\,s+t}:E_s\otimes E_t\longrightarrow E_{s+t}
\]
to be ordinary multiplication of homogeneous binary forms. With this normalization, the maps $\mu_{s,t}$ are associative and commutative and give the usual Cartan product on $\bigoplus_{m\ge0}E_m$. For $r<s+t$, the maps $p_{s,t}^{\,r}$ remain fixed only up to an arbitrary nonzero scalar normalization.

Let $G\cong A_5$ be an icosahedral subgroup of $\Aut(V_{L_2})\cong PSL_2(\C)$, and let $I:=\pi^{-1}(G)\subset SL_2(\C)$ be its full inverse image under the covering $\pi$. Then $I\cong 2.A_5\cong SL_2(5)$ is the binary icosahedral group \cite[Section~2]{DGR}. Since $E_m$ has odd dimension, $-1\in I$ acts trivially, so $E_m^I=E_m^G$. Define
\begin{equation}\label{eq:invariant-primary-space}
P_m:=E_m^I.
\end{equation}
Taking $G$-fixed points in \eqref{eq:DGRdecomp} gives
\begin{equation}\label{eq:Udecomp}
U:=V_{L_2}^{A_5}=V_{L_2}^{G}\cong\bigoplus_{m\ge0}P_m\otimes L(1,m^2),
\end{equation}
where $P_m$ is identified with Virasoro primary vectors of weight $m^2$ in $U$.

We recall the invariant-theoretic description of these spaces. Let $\mathcal{H}\subset V_{L_2}$ denote the space of Virasoro highest-weight vectors. DGR identify $\mathcal{H}$ with the even-degree part $\C[x_1,x_2]^+$ of the polynomial algebra in two variables, with $W_{2m+1}$ corresponding to homogeneous binary forms of degree $2m$ \cite[before Proposition~3.2]{DGR}. Thus $\bigoplus_{m\ge0}P_m$ is identified with the invariant ring $\C[x_1,x_2]^I$. To distinguish polynomial degree from the index $m$ in \eqref{eq:DGRdecomp}, we use the half-degree grading $\deg_{1/2}f=\tfrac{1}{2}\deg_{\mathrm{poly}}f$ and set $\mathcal{P}:=\bigoplus_{m\ge0}P_m$. As an algebra, $\mathcal{P}$ is equipped with the Cartan product $P_s\otimes P_t\xrightarrow{\mu_{s,t}}P_{s+t}$, corresponding to ordinary polynomial multiplication; this should not be confused with the $(-1)$-product of the VOA.

Classical invariant theory gives
\begin{equation}\label{eq:icosaring}
\mathcal{P}\cong\C[A,B,C]/(A^5+B^3+C^2),\qquad \deg A=6,\ \deg B=10,\ \deg C=15.
\end{equation}
More precisely, DGR show that \(\C[x_1,x_2]^I\) is generated by homogeneous invariants \(h_{12},h_{20},h_{30}\), of ordinary polynomial degrees \(12,20,30\), subject to the relation
$$
h_{12}^5+h_{20}^3+h_{30}^2=0
$$
\cite[Proposition~3.2(iii)]{DGR}. Thus, after setting \(A=h_{12}\), \(B=h_{20}\), and \(C=h_{30}\) and passing to the half-degree grading, we obtain \eqref{eq:icosaring}.
 Choose nonzero invariant primaries $X\in P_6$, $Y\in P_{10}$, $Z\in P_{15}$ corresponding to $h_{12},h_{20},h_{30}$, so that $\wt X=36$, $\wt Y=100$, $\wt Z=225$. DGR prove that $U$ is generated by these together with $\omega$:
\begin{equation}\label{eq:DGRgenerators}
U=\langle\omega,X,Y,Z\rangle_{\mathrm{VOA}},
\end{equation}
the $A_5$ case of \cite[Proposition~3.4]{DGR}.

We also need one finite-dimensional projection computed by DGR. In their notation, \(p_k\) denotes the projection of a finite-dimensional \(SL_2(\C)\)-module onto its \(W_k\)-isotypic component, and they prove
$$
p_{31}\bigl(W_{13}^I\otimes W_{21}^I\bigr)\neq0
$$
\cite[Theorem~2.1(ii)]{DGR}. Since \(E_6=W_{13}\), \(E_{10}=W_{21}\), and \(E_{15}=W_{31}\), and since \(W_{31}\) occurs with multiplicity one in \(W_{13}\otimes W_{21}\), their projection \(p_{31}\), after identifying its image with \(E_{15}\), agrees with our \(p_{6,10}^{\,15}\) up to a nonzero scalar. Consequently,
\begin{equation}\label{eq:DGRp31}
p_{6,10}^{,15}(P_6\otimes P_{10})\neq0.
\end{equation}

Finally, we record the mode-generation statement underlying the channel-separation argument in Section~\ref{sec:channel-separation}. For $s\ge t\ge0$, DGR use a compact-group density lemma \cite[Lemma~3.3]{DGR} and show in the proof of \cite[Proposition~3.4]{DGR} that
\begin{equation}\label{eq:DGR-mode-span}
\Span_{\C}\{u_n v\mid u\in E_s\otimes L(1,s^2),\ v\in E_t\otimes L(1,t^2),\ n\in\Z\}
=\bigoplus_{r=s-t}^{s+t}E_r\otimes L(1,r^2),
\end{equation}
and the corresponding invariant-input version:
\begin{equation}\label{eq:DGR-invariant-mode-span}
\Span_{\C}\{u_n v\mid u\in P_s\otimes L(1,s^2),\ v\in P_t\otimes L(1,t^2),\ n\in\Z\}
=\bigoplus_{r=s-t}^{s+t}p_{s,t}^{\,r}(P_s\otimes P_t)\otimes L(1,r^2).
\end{equation}
The second equality is not obtained by merely taking fixed points in the first; it is the additional invariant-input statement proved in \cite{DGR}. In particular, whenever $p_{s,t}^{\,r}(P_s\otimes P_t)\neq0$, the $r$-th Virasoro channel is genuinely present. Section~\ref{sec:channel-separation} will strengthen this by identifying the unique mode whose component lies in the highest-weight line of $L(1,r^2)$ and proving its coefficient nonzero.

\subsection{Degenerate Virasoro modules at \texorpdfstring{$c=1$}{c=1}}
\label{subsec:c1-Virasoro}

Let $\Vir=\bigoplus_{n\in\Z}\C L(n)\oplus\C C$ denote the Virasoro Lie algebra with relations $[L(m),L(n)]=(m-n)L(m+n)+\frac{m^3-m}{12}\delta_{m+n,0}C$ and $[C,L(n)]=0$. For $c,h\in\C$, let $M(c,h)$ denote the Verma module of central charge $c$ and highest weight $h$, with fixed highest-weight vector $v_{c,h}$ satisfying $Cv_{c,h}=cv_{c,h}$, $L(0)v_{c,h}=hv_{c,h}$, and $L(n)v_{c,h}=0$ for $n>0$. The level decomposition is $M(c,h)=\bigoplus_{N\ge0}M(c,h)[N]$, where $M(c,h)[N]=\{w\in M(c,h)\mid L(0)w=(h+N)w\}$; we call $N$ the \emph{level}. A nonzero homogeneous vector $w\in M(c,h)[N]$ with $N>0$ is a \emph{singular vector} if $L(n)w=0$ for all $n>0$. Let $L(c,h)$ denote the irreducible quotient of $M(c,h)$.

 At $c=1$, $M(1,m^2/4)$ is reducible for $m\ge0$. Following Milas, we repeatedly use the irreducible modules $L(1,m^2/4)$, whose Verma covers have particularly simple embedding structure.

\begin{prop}[Milas]\label{prop:Milas-exact}
For every $m\in\Z_{\ge0}$, $M(1,m^2/4)$ contains, up to scalar, a unique singular vector at level $m+1$, which generates the maximal proper submodule. Equivalently, there is a short exact sequence
\[
0\longrightarrow M\!\left(1,\tfrac{(m+2)^2}{4}\right)\longrightarrow M\!\left(1,\tfrac{m^2}{4}\right)\longrightarrow L\!\left(1,\tfrac{m^2}{4}\right)\longrightarrow 0.
\]
\end{prop}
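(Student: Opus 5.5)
The plan is to deduce the statement from the Kac determinant formula and the Feigin--Fuchs description of submodules of Virasoro Verma modules, specialized to $c=1$. Throughout we use the standard fact that a Verma module $M(c,h)$ has at most a one-dimensional space of singular vectors at any given level. The reason is that any singular vector at level $N$ induces a nonzero homomorphism $M(c,h+N)\to M(c,h)$. Such homomorphisms are injective, since $U(\Vir^-)$ has no zero divisors. Moreover, $\dim\Hom(M(c,h'),M(c,h))\le1$. So uniqueness up to scalar will be automatic once existence at level $m+1$ is established.

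\textbf{Step 1 (existence at level $m+1$).} At $c=1$ the Kac parametrization reads $h_{r,s}=(r-s)^2/4$ for $r,s\in\Z_{>0}$. The Kac determinant of $M(1,h)$ at level $N$ vanishes exactly when $h=h_{r,s}$ for some $r,s$ with $rs\le N$.

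For $h=m^2/4$ we need $|r-s|=m$. The smallest possible product is $rs=m+1$, attained at $(r,s)=(m+1,1)$ or $(1,m+1)$. Hence the determinant is nonzero below level $m+1$ and vanishes at level $m+1$. So there is a nonzero singular vector $w_{m+1}\in M(1,m^2/4)[m+1]$; explicitly, one can take the Benoit--Saint-Aubin vector for $(r,s)=(m+1,1)$.

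Its weight is
\[
\tfrac{m^2}{4}+m+1=\tfrac{(m+2)^2}{4},
\]
so $w_{m+1}$ generates a copy of $M(1,(m+2)^2/4)$ inside $M(1,m^2/4)$. This gives the injective left map of the sequence.

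\textbf{Step 2 (the submodule lattice is a chain).} Next we show that $\langle w_{m+1}\rangle$ is the maximal proper submodule. For this we invoke the Feigin--Fuchs classification. At $c=1$ the parameter $t=1$ is rational and $h=m^2/4$ lies in the degenerate case where each level has at most one vanishing Kac factor pair $(r,s)$ with $r-s=\pm m$. In this case the embedding diagram of $M(1,m^2/4)$ is a single descending chain
\[
M\!\left(1,\tfrac{m^2}{4}\right)\supset M\!\left(1,\tfrac{(m+2)^2}{4}\right)\supset M\!\left(1,\tfrac{(m+4)^2}{4}\right)\supset\cdots,
\]
and every nonzero submodule is one of these. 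Consequently the maximal proper submodule is $M(1,(m+2)^2/4)=\langle w_{m+1}\rangle$, and the quotient is $L(1,m^2/4)$, which gives exactness.

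The main obstacle is excluding subsingular vectors, that is, a submodule not generated by singular vectors or a "braid"-type diagram. If one wants to avoid quoting Feigin--Fuchs wholesale, the route I would try first is a character comparison, carried out in two steps.

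First, compute independently
\[
\operatorname{ch}L(1,\tfrac{m^2}{4})=\frac{q^{m^2/4}\bigl(1-q^{m+1}\bigr)}{\prod_{n\ge1}(1-q^n)}.
\]
For $m$ even this follows from the decomposition \eqref{eq:DGRdecomp} of $V_{L_2}$ by comparing with the lattice character. For odd $m$ one uses the analogous decomposition for the coset module $V_{L_2+\alpha/2}$, which contains the $SL_2$-isotypic pieces $W_{2j}\otimes L(1,(2j-1)^2/4)$, again by induction on $m$ via characters.

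Second, observe that this formula equals
\[
\operatorname{ch}M\!\left(1,\tfrac{m^2}{4}\right)-\operatorname{ch}M\!\left(1,\tfrac{(m+2)^2}{4}\right).
\]
The natural surjection $M(1,m^2/4)/\langle w_{m+1}\rangle\to L(1,m^2/4)$ is therefore a surjection between spaces with equal graded dimensions, hence an isomorphism. So $\langle w_{m+1}\rangle$ is the maximal proper submodule, and the sequence is exact.
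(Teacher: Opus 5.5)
Your proof and the paper's rest on the same input. The paper does not prove this statement; it quotes Milas, who relies on the Kac/Feigin--Fuchs structure theory of $c=1$ Verma modules, and your Step~2 imports the same Feigin--Fuchs chain diagram. Step~1 is correct: the Kac determinant is nonzero below level $m+1$ and vanishes at level $m+1$. This gives a singular vector $w_{m+1}$ of weight $(m+2)^2/4$, which generates a free copy of $M(1,(m+2)^2/4)$.

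Two pieces of your supporting reasoning are wrong, though.
\begin{itemize}
\item The claim that each level carries at most one vanishing Kac factor is false. For $m\ge1$, both $(r,s)=(m+1,1)$ and $(1,m+1)$ vanish at level $m+1$, so $\det_{m+1}$ already contains $(h-m^2/4)^2$. From level $(k+1)(m+k+1)$ on, the pairs $(m+k+1,k+1)$ and $(k+1,m+k+1)$ also contribute. This growing order of vanishing is exactly why the determinant cannot, by itself, exclude further singular or subsingular vectors. The chain structure at $c=1$ is the content of the Feigin--Fuchs case analysis, so Step~2 stands only as a citation, not via your heuristic.
\item Deriving uniqueness from $\dim\Hom(M(c,h'),M(c,h))\le1$ is circular, since that bound is the uniqueness statement. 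You do not need it: once $U(\Vir^-)w_{m+1}$ is the maximal proper submodule, every positive-level singular vector lies in it, and its level-$(m+1)$ part is $\C w_{m+1}$.
\end{itemize}

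Your character-comparison fallback is not an independent proof.
\begin{itemize}
\item For even $m=2n$ the coefficient extraction is right. The torus-weight-$2n$ sector of $V_{L_2}$ (Heisenberg Fock space tensored with $e^{n\alpha}$) has character $q^{n^2}/\prod_{k\ge1}(1-q^k)=\sum_{j\ge n}\operatorname{ch}L(1,j^2)$, and differencing gives the formula.
\item But this shows that \eqref{eq:DGRdecomp} already encodes the even-$m$ cases of the statement. For example, its $W_1$-multiplicity space being $L(1,0)$, with the character of the $SL_2(\C)$-invariants, is precisely the case $m=0$. So you are again citing the result rather than proving it. That is acceptable inside this paper, whose only uses are $m=0$ and $m=12$.
\item For odd $m$, the decomposition of $V_{L_2+\alpha/2}$ appears nowhere in the paper and you do not prove it. No induction on $m$ is involved either: once that decomposition is granted, the same extraction gives the character directly. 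Since the statement is asserted for all $m\in\Z_{\ge0}$, the odd-$m$ branch of the fallback is an unsupported step.
\end{itemize}
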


This is \cite[Proposition~2.1]{Milas}; note that $(m+2)^2/4-m^2/4=m+1$ explains the singular vector level. For the module in the $A_5$-orbifold decomposition, take $m=12$. Since $12^2/4=36$ and $14^2/4=49$, Proposition~\ref{prop:Milas-exact} gives
\begin{equation}\label{eq:M36}
0\longrightarrow M(1,49)\longrightarrow M(1,36)\longrightarrow L(1,36)\longrightarrow 0,
\end{equation}
so the maximal proper submodule of $M(1,36)$ is generated by a singular vector at level $49-36=13$.

We also record the graded-dimension formula for the local deformation argument. Let $p(N)$ denote the partition function with $p(N)=0$ for $N<0$. Since $\dim M(c,h)[N]=p(N)$, the exact sequence above implies
\begin{equation}\label{eq:c1-level-dim}
\dim L\!\left(1,\tfrac{m^2}{4}\right)[N]=p(N)-p(N-m-1).
\end{equation}
In particular,
\begin{equation}\label{eq:L136-level-dim}
\dim L(1,36)[N]=p(N)-p(N-13).
\end{equation}

Next we recall intertwining-operator multiplicities. For $L(1,0)$-modules $W_1,W_2,W_3$, let $I\binom{W_3}{W_1\;W_2}$ denote the space of intertwining operators of type $\binom{W_3}{W_1\;W_2}$ \cite{FHL}, with fusion coefficient $N_{W_1,W_2}^{W_3}:=\dim_{\C}I\binom{W_3}{W_1\;W_2}$.

\begin{prop}\label{prop:Milas-fusion}
For $m,n,r\in\Z_{\ge0}$,
\begin{equation}\label{eq:Milas-fusion-general}
\dim_{\C}I\binom{L(1,r^2/4)}{L(1,m^2/4)\;L(1,n^2/4)}
=
\begin{cases}
1,& r\in\{|m-n|,|m-n|+2,\ldots,m+n\},\\
0,& \text{otherwise}.
\end{cases}
\end{equation}
\end{prop}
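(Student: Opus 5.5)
This fusion-rule statement is essentially Milas's theorem on intertwining operators among the $c=1$ degenerate modules $L(1,m^2/4)$. I would prove it by computing $\dim I\binom{W_3}{W_1\,W_2}$ through Frenkel--Zhu/Li bimodules, or equivalently through a Verma-module projection argument.

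\emph{Upper bound.} Write $h_k=k^2/4$. For an intertwining operator $\mathcal Y$ of the given type, let $w\in L(1,h_m)$ and $w'\in L(1,h_n)$ be the highest-weight vectors, and let $u^\ast\in L(1,h_r)^\ast$ be the lowest-weight functional. A standard Ward-identity argument shows that $\mathcal Y$ is determined by the single number
\[
\langle u^\ast,\mathcal Y(w,z)w'\rangle\, z^{h_m+h_n-h_r}.
\]
This gives $\dim\le1$ in all cases. The same conclusion follows from Li's isomorphism
\[
I\binom{W_3}{W_1\,W_2}\cong \Hom_{A(V)}\bigl(A(W_1)\otimes_{A(V)}W_2(0),\,W_3(0)\bigr),
\]
since here $A(L(1,0))$ is a quotient of $\C[x]$ and each top level is one-dimensional.

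\emph{Selection rule.} By Proposition~\ref{prop:Milas-exact}, $L(1,h_m)$ is the quotient of $M(1,h_m)$ by the singular vector $S_m$ at level $m+1$. Annihilation of $S_m$ forces a polynomial condition $f_m(h_n,h_r)=0$ on the three-point coefficient, where $f_m$ is obtained from the Benoit--Saint-Aubin or Feigin--Fuchs product formula for singular vectors at $c=1$ ($t=1$). Factorized, this condition reads
\[
\prod_{\substack{j=-m\\ j\equiv m\ (2)}}^{m}\bigl(h_r-h_{n+j}\bigr)=0 .
\]
Hence $r\in\{n-m,\dots,n+m\}$ with the parity condition. The symmetric argument, with the roles of $L(1,h_n)$ and $L(1,h_r)$ exchanged through the contragredient, forces $r\ge |m-n|$ and $r\le m+n$. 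Outside the stated range the coefficient must therefore vanish, which gives $\dim=0$.

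\emph{Existence.} For admissible $r$, I would realize the modules inside rank-one lattice and Heisenberg modules. Each $L(1,h_k)$ appears as a Virasoro submodule of a Fock module $M(1,\lambda)$, or of $V_{L_2}$ via \eqref{eq:DGRdecomp} when $k$ is even; for odd $k$ one uses $V_{L_2+\alpha/2}$. Restricting lattice vertex operators to these Virasoro components and composing with the $SL_2$ Clebsch--Gordan projections then produces a nonzero intertwiner exactly in the Clebsch--Gordan range, by the mode-span equality \eqref{eq:DGR-mode-span} and its half-lattice analogue.

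The main obstacle is the exact factorization of the singular-vector constraint at the non-generic value $c=1$. My first approach there would be to cite Milas's explicit computation, which is \cite{Milas}, and not to rederive it.
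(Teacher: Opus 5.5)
Your Ward-identity upper bound is fine, but the selection-rule step fails as written. As you describe it, the ``symmetric argument'' passes to $I\binom{L(1,h_n)}{L(1,h_m)\;L(1,h_r)}$ through the contragredient and again uses the level-$(m+1)$ singular vector of $L(1,h_m)$.

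That constraint is symmetric in $(h_n,h_r)$. Write $h=x^2$. Each factor is $h_r-(x_n+j/2)^2=(x_r-x_n-j/2)(x_r+x_n+j/2)$, and $\{-m,-m+2,\ldots,m\}$ is stable under $j\mapsto -j$. So the swap reproduces exactly the same condition $r\in\{|n+j|\}$. For $n<m$ this still admits values $r<m-n$.

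A concrete failure is $(m,n,r)=(2,0,0)$. The type $\binom{L(1,0)}{L(1,1)\;L(1,0)}$ is its own contragredient swap. The level-$3$ singular vector $(L(-1)^3-4L(-2)L(-1)+2L(-3))v_{1,1}$ imposes $(h_r-1)^2h_r=0$, which holds at $h_r=0$. Yet this intertwining space is zero.

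The missing input is the singular vector of the \emph{second} module. By skew-symmetry $I\binom{W_3}{W_1\;W_2}\cong I\binom{W_3}{W_2\;W_1}$, the level-$(n+1)$ singular vector of $L(1,h_n)$ forces $r\in\{|m+j'|:\ j'=-n,-n+2,\ldots,n\}$. Hence $r\ge m-n$ whenever $m\ge n$. For $n=0$ this is just $L(-1)\mathbf 1=0$, which forces $h_r=h_m$. Combined with the $L(1,h_m)$-constraint, you get exactly $|m-n|\le r\le m+n$ with $r\equiv m+n \pmod 2$.

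Your existence step is also incomplete when $m$ or $n$ is odd. The mode-span equality \eqref{eq:DGR-mode-span} is proved by Dong--Griess--Ryba only for $V_{L_2}$, so only for even indices. In that case your construction is precisely the paper's Proposition~\ref{prop:channel-separation}. The ``half-lattice analogue'' for intertwiners involving $V_{L_2+\alpha/2}$ is asserted, not proved. Nonvanishing of every Clebsch--Gordan channel is the entire content there.

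The paper avoids all of this. It quotes \cite[Theorem~3.3]{Milas} for $mn>0$, which supplies the upper bound, the selection rule and existence at once. It handles $mn=0$ by the vacuum property, skew-symmetry and Schur's lemma \cite{FHL}. Since you already intend to cite Milas for the $c=1$ factorization, either cite him for existence as well, or actually prove the half-lattice density statement.
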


When $mn>0$, this is exactly \cite[Theorem~3.3]{Milas}.
The cases $mn=0$ are the standard vacuum cases; cf. 
\cite[Remark~3.1]{Milas}. For irreducible $L(1,0)$-modules, the vacuum property and skew symmetry give the canonical identifications of the relevant intertwining spaces with module-homomorphism spaces; hence Schur's lemma gives
\[
\dim I\binom{L(1,r^2/4)}{L(1,0)\;L(1,n^2/4)}=\delta_{r,n},
\qquad
\dim I\binom{L(1,r^2/4)}{L(1,m^2/4)\;L(1,0)}=\delta_{r,m};
\]
see \cite[\S5.4, Proposition 5.4.7]{FHL}. These identities include $m=n=0$ and agree exactly with \eqref{eq:Milas-fusion-general}. 

For the two channels needed later, set $m=n=12$. The admissible indices are $r=24,22,\ldots,0$; in particular $r=0$ and $r=12$ are both allowed. Hence
\begin{equation}\label{eq:fusion-vacuum}
\dim_{\C}I\binom{L(1,0)}{L(1,36)\;L(1,36)}=1,
\qquad
\dim_{\C}I\binom{L(1,36)}{L(1,36)\;L(1,36)}=1.
\end{equation}
We refer to these as the \emph{vacuum channel} and \emph{self channel}, respectively.

\subsection{A generic first-row deformation}
\label{subsec:generic-first-row}

We shall later deform the two $c=1$ Virasoro channels in
$L(1,36)\times L(1,36)$ to first-row modules of a generic Virasoro VOA.
Using the Koshida--Kyt\"ol\"a parametrization
\cite[Section~4.2]{KoshidaKytola}, extended algebraically to
$\kappa\in\C^\times$, set
\begin{equation}\label{eq:ckappa}
c(\kappa)
=
1-\frac{3(\kappa-4)^2}{2\kappa}
=
13-6\left(\frac{\kappa}{4}+\frac{4}{\kappa}\right).
\end{equation}
The first-row Kac weights are
\[
h(\lambda;\kappa)
=
h_{1,1+\lambda}(\kappa)
=
\frac{\lambda(2(\lambda+2)-\kappa)}{2\kappa},
\qquad \lambda\in\Z_{\ge0},
\]
as in \cite[(4.9)--(4.10)]{KoshidaKytola}.
In the generic regime considered in \cite[(4.13)]{KoshidaKytola},
namely $\kappa\in(0,\infty)\setminus\Q$,
the Verma module
$M(c(\kappa),h(\lambda;\kappa))$
has maximal proper submodule isomorphic to
$M(c(\kappa),h(\lambda;\kappa)+\lambda+1)$
\cite[Lemma~4.1]{KoshidaKytola}.
For such $\kappa$, define the irreducible first-row module
\begin{equation}\label{eq:Qlambda-generic}
Q_\lambda(\kappa)
:=
M\bigl(c(\kappa),h(\lambda;\kappa)\bigr)
\big/
M\bigl(c(\kappa),h(\lambda;\kappa)+\lambda+1\bigr).
\end{equation}
The singular vector generating the denominator occurs at level
$\lambda+1$; see \cite[Lemma~4.1 and (4.12)]{KoshidaKytola}.

For compatibility with $c=1$, set $\kappa=4t$ and define
\(
c(t):=c(4t)=13-6t-6t^{-1}.
\)
For $\lambda=12$, we similarly set
\begin{equation}\label{eq:Ht}
H(t):=h(12;4t)=42t^{-1}-6.
\end{equation}
Thus $c(1)=1$ and $H(1)=36$. We shall call
$t_0\in\C^\times$ \emph{admissible generic} if
\[
4t_0\in(0,\infty)\setminus\Q.
\]
Equivalently, $\kappa_0:=4t_0$ lies in the generic regime of
\cite[(4.13)]{KoshidaKytola}. Notice that $t=1$ ($\kappa=4$) is not
admissible generic, so the generic results of \cite{KoshidaKytola}
cannot be specialized directly to this point.

Define the selection rule for $\lambda,\mu\in\Z_{\ge0}$ by
\begin{equation}\label{eq:Sel}
\operatorname{Sel}(\lambda,\mu)
:=
\{\nu\in\Z_{\ge0}\mid
|\lambda-\mu|\le\nu\le\lambda+\mu,\ 
\lambda+\mu+\nu\equiv0\pmod2\},
\end{equation}
the same index set as the $\mathfrak{sl}_2$ Clebsch--Gordan rule
\cite[Section~2.1]{KoshidaKytola}.
For $\kappa\in(0,\infty)\setminus\Q$, Koshida--Kyt\"ol\"a prove
\cite[Corollary~4.16]{KoshidaKytola}
\begin{equation}\label{eq:generic-first-row-fusion}
\dim_{\C}I\binom{Q_\nu(\kappa)}
{Q_\lambda(\kappa)\;Q_\mu(\kappa)}
=
\begin{cases}
1,& \nu\in\operatorname{Sel}(\lambda,\mu),\\
0,& \text{otherwise}.
\end{cases}
\end{equation}

In our case $\lambda=\mu=12$, so
$\operatorname{Sel}(12,12)=\{0,2,\ldots,24\}$; in particular
$0,12\in\operatorname{Sel}(12,12)$. Hence for every
$\kappa\in(0,\infty)\setminus\Q$, both intertwining spaces
$I\binom{Q_0(\kappa)}{Q_{12}(\kappa)\;Q_{12}(\kappa)}$
and
$I\binom{Q_{12}(\kappa)}{Q_{12}(\kappa)\;Q_{12}(\kappa)}$
are one-dimensional. After fixing highest-weight vectors, a nonzero
intertwining operator is determined by its nonzero initial coefficient
\cite[Proposition~4.12]{KoshidaKytola}. We use the normalization of
\cite[Definition~5.1]{KoshidaKytola}; we call these the generic
\emph{vacuum channel} and \emph{self channel}.

\subsection{The level-\texorpdfstring{$13$}{13} Benoit--Saint-Aubin operator}
\label{subsec:BSA}

We record the Benoit--Saint-Aubin singular-vector formula \cite{BenoitSaintAubin}, in the first-row parametrization and normalization of \cite[(4.12)]{KoshidaKytola}, specialized below to $\lambda=12$. Let $\Vir_{-}:=\bigoplus_{n\ge1}\C L(-n)$. An \emph{ordered composition} of $N\in\Z_{>0}$ is a sequence $\mathbf p=(p_1,\ldots,p_k)$ with $p_i\in\Z_{>0}$ and $\sum p_i=N$; write $\mathbf p\models N$, and denote its
length by $\ell(\mathbf p):=k$. Define prefix and suffix sums $P_j:=\sum_{i=1}^j p_i$ and $Q_j:=\sum_{i=j+1}^k p_i$ for $1\le j\le k-1$, so $P_j+Q_j=N$. Empty products equal $1$.

For general $\lambda\in\Z_{\ge0}$, the Benoit--Saint-Aubin singular operator is
\begin{equation}\label{eq:BSA-KK-general}
S_\lambda^{\mathrm{KK}}(\kappa)
=
\sum_{\mathbf p\models\lambda+1}
\left(-\frac4\kappa\right)^{\lambda+1-k}
\frac{(\lambda!)^2}{\prod_{j=1}^{k-1}P_jQ_j}
L(-p_1)\cdots L(-p_k).
\end{equation}
For $\kappa\in(0,\infty)\setminus\Q$, $S_\lambda^{\mathrm{KK}}(\kappa)v_{c(\kappa),h(\lambda;\kappa)}$ is a singular vector of level $\lambda+1$ generating the maximal proper submodule \cite[Lemma~4.1 and (4.12)]{KoshidaKytola}.

Now take $\lambda=12$, $\kappa=4t$, so the singular vector has level $13$. For later calculations we use a modified normalization:
\begin{equation}\label{eq:BSA-beta}
S_{13}(t):=\sum_{\mathbf p\models13}\beta_{\mathbf p}(t)L(-p_1)\cdots L(-p_k),
\qquad
\beta_{\mathbf p}(t):=\prod_{j=1}^{k-1}\frac{t}{Q_j(Q_j-13)}.
\end{equation}
Since $P_j+Q_j=13$, we have $Q_j-13=-P_j$, whence
\begin{equation}\label{eq:BSA-beta-rewritten}
\beta_{\mathbf p}(t)=(-1)^{k-1}t^{k-1}\frac{1}{\prod_{j=1}^{k-1}P_jQ_j}.
\end{equation}
Substituting $\lambda=12$, $\kappa=4t$ into \eqref{eq:BSA-KK-general} gives coefficient $(-1)^{13-k}t^{-(13-k)}(12!)^2/\prod P_jQ_j$. Since $(-1)^{13-k}=(-1)^{k-1}$, comparison yields
\begin{equation}\label{eq:BSA-normalization-comparison}
S_{12}^{\mathrm{KK}}(4t)=(12!)^2t^{-12}S_{13}(t).
\end{equation}
Put \(\Lambda:=\C[t,t^{-1}]\). Since \((12!)^2t^{-12}\in\Lambda^\times\), equation~\eqref{eq:BSA-normalization-comparison} shows that \(S_{12}^{\mathrm{KK}}(4t)\) and \(S_{13}(t)\) differ by a unit of the coefficient ring \(\Lambda\). In particular, for every admissible generic specialization \(t=t_0\), the two specialized operators are nonzero scalar multiples of one another and hence determine the same Benoit--Saint-Aubin singular-vector relation.

For later use, consider the monomial \(L(-1)^{13}\). For \(\mathbf p=(1^{13})\), we have \(Q_j=13-j\) (\(1\le j\le12\)), so
\begin{equation}\label{eq:BSA-Lminusone-coefficient}
\beta_{(1^{13})}(t)
=\prod_{j=1}^{12}\frac{t}{(13-j)(-j)}
\frac{t^{12}}{(12!)^2}
\in\Lambda^\times.
\end{equation}
Here the signs cancel since \(12\) is even. Thus the coefficient of \(L(-1)^{13}\) in \(S_{13}(t)\) is a unit of \(\Lambda\). This observation will be used in Section~\ref{sec:local-regularization}. Finally, the number of ordered compositions of \(13\) is \(2^{12}=4096\); we shall evaluate the associated scalar recurrence via dynamic programming rather than expanding all terms individually.

\section{Channel separation and lowest-mode nonvanishing}
\label{sec:channel-separation}

We now establish the mechanism connecting the $SL_2(\C)$-representation theory of multiplicity spaces in \eqref{eq:DGRdecomp} with individual vertex-operator modes. For each $m\ge0$, fix a nonzero highest-weight vector $\mathbf v_m\in L(1,m^2)$ with $L(0)\mathbf v_m=m^2\mathbf v_m$ and $L(n)\mathbf v_m=0$ ($n>0$). Under $V_{L_2}\cong\bigoplus_{m\ge0}E_m\otimes L(1,m^2)$, identify the Virasoro highest-weight subspace $E_m\otimes\C\mathbf v_m$ with $E_m$ via $\xi\leftrightarrow\xi\otimes\mathbf v_m$. Let $\Pi_r:V_{L_2}\to E_r\otimes L(1,r^2)$ denote the projection associated with \eqref{eq:DGRdecomp}.  Since \eqref{eq:DGRdecomp} is a decomposition into submodules for the commuting actions of \(SL_2(\C)\) and \(L(1,0)\), the projection \(\Pi_r\) is equivariant for both actions.
 Write $Y_r(a,z)$ for the action of $L(1,0)$ on $L(1,r^2)$, so that $Y_{V_{L_2}}(a,z)|_{E_r\otimes L(1,r^2)}=\id_{E_r}\otimes Y_r(a,z)$.

\begin{prop}[Channel separation]\label{prop:channel-separation}
Let $s,t,r\in\Z_{\ge0}$ with $|s-t|\le r\le s+t$, and fix the nonzero Clebsch--Gordan projection $p_{s,t}^{\,r}:E_s\otimes E_t\to E_r$ from Section~\ref{subsec:rank-one}. There exists a unique nonzero intertwining operator $\mathcal Y_{s,t}^{\,r}\in I\binom{L(1,r^2)}{L(1,s^2)\;L(1,t^2)}$ such that for all $\xi\in E_s$, $\eta\in E_t$, $u\in L(1,s^2)$, $v\in L(1,t^2)$,
\begin{equation}\label{eq:channel-factorization}
\Pi_r Y(\xi\otimes u,z)(\eta\otimes v)=p_{s,t}^{\,r}(\xi\otimes\eta)\otimes\mathcal Y_{s,t}^{\,r}(u,z)v.
\end{equation}
\end{prop}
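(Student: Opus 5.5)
The plan is to read the restricted vertex operator as an $SL_2(\C)$-equivariant family of intertwining operators, use Schur's lemma on the multiplicity spaces to isolate the Clebsch--Gordan factor, and then identify the remaining Virasoro factor with an element of the relevant intertwining space.

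\textbf{Step 1: an equivariant bilinear family.} Fix $u\in L(1,s^2)$, $v\in L(1,t^2)$, and define
\[
\Phi(\xi,\eta):=\Pi_r Y(\xi\otimes u,z)(\eta\otimes v)\in \bigl(E_r\otimes L(1,r^2)\bigr)\{z\}.
\]
The map $\Phi$ is bilinear in $(\xi,\eta)$. Since $SL_2(\C)$ acts on $V_{L_2}$ by VOA automorphisms through $\pi$, we have $gY(w,z)w'=Y(gw,z)gw'$. Moreover $\Pi_r$ commutes with $g$, and $g$ acts on $E_m\otimes L(1,m^2)$ as $g\otimes\id$. Hence, for each fixed $u,v$, and for each coefficient of $z$ paired against any $\phi\in L(1,r^2)^*$, the map $\xi\otimes\eta\mapsto(\id\otimes\phi)\Phi(\xi,\eta)$ is an $SL_2(\C)$-homomorphism $E_s\otimes E_t\to E_r$. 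By the multiplicity-one statement \eqref{eq:CG} and Schur's lemma, each such map equals a scalar multiple of $p_{s,t}^{\,r}$. The scalar depends bilinearly on $u,v$, linearly on $\phi$, and on the power of $z$. Assembling these scalars, exactly as in a tensor-product Hom decomposition $\Hom_{SL_2}(E_s\otimes E_t,E_r\otimes M)\cong M$ for the multiplicity-one target, produces a unique linear map
\[
\mathcal Y(\cdot,z)\cdot: L(1,s^2)\otimes L(1,t^2)\to L(1,r^2)\{z\}
\]
satisfying \eqref{eq:channel-factorization}. For the precise argument I would choose $\xi_0,\eta_0$ with $p_{s,t}^{\,r}(\xi_0\otimes\eta_0)\neq0$, pick a linear functional $\lambda$ on $E_r$ with $\lambda(p(\xi_0\otimes\eta_0))=1$, and set
\[
\mathcal Y(u,z)v:=(\lambda\otimes\id)\Phi(\xi_0,\eta_0).
\]
Equivariance then gives the factorization for all $\xi,\eta$. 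Uniqueness follows because $p_{s,t}^{\,r}\neq0$.

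\textbf{Step 2: $\mathcal Y$ is an intertwining operator.} The Virasoro field of $V_{L_2}$ acts on each summand as $\id_{E}\otimes Y_m$, and $\Pi_r$ commutes with it. Take the Jacobi identity of $V_{L_2}$ for the triple $(\omega\ \text{or any } a\in L(1,0),\ \xi\otimes u,\ \eta\otimes v)$, apply $\Pi_r$, then apply $\lambda\otimes\id$ with $\xi_0,\eta_0$. Two features make this work. First, $a\in L(1,0)=E_0\otimes L(1,0)$ is $SL_2$-invariant, so its modes act only on the Virasoro factor. Second, the term $Y(Y(a,z_0)(\xi\otimes u),z_2)$ stays in the $E_s$-isotypic piece. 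The result is the Jacobi identity for $\mathcal Y$ with respect to $Y_r,Y_s,Y_t$. The $L(-1)$-derivative property follows in the same way from $Y(L(-1)w,z)=\tfrac{d}{dz}Y(w,z)$, and the truncation condition is inherited from $Y$. Hence $\mathcal Y\in I\binom{L(1,r^2)}{L(1,s^2)\;L(1,t^2)}$.

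\textbf{Step 3: nonvanishing.} This is the main obstacle. I would use \eqref{eq:DGR-mode-span}, which states that the modes $u_nv$ with $u\in E_s\otimes L(1,s^2)$ and $v\in E_t\otimes L(1,t^2)$ span a space containing $E_r\otimes L(1,r^2)$, so $\Pi_r Y$ is not identically zero on these inputs. By the factorization, every such projected mode lies in $p_{s,t}^{\,r}(E_s\otimes E_t)\otimes\mathcal Y(\cdot,z)\cdot$. If $\mathcal Y$ were zero, the $r$-th component would vanish, which is a contradiction. For Step 2 one must check that $\mathcal Y$ does not depend on the choices of $\xi_0,\eta_0,\lambda$ beyond the normalization. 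This is guaranteed by Schur's lemma, since any other choice changes $\mathcal Y$ only through the fixed normalization of $p_{s,t}^{\,r}$, and uniqueness has already been settled in Step 1.
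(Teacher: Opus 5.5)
Your proposal is correct and follows the paper's proof essentially step by step: coefficientwise Schur factorization of the $SL_2(\C)$-equivariant projected modes, transfer of the Jacobi identity and the $L(-1)$-derivative property after fixing $\xi_0,\eta_0$ with $p_{s,t}^{\,r}(\xi_0\otimes\eta_0)\neq0$, and nonvanishing from the DGR mode-span. The one detail you skip is that \eqref{eq:DGR-mode-span} is stated only for $s\ge t$; the paper covers the case $s<t$ by skew-symmetry $Y(a,z)b=e^{zL(-1)}Y(b,-z)a$, using that $\Pi_r$ commutes with $L(-1)$.
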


\begin{proof}
\noindent\textbf{Step 1: mode-wise factorization.} Fix $u\in L(1,s^2)$, $v\in L(1,t^2)$, $n\in\Z$, and define $B_n^{u,v}:E_s\otimes E_t\to E_r\otimes L(1,r^2)$ by
\[
B_n^{u,v}(\xi\otimes\eta):=\Pi_r((\xi\otimes u)_n(\eta\otimes v)).
\]
Since $SL_2(\C)$ acts by VOA automorphisms and trivially on the Virasoro factors, $B_n^{u,v}$ is $SL_2(\C)$-equivariant. Because $E_s\otimes E_t$ is finite-dimensional and $L(1,r^2)$ is a trivial $SL_2(\C)$-module, there is a canonical isomorphism
\[
\Hom_{SL_2(\C)}\!\bigl(E_s\otimes E_t,E_r\otimes L(1,r^2)\bigr)
\cong
\Hom_{SL_2(\C)}(E_s\otimes E_t,E_r)\otimes L(1,r^2).
\]
No completed tensor product is involved: since \(E_s\otimes E_t\) is finite-dimensional, the image of any linear map \(E_s\otimes E_t\to E_r\otimes L(1,r^2)\) is contained in \(E_r\otimes W\) for some finite-dimensional subspace \(W\subset L(1,r^2)\). Hence the usual algebraic tensor-product identification applies.
 Since
$\Hom_{SL_2(\C)}(E_s\otimes E_t,E_r)=\C p_{s,t}^{\,r}$,
there is therefore a unique $b_n(u,v)\in L(1,r^2)$ such that
\[
B_n^{u,v}(\xi\otimes\eta)
=p_{s,t}^{\,r}(\xi\otimes\eta)\otimes b_n(u,v)
\]
for all $\xi,\eta$. Uniqueness shows that $b_n(u,v)$ is bilinear in $u,v$. Choose $\xi_0,\eta_0$ with
$p_{s,t}^{\,r}(\xi_0\otimes\eta_0)\neq0$. Ambient VOA lower truncation gives
$(\xi_0\otimes u)_n(\eta_0\otimes v)=0$ for $n\gg0$, and hence
$b_n(u,v)=0$ for $n\gg0$. Thus
\[
\mathcal Y_{s,t}^{\,r}(u,z)v
:=\sum_{n\in\Z}b_n(u,v)z^{-n-1}
\]
is lower truncated, and summing the mode identities gives
\eqref{eq:channel-factorization}.

\noindent\textbf{Step 2: $L(-1)$-derivative property.} 
Under the decomposition \eqref{eq:DGRdecomp}, the Virasoro algebra acts only on the second tensor factor, so
$
L(-1)(\xi\otimes u)=\xi\otimes L(-1)u.
$
Using the ambient VOA derivative property
$
Y(L(-1)a,z)=\frac{d}{dz}Y(a,z),
$
the fact that \(\Pi_r\) is independent of the formal variable \(z\), and the factorization from Step~1, we obtain
\begin{align*}
p_{s,t}^{\,r}(\xi\otimes\eta)\otimes
\mathcal Y_{s,t}^{\,r}(L(-1)u,z)v
&=\Pi_rY(\xi\otimes L(-1)u,z)(\eta\otimes v)\\
&=\Pi_r\frac{d}{dz}Y(\xi\otimes u,z)(\eta\otimes v)\\
&=\frac{d}{dz}\Pi_rY(\xi\otimes u,z)(\eta\otimes v)\\
&=p_{s,t}^{\,r}(\xi\otimes\eta)\otimes
\frac{d}{dz}\mathcal Y_{s,t}^{\,r}(u,z)v.
\end{align*}
Choosing \(\xi,\eta\) with
\(p_{s,t}^{\,r}(\xi\otimes\eta)\neq0\), we obtain
\begin{equation}\label{eq:channel-Lminus1}
\mathcal Y_{s,t}^{\,r}(L(-1)u,z)
=\frac{d}{dz}\mathcal Y_{s,t}^{,r}(u,z).
\end{equation}

\noindent\textbf{Step 3: Jacobi identity.} For $a\in L(1,0)$, apply the VOA Jacobi identity to
$a$, $\xi\otimes u$, and $\eta\otimes v$, and then apply $\Pi_r$.
Using
\[
\Pi_rY(a,x_1)=(\id_{E_r}\otimes Y_r(a,x_1))\Pi_r,
\]
together with
\[
Y(a,x_1)(\eta\otimes v)=\eta\otimes Y_t(a,x_1)v,
\qquad
Y(a,x_0)(\xi\otimes u)=\xi\otimes Y_s(a,x_0)u,
\]
and applying the factorization from Step~1 coefficientwise to the formal series \(Y_t(a,x_1)v\) and \(Y_s(a,x_0)u\), we obtain
\[
\begin{aligned}
&x_0^{-1}\delta\!\left(\frac{x_1-x_2}{x_0}\right)
 p_{s,t}^{\,r}(\xi\otimes\eta)\otimes
 Y_r(a,x_1)\mathcal Y_{s,t}^{\,r}(u,x_2)v\\
&\quad-
 x_0^{-1}\delta\!\left(\frac{x_2-x_1}{-x_0}\right)
 p_{s,t}^{\,r}(\xi\otimes\eta)\otimes
 \mathcal Y_{s,t}^{\,r}(u,x_2)Y_t(a,x_1)v\\
&=
 x_2^{-1}\delta\!\left(\frac{x_1-x_0}{x_2}\right)
 p_{s,t}^{\,r}(\xi\otimes\eta)\otimes
 \mathcal Y_{s,t}^{\,r}(Y_s(a,x_0)u,x_2)v.
\end{aligned}
\]
Choose $\xi,\eta$ with
$p_{s,t}^{\,r}(\xi\otimes\eta)\neq0$ and cancel this common tensor
factor. The resulting identity is exactly the FHL Jacobi identity for
an intertwining operator of type
$\binom{L(1,r^2)}{L(1,s^2)\;L(1,t^2)}$.
Together with lower truncation and the $L(-1)$-derivative property
\eqref{eq:channel-Lminus1}, this proves
$\mathcal Y_{s,t}^{\,r}\in
I\binom{L(1,r^2)}{L(1,s^2)\;L(1,t^2)}$ \cite{FHL}.

\noindent\textbf{Step 4: nonvanishing.} The DGR mode-span statement recalled in \eqref{eq:DGR-mode-span} is formulated for the ordered range $s\ge t$. If $s<t$, VOA skew-symmetry gives
\[
Y(a,z)b=e^{zL(-1)}Y(b,-z)a.
\]
The projection $\Pi_r$ commutes with $L(-1)$, and $e^{zL(-1)}$ is invertible as a formal power series. Consequently the $r$-channel vanishes for every pair of inputs of types $(s,t)$ if and only if it vanishes for every pair of the exchanged types $(t,s)$. Thus, for the nonvanishing argument, skew-symmetry reduces the case $s<t$ to the ordered case $t>s$, and we may apply the DGR result in all cases. If $\mathcal Y_{s,t}^{\,r}=0$, then $\Pi_rY(\xi\otimes u,z)(\eta\otimes v)=0$ for all inputs, contradicting \cite[Lemma~3.3 and proof of Proposition~3.4]{DGR}, which guarantees that $E_r\otimes L(1,r^2)$ appears nontrivially in the span of modes when $|s-t|\le r\le s+t$. Uniqueness follows from Step~1 once $p_{s,t}^{\,r}$ is fixed.
\end{proof}

The next lemma shows that a nonzero Virasoro channel cannot first appear at positive level.

\begin{lem}[Lowest-weight coefficient]\label{lem:lowest-coeff}
Let $r,s,t\in\Z_{\ge0}$, and let
\(
\mathcal Y\in
I\binom{L(1,r^2)}{L(1,s^2)\;L(1,t^2)}
\)
be nonzero. Then
\begin{equation}\label{eq:lowest-expansion}
\mathcal Y(\mathbf v_s,z)\mathbf v_t
=
z^{r^2-s^2-t^2}
\left(
c\,\mathbf v_r+\sum_{N\ge1}w_Nz^N
\right),
\end{equation}
where $c\in\C^\times$ and
$w_N\in L(1,r^2)[N]$.
\end{lem}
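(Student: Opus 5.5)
We first establish the shape of the expansion, then show that the leading coefficient $c$ cannot vanish.

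\textbf{Step 1: form of the expansion.} Apply the $L(0)$-commutator formula for intertwining operators, $[L(0),\mathcal Y(u,z)]=z\frac{d}{dz}\mathcal Y(u,z)+\mathcal Y(L(0)u,z)$. It shows that the mode $(\mathbf v_s)_n\mathbf v_t$ has conformal weight $s^2+t^2-n-1$. Since $L(1,r^2)$ is graded by $r^2+\Z_{\ge0}$, the series $\mathcal Y(\mathbf v_s,z)\mathbf v_t$ has the form $z^{r^2-s^2-t^2}\sum_{N\ge0}w_Nz^N$ with $w_N\in L(1,r^2)[N]$. The level-zero space is $\C\mathbf v_r$, so $w_0=c\,\mathbf v_r$ for some $c\in\C$. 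It remains to show $c\neq0$.

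\textbf{Step 2: nonvanishing of $c$.} Suppose $c=0$. We would show that $\mathcal Y(\mathbf v_s,z)\mathbf v_t$ lies in the proper subspace $L(1,r^2)_+:=\bigoplus_{N\ge1}L(1,r^2)[N]$, and then propagate this to all inputs. Take $n>0$ and apply the commutator formula
\[
[L(n),\mathcal Y(\mathbf v_s,z)]=\sum_{k\ge0}\binom{n+1}{k}z^{n+1-k}\mathcal Y(L(k-1)\mathbf v_s,z).
\]
Since $\mathbf v_s$ is primary, only $k=0,1$ contribute:
\[
L(n)\mathcal Y(\mathbf v_s,z)\mathbf v_t=\bigl(z^{n+1}\tfrac{d}{dz}+(n+1)s^2z^n\bigr)\mathcal Y(\mathbf v_s,z)\mathbf v_t,
\]
using $L(n)\mathbf v_t=0$. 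Extracting the coefficient of $z^{r^2-s^2-t^2+N-n}$ yields
\[
L(n)w_N=\bigl(r^2-s^2-t^2+N-n+(n+1)s^2\bigr)w_{N-n}
\]
(up to my precise bookkeeping). By induction this says that the subspace spanned by the $w_N$ is stable under positive modes, in the sense that $L(n)$ maps each $w_N$ into the span of lower $w$'s. Since $w_0=0$, every $w_N$ with $N\ge1$ is annihilated by enough positive modes, that is, it generates a submodule not containing $\mathbf v_r$.

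More cleanly, let $W$ be the $\Vir$-submodule generated by all coefficients of $\mathcal Y(u,z)v$ for $u\in L(1,s^2)$, $v\in L(1,t^2)$. The $L(-1)$-derivative property handles $u=L(-1)\mathbf v_s$. The Jacobi identity, through the commutator and iterate formulas, expresses $\mathcal Y(L(-n)\mathbf v_s,z)$ and $\mathcal Y(\mathbf v_s,z)L(-n)\mathbf v_t$ via $\Vir$-modes applied to $\mathcal Y(\mathbf v_s,z)\mathbf v_t$. Hence every coefficient lies in $U(\Vir)\cdot\{w_N\}$.

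\textbf{Step 3: the key point.} The main obstacle is to show that the $\Vir$-submodule generated by the $w_N$ is proper whenever $c=0$. Because $L(1,r^2)$ is irreducible, any proper submodule is zero, so $c=0$ would force $\mathcal Y=0$, contradicting the hypothesis. To prove properness, I would use the contragredient pairing. For any $w'$ in the graded dual, pair against $\mathbf v_r'$ (the lowest-weight dual vector) and consider the matrix coefficient $\langle\mathbf v_r',\mathcal Y(u,z)v\rangle$. For $u=L(-n_1)\cdots\mathbf v_s$ and $v=L(-m_1)\cdots\mathbf v_t$, the commutator formulas move all Virasoro modes onto $\mathbf v_r'$, where positive modes kill it (since $L(n)^*=L(-n)$ and $\mathbf v_r'$ is lowest). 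This expresses the matrix coefficient as a differential operator in $z$ times $\langle\mathbf v_r',\mathcal Y(\mathbf v_s,z)\mathbf v_t\rangle=c\,z^{r^2-s^2-t^2}$. Hence $c=0$ makes the projection of every $\mathcal Y(u,z)v$ onto the top level vanish. The image of $\mathcal Y$ therefore lies in $\{w:\langle\mathbf v_r',U(\Vir)w\rangle=0\}$, which is a submodule not containing $\mathbf v_r$ and is therefore $0$ by irreducibility. So $\mathcal Y=0$, a contradiction, and hence $c\in\C^\times$.
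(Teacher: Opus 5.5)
Your argument is correct, but its key step differs from the paper's. The paper splits the proof in two.
\begin{itemize}
\item First it shows $\mathcal Y(\mathbf v_s,z)\mathbf v_t\neq0$. It does this by checking that the annihilators $K_t=\{v:\mathcal Y(\mathbf v_s,z)v=0\}$ and $K_s=\{u:\mathcal Y(u,z)v=0 \text{ for all } v\}$ are submodules, using the primary-field commutator formula and the Jacobi identity with $\omega$.
\item Then it uses the Ward identity $L(m)w_N=(r^2-t^2+N+m(s^2-1))w_{N-m}$. This shows the first nonzero $w_{N_0}$ is singular, and irreducibility forces $N_0=0$.
\end{itemize}

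You instead prove $c=0\Rightarrow\mathcal Y=0$ in one stroke. The level-zero functional $\langle\mathbf v_r',\cdot\rangle$ applied to any $\mathcal Y(u,z)v$ is obtained from $c\,z^{r^2-s^2-t^2}$ by $z$-differential operators. The coefficient span of $\mathcal Y$ is $\Vir$-stable by the commutator formula, so it is a submodule with zero top component and hence zero.

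This is the standard ``an intertwining operator is determined by its top-level term'' argument. It is conceptual, but it requires an induction on the total level of $(u,v)$ that you only gesture at:
\begin{itemize}
\item the commutator formula handles modes on $v$;
\item the iterate formula handles $L(-n)$, $n\ge2$, on $u$, and in doing so transfers an $L(-1)$ onto $v$;
\item every $L(-1)$ must be absorbed into $d/dz$ via the derivative property, so that the total level strictly drops.
\end{itemize}
Saying the commutator formulas ``move all modes onto $\mathbf v_r'$'' undersells this. The paper's route needs no PBW induction on the inputs, and its singular-vector mechanism is the one reused later in the positive-mode rigidity lemma.

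Note also that your first attempt in Step~2 already works once made precise. If $w_0=0$, the Ward identity together with PBW shows that $U(\Vir)w_N$ has no level-zero component, so $w_N=0$ for all $N$. Your claim that every coefficient of $\mathcal Y$ lies in $U(\Vir)\{w_N\}$ then gives $\mathcal Y=0$ without any dual vector. This is essentially the paper's two-step proof, with your spanning claim replacing the annihilator argument.

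One small slip: the coefficient to extract is that of $z^{r^2-s^2-t^2+N}$, not $z^{r^2-s^2-t^2+N-n}$. Your resulting formula is nonetheless correct.
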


\begin{proof}
\noindent\textbf{Step 1: highest-vector matrix coefficient is nonzero.} 
Suppose, to the contrary, that
$
\mathcal Y(\mathbf v_s,z)\mathbf v_t=0.
$
Since $\mathbf v_s$ is primary of conformal weight $s^2$, the Virasoro commutator formula gives
$$
[L(m),\mathcal Y(\mathbf v_s,z)]
=
z^m\left(z\frac{d}{dz}+(m+1)s^2\right)
\mathcal Y(\mathbf v_s,z)
\qquad (m\in\mathbb Z).
$$
First set
$
K_t:=
\left\{
v\in L(1,t^2)\ \middle|\ 
\mathcal Y(\mathbf v_s,z)v=0
\right\}.
$
We claim that $K_t$ is an $L(1,0)$-submodule. Indeed, if
$v\in K_t$, then for every $m\in\mathbb Z$,
\begin{align*}
\mathcal Y(\mathbf v_s,z)L(m)v
&=
L(m)\mathcal Y(\mathbf v_s,z)v-
[L(m),\mathcal Y(\mathbf v_s,z)]v\\
&=
-
z^m\left(z\frac{d}{dz}+(m+1)s^2\right)
\mathcal Y(\mathbf v_s,z)v=0.
\end{align*}
Hence $L(m)v\in K_t$ for every $m\in\mathbb Z$, and therefore
$K_t$ is an $L(1,0)$-submodule of $L(1,t^2)$. By our assumption,
$\mathbf v_t\in K_t$. Since $L(1,t^2)$ is irreducible and
$\mathbf v_t\neq0$, it follows that
$
K_t=L(1,t^2).
$
Consequently,
$
\mathcal Y(\mathbf v_s,z)v=0$
for every 
$v\in L(1,t^2).$
Now set
$$
K_s:=
\left\{
u\in L(1,s^2)\ \middle|\ 
\mathcal Y(u,z)v=0
\text{ for all }v\in L(1,t^2)
\right\}.
$$
We claim that $K_s$ is an $L(1,0)$-submodule. Let
$u\in K_s$ and $v\in L(1,t^2)$. Apply the intertwining-operator
Jacobi identity with the Virasoro vector $\omega$:
\begin{align*}
&x_0^{-1}\delta\left(\frac{x_1-x_2}{x_0}\right)
Y_r(\omega,x_1)\mathcal Y(u,x_2)v\\
&\quad-
x_0^{-1}\delta\left(\frac{x_2-x_1}{-x_0}\right)
\mathcal Y(u,x_2)Y_t(\omega,x_1)v\\
&=
x_2^{-1}\delta\left(\frac{x_1-x_0}{x_2}\right)
\mathcal Y\left(Y_s(\omega,x_0)u,x_2\right)v .
\end{align*}
Since $u\in K_s$, the first term on the left is zero, and the second
is also zero coefficientwise because every coefficient of
$Y_t(\omega,x_1)v$ lies in $L(1,t^2)$. Hence the right-hand side
vanishes. Taking $\operatorname{Res}_{x_1}$ gives
$
\mathcal Y\!\left(Y_s(\omega,x_0)u,x_2\right)v=0.
$
Since
$
Y_s(\omega,x_0)u
=
\sum_{n\in\mathbb Z}L(n)u\,x_0^{-n-2},
$
comparison of coefficients of $x_0$ yields
$
\mathcal Y(L(n)u,x_2)v=0(n\in\mathbb Z).$
As $v$ was arbitrary, $L(n)u\in K_s$ for every $n\in\mathbb Z$.
Thus $K_s$ is an $L(1,0)$-submodule.
Since $K_t=L(1,t^2)$, we have
$
\mathcal Y(\mathbf v_s,z)v=0$
for every $v\in L(1,t^2),
$
and hence $\mathbf v_s\in K_s$. Since $L(1,s^2)$ is irreducible,
$
K_s=L(1,s^2).
$
It follows that $\mathcal Y=0$, contradicting the assumption that
$\mathcal Y$ is nonzero. Therefore
$
\mathcal Y(\mathbf v_s,z)\mathbf v_t\neq0.
$

\noindent\textbf{Step 2: form of expansion.} Set $h_s=s^2$, $h_t=t^2$, $h_r=r^2$. The $L(0)$-commutator gives $L(0)\mathcal Y(\mathbf v_s,z)\mathbf v_t=(h_s+h_t+z\frac{d}{dz})\mathcal Y(\mathbf v_s,z)\mathbf v_t$. Since $L(0)$ acts as $h_r+N$ on level $N$, the expansion must be $\mathcal Y(\mathbf v_s,z)\mathbf v_t=z^{h_r-h_s-h_t}\sum_{N\ge0}w_Nz^N$ with $w_N\in L(1,r^2)[N]$. By Step~1, not all $w_N$ vanish.

\noindent\textbf{Step 3: first nonzero coefficient at level zero.} Let $N_0\ge0$ be minimal with $w_{N_0}\neq0$. For $m>0$, using $L(m)\mathbf v_t=0$ and the primary-vector commutator formula, we obtain
$$L(m)\mathcal Y(\mathbf v_s,z)\mathbf v_t = z^m\left(z\frac d{dz}+(m+1)s^2\right) \mathcal Y(\mathbf v_s,z)\mathbf v_t.$$
Comparing coefficients of $z^{r^2-s^2-t^2+N}$, with $w_j:=0$ for $j<0$, gives $L(m)w_N=(r^2-t^2+N+m(s^2-1))w_{N-m}$. Minimality implies $L(m)w_{N_0}=0$ for $m>0$, so $w_{N_0}$ is singular. If $N_0>0$, the submodule generated by $w_{N_0}$ has weights $\ge r^2+N_0>r^2$, contradicting irreducibility of $L(1,r^2)$. Hence $N_0=0$, and $L(1,r^2)[0]=\C\mathbf v_r$ gives $w_0=c\mathbf v_r$ with $c\neq0$.
\end{proof}

\begin{cor}[Fixed-mode nonvanishing]\label{cor:fixed-mode}
Let $s,t,r\in\Z_{\ge0}$ with $|s-t|\le r\le s+t$.
Let $\xi\in E_s$ and $\eta\in E_t$, and regard
$\xi\otimes\mathbf v_s$ and $\eta\otimes\mathbf v_t$
as Virasoro highest-weight vectors in $V_{L_2}$.
Suppose $p_{s,t}^{\,r}(\xi\otimes\eta)\neq0$. Set
\begin{equation}\label{eq:fixed-mode-index}
n(s,t;r):=s^2+t^2-r^2-1.
\end{equation}
Let
\(
\operatorname{pr}_{r,0}:L(1,r^2)\longrightarrow
L(1,r^2)[0]=\C\mathbf v_r
\)
denote the projection onto the level-zero subspace, and define
\(
\operatorname{pr}_{E_r\otimes\C\mathbf v_r}
:=
(\id_{E_r}\otimes\operatorname{pr}_{r,0})\circ\Pi_r
:
V_{L_2}\longrightarrow E_r\otimes\C\mathbf v_r.
\)
Then $(\xi\otimes\mathbf v_s)_{n(s,t;r)}(\eta\otimes\mathbf v_t)$ has nonzero component in $E_r\otimes\C\mathbf v_r$. Precisely, there exists $c_{s,t}^{\,r}\in\C^\times$, depending on the fixed normalizations of $p_{s,t}^{\,r}$ and of the highest-weight vectors $\mathbf v_s,\mathbf v_t,\mathbf v_r$, but independent of $\xi,\eta$, such that
\begin{equation}\label{eq:fixed-mode-leading}
\operatorname{pr}_{E_r\otimes\C\mathbf v_r}\bigl((\xi\otimes\mathbf v_s)_{n(s,t;r)}(\eta\otimes\mathbf v_t)\bigr)=c_{s,t}^{\,r}\,p_{s,t}^{\,r}(\xi\otimes\eta)\otimes\mathbf v_r.
\end{equation}
\end{cor}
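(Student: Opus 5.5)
The plan is to combine Proposition~\ref{prop:channel-separation} with Lemma~\ref{lem:lowest-coeff}. First, apply \eqref{eq:channel-factorization} with $u=\mathbf v_s$ and $v=\mathbf v_t$:
\[
\Pi_r Y(\xi\otimes\mathbf v_s,z)(\eta\otimes\mathbf v_t)=p_{s,t}^{\,r}(\xi\otimes\eta)\otimes\mathcal Y_{s,t}^{\,r}(\mathbf v_s,z)\mathbf v_t .
\]
Here $\mathcal Y_{s,t}^{\,r}$ is nonzero and is uniquely determined once $p_{s,t}^{\,r}$ is fixed. Since $\Pi_r$ is applied coefficientwise, extracting the coefficient of $z^{-n-1}$ gives, for every $n\in\Z$,
\[
\Pi_r\bigl((\xi\otimes\mathbf v_s)_n(\eta\otimes\mathbf v_t)\bigr)=p_{s,t}^{\,r}(\xi\otimes\eta)\otimes b_n(\mathbf v_s,\mathbf v_t).
\]

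Second, apply Lemma~\ref{lem:lowest-coeff} to the nonzero intertwining operator $\mathcal Y_{s,t}^{\,r}$:
\[
\mathcal Y_{s,t}^{\,r}(\mathbf v_s,z)\mathbf v_t=z^{r^2-s^2-t^2}\Bigl(c\,\mathbf v_r+\sum_{N\ge1}w_Nz^N\Bigr),\qquad c\in\C^\times,\ w_N\in L(1,r^2)[N].
\]
The coefficient $b_n(\mathbf v_s,\mathbf v_t)$ multiplies $z^{-n-1}$. It lies in level $N$ exactly when $-n-1=r^2-s^2-t^2+N$. The level-zero term therefore corresponds to $n=s^2+t^2-r^2-1=n(s,t;r)$, and we get $b_{n(s,t;r)}(\mathbf v_s,\mathbf v_t)=c\,\mathbf v_r$. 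Applying $\id_{E_r}\otimes\operatorname{pr}_{r,0}$ to the displayed identity with $n=n(s,t;r)$ then yields \eqref{eq:fixed-mode-leading} with $c_{s,t}^{\,r}:=c$. Since $p_{s,t}^{\,r}(\xi\otimes\eta)\neq0$, the component in $E_r\otimes\C\mathbf v_r$ is nonzero.

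Finally, $c$ does not depend on $\xi$ or $\eta$, because $\mathcal Y_{s,t}^{\,r}$ is constructed independently of them in Step~1 of Proposition~\ref{prop:channel-separation}. It depends only on the normalization of $p_{s,t}^{\,r}$ and on the choice of $\mathbf v_s,\mathbf v_t,\mathbf v_r$.

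There is no real obstacle. The only care needed is the exponent bookkeeping that identifies the unique mode landing in level zero, and checking that the level-zero projection commutes with the identification $E_r\otimes L(1,r^2)$ used in \eqref{eq:channel-factorization}. Both follow from the $L(0)$-grading computed in Step~2 of Lemma~\ref{lem:lowest-coeff}.
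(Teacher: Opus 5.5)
Your proposal is correct and is essentially the paper's own proof: you specialize the factorization \eqref{eq:channel-factorization} to $u=\mathbf v_s$, $v=\mathbf v_t$, use Lemma~\ref{lem:lowest-coeff} to get the nonzero level-zero coefficient $c\,\mathbf v_r$ of $\mathcal Y_{s,t}^{\,r}(\mathbf v_s,z)\mathbf v_t$, and read off that the power $z^{r^2-s^2-t^2}$ corresponds to the mode $n(s,t;r)=s^2+t^2-r^2-1$. Your extra remarks are consistent with that argument: $\operatorname{pr}_{r,0}$ acts trivially at this weight, and $c$ is independent of $\xi,\eta$.
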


\begin{proof}
By Proposition~\ref{prop:channel-separation}, $\Pi_r Y(\xi\otimes\mathbf v_s,z)(\eta\otimes\mathbf v_t)=p_{s,t}^{\,r}(\xi\otimes\eta)\otimes\mathcal Y_{s,t}^{\,r}(\mathbf v_s,z)\mathbf v_t$. Lemma~\ref{lem:lowest-coeff} gives $\mathcal Y_{s,t}^{\,r}(\mathbf v_s,z)\mathbf v_t=z^{r^2-s^2-t^2}(c_{s,t}^{\,r}\mathbf v_r+\sum_{N\ge1}w_Nz^N)$ with $c_{s,t}^{\,r}\neq0$. The mode convention $Y(a,z)=\sum a_nz^{-n-1}$ implies the power $z^{r^2-s^2-t^2}$ corresponds to $n=s^2+t^2-r^2-1$. Extracting this coefficient yields \eqref{eq:fixed-mode-leading}; nonvanishing follows since both $c_{s,t}^{\,r}$ and $p_{s,t}^{\,r}(\xi\otimes\eta)$ are nonzero.
\end{proof}

\section{Virasoro descendants and triangular \texorpdfstring{$C_2$}{C2}-reduction}
\label{sec:triangular-reduction}

We now pass from the channel decomposition of Section~\ref{sec:channel-separation} to relations in the  $C_2$-Poisson algebra $R(U)=U/C_2(U)$ where $U=V_{L_2}^{A_5}$ and $a=[\omega]\in R(U)$. If $p\in P_r\subset U$ is a Virasoro highest-weight vector ($\wt p=r^2$), a homogeneous Virasoro descendant $w$ of $p$ has level $N$ if $L(0)w=(r^2+N)w$. By PBW, the level-$N$ descendant space is spanned by vectors $L(-n_1)\cdots L(-n_\ell)p$ with $n_1\ge\cdots\ge n_\ell\ge1$ and $N=\sum n_i$.

\begin{lem}[Descendant reduction]\label{lem:descendant}
Let $p\in U$ be a Virasoro highest-weight vector and $w$ a homogeneous descendant at level $N$. Then $[w]\in\C a^{N/2}[p]$ if $N$ is even, and $[w]=0$ if $N$ is odd. Explicitly, for $N=2k$, $[w]=c_w a^k[p]$ for some $c_w\in\C$.
\end{lem}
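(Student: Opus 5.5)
By linearity and the PBW description above, it suffices to treat a single ordered monomial $w=L(-n_1)\cdots L(-n_\ell)p$ with $n_1\ge\cdots\ge n_\ell\ge1$ and $\sum n_i=N$. I will argue by induction on the length $\ell$. For $\ell=0$ the claim is trivial: $N=0$ and $[w]=[p]$.

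For $\ell\ge1$, write $w=L(-n_1)w'$, where $w'=L(-n_2)\cdots L(-n_\ell)p$ is a descendant at level $N-n_1$. I then split into cases according to $n_1$.
\begin{itemize}
\item If $n_1\ge3$, then Lemma~\ref{lem:deepmodes} applied to $\omega$ gives $L(-n_1)w'=\omega_{-n_1+1}w'\in C_2(U)$, so $[w]=0$.
\item If $n_1=2$, then $[w]=a[w']$, and the inductive hypothesis applies to $w'$.
\item If $n_1=1$, the ordering forces all $n_i=1$, so $w=L(-1)^N p$.
\end{itemize}

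In the last case, for $N\ge1$ I will use the standard fact that $L(-1)V\subset C_2(V)$: from $(L(-1)u)_n=-nu_{n-1}$ we get $(L(-1)u)_{-1}\mathbf 1=u_{-2}\mathbf 1$, and $L(-1)u=u_{-2}\mathbf 1$. So $[w]=0$ whenever $N\ge1$.

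It remains to track parity. In the $n_1=2$ case, induction gives $[w']=c\,a^{(N-2)/2}[p]$ when $N-2$ is even and $[w']=0$ when $N-2$ is odd. Hence $[w]=c\,a^{N/2}[p]$ when $N$ is even and $[w]=0$ when $N$ is odd. In the other two cases $[w]=0$, which is consistent with both parities. Summing over PBW monomials gives the statement, with $c_w$ an explicit linear combination; in fact $c_w$ is the coefficient of $L(-2)^{N/2}p$ in the ordered PBW expansion of $w$.

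There is no real obstacle here. The only point to watch is the ordering convention: the PBW monomials must be taken with $n_1\ge\cdots\ge n_\ell$, so that the $n_1=1$ case really forces all modes to equal $L(-1)$. An unordered monomial such as $L(-2)L(-1)p$ is still fine, since it is handled by induction. I will also note that, because PBW spanning holds in any quotient of the Verma module, no use of the $c=1$ degeneracy is needed.
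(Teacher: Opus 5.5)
Your proof is correct and is essentially the paper's argument: modes $L(-n)$ with $n\ge3$ vanish by Lemma~\ref{lem:deepmodes}, $L(-2)$ acts as multiplication by $a$, and $L(-1)u=u_{-2}\mathbf 1\in C_2(U)$; your induction on length only repackages the paper's direct reduction to monomials $L(-2)^kL(-1)^jp$. One small caveat: $U$ contains only a quotient of the Verma module, so the ordered PBW expansion of $w$ need not be unique, and your closing remark should say that $c_w$ may be taken as the coefficient of $L(-2)^{N/2}p$ in any chosen expansion.
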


\begin{proof}
By PBW, it suffices to treat monomials $L(-n_1)\cdots L(-n_\ell)p$ with $n_1\ge\cdots\ge n_\ell\ge1$, $\sum n_i=N$. If $n_1\ge3$, then $L(-n_1)=\omega_{-n_1+1}$ with $-n_1+1\le-2$, so Lemma~\ref{lem:deepmodes} gives $[\omega_{-n_1+1}u]=0$; hence such monomials vanish in $R(U)$. Remaining monomials have $n_i\in\{1,2\}$, taking the form $L(-2)^kL(-1)^jp$ with $N=2k+j$. Since $L(-2)u=\omega_{-1}u$ and $[\omega][u]=[\omega_{-1}u]$, repeated application gives $[L(-2)^kL(-1)^jp]=a^k[L(-1)^jp]$. For $j>0$, set $u=L(-1)^{j-1}p$; then $L(-1)^jp=u_{-2}\mathbf1\in C_2(U)$, so $[L(-1)^jp]=0$. Thus only $L(-2)^kp$ survives, with $N=2k$ and $[L(-2)^kp]=a^k[p]$.
\end{proof}

For each $r\ge0$, let
\(
\Pi_r:V_{L_2}\to E_r\otimes L(1,r^2)
\)
be the projection defined above. Since $\Pi_r$ is $I$-equivariant and
$U=V_{L_2}^{I}$, we have
\(
\Pi_r(U)
=(E_r\otimes L(1,r^2))^I
=P_r\otimes L(1,r^2).
\)

\begin{prop}[Triangular $C_2$-reduction]\label{prop:triangular}
Let $s,t>0$ and set $M:=s+t$. There exist a constant
$\kappa_{s,t}\in\C^\times$, depending only on the fixed normalization
choices made in Section~\ref{sec:channel-separation}, and, for each
$r$ satisfying
\(
|s-t|\le r<M,
M^2-r^2\in2\Z_{\ge0},
\)
a linear map
\(
\beta_r:P_s\otimes P_t\longrightarrow P_r
\)
such that, for all $p\in P_s$ and $q\in P_t$, one has in $R(U)$
\begin{equation}\label{eq:triangular-relation}
\kappa_{s,t}[\mu_{s,t}(p,q)]
+
\sum_{\substack{|s-t|\le r<M\\ M^2-r^2\in2\Z_{\ge0}}}
a^{(M^2-r^2)/2}
[\beta_r(p\otimes q)]
=0.
\end{equation}
The maps $\beta_r$ depend on auxiliary choices of linear sections and
are not asserted to be canonical; only their existence and linearity
are needed.
\end{prop}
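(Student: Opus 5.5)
The plan is to produce a $C_2$-relation by applying a single fixed mode that lies in $C_2(U)$ to the pair $p,q$, and then to decompose the resulting vector channel by channel. Regard $p=p\otimes\mathbf v_s$ and $q=q\otimes\mathbf v_t$ as Virasoro highest-weight vectors in $U$. Take the mode index $n:=n(s,t;M)=s^2+t^2-M^2-1=-2st-1\le-3$, which is valid since $s,t>0$. By Lemma~\ref{lem:deepmodes}, $w:=p_{n}q\in C_2(U)$. This vector is homogeneous of weight $s^2+t^2-n-1=M^2$. Because $p$ and $q$ are $I$-invariant, $w\in U$, and we decompose $w=\sum_r\Pi_r(w)$ with $\Pi_r(w)\in P_r\otimes L(1,r^2)$. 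By \eqref{eq:DGR-invariant-mode-span}, or directly by Proposition~\ref{prop:channel-separation}, only channels with $|s-t|\le r\le M$ occur.

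\textbf{Top channel.} By Corollary~\ref{cor:fixed-mode} together with $L(1,M^2)[0]=\C\mathbf v_M$ (the weight of $w$ is exactly $M^2$), we get $\Pi_M(w)=c_{s,t}^{\,M}\,\mu_{s,t}(p\otimes q)\otimes\mathbf v_M$. Thus its class is $\kappa_{s,t}[\mu_{s,t}(p,q)]$ with $\kappa_{s,t}:=c_{s,t}^{\,M}\ne0$. The constant is independent of $p,q$ by the corollary.

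\textbf{Lower channels.} For $r<M$, the component $\Pi_r(w)$ lies in $P_r\otimes L(1,r^2)[M^2-r^2]$, i.e.\ in Virasoro descendants of $P_r$ at level $N=M^2-r^2$. Choose a basis $f_1,\dots,f_k$ of $P_r$. Then $\Pi_r(w)=\sum_i f_i\otimes w_i$ with $w_i\in L(1,r^2)[N]$. By Lemma~\ref{lem:descendant}, $[f_i\otimes w_i]=c_{w_i}a^{N/2}[f_i]$ when $N$ is even and $0$ when $N$ is odd. This yields $[\Pi_r(w)]=a^{N/2}[\beta_r(p\otimes q)]$ with $\beta_r(p\otimes q):=\sum_i c_{w_i}f_i$.

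For linearity, note that $w$ depends bilinearly on $(p,q)$. The map $L(1,r^2)[N]\to\C$, $x\mapsto c_x$, can be made linear by fixing a PBW basis of $L(1,r^2)[N]$ (this is the linear section choice mentioned in the statement; lift through $M(1,r^2)[N]$ and take the coefficient of $L(-2)^{N/2}$). Hence $\beta_r$ is linear. Odd $N$ contributes nothing, which matches the restriction $M^2-r^2\in2\Z_{\ge0}$. The lemma's "$\C a^{N/2}[p]$" conclusion must be applied to each $f_i\otimes w_i$ individually, which is legitimate because $f_i\otimes\mathbf v_r$ is a highest-weight vector in $U$.

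Summing over channels, $0=[w]=\kappa_{s,t}[\mu_{s,t}(p,q)]+\sum_{r<M}a^{(M^2-r^2)/2}[\beta_r(p\otimes q)]$, which is \eqref{eq:triangular-relation}.

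\textbf{Main obstacle.} The delicate point is the top channel: we must confirm that no level-zero ambiguity arises and that the coefficient is genuinely nonzero. This is exactly what Lemma~\ref{lem:lowest-coeff} and Corollary~\ref{cor:fixed-mode} provide. Once these are in hand, the remaining steps are bookkeeping.
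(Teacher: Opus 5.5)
Your proposal is correct and follows the paper's proof step for step: the deep mode $n=-2st-1$ lies in $C_2(U)$, the top channel is identified at level zero via Corollary~\ref{cor:fixed-mode} as $\kappa_{s,t}[\mu_{s,t}(p,q)]$, and the lower channels are reduced by Lemma~\ref{lem:descendant}. The only variation is how you make $\beta_r$ linear: you fix a linear lift $L(1,r^2)[N]\to M(1,r^2)[N]$ and read off the coefficient of $L(-2)^{N/2}$, whereas the paper composes the bilinear class map with a linear section of $e\mapsto a^{k}[e]$ on $P_r$. Either works, and your coefficient is in fact independent of the lift, because the maximal submodule of $M(1,r^2)$ is generated at the odd level $2r+1$.
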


\begin{proof}

If \(p=0\) or \(q=0\), then the desired identity is immediate from
bilinearity, since both \(\beta_r(p,q)\) and
\(\mu_{s,t}(p,q)\) vanish.  Hence, for the remainder of the proof,
we may and shall assume that
\(
p\neq0, q\neq0.
\)

\noindent\textbf{Step 1: mode reaching top channel at level zero.} Since $\wt p=s^2$, $\wt q=t^2$, and $\wt(u_nv)=\wt u+\wt v-n-1$, set $n_0:=-2st-1$. Then $\wt(p_{n_0}q)=s^2+t^2+2st=M^2$. Equivalently, $n_0=s^2+t^2-M^2-1=n(s,t;M)$ from Corollary~\ref{cor:fixed-mode}.

\noindent\textbf{Step 2: this mode lies in $C_2(U)$.} Since $s,t>0$, $2st+1\ge3$, so $n_0\le-3$. Lemma~\ref{lem:deepmodes} with $m=2st+1\ge2$ gives $p_{-2st-1}q\in C_2(U)$, hence $[p_{-2st-1}q]=0$ in $R(U)$.

\noindent\textbf{Step 3: channel decomposition.}
Since \(p\in P_s\subset E_s\) and \(q\in P_t\subset E_t\), the
Clebsch--Gordan decomposition
\[
E_s\otimes E_t
\cong
\bigoplus_{r=|s-t|}^{s+t}E_r
\]
shows that only the channels
\(
|s-t|\le r\le s+t=M
\)
can occur in \(p_{-2st-1}q\).  Equivalently, by the mode-wise
channel decomposition established above,
\(\Pi_r\bigl(p_{-2st-1}q\bigr)=0\)
for  \(r<|s-t|\)  or  
\(r>M.\)
Since \(p,q\in U=V_{L_2}^{I}\) and \(I\) acts by VOA
automorphisms, \(p_{-2st-1}q\in U\).  Moreover, \(\Pi_r\) is
\(I\)-equivariant; hence
\[
\Pi_r\bigl(p_{-2st-1}q\bigr)
\in
\bigl(E_r\otimes L(1,r^2)\bigr)^I
=
P_r\otimes L(1,r^2).
\]
Thus
\[
p_{-2st-1}q
=
\sum_{r=|s-t|}^{M} w_r(p,q),
\qquad
w_r(p,q):=
\Pi_r\bigl(p_{-2st-1}q\bigr)
\in
P_r\otimes L(1,r^2).
\]
By Step~1,
\(
\wt\bigl(p_{-2st-1}q\bigr)=M^2.
\)
Since the \(r\)-th summand
\(P_r\otimes L(1,r^2)\) has lowest conformal weight \(r^2\),
the component \(w_r(p,q)\) lies at Virasoro level
\(
N_r:=M^2-r^2.
\)
Therefore
\(
w_r(p,q)
\in
P_r\otimes L(1,r^2)[N_r],\)
\(
N_r:=M^2-r^2.
\)

\noindent\textbf{Step 4: top channel coefficient nonzero.} For $r=M$, $N_M=0$, so $w_M(p,q)\in P_M\otimes\C\mathbf v_M$. Since $\mu_{s,t}$ is ordinary multiplication of binary forms and $\C[x_1,x_2]$ is a domain, $p,q\neq0$ implies $\mu_{s,t}(p,q)\neq0$. Corollary~\ref{cor:fixed-mode} gives $w_M(p,q)=\kappa_{s,t}\mu_{s,t}(p,q)\otimes\mathbf v_M$ with $\kappa_{s,t}\neq0$ independent of $p,q$. Identifying $P_M\otimes\C\mathbf v_M\cong P_M$, the top channel contributes $[w_M(p,q)]=\kappa_{s,t}[\mu_{s,t}(p,q)]$.

\noindent\textbf{Step 5: reduce lower channels modulo $C_2(U)$.} For $|s-t|\le r<M$, for arbitrary $p'\in P_s$, $q'\in P_t$, let $w_r(p',q'):=\Pi_r(p'_{n_0}q')\in P_r\otimes L(1,r^2)[N_r]$. Choosing a basis $e_1,\ldots,e_d$ of $P_r$, write $w_r(p',q')=\sum_i e_i\otimes u_i(p',q')$ with $u_i(p',q')\in L(1,r^2)[N_r]$. If $N_r$ is odd, Lemma~\ref{lem:descendant} gives $[w_r(p',q')]=0$ for all $p',q'$, and we set $\beta_r:=0$. If $N_r=2k$ is even, Lemma~\ref{lem:descendant} gives
$$
[w_r(p',q')]\in a^k\Span\{[e]\mid e\in P_r\}=\operatorname{Im}F_{r,k},
\qquad
F_{r,k}:P_r\longrightarrow R(U),\quad e\longmapsto a^k[e].
$$
Since $(p',q')\mapsto[w_r(p',q')]$ is bilinear, it induces a linear map
$$
G_r:P_s\otimes P_t\longrightarrow\operatorname{Im}F_{r,k},
\qquad
G_r(p'\otimes q')=[w_r(p',q')].
$$
Choose a linear section $\sigma_{r,k}:\operatorname{Im}F_{r,k}\to P_r$ of the surjection $F_{r,k}:P_r\twoheadrightarrow\operatorname{Im}F_{r,k}$, and set $\beta_r:=\sigma_{r,k}\circ G_r$. Then
$$
[w_r(p',q')]=a^{(M^2-r^2)/2}[\beta_r(p'\otimes q')]
$$
for all $p'\in P_s$, $q'\in P_t$.

\noindent\textbf{Step 6: sum the channel relations.}
By Step~2,
\(
[p_{-2st-1}q]=0\)
in \(R(U).
\)
On the other hand, Step~3 gives the channel decomposition
\(
p_{-2st-1}q
=
\sum_{r=|s-t|}^{M}w_r(p,q).
\)
Therefore, after passing to \(R(U)\),
\(
0
=
[p_{-2st-1}q]
=
\sum_{r=|s-t|}^{M}[w_r(p,q)].
\)
We now separate the top channel \(r=M\) from the lower channels:
\(
0
=
[w_M(p,q)]
+
\sum_{r=|s-t|}^{M-1}[w_r(p,q)].
\)
By Step~4,
\(
[w_M(p,q)]
=
\kappa_{s,t}[\mu_{s,t}(p,q)].
\)
For a lower channel \(r<M\), if \(N_r\) is odd, Step~5 gives
\(
[w_r(p,q)]=0.
\)
If \(N_r\) is even, say \(N_r=2k\), then Step~5 gives
\[
[w_r(p,q)]
=
a^k[\beta_r(p\otimes q)]
=
a^{(M^2-r^2)/2}[\beta_r(p\otimes q)].
\]
Hence all lower channels with \(M^2-r^2\) odd disappear, while the
even-level lower channels contribute
\[
\sum_{\substack{|s-t|\le r<M\\ M^2-r^2\in2\Z_{\ge0}}}
a^{(M^2-r^2)/2}
[\beta_r(p\otimes q)].
\]
Substituting these expressions into the preceding channel sum yields
\[
\kappa_{s,t}[\mu_{s,t}(p,q)]
+
\sum_{\substack{|s-t|\le r<M\\ M^2-r^2\in2\Z_{\ge0}}}
a^{(M^2-r^2)/2}
[\beta_r(p\otimes q)]
=
0,
\]
which is precisely \eqref{eq:triangular-relation}.
\end{proof}

\section{Finite generation over \texorpdfstring{$\C[a]$}{C[a]}}
\label{sec:finite-generation}

We now use the binary icosahedral invariant ring to convert the triangular reduction of Proposition~\ref{prop:triangular} into a finite-generation statement for the  $C_2$-Poisson algebra. Recall from Section~\ref{subsec:rank-one} that $\mathcal P=\bigoplus_{m\ge0}P_m$ is a connected graded commutative algebra under the Cartan product with $P_0=\C\mathbf1$, and with respect to the half-degree grading,
\begin{equation}\label{eq:P-ring-again}
\mathcal P\cong\C[A,B,C]/(A^5+B^3+C^2),\qquad \deg A=6,\ \deg B=10,\ \deg C=15.
\end{equation}
This is the binary icosahedral invariant-ring description of \cite[Proposition~3.2(iii)]{DGR} after halving ordinary polynomial degrees. Let $\mathcal P_+:=\bigoplus_{m>0}P_m$ be the augmentation ideal; its square satisfies $(\mathcal P_+^2)_m=\sum_{s+t=m,\,s,t>0}\mu_{s,t}(P_s\otimes P_t)$.

Recall the previously chosen nonzero invariant primaries \( X\in P_6, Y\in P_{10}, Z\in P_{15}, \) corresponding respectively to the basic binary icosahedral invariants \(A,B,C\).    

\begin{lem}\label{lem:small-invariant-degrees}
The invariant primary spaces in low degrees satisfy $P_r=0$ for $1\le r\le15$, $r\notin\{6,10,12,15\}$, while $P_6=\C X$, $P_{10}=\C Y$, $P_{12}=\C\,\mu_{6,6}(X,X)$, $P_{15}=\C Z$, and $P_0=\C\mathbf1$.
\end{lem}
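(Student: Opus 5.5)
The plan is to read off the low-degree part of the Hilbert series of the graded ring \eqref{eq:P-ring-again}. Under the identification of $\mathcal P=\bigoplus_m P_m$ with $\C[A,B,C]/(A^5+B^3+C^2)$ in the half-degree grading, $\dim P_r$ equals the number of monomials $A^iB^jC^k$ with $6i+10j+15k=r$, minus the contribution of the relation ideal in degree $r$. The relation $A^5+B^3+C^2$ is homogeneous of degree $30$. Because $\C[A,B,C]$ is a domain, the ideal it generates has zero component in every degree $<30$. So for $r\le15$ the dimension of $P_r$ is simply the number of solutions of $6i+10j+15k=r$ in nonnegative integers.

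The key step is a short enumeration for $1\le r\le15$.
\begin{itemize}
\item If $k\ge1$, then $15k\le15$ forces $k=1$, $i=j=0$, so $r=15$.
\item If $k=0$, then $6i+10j\le15$ allows only $(i,j)\in\{(1,0),(0,1),(2,0)\}$, giving $r=6,10,12$.
\item The trivial solution is $(0,0,0)$, giving $r=0$.
\end{itemize}
Each admissible $r$ has exactly one monomial. Hence $P_r=0$ for $1\le r\le15$ with $r\notin\{6,10,12,15\}$, and each of $P_0,P_6,P_{10},P_{12},P_{15}$ is one-dimensional, spanned by $1$, $A$, $B$, $A^2$ and $C$ respectively.

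It remains to match these spanning elements with the named vectors.
\begin{itemize}
\item $P_0=E_0^I=E_0$ is the constants, which corresponds to $\C\mathbf 1$ (the vacuum is the highest-weight vector of $L(1,0)$).
\item $X,Y,Z$ were chosen nonzero in $P_6,P_{10},P_{15}$ corresponding to $h_{12},h_{20},h_{30}$, so one-dimensionality gives $P_6=\C X$, $P_{10}=\C Y$ and $P_{15}=\C Z$.
\item For $P_{12}$: $\mu_{6,6}$ is ordinary multiplication of binary forms, so $\mu_{6,6}(X,X)$ corresponds to $h_{12}^2$. This is nonzero because $\C[x_1,x_2]$ is a domain, so it spans the one-dimensional space $P_{12}$.
\end{itemize}

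The only point needing care is the faithfulness of the identification \eqref{eq:icosaring}. It must be an isomorphism of graded algebras carrying the Cartan product to polynomial multiplication and the half-degree grading to the index $m$; this is exactly how $\mathcal P$ was set up from \cite[Proposition~3.2(iii)]{DGR}. One also needs that $h_{12},h_{20},h_{30}$ generate with no relation below degree $30$. This follows since the relation ideal is principal with generator in degree $30$. I expect no real obstacle; the argument is a bookkeeping check on the Hilbert series $(1-t^{30})/\bigl((1-t^6)(1-t^{10})(1-t^{15})\bigr)$ through degree $15$.
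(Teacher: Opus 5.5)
Your proposal is correct and follows the paper's proof essentially verbatim: the relation $A^5+B^3+C^2$ has degree $30$, so for $r\le15$ you count solutions of $6i+10j+15k=r$, find only $A,B,A^2,C$, and identify $A^2$ with $\mu_{6,6}(X,X)$ under the Cartan product. Your added remarks (nonvanishing of $\mu_{6,6}(X,X)$ because $\C[x_1,x_2]$ is a domain, and $P_0=E_0$ spanned by the vacuum) are correct small clarifications that the paper leaves implicit.
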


\begin{proof}
By \eqref{eq:icosaring}, $P_r$ is the degree-$r$ component of $\C[A,B,C]/(A^5+B^3+C^2)$. The relation has degree $30$, imposing no constraint below $30$. For $0<r\le15$, monomials $A^iB^jC^k$ have degree $6i+10j+15k=r$; the only positive solutions are $r=6,10,12,15$ corresponding to $A,B,A^2,C$. Thus these spaces are one-dimensional and others vanish. Under the Cartan-product realization, $A^2$ corresponds (up to scalar) to $\mu_{6,6}(X,X)$, giving $P_{12}$.
\end{proof}

\begin{lem}\label{lem:indecomposable-primary}
The graded space of indecomposables is $\mathcal P_+/\mathcal P_+^2\cong\C\overline A\oplus\C\overline B\oplus\C\overline C$ with $\deg\overline A=6$, $\deg\overline B=10$, $\deg\overline C=15$. Consequently, for every $m>0$ with $m\notin\{6,10,15\}$,
\begin{equation}\label{eq:Pmproducts}
P_m=\sum_{\substack{s+t=m\\s,t>0}}\mu_{s,t}(P_s\otimes P_t).
\end{equation}
\end{lem}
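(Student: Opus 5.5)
The plan is to compute the indecomposables of the graded ring $\mathcal P\cong\C[A,B,C]/(A^5+B^3+C^2)$ directly from the presentation \eqref{eq:P-ring-again}, and then read off \eqref{eq:Pmproducts} by identifying $\mathcal P_+^2$ with sums of Cartan products.

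\textbf{Step 1: generation of the augmentation ideal.} Set $R=\C[A,B,C]$ with $\deg A=6$, $\deg B=10$, $\deg C=15$, and let $f=A^5+B^3+C^2$. Let $\mathfrak m=(A,B,C)$ be the irrelevant ideal of $R$. Then $\mathcal P_+=\mathfrak m/(f)$. Since $f\in\mathfrak m$, and in fact $f\in\mathfrak m^2$, we get
\[
\mathcal P_+^2=(\mathfrak m^2+(f))/(f)=\mathfrak m^2/(f),
\qquad\text{hence}\qquad
\mathcal P_+/\mathcal P_+^2\cong\mathfrak m/\mathfrak m^2 .
\]
The quotient $\mathfrak m/\mathfrak m^2$ has basis the images $\overline A,\overline B,\overline C$, in degrees $6,10,15$.

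\textbf{Step 2: the main point.} The only thing to check is that the relation $f$ has no linear term. It does not: $f$ is a sum of a fifth, a cube and a square, so $f\in\mathfrak m^2$. Therefore no generator becomes decomposable modulo the relation, and the three generators stay independent in $\mathfrak m/\mathfrak m^2$. Consequently $(\mathcal P_+/\mathcal P_+^2)_m=0$ for every $m\notin\{6,10,15\}$.

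\textbf{Step 3: translation to Cartan products.} The multiplication on $\mathcal P$ is the Cartan product $\mu$, which is ordinary multiplication of binary forms under the identification of Section~\ref{subsec:rank-one}. Its degree-$m$ part of $\mathcal P_+^2$ is therefore $\sum_{s+t=m,\,s,t>0}\mu_{s,t}(P_s\otimes P_t)$, as recorded before Lemma~\ref{lem:small-invariant-degrees}. For $m>0$ with $m\notin\{6,10,15\}$, the vanishing in Step~2 says $(\mathcal P_+)_m=(\mathcal P_+^2)_m$. Since $(\mathcal P_+)_m=P_m$, this is exactly \eqref{eq:Pmproducts}.

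\textbf{Expected obstacle.} There is essentially no hard step. The only care needed is that the isomorphism \eqref{eq:icosaring} is one of graded algebras under the Cartan product, which holds because $\mu_{s,t}$ is polynomial multiplication. This guarantees that products in $\mathcal P$ correspond to products in $R/(f)$.
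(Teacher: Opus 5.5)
Your proof is correct and follows the same route as the paper. Both arguments use $F=A^5+B^3+C^2\in\mathfrak m^2$ to get $\mathcal P_+/\mathcal P_+^2\cong\mathfrak m/(\mathfrak m^2+(F))=\mathfrak m/\mathfrak m^2$ with basis $\overline A,\overline B,\overline C$, and then conclude $P_m=(\mathcal P_+)_m=(\mathcal P_+^2)_m$ for $m\notin\{6,10,15\}$, identifying the latter with the sum of Cartan products.
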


\begin{proof}
Let $S=\C[A,B,C]$, $\mathfrak m=(A,B,C)$, $F=A^5+B^3+C^2$. Then $\mathcal P\cong S/(F)$ and $\mathcal P_+$ is the image of $\mathfrak m$. Since each monomial in $F$ has algebraic degree $\ge2$, $F\in\mathfrak m^2$, so $(F)\subset\mathfrak m^2$. Hence $\mathcal P_+/\mathcal P_+^2\cong\mathfrak m/(\mathfrak m^2+(F))=\mathfrak m/\mathfrak m^2=\C\overline A\oplus\C\overline B\oplus\C\overline C$ with degrees $6,10,15$. For $m\notin\{6,10,15\}$, $(\mathcal P_+/\mathcal P_+^2)_m=0$, so $P_m=(\mathcal P_+)_m=(\mathcal P_+^2)_m$, yielding \eqref{eq:Pmproducts}.
\end{proof}

In \(R(U)\), set \( x:=[X], y:=[Y], z:=[Z], \) and recall that \(a=[\omega]\). Define the \(\C[a]\)-submodule 
\begin{equation}\label{eq:M4-def} \mathcal M := \C[a]\cdot1+\C[a]\cdot x+\C[a]\cdot y+\C[a]\cdot z \subset R(U). \end{equation}

\begin{thm}[Four-generator reduction]\label{thm:rank4}
We have $R(U)=\mathcal M$.
\end{thm}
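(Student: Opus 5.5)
The plan is to show that every class in $R(U)$ lies in $\mathcal M$, working by strong induction on the index $m$ of the Virasoro channel. By \eqref{eq:Udecomp}, $U=\bigoplus_{m\ge0}P_m\otimes L(1,m^2)$. Every homogeneous element of $U$ is therefore a finite sum of Virasoro descendants of invariant primaries $p\in P_m$.

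\textbf{Step 1: reduce to primaries.} Lemma~\ref{lem:descendant} shows that the class of a level-$N$ descendant of $p$ is either $0$ or $c\,a^{N/2}[p]$. Since $\mathcal M$ is a $\C[a]$-submodule, it suffices to prove
\[
[p]\in\mathcal M\quad\text{for all } p\in P_m \text{ and all } m\ge0.
\]

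\textbf{Step 2: base cases.} For $m=0$ we have $P_0=\C\mathbf 1$, so $[p]\in\C\cdot1$. For $m\in\{6,10,15\}$, I would not rely on Lemma~\ref{lem:small-invariant-degrees}, which only treats $m\le15$. Instead I use Lemma~\ref{lem:indecomposable-primary} in general: whenever $m\notin\{6,10,15\}$, $P_m$ is spanned by Cartan products $\mu_{s,t}(p,q)$ with $s+t=m$ and $s,t>0$. For $m=6$ or $m=15$, the indecomposable quotient is one-dimensional in that degree, so $P_m=\C X\oplus(\mathcal P_+^2)_m$, respectively $P_m=\C Z\oplus(\mathcal P_+^2)_m$. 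The same holds for $m=10$ with $Y$. Thus in every degree $m>0$,
\[
P_m=\delta_m\cdot(\text{generator})+\sum_{s+t=m,\;s,t>0}\mu_{s,t}(P_s\otimes P_t),
\]
where the generator is one of $X,Y,Z$ and $\delta_m$ is $1$ or $0$. The generator term contributes $x$, $y$ or $z$, which lie in $\mathcal M$.

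\textbf{Step 3: induction step.} Take $p\in P_s$ and $q\in P_t$ with $s,t>0$ and $s+t=m$. Proposition~\ref{prop:triangular} gives
\[
\kappa_{s,t}[\mu_{s,t}(p,q)]=-\sum_r a^{(m^2-r^2)/2}[\beta_r(p\otimes q)],
\]
where each $r$ satisfies $|s-t|\le r<m$. Since $\kappa_{s,t}\ne0$ and $\beta_r(p\otimes q)\in P_r$ with $r<m$, the induction hypothesis puts every term on the right in $\mathcal M$. As $\mathcal M$ is a $\C[a]$-module, $[\mu_{s,t}(p,q)]\in\mathcal M$. Linearity then gives $[P_m]\subset\mathcal M$, which closes the induction.

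\textbf{Step 4: conclude.} Combining Steps 1–3, $R(U)=\mathcal M$.

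The only delicate point is making sure the Cartan-product decomposition of $P_m$ really covers all degrees. That is exactly what the indecomposables computation in Lemma~\ref{lem:indecomposable-primary} provides. The $r=0$ channel is harmless because $P_0$ gives a multiple of $1$. Beyond that, the triangular relation has a nonzero leading coefficient and every lower channel has strictly smaller index, so no genuine obstacle arises.
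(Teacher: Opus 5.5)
Your proposal is correct and follows essentially the same route as the paper: reduce to primaries via Lemma~\ref{lem:descendant}, then run strong induction on $m$ using Lemma~\ref{lem:indecomposable-primary} and the triangular relation of Proposition~\ref{prop:triangular}. Your detour around Lemma~\ref{lem:small-invariant-degrees} is unnecessary, since that lemma covers $m=6,10,15$ and gives $P_6=\C X$, $P_{10}=\C Y$, $P_{15}=\C Z$ directly; your indecomposables-based replacement is nonetheless also valid.
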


\begin{proof}
We prove $R(U)\subset\mathcal M$ in two stages.

\noindent\textbf{Step 1: reduction of all Virasoro primary classes.} We prove by strong induction on \(m\) that \( [P_m]\subset\mathcal M  \) for every  $m\ge0$. Assume that \([P_j]\subset\mathcal M\) for all \(0\le j<m\). If \(m=0\), then \(P_0=\C\mathbf1\), so \([P_0]\subset\mathcal M\). If \(m\in\{6,10,15\}\), then \( [P_6]\subset\C x, [P_{10}]\subset\C y, [P_{15}]\subset\C z, \) respectively, and hence again \([P_m]\subset\mathcal M\). It remains to consider \(m>0\) with \(m\notin\{6,10,15\}\). By Lemma~\ref{lem:indecomposable-primary}, \[ P_m = \sum_{\substack{s+t=m\\s,t>0}} \mu_{s,t}(P_s\otimes P_t). \] Thus every \(p\in P_m\) can be written as a finite sum \( p = \sum_\nu \mu_{s_\nu,t_\nu}(p_\nu,q_\nu), \) where \( s_\nu,t_\nu>0, s_\nu+t_\nu=m, p_\nu\in P_{s_\nu}, q_\nu\in P_{t_\nu}. \) After omitting zero summands, we may and do assume \(p_\nu\neq0\) and \(q_\nu\neq0\) for every \(\nu\). Since \(s_\nu,t_\nu>0\) and \(s_\nu+t_\nu=m\), we have \(s_\nu,t_\nu<m\). For each \(\nu\), Proposition~\ref{prop:triangular}, applied with \(s=s_\nu\), \(t=t_\nu\), and \(M=m\), gives \[ \kappa_{s_\nu,t_\nu} [\mu_{s_\nu,t_\nu}(p_\nu,q_\nu)] = - \sum_{\substack{|s_\nu-t_\nu|\le r<m\\ m^2-r^2\in2\Z_{\ge0}}} a^{(m^2-r^2)/2} [\beta_r(p_\nu\otimes q_\nu)], \] where \( \kappa_{s_\nu,t_\nu}\in\C^\times,  \beta_r(p_\nu\otimes q_\nu)\in P_r,  r<m. \) By the strong induction hypothesis, \( [\beta_r(p_\nu\otimes q_\nu)]\in\mathcal M \) for every \(r<m\). Since \(\mathcal M\) is a \(\C[a]\)-submodule of \(R(U)\), the entire right-hand side belongs to \(\mathcal M\). As \(\kappa_{s_\nu,t_\nu}\neq0\), it follows that \( [\mu_{s_\nu,t_\nu}(p_\nu,q_\nu)] \in\mathcal M. \) Summing over \(\nu\) gives \([p]\in\mathcal M\). Hence \([P_m]\subset\mathcal M\), completing the induction.

\noindent\textbf{Step 2: reduction of arbitrary vectors.} By \eqref{eq:Udecomp}, \( U\cong\bigoplus_{m\ge0}P_m\otimes L(1,m^2). \) Since every vector of \(U\) is a finite sum of homogeneous vectors from these summands, it suffices to consider a homogeneous vector \( u\in P_m\otimes L(1,m^2) \) of conformal weight \(m^2+N\). Choose a basis \(p_1,\ldots,p_d\) of \(P_m\). Then \(u\) can be written as \( u=\sum_{i=1}^d w_i, \) where each \(w_i\) lies in the level-\(N\) descendant space generated by the Virasoro highest-weight vector \(p_i\). By PBW, each \(w_i\) is a linear combination of vectors of the form \[ L(-n_1)\cdots L(-n_\ell)p_i, \qquad n_j\ge1,\qquad \sum_{j=1}^{\ell}n_j=N. \] Lemma~\ref{lem:descendant} therefore gives \[ [w_i]\in \begin{cases} \C\,a^{N/2}[p_i],& N\ \text{even},\\ 0,& N\ \text{odd}. \end{cases} \] By Step~1, \([p_i]\in\mathcal M\) for every \(i\). Since \(\mathcal M\) is a \(\C[a]\)-submodule of \(R(U)\), it follows that \([w_i]\in\mathcal M\) for every \(i\), and hence \( [u]=\sum_{i=1}^d[w_i]\in\mathcal M. \) Thus every class in \(R(U)=U/C_2(U)\) belongs to \(\mathcal M\), so \( R(U)\subset\mathcal M. \) The reverse inclusion \(\mathcal M\subset R(U)\) is immediate from the definition of \(\mathcal M\). Therefore  $ R(U)=\mathcal M.$
\end{proof}

\section{Eliminating the weight-\texorpdfstring{$225$}{225} and weight-\texorpdfstring{$100$}{100} classes}
\label{sec:eliminate-yz}

We now sharpen the four-generator reduction of Theorem~\ref{thm:rank4}. The two exceptional generators of conformal weights $225$ and $100$ can be eliminated by suitable Clebsch–Gordan projections.

\subsection{The weight-\texorpdfstring{$225$}{225} class}

\begin{prop}\label{prop:z0}
In the  $C_2$-Poisson algebra $R(U)$, $z=0$.
\end{prop}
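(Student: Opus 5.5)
The plan is to imitate the triangular reduction of Proposition~\ref{prop:triangular}, but with a mode that lands at level zero in a non-top Clebsch--Gordan channel, namely $r=15$ inside $E_6\otimes E_{10}$. Take $s=6$, $t=10$, $r=15$. Then $|s-t|=4\le15\le16=s+t$, so $r$ lies in the Clebsch--Gordan range. The relevant mode index from Corollary~\ref{cor:fixed-mode} is
\[
n(6,10;15)=36+100-225-1=-90.
\]
So the plan is to consider the vector $X_{-90}Y\in U$, which has conformal weight $36+100+89=225$. Since $-90\le-2$, Lemma~\ref{lem:deepmodes} gives $X_{-90}Y\in C_2(U)$, and hence $[X_{-90}Y]=0$.

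\textbf{Channel decomposition.} Next I decompose $X_{-90}Y=\sum_r \Pi_r(X_{-90}Y)$ exactly as in Step~3 of Proposition~\ref{prop:triangular}.
\begin{itemize}
\item By the Clebsch--Gordan rule and Proposition~\ref{prop:channel-separation}, only channels with $4\le r\le16$ can occur.
\item By $I$-equivariance, each component lies in $P_r\otimes L(1,r^2)$ at level $N_r=225-r^2$.
\item Since $16^2=256>225$, the channel $r=16$ cannot occur at all, because its lowest weight already exceeds $225$.
\end{itemize}

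\textbf{The channel $r=15$.} This channel sits at level $0$. By Corollary~\ref{cor:fixed-mode}, its component equals $c_{6,10}^{\,15}\,p_{6,10}^{\,15}(X\otimes Y)\otimes\mathbf v_{15}$ with $c_{6,10}^{\,15}\neq0$. Since $P_6=\C X$ and $P_{10}=\C Y$ by Lemma~\ref{lem:small-invariant-degrees}, the DGR input \eqref{eq:DGRp31} gives $p_{6,10}^{\,15}(X\otimes Y)\neq0$. This vector lies in $P_{15}=\C Z$, so the $r=15$ component is $\lambda Z$ for some $\lambda\in\C^\times$.

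\textbf{The channels $4\le r\le14$.} By Lemma~\ref{lem:small-invariant-degrees}, $P_r\neq0$ only for $r\in\{6,10,12\}$ in this range. The corresponding levels are
\[
225-36=189,\qquad 225-100=125,\qquad 225-144=81,
\]
and all three are odd. Lemma~\ref{lem:descendant} therefore kills each of these components in $R(U)$.

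\textbf{Conclusion.} Summing the channels in $R(U)$ gives $0=[X_{-90}Y]=\lambda z$. Since $\lambda\neq0$, this yields $z=0$.

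The only delicate point is making sure that the $r=15$ channel contributes exactly a nonzero multiple of $Z$. This rests on the multiplicity-one identification of DGR's $p_{31}$ with $p_{6,10}^{\,15}$, and on applying Corollary~\ref{cor:fixed-mode} to this non-top channel. The corollary is stated for arbitrary $r$ in the Clebsch--Gordan range, so that step is covered. The parity check that all lower channels sit at odd level is the crucial but easy observation.
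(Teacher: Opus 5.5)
Your proposal is correct and follows essentially the same argument as the paper: you use the $C_2$-relation from $X_{-90}Y$, keep the nonzero level-zero $r=15$ channel via Corollary~\ref{cor:fixed-mode} and \eqref{eq:DGRp31}, exclude $r=16$ by weight, and kill the $r=6,10,12$ channels because their levels $189,125,81$ are odd. This is exactly the paper's proof, and you correctly noted that the $r=15$ component lies entirely at level zero, so the fixed-mode projection captures the whole component.
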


\begin{proof}
Dong--Griess--Ryba prove $p_{31}(W_{13}^I\otimes W_{21}^I)\neq0$ \cite[Theorem~2.1(ii)]{DGR}. In our indexing $E_6=W_{13}$, $E_{10}=W_{21}$, $E_{15}=W_{31}$, so $p_{6,10}^{\,15}(P_6\otimes P_{10})\neq0$. Since $P_6=\C X$, $P_{10}=\C Y$ with $X,Y\neq0$, we have $p_{6,10}^{\,15}(X\otimes Y)\neq0$. By Corollary~\ref{cor:fixed-mode}, the mode reaching the $r=15$ highest-weight line has index $n(6,10;15)=6^2+10^2-15^2-1=-90$. Thus the projection of $X_{-90}Y$ onto $P_{15}\otimes\C\mathbf v_{15}$ is nonzero; since $P_{15}=\C Z$, there exists $c_{15}\in\C^\times$ such that the level-zero $r=15$ component is $c_{15}Z$.

The Clebsch--Gordan decomposition gives $E_6\otimes E_{10}\cong\bigoplus_{r=4}^{16}E_r$, so only $4\le r\le16$ occur. The total weight is $\wt(X_{-90}Y)=36+100+90-1=225$. Since $P_r\otimes L(1,r^2)$ has lowest weight $r^2$, we need $r^2\le225$, i.e., $r\le15$; thus $r=16$ cannot contribute. By Lemma~\ref{lem:small-invariant-degrees}, the only nonzero $P_r$ with $4\le r\le15$ are $r=6,10,12,15$. Hence $X_{-90}Y=w_6+w_{10}+w_{12}+w_{15}$ with $w_r\in P_r\otimes L(1,r^2)$ of total weight $225$ and $w_{15}=c_{15}Z$. The levels are $N_6=189$, $N_{10}=125$, $N_{12}=81$, all odd; Lemma~\ref{lem:descendant} gives $[w_6]=[w_{10}]=[w_{12}]=0$. Since $-90\le-2$, Lemma~\ref{lem:deepmodes} implies $X_{-90}Y\in C_2(U)$, so $[X_{-90}Y]=0$. Taking classes yields $0=c_{15}z$, hence $z=0$.
\end{proof}

\subsection{A transvectant computation}

To eliminate $y$, we show the self-product of the degree-$6$ invariant has nonzero $r=10$ component. 
Under $E_6\cong\Sym^{12}(\C^2)$, choose coordinates on $\C^2$ so that the invariant line $P_6=E_6^I$ is spanned by the classical degree-$12$ binary icosahedral invariant
\begin{equation}\label{eq:Klein-f}
f(u,v)=uv(u^{10}+11u^5v^5-v^{10})
\end{equation}
\cite[Example~1.16]{Mukai}. 
Define the unnormalized $q$-th transvectant $$(F,G)_q=\sum_{j=0}^q(-1)^j\binom qj\frac{\partial^qF}{\partial u^{q-j}\partial v^j}\frac{\partial^qG}{\partial u^j\partial v^{q-j}},$$ which realizes the Clebsch--Gordan projection $\Sym^m\otimes\Sym^n\to\Sym^{m+n-2q}$ up to scalar. For $m=n=12$, $q=2$, the target is $\Sym^{20}\cong E_{10}$, representing $p_{6,6}^{\,10}$.

\begin{lem}\label{lem:transvectant-2}
For $f$ in \eqref{eq:Klein-f}, $(f,f)_2=242(-u^{20}+228u^{15}v^5-494u^{10}v^{10}-228u^5v^{15}-v^{20})\neq0$. Consequently, $p_{6,6}^{\,10}(X\otimes X)\neq0$.
\end{lem}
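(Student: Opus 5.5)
The plan is to compute $(f,f)_2$ directly and then transfer its nonvanishing to $p_{6,6}^{\,10}$ through the uniqueness of Clebsch--Gordan projections.

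\textbf{Step 1: reduce to a Hessian.} In $(f,f)_2$ with $q=2$, the $j=0$ and $j=2$ terms coincide, both being $f_{uu}f_{vv}$, while the $j=1$ term is $-2f_{uv}^2$. Hence
\[
(f,f)_2=2\bigl(f_{uu}f_{vv}-f_{uv}^2\bigr),
\]
which is twice the Hessian of $f$.

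\textbf{Step 2: compute the second derivatives.} Expand $f=u^{11}v+11u^6v^6-uv^{11}$. Then
\[
f_{uu}=110u^9v+330u^4v^6,\qquad f_{vv}=330u^6v^4-110uv^9,
\]
\[
f_{uv}=11u^{10}+396u^5v^5-11v^{10}.
\]

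\textbf{Step 3: assemble the coefficients.} I would multiply out and collect the five monomials $u^{20},u^{15}v^5,u^{10}v^{10},u^5v^{15},v^{20}$.
\begin{itemize}
\item The $u^{20}$ and $v^{20}$ coefficients come only from $-2f_{uv}^2$, giving $-242$ for each.
\item The $u^{15}v^5$ coefficient is $2(110\cdot330-2\cdot11\cdot396)=2\cdot27588=242\cdot228$. The $u^5v^{15}$ coefficient is its negative.
\item The $u^{10}v^{10}$ coefficient is $2(330^2-110^2-396^2+2\cdot121)=242\cdot(-494)$.
\end{itemize}
This reproduces the displayed formula. Nonvanishing is then immediate, since the coefficient of $u^{20}$ is already $-242\neq0$. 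The only real work is this bookkeeping, and it is the one place an arithmetic slip could occur. The nonvanishing conclusion, however, needs only the $u^{20}$ term, which is robust.

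\textbf{Step 4: pass to $p_{6,6}^{\,10}$.} The map $F\otimes G\mapsto(F,G)_2$ is a nonzero $SL_2(\C)$-equivariant map $\Sym^{12}\otimes\Sym^{12}\to\Sym^{20}$. Nonzero is clear, because it is nonzero on $f\otimes f$. By the multiplicity-one statement \eqref{eq:CG}, $\Hom_{SL_2(\C)}(E_6\otimes E_6,E_{10})$ is one-dimensional. Hence $p_{6,6}^{\,10}$ is a nonzero scalar multiple of the transvectant under the identification $E_m\cong\Sym^{2m}(\C^2)$.

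By Lemma~\ref{lem:small-invariant-degrees}, $P_6=\C X$, and $X$ corresponds to a nonzero multiple of $f$ under the chosen coordinates \cite[Example~1.16]{Mukai}. Therefore $p_{6,6}^{\,10}(X\otimes X)$ is a nonzero multiple of $(f,f)_2\neq0$.
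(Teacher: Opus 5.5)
Your proposal is correct and follows the paper's proof essentially verbatim: you reduce $(f,f)_2$ to $2(f_{uu}f_{vv}-f_{uv}^2)$, compute the same three second derivatives (your bookkeeping checks out, e.g.\ $2\cdot27588=242\cdot228$ and $2\cdot(-59774)=242\cdot(-494)$), and use the one-dimensionality of $\Hom_{SL_2(\C)}(E_6\otimes E_6,E_{10})$ to identify the transvectant with a nonzero multiple of $p_{6,6}^{\,10}$. Your observation that the $u^{20}$ coefficient $-242$ alone already forces nonvanishing is a small robustness point that the paper does not spell out.
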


\begin{proof}
Expanding $f=u^{11}v+11u^6v^6-uv^{11}$, compute $f_{uu}=110u^9v+330u^4v^6$, $f_{uv}=11u^{10}+396u^5v^5-11v^{10}$, $f_{vv}=330u^6v^4-110uv^9$. Then $(f,f)_2=2(f_{uu}f_{vv}-f_{uv}^2)=242(-u^{20}+228u^{15}v^5-494u^{10}v^{10}-228u^5v^{15}-v^{20})\neq0$. Since $\dim\Hom_{SL_2(\C)}(E_6\otimes E_6,E_{10})=1$, this transvectant is a nonzero multiple of $p_{6,6}^{\,10}$, proving the claim.
\end{proof}

\subsection{The weight-\texorpdfstring{$100$}{100} class}

\begin{prop}\label{prop:y-elim}
There exist $\lambda,\mu\in\C$ such that $y=\lambda a^{50}+\mu a^{32}x$.
\end{prop}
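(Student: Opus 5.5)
The plan mirrors the proof of Proposition~\ref{prop:z0}, now with $s=t=6$ and target channel $r=10$. By Lemma~\ref{lem:transvectant-2}, $p_{6,6}^{\,10}(X\otimes X)\neq0$. Corollary~\ref{cor:fixed-mode} then identifies the mode that first reaches the $r=10$ highest-weight line: its index is $n(6,6;10)=36+36-100-1=-29$. Consider $X_{-29}X$. Its weight is $36+36+29-1=100$. Since $-29\le-2$, Lemma~\ref{lem:deepmodes} gives $X_{-29}X\in C_2(U)$, so $[X_{-29}X]=0$ in $R(U)$.

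Next I decompose $X_{-29}X$ into channels. By Clebsch--Gordan, only $0\le r\le12$ can occur. Invariance means only those $r$ with $P_r\neq0$ contribute, and Lemma~\ref{lem:small-invariant-degrees} leaves $r\in\{0,6,10,12\}$. The weight bound $r^2\le100$ removes $r=12$. Thus
\[
X_{-29}X=w_0+w_6+w_{10},\qquad w_r\in P_r\otimes L(1,r^2)[100-r^2].
\]
For $r=10$ the level is $0$. Since $P_{10}=\C Y$, Corollary~\ref{cor:fixed-mode} gives $w_{10}=c\,Y$ with $c=c_{6,6}^{\,10}\cdot(\text{nonzero scalar from }p_{6,6}^{\,10}(X\otimes X)=\text{const}\cdot Y)$, so $c\in\C^\times$.

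The remaining two channels are handled by Lemma~\ref{lem:descendant}. For $r=0$ the level is $N_0=100$, which is even, and $P_0=\C\mathbf 1$, so $[w_0]\in\C a^{50}[\mathbf1]=\C a^{50}$. For $r=6$ the level is $N_6=64$, also even, and $P_6=\C X$, so $[w_6]\in\C a^{32}x$. Taking classes gives
\[
0=c\,y+\lambda' a^{50}+\mu' a^{32}x,
\]
and dividing by $c\neq0$ yields $y=\lambda a^{50}+\mu a^{32}x$.

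The only step needing care is the nonvanishing of the top-of-channel coefficient $c$. This coefficient is the product of the intertwining constant from Lemma~\ref{lem:lowest-coeff}, which is nonzero because Proposition~\ref{prop:channel-separation} applies since $|6-6|\le10\le12$, and the transvectant nonvanishing. The only other check is that $r=10$ is a genuine channel and that the vacuum channel $r=0$ is permitted, which holds since $0\in[0,12]$. No information about $\lambda,\mu$ is needed.
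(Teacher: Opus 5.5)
Your proposal is correct and follows the paper's own argument essentially step for step. The paper uses the same ingredients: the deep mode $X_{-29}X\in C_2(U)$ of weight $100$, the restriction to the channels $r\in\{0,6,10\}$ via the weight bound $r^2\le 100$ and Lemma~\ref{lem:small-invariant-degrees}, the even-level reduction of the $r=0$ and $r=6$ components by Lemma~\ref{lem:descendant}, and the nonvanishing of the level-zero $r=10$ coefficient obtained from Lemma~\ref{lem:transvectant-2} together with Corollary~\ref{cor:fixed-mode}.
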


\begin{proof}
By Lemma~\ref{lem:transvectant-2}, $p_{6,6}^{\,10}(X\otimes X)\neq0$. Corollary~\ref{cor:fixed-mode} applies to the $r=10$ channel. The mode index is $n(6,6;10)=36+36-100-1=-29$, so the $P_{10}$ highest-weight component of $X_{-29}X$ is nonzero; since $P_{10}=\C Y$, there exists $c_{10}\in\C^\times$ with level-zero component $c_{10}Y$. Total weight is $\wt(X_{-29}X)=100$. Clebsch--Gordan gives $E_6\otimes E_6\cong\bigoplus_{r=0}^{12}E_r$, but $r^2\le100$ forces $r\le10$. By Lemma~\ref{lem:small-invariant-degrees}, only $P_0,P_6,P_{10}$ are nonzero in this range. Thus $X_{-29}X=w_0+w_6+w_{10}$ with $w_{10}=c_{10}Y$. Levels: $N_0=100$ (even), $N_6=64$ (even). Lemma~\ref{lem:descendant} gives $[w_0]=c_0a^{50}$, $[w_6]=c_6a^{32}x$. Since $-29\le-2$, $X_{-29}X\in C_2(U)$, so $0=c_0a^{50}+c_6a^{32}x+c_{10}y$. Solving yields $y=-(c_0/c_{10})a^{50}-(c_6/c_{10})a^{32}x$.
\end{proof}

\begin{thm}[Two-generator reduction]\label{thm:rank2}
The intrinsic $C_2$-Poisson algebra is generated by two elements as a $\C[a]$-module:
\begin{equation}\label{eq:rank2}
R(U)=\C[a]\cdot1+\C[a]\cdot x.
\end{equation}
Moreover, $R(U)_{2N+1}=0$ for all $N\ge0$, and
\begin{equation}\label{eq:even-R-bound}
R(U)_{2N}\subseteq
\begin{cases}
\C a^N,& 0\le N<18,\\
\C a^N+\C a^{N-18}x,& N\ge18.
\end{cases}
\end{equation}
In particular, $\dim_\C R(U)_n\le2$ for all $n\ge0$.
\end{thm}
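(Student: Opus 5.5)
The plan is to combine the four-generator reduction of Theorem~\ref{thm:rank4} with the two eliminations of Section~\ref{sec:eliminate-yz}. By Theorem~\ref{thm:rank4}, $R(U)=\C[a]\cdot1+\C[a]\cdot x+\C[a]\cdot y+\C[a]\cdot z$. Proposition~\ref{prop:z0} gives $z=0$, so the summand $\C[a]z$ disappears. Proposition~\ref{prop:y-elim} gives $y=\lambda a^{50}+\mu a^{32}x$, whence
\[
\C[a]\,y\subseteq\C[a]\cdot1+\C[a]\cdot x.
\]
This yields \eqref{eq:rank2}; the reverse inclusion is trivial.

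For the graded statements I will use that $R(U)$ is graded by conformal weight and that $[u]$ is homogeneous of weight $\wt u$. The classes $a^k$ have weight $2k$, since $\wt\omega=2$ and the product $[u][v]=[u_{-1}v]$ adds weights. Likewise $a^kx$ has weight $2k+36$. Hence every element of $\C[a]\cdot1+\C[a]\cdot x$ decomposes into homogeneous pieces of the form $\alpha a^k$ or $\beta a^kx$, all of even weight, and so $R(U)_{2N+1}=0$.

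In weight $2N$, the only contributions are $\C a^N$ together with $\C a^{N-18}x$, the latter occurring only when $N-18\ge0$. This gives \eqref{eq:even-R-bound}, and therefore $\dim R(U)_n\le 2$. One point needs care: a general element $\sum f_k a^k+\sum g_k a^kx$ must be split into its weight components, and this is legitimate because $R(U)=\bigoplus_n R(U)_n$ and each monomial is homogeneous. Alternatively, the odd-weight vanishing can be checked directly. By Step~2 of the proof of Theorem~\ref{thm:rank4} together with Lemma~\ref{lem:descendant}, odd-level descendants vanish, and Lemma~\ref{lem:small-invariant-degrees} and \eqref{eq:icosaring} show that all nonzero $P_m$ with $m^2$ odd are generated by elements whose classes reduce into the $\C[a]$-span of $1$ and $x$. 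The graded argument above already suffices, however.

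There is no real obstacle here. The substantive inputs, namely nonvanishing of the $r=15$ and $r=10$ channels and the triangular induction, are already established in the earlier results. The only step needing attention is the homogeneity bookkeeping, which ensures that the weight-$2N$ component cannot receive contributions from other powers of $a$.
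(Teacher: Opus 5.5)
Your proposal is correct and essentially matches the paper's proof. Like the paper, you combine Theorem~\ref{thm:rank4} with $z=0$ (Proposition~\ref{prop:z0}) and $y=\lambda a^{50}+\mu a^{32}x$ (Proposition~\ref{prop:y-elim}), then read off the graded bounds from $\wt(a^k)=2k$ and $\wt(a^kx)=2k+36$. The alternative odd-weight check in your third paragraph is unnecessary and only loosely stated, but you do not rely on it, so nothing is missing.
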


\begin{proof}
Theorem~\ref{thm:rank4} gives $R(U)=\C[a]\cdot1+\C[a]\cdot x+\C[a]\cdot y+\C[a]\cdot z$. Propositions~\ref{prop:z0} and \ref{prop:y-elim} give $z=0$ and $y=\lambda a^{50}+\mu a^{32}x\in\C[a]\cdot1+\C[a]\cdot x$, proving \eqref{eq:rank2}. Since $\wt a=2$ and $\wt x=36$, monomials in $\C[a]\cdot1$ have weight $2k$ and those in $\C[a]\cdot x$ have weight $36+2k=2(k+18)$, both even; hence $R(U)_{2N+1}=0$. For weight $2N$, the first summand contributes $\C a^N$; the second requires $36+2k=2N$, i.e., $k=N-18\ge0$, contributing $\C a^{N-18}x$ only when $N\ge18$. This proves \eqref{eq:even-R-bound} and the dimension bound.
\end{proof}

\begin{rmk}\label{rem:rank2-significance}
Theorem~\ref{thm:rank2} is the decisive structural reduction for $C_2$-cofiniteness: every invariant Virasoro primary class reduces in the $C_2$-quotient to an element of $\C[a]\cdot1+\C[a]\cdot x$. The remaining problem is controlled entirely by powers of $a=[\omega]$; it remains to prove that sufficiently high powers of $a$ and $a$ acting on $x$ vanish.
\end{rmk}

\section{The Virasoro \texorpdfstring{$C_2$}{C2}-symbol and the determinant criterion}
\label{sec:symbol-determinant}

After Theorem~\ref{thm:rank2}, the $C_2$-Poisson algebra is $R(U)=\C[a]\cdot1+\C[a]\cdot x$ with $a=[\omega]$, $x=[X]$. It remains to produce relations forcing high powers of $a$ and $a$ acting on $x$ to vanish. This section reduces that problem to the nonvanishing of a single $2\times2$ determinant, whose numerical evaluation follows later.

\subsection{The Virasoro \texorpdfstring{$C_2$}{C2}-symbol}

Let $\mathfrak n_-:=\bigoplus_{n\ge1}\C L(-n)$ be graded by $\deg L(-n)=n$.

\begin{lem}\label{lem:sigma}
There exists a unique algebra homomorphism
$\sigma:U(\mathfrak n_-)\to\C[T]$ such that
$\sigma(L(-2))=T$ and $\sigma(L(-n))=0$ for $n\neq2$, where
$\deg T=2$. Moreover, if $p\in U$ is a Virasoro highest-weight vector and $A\in U(\mathfrak n_-)$, then in $R(U)$, 
\begin{equation}\label{eq:sigma-C2}
[Ap]=\sigma(A)|_{T=a}\,[p].
\end{equation}
\end{lem}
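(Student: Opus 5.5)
Existence and uniqueness of $\sigma$ come first, and they are purely algebraic. I will use the PBW theorem for $\mathfrak n_-$, or more simply the universal property of $U(\mathfrak n_-)$. An algebra homomorphism $U(\mathfrak n_-)\to\C[T]$ is the same thing as a Lie algebra homomorphism $\mathfrak n_-\to\C[T]$, where the target carries the zero bracket because it is commutative. Since $[\mathfrak n_-,\mathfrak n_-]=\bigoplus_{n\ge3}\C L(-n)$ (indeed $[L(-1),L(-k)]=(k-1)L(-k-1)$), a linear map $\phi:\mathfrak n_-\to\C[T]$ is a Lie homomorphism exactly when it kills all $L(-n)$ with $n\ge3$. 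The assignment $L(-1)\mapsto0$, $L(-2)\mapsto T$, $L(-n)\mapsto0$ for $n\ge3$ has this property, so it extends uniquely to $\sigma$. Uniqueness also follows directly, since the $L(-n)$ generate $U(\mathfrak n_-)$. The map preserves degree because $\deg T=2$, so $\sigma$ is graded.

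For \eqref{eq:sigma-C2}, both sides are linear in $A$. By PBW it suffices to check the identity on ordered monomials $A=L(-n_1)\cdots L(-n_\ell)$ with $n_1\ge\cdots\ge n_\ell\ge1$, since these span $U(\mathfrak n_-)$ and $\sigma$ is an algebra map. The computation is the one already carried out in Lemma~\ref{lem:descendant}, now tracked with exact coefficients, and it splits into three cases.
\begin{enumerate}
\item If $n_1\ge3$, then $Ap=\omega_{-n_1+1}(\cdots)\in C_2(U)$ by Lemma~\ref{lem:deepmodes}, so $[Ap]=0$; also $\sigma(A)=0$ because $\sigma(L(-n_1))=0$.
\item Otherwise $A=L(-2)^kL(-1)^j$. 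If $j>0$, then $L(-1)^jp=(L(-1)^{j-1}p)_{-2}\mathbf 1\in C_2(U)$. Since $L(-2)=\omega_{-1}$ and $C_2(U)$ is a two-sided ideal for the $(-1)$-product in the sense $[\omega_{-1}u]=a[u]$, we get $[Ap]=a^k[L(-1)^jp]=0$, matching $\sigma(A)=0$.
\item If $j=0$, then $[L(-2)^kp]=a^k[p]$ by iterating $[L(-2)u]=a[u]$, and this equals $\sigma(A)|_{T=a}[p]$.
\end{enumerate}

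The one subtle point is the reduction to ordered monomials. The paper's PBW ordering of the descendants, $L(-n_1)\cdots L(-n_\ell)p$ with decreasing $n_i$, is a spanning set for $U(\mathfrak n_-)$ by PBW. A general word is rewritten into ordered words via commutators, which lie in $\bigoplus_{n\ge3}\C L(-n)$. Because both $A\mapsto[Ap]$ and $A\mapsto\sigma(A)|_{T=a}[p]$ are linear, checking the identity on a spanning set suffices, and no compatibility issue arises. I do not expect any real obstacle here: the lemma is essentially a repackaging of Lemma~\ref{lem:descendant}, with the exact coefficient $1$ for $L(-2)^k$ recorded.
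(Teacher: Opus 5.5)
Your proposal is correct and follows essentially the same route as the paper: extend $L(-2)\mapsto T$, $L(-n)\mapsto 0$ to $U(\mathfrak n_-)$ via the universal property, then verify \eqref{eq:sigma-C2} on PBW monomials using the case analysis of Lemma~\ref{lem:descendant} and linearity. The differences are cosmetic. You check the Lie-homomorphism condition by identifying the derived algebra $\bigoplus_{n\ge3}\C L(-n)$, where the paper checks $\ell([L(-m),L(-n)])=0$ directly. You also re-derive the three monomial cases explicitly instead of citing Lemma~\ref{lem:descendant}.
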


\begin{proof}
Define
\(
\ell:\mathfrak n_-\longrightarrow \C[T]
\)
by
\(
\ell(L(-2))=T,
\ell(L(-n))=0
\quad (n\neq2),
\)
where the target is regarded as an abelian Lie algebra. Since
\[
[L(-m),L(-n)]=(n-m)L(-(m+n)),
\]
we have \(\ell([L(-m),L(-n)])=0\): indeed, the only possible case in which
\(\ell(L(-(m+n)))\neq0\) is \(m+n=2\), forcing \(m=n=1\), and then
\(n-m=0\). Hence \(\ell\) is a Lie algebra homomorphism. By the universal
property of the enveloping algebra, it extends uniquely to an algebra
homomorphism
\(
\sigma:U(\mathfrak n_-)\longrightarrow \C[T].
\)
It remains to prove \eqref{eq:sigma-C2}. By Lemma~\ref{lem:descendant},
modulo \(C_2(U)\) every PBW monomial acting on a Virasoro highest-weight
vector vanishes unless it is a power of \(L(-2)\), while
\(
[L(-2)^r p]=a^r[p].
\)
On the other hand, by definition,
\(
\sigma(L(-2)^r)=T^r,
\)
whereas every PBW monomial containing some \(L(-n)\) with \(n\neq2\)
is sent to zero by \(\sigma\). Therefore, by linearity, for every
\(A\in U(\mathfrak n_-)\),
\(
[Ap]=\sigma(A)\big|_{T=a}\,[p].
\)
\end{proof}

\begin{rmk}\label{rem:sigma-level}
If $A\in U(\mathfrak n_-)$ is homogeneous of level $N$, then $\sigma(A)=0$ for odd $N$, while for $N=2k$, $\sigma(A)\in\C T^k$ (only $L(-2)^k$ survives).
\end{rmk}

\subsection{The symbol on \texorpdfstring{$L(1,0)$ and $L(1,36)$}{L(1,0) and L(1,36)}}

For $h\in\{0,36\}$, let
\[
q_h:M(1,h)\longrightarrow L(1,h)
\]
be the canonical quotient map, and write
\(
v_h:=q_h(v_{1,h})
\)
for the image of the fixed highest-weight vector of the Verma module.
Thus $v_0$ is the vacuum vector of $L(1,0)$.

\begin{lem}\label{lem:module-symbol}
For each $h\in\{0,36\}$, there is a well-defined linear map
\[
\Sigma_h:L(1,h)\longrightarrow\C[T]
\]
characterized by
\begin{equation}\label{eq:Sigma-def}
\Sigma_h(Av_h)=\sigma(A),
\qquad A\in U(\mathfrak n_-).
\end{equation}
In particular,
\(
\Sigma_0(v_0)=1,
\Sigma_{36}(v_{36})=1.
\)
Moreover, if $h\in\{0,36\}$ and $w\in L(1,h)$ is homogeneous of
level $N$,
then
\(
\Sigma_h(w)=0\), if $N$ is odd,
while, if $N=2k$,
\(
\Sigma_h(w)\in\C T^k.
\)
\end{lem}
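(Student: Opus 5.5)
Since $L(1,h)=U(\mathfrak n_-)v_h$, the formula $\Sigma_h(Av_h)=\sigma(A)$ defines a map $L(1,h)\to\C[T]$ exactly when $\sigma$ vanishes on the annihilator $\operatorname{Ann}(v_h)=\{A\in U(\mathfrak n_-)\mid Av_h=0\}$. So the whole proof is to show $\sigma(\operatorname{Ann}(v_h))=0$. The statement is therefore equivalent to $\ker(q_h)$ lying in the kernel of the induced map on the Verma module. Let $\tilde\Sigma_h:M(1,h)\to\C[T]$ be $Av_{1,h}\mapsto\sigma(A)$. This is well defined because $M(1,h)\cong U(\mathfrak n_-)$ freely. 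We need $\tilde\Sigma_h(\ker q_h)=0$.

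\textbf{Describing the kernel.} For $h=36$, Proposition~\ref{prop:Milas-exact} (equation \eqref{eq:M36}) shows that $\ker q_{36}$ is generated by a singular vector $S v_{1,36}$ at level $13$. Since it is generated by a singular vector and $M(1,49)$ is Verma, we have $\ker q_{36}=U(\mathfrak n_-)Sv_{1,36}$. For $h=0$ (Milas with $m=0$), $\ker q_0$ is generated by the level-one singular vector $L(-1)v_{1,0}$, so $\ker q_0=U(\mathfrak n_-)L(-1)v_{1,0}$.

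\textbf{Vanishing of $\sigma$ on the kernel.} Every element of the kernel has the form $BSv_{1,h}$ with $B\in U(\mathfrak n_-)$. Because $\sigma$ is an algebra homomorphism (Lemma~\ref{lem:sigma}), $\tilde\Sigma_h(BSv_{1,h})=\sigma(B)\sigma(S)$. It therefore suffices to check $\sigma(S)=0$. The level of $S$ is odd ($1$ for $h=0$, $13$ for $h=36$). By Remark~\ref{rem:sigma-level}, $\sigma$ kills homogeneous elements of odd level, so $\sigma(S)=0$. This parity observation is the key point, and it avoids any need for an explicit singular vector.

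\textbf{Remaining claims.} We have $\Sigma_h(v_h)=\sigma(1)=1$. For a homogeneous $w$ of level $N$, write $w=Av_h$ with $A$ homogeneous of level $N$; this is possible by taking the degree-$N$ part of any preimage, since $q_h$ is graded. Remark~\ref{rem:sigma-level} then gives $\Sigma_h(w)=0$ when $N$ is odd and $\Sigma_h(w)\in\C T^k$ when $N=2k$.

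The only delicate point is the exact description of $\ker q_h$, which the cited Milas exact sequence supplies.
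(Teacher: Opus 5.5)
Your proposal is correct and follows essentially the same route as the paper: you define $\widetilde\Sigma_h$ on the Verma module via the PBW isomorphism, use Milas's description $\ker q_h=U(\mathfrak n_-)Sv_{1,h}$ with $S$ of odd level ($1$ or $13$), and conclude from multiplicativity and parity that $\sigma(BS)=\sigma(B)\sigma(S)=0$. You also handle the level statements through a homogeneous lift, exactly as the paper does.
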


\begin{proof}
By the PBW theorem, for each $h\in\{0,36\}$ the map
\[
U(\mathfrak n_-)\longrightarrow M(1,h),
\qquad
A\longmapsto Av_{1,h},
\]
is an isomorphism of vector spaces. Hence there is a well-defined
linear map
\[
\widetilde\Sigma_h:M(1,h)\longrightarrow\C[T],
\qquad
\widetilde\Sigma_h(Av_{1,h})=\sigma(A).
\]
It remains to show that the maximal proper submodule of $M(1,h)$ is
contained in $\ker\widetilde\Sigma_h$.

For $h=0$, Proposition~\ref{prop:Milas-exact} with $m=0$ shows that
the maximal proper submodule of $M(1,0)$ is generated by its unique,
up to scalar, singular vector at level $1$. Since
\(
M(1,0)[1]=\C L(-1)v_{1,0},
\)
this singular vector is a nonzero scalar multiple of
$L(-1)v_{1,0}$. Hence
\(
\ker q_0=U(\mathfrak n_-)L(-1)v_{1,0}.
\)
Since $\sigma(L(-1))=0$ and $\sigma$ is multiplicative,
\(
\sigma(AL(-1))=0\)\(
(A\in U(\mathfrak n_-)),
\)
so $\widetilde\Sigma_0$ vanishes on $\ker q_0$.

For $h=36$, Proposition~\ref{prop:Milas-exact} with $m=12$ shows
that the maximal proper submodule of $M(1,36)$ is generated by a
nonzero singular vector of level $13$. By the PBW theorem, this
vector may be written as
\(
Sv_{1,36}
\)
for some homogeneous $S\in U(\mathfrak n_-)$ of level $13$.
Therefore
\(
\ker q_{36}=U(\mathfrak n_-)Sv_{1,36}.
\)
Since $13$ is odd, Remark~\ref{rem:sigma-level} gives
$\sigma(S)=0$. Thus, for every $A\in U(\mathfrak n_-)$,
\(
\sigma(AS)=\sigma(A)\sigma(S)=0,
\)
so $\widetilde\Sigma_{36}$ vanishes on $\ker q_{36}$.

Therefore $\widetilde\Sigma_h$ factors uniquely through the quotient
$q_h:M(1,h)\to L(1,h)$, giving a well-defined linear map
\[
\Sigma_h:L(1,h)\longrightarrow\C[T]
\]
satisfying \eqref{eq:Sigma-def}.
Equivalently, if $A,B\in U(\mathfrak n_-)$ satisfy
$Av_h=Bv_h$ in $L(1,h)$, then
\(
(A-B)v_{1,h}\in\ker q_h,
\)
and the preceding argument gives
\(
\sigma(A-B)=0.
\)
Thus $\sigma(A)=\sigma(B)$, which proves explicitly that
\eqref{eq:Sigma-def} is independent of the choice of representative.

Finally, if $w\in L(1,h)$ has level $N$, choose a homogeneous lift
$\widetilde w\in M(1,h)[N]$. By PBW,
\(
\widetilde w=Av_{1,h}
\)
for some homogeneous $A\in U(\mathfrak n_-)$ of level $N$.
Remark~\ref{rem:sigma-level} gives
\(
\sigma(A)=0
\)
when $N$ is odd, and
\(
\sigma(A)\in\C T^k
\)
when $N=2k$. Since $\Sigma_h(w)=\sigma(A)$, the asserted level
statements follow.
\end{proof}

\subsection{The vacuum and self symbols}

Retain the highest-weight vectors $v_0\in L(1,0)$ and
$v_{36}\in L(1,36)$ fixed in Section~7.2, and set
\(
\mathbf v_0:=v_0, \mathbf v_6:=v_{36}.
\)
Thus the normalizations of $\Sigma_0$ and $\Sigma_{36}$ are
\(
\Sigma_0(\mathbf v_0)=1,
\Sigma_{36}(\mathbf v_6)=1.
\)
By Lemma~\ref{lem:lowest-coeff}, choose nonzero intertwining operators
\[
\mathcal Y_0\in
I\binom{L(1,0)}{L(1,36)\;L(1,36)},
\qquad
\mathcal Y_6\in
I\binom{L(1,36)}{L(1,36)\;L(1,36)}
\]
and rescale them so that
\begin{align}
\mathcal Y_0(\mathbf v_6,z)\mathbf v_6
&=z^{-72}
  \bigl(\mathbf v_0+\text{positive-level terms}\bigr),
\label{eq:Y0-normalization}\\
\mathcal Y_6(\mathbf v_6,z)\mathbf v_6
&=z^{-36}
  \bigl(\mathbf v_6+\text{positive-level terms}\bigr).
\label{eq:Y6-normalization}
\end{align}

\begin{defi}\label{def:uv}
Define scalars $u_n,v_n\in\C$ ($n\ge0$) by
\begin{align}
\Sigma_0(\mathcal Y_0(\mathbf v_6,z)\mathbf v_6)&=z^{-72}\sum_{n\ge0}u_n(Tz^2)^n,\quad u_0=1,\label{eq:u-series}\\
\Sigma_{36}(\mathcal Y_6(\mathbf v_6,z)\mathbf v_6)&=z^{-36}\sum_{n\ge0}v_n(Tz^2)^n,\quad v_0=1.\label{eq:v-series}
\end{align}
\end{defi}

In the vacuum channel, level-$N$ coefficients occur at $z^{-72+N}$; $\Sigma_0$ kills odd $N$, and for $N=2n$ gives
$z^{-72}u_n(Tz^2)^n$. Similarly, in the self channel the leading power is $z^{-36}$. The chosen normalizations imply that the scalar initial values are
$
u_0=v_0=1.
$

Hereafter, the bold symbols $\mathbf v_0,\mathbf v_6$ denote the fixed highest-weight vectors inherited from Section~7.2, whereas $u_n,v_n$ denote the scalar sequences in Definition~\ref{def:uv}.

\subsection{The two VOA coupling constants}

By Lemma~\ref{lem:small-invariant-degrees},
$
P_0=\C\mathbf1, P_6=\C X.
$
Applying Proposition~\ref{prop:channel-separation} with $s=t=6$ and
$r=0,6$, and using the one-dimensionality of the corresponding
intertwining-operator spaces in \eqref{eq:fusion-vacuum}, we obtain
scalars $\gamma_0,\gamma_6\in\C$ such that
\begin{align}
\Pi_0Y(X,z)X
&=\gamma_0\mathbf{1}\otimes\mathcal Y_0(\mathbf v_6,z)\mathbf v_6,
\label{eq:actual-vacuum-channel}\\
\Pi_6Y(X,z)X
&=\gamma_6X\otimes
\mathcal Y_6(\mathbf v_6,z)\mathbf v_6.
\label{eq:actual-self-channel}
\end{align}

\begin{lem}\label{lem:couplings}
$\gamma_0\neq0$ and $\gamma_6\neq0$.
\end{lem}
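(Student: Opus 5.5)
The plan is to compare the factorization \eqref{eq:actual-vacuum-channel}--\eqref{eq:actual-self-channel} with the canonical channel operators $\mathcal Y_{6,6}^{\,r}$ of Proposition~\ref{prop:channel-separation}, and then use the nonvanishing results of Section~\ref{sec:channel-separation}.

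First take $r=0$. Proposition~\ref{prop:channel-separation} with $s=t=6$ gives
\[
\Pi_0Y(X,z)X=p_{6,6}^{\,0}(X\otimes X)\otimes\mathcal Y_{6,6}^{\,0}(\mathbf v_6,z)\mathbf v_6 .
\]
Here $\mathcal Y_{6,6}^{\,0}$ is a nonzero element of the one-dimensional space $I\binom{L(1,0)}{L(1,36)\;L(1,36)}$ from \eqref{eq:fusion-vacuum}. Hence $\mathcal Y_{6,6}^{\,0}=\lambda_0\mathcal Y_0$ with $\lambda_0\neq0$. Write $p_{6,6}^{\,0}(X\otimes X)=\epsilon_0\mathbf 1$, using $P_0=\C\mathbf 1$; this is legitimate because $p_{6,6}^{\,0}$ is $I$-equivariant and so maps $P_6\otimes P_6$ into $P_0$. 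Then $\gamma_0=\epsilon_0\lambda_0$, and it suffices to show $\epsilon_0\neq0$.

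The invariant bilinear form on $E_6\cong\Sym^{12}(\C^2)$ is symmetric, since $12$ is even. The map $p_{6,6}^{\,0}$ is a nonzero multiple of this form. Concretely it is the $12$th transvectant $(f,f)_{12}$, which can be checked directly on $f=u^{11}v+11u^6v^6-uv^{11}$. The cross terms pair $u^{11}v$ with $uv^{11}$, and the middle term pairs with itself. Each contributes a product of factorials with a definite sign, and the total is a nonzero rational number. Equivalently, Corollary~\ref{cor:fixed-mode} reduces the vacuum case to the nonvanishing of $(X_{71}X)$, which is the invariant pairing $\langle X,X\rangle$ of $V_{L_2}$ up to a nonzero scalar. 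This pairing is nonzero because the invariant form on $V_{L_2}$ restricts nondegenerately to the $I$-invariants. I would prefer this second argument: $U$ is simple of CFT type, so its invariant form is nondegenerate, and it is nondegenerate on each $P_m\otimes\C\mathbf v_m$ by weight and orthogonality of distinct $SL_2$-isotypic components. Since $P_6$ is one-dimensional, $\langle X,X\rangle\neq0$.

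Next take $r=6$. Here $\gamma_6=\epsilon_6\lambda_6$, where $p_{6,6}^{\,6}(X\otimes X)=\epsilon_6X$ and $\mathcal Y_{6,6}^{\,6}=\lambda_6\mathcal Y_6$ with $\lambda_6\neq0$. The task is to show $\epsilon_6\neq0$, that is, that the Clebsch--Gordan projection $\Sym^{12}\otimes\Sym^{12}\to\Sym^{12}$ does not kill $f\otimes f$. This projection is the $6$th transvectant, so the claim is $(f,f)_6\neq0$. The invariant $(f,f)_6$ has degree $12$, so it must be a multiple of $f$. I would find the multiple by computing a single coefficient, say the coefficient of $u^{11}v$, in the style of Lemma~\ref{lem:transvectant-2}. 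One must avoid the classical vanishing of odd transvectants of a form with itself; $q=6$ is even, so there is no forced vanishing.

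The main obstacle is this explicit $6$th transvectant computation, which is finite but tedious. An alternative route uses the fact that Klein's form $f$ has Hessian $(f,f)_2$ of degree $20$ and that $(f,f)_4$ vanishes for the icosahedral form, since there is no invariant of degree $16$. These facts do not settle $(f,f)_6$, so I would simply carry out the coefficient extraction, possibly with the exact-verification program of the appendix. Once $\epsilon_0,\epsilon_6\neq0$ are established, both couplings $\gamma_0$ and $\gamma_6$ are nonzero.
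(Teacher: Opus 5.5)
Your proposal is correct and follows the paper's route: both reduce $\gamma_0,\gamma_6\neq0$, via Proposition~\ref{prop:channel-separation} and Corollary~\ref{cor:fixed-mode}, to $p_{6,6}^{\,0}(X\otimes X)\neq0$ and $p_{6,6}^{\,6}(X\otimes X)\neq0$. The self channel is settled by the extraction of the $u^{11}v$-coefficient of $(f,f)_6$ that you propose (only the two pairings of $u^{11}v$ with $11u^6v^6$ contribute, giving $(f,f)_6=-26345088000\,f$, as the paper states). For the vacuum channel, the paper cites \cite[Lemma~3.5]{DGR} for nondegeneracy of the invariant form on $P_6$, whereas you give two self-contained substitutes that are equally valid: the direct evaluation $(f,f)_{12}=12!\,\bigl(2\cdot 11!+121\cdot(6!)^2\bigr)\neq0$, and the argument identifying $X_{71}X$ with $\langle X,X\rangle\mathbf 1$ and using $G$-averaging of the invariant form of $V_{L_2}$.
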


\begin{proof}
\noindent\textbf{Vacuum channel.} The projection $p_{6,6}^{\,0}:E_6\otimes E_6\to E_0$ corresponds to $p_1:W_{13}\otimes W_{13}\to W_1$. By \cite[Lemma~3.5]{DGR}, $W_{13}$ carries a nonzero invariant bilinear form nondegenerate on the one-dimensional invariant line $W_{13}^I=P_6=\C X$; thus $p_{6,6}^{\,0}(X\otimes X)\neq0$. Corollary~\ref{cor:fixed-mode} implies the vacuum channel in $Y(X,z)X$ is nonzero, so $\gamma_0\neq0$.

\noindent\textbf{Self channel.} Under $E_6\cong\Sym^{12}(\C^2)$, $X$ corresponds to $f(u,v)=uv(u^{10}+11u^5v^5-v^{10})$. A direct sixth-transvectant calculation gives
\[
(f,f)_6=-26345088000\,f\neq0.
\]
Since $(f,f)_6$ is an invariant binary form of degree $12$ and $P_6=\C f$, this determines the displayed identity. The sixth transvectant realizes $p_{6,6}^{\,6}:E_6\otimes E_6\to E_6$ up to scalar, so $p_{6,6}^{\,6}(X\otimes X)\neq0$. Corollary~\ref{cor:fixed-mode} implies the self channel is nonzero, so $\gamma_6\neq0$.
\end{proof}

\subsection{The weight-\texorpdfstring{$74$}{74} and weight-\texorpdfstring{$76$}{76} relations}

\begin{lem}\label{lem:74-76-channels}
At total conformal weights $74$ and $76$, the only nonzero invariant
Virasoro-channel components of $Y(X,z)X$ that can occur are those with
$r=0$ and $r=6$.
\end{lem}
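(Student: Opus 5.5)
The argument mirrors the channel bookkeeping already used in Propositions~\ref{prop:z0} and \ref{prop:y-elim}. We combine three constraints on the components of $Y(X,z)X$: the Clebsch--Gordan range, invariance under $I$, and a lower bound on conformal weight.

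First, the Clebsch--Gordan range. Since $X\in P_6\subset E_6$, Proposition~\ref{prop:channel-separation} together with \eqref{eq:CG} shows that $\Pi_rY(X,z)X$ vanishes unless $0\le r\le 12$. The relevant components are $p_{6,6}^{\,r}(X\otimes X)\otimes\mathcal Y_{6,6}^{\,r}(\mathbf v_6,z)\mathbf v_6$.

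Second, invariance. Because $X\in U=V_{L_2}^{I}$ and $\Pi_r$ is $I$-equivariant, each such component lies in $(E_r\otimes L(1,r^2))^I=P_r\otimes L(1,r^2)$. Equivalently, $p_{6,6}^{\,r}(X\otimes X)\in P_r$. By Lemma~\ref{lem:small-invariant-degrees}, for $0\le r\le 12$ the space $P_r$ is nonzero only when $r\in\{0,6,10,12\}$. So at most these four channels can appear at any weight.

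Third, the weight bound. The summand $P_r\otimes L(1,r^2)$ has lowest conformal weight $r^2$, so a component of total weight $W$ in channel $r$ requires $r^2\le W$. For $r=10$ and $r=12$ we have $r^2=100$ and $r^2=144$, both exceeding $76$. Hence neither channel can contribute at total weight $74$ or $76$. Only $r=0$ and $r=6$ remain, and these sit at Virasoro levels $74,76$ and $38,40$ respectively.

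If one also wants these two surviving channels to actually be present, Lemma~\ref{lem:couplings} supplies $\gamma_0,\gamma_6\ne0$. However, the statement only asserts that no other channels occur, so this is not needed.

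The only point requiring care is the invariance step, namely that a component in channel $r$ with $P_r=0$ really vanishes. This follows directly from $\Pi_r(U)=P_r\otimes L(1,r^2)$, which was noted before Proposition~\ref{prop:triangular}. With that in hand, the rest is the elementary inequality $r^2\le 76$, which excludes $r=10$ and $r=12$.
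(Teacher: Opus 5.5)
Your proposal is correct and uses the same argument as the paper: the Clebsch--Gordan range $0\le r\le 12$, the invariance $\Pi_r(U)=P_r\otimes L(1,r^2)$ together with Lemma~\ref{lem:small-invariant-degrees}, and the weight bound $r^2\le 76$. The only difference is order: the paper applies the weight bound first to get $r\le 8$ and then invokes the invariant-degree lemma.
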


\begin{proof}
Clebsch--Gordan gives $E_6\otimes E_6=\bigoplus_{r=0}^{12}E_r$. For total weight $W\in\{74,76\}$, we need $r^2\le W<81=9^2$, so $0\le r\le8$. By Lemma~\ref{lem:small-invariant-degrees}, only $P_0$ and $P_6$ are nonzero in this range.
\end{proof}

\begin{prop}\label{prop:two-symbol-relations}
In $R(U)$,
\begin{align}
0=[X_{-3}X]&=\gamma_0u_{37}a^{37}+\gamma_6v_{19}a^{19}x,\label{eq:X-3}\\
0=[X_{-5}X]&=\gamma_0u_{38}a^{38}+\gamma_6v_{20}a^{20}x.\label{eq:X-5}
\end{align}
\end{prop}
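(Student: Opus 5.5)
The plan has three steps. First show that $X_{-3}X$ and $X_{-5}X$ lie in $C_2(U)$. Then decompose them into Virasoro channels using Lemma~\ref{lem:74-76-channels}. Finally evaluate each channel in $R(U)$ with the symbol maps of Lemma~\ref{lem:module-symbol}.

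\textbf{Vanishing.} Since $X\in U$ and $-3,-5\le-2$, Lemma~\ref{lem:deepmodes} gives $X_{-3}X,X_{-5}X\in C_2(U)$. Both classes are therefore zero.

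\textbf{Weights and channels.} By $\wt(u_nv)=\wt u+\wt v-n-1$, the vectors $X_{-3}X$ and $X_{-5}X$ have weights $74$ and $76$. By Lemma~\ref{lem:74-76-channels}, only $\Pi_0$ and $\Pi_6$ can be nonzero on them, so
\[
X_{-n}X=\Pi_0(X_{-n}X)+\Pi_6(X_{-n}X),\qquad n\in\{3,5\}.
\]
Equations \eqref{eq:actual-vacuum-channel} and \eqref{eq:actual-self-channel} say that $\Pi_0(X_{-n}X)$ is $\gamma_0\mathbf 1\otimes(\text{coefficient of }z^{n-1}\text{ in }\mathcal Y_0(\mathbf v_6,z)\mathbf v_6)$, and similarly $\Pi_6(X_{-n}X)=\gamma_6X\otimes(\ldots)$ with $\mathcal Y_6$. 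Recall that $u_m z^{-m-1}$ corresponds to $z^{m'}$ with $m'=-m-1$. The mode $X_{-n}$ therefore picks out the power $z^{n-1}$.
\begin{itemize}
\item For $n=3$: in the vacuum channel $z^{-72+N}=z^{2}$ gives $N=74=2\cdot37$. In the self channel $z^{-36+N}=z^2$ gives $N=38=2\cdot 19$.
\item For $n=5$: the same computation gives $N=76=2\cdot38$ and $N=40=2\cdot20$.
\end{itemize}

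\textbf{Passage to $R(U)$.} Take a level-$N$ coefficient $w\in L(1,h)[N]$ and write it as $w=A\mathbf v$ with $A\in U(\mathfrak n_-)$. By Lemma~\ref{lem:sigma}, $[p\otimes A\mathbf v]=\sigma(A)|_{T=a}[p]$, and by Lemma~\ref{lem:module-symbol} this equals $\Sigma_h(w)|_{T=a}[p]$. This holds for $p=\mathbf 1$ (so $[\mathbf 1]=1$) and for $p=X$ (so $[X]=x$). The identification $\mathbf 1\otimes\mathbf v_0\leftrightarrow\mathbf 1$ and $X\otimes\mathbf v_6\leftrightarrow X$ is compatible with the Virasoro action, so $\mathbf 1\otimes A\mathbf v_0=A\mathbf 1$ and $X\otimes A\mathbf v_6=AX$ inside $U$.

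By Definition~\ref{def:uv}, the symbol of the coefficient at level $2k$ is $u_kT^k$ in the vacuum channel and $v_kT^k$ in the self channel. Substituting the values of $k$ found above gives
\[
[X_{-3}X]=\gamma_0u_{37}a^{37}+\gamma_6v_{19}a^{19}x,
\]
and the analogous expression with $u_{38},v_{20}$ for $[X_{-5}X]$. Combining with the vanishing of both classes proves the proposition.

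\textbf{Main obstacle.} The delicate point is checking that $\Sigma_h$ is compatible with the embedding of $L(1,h)$ into $U$. Lemma~\ref{lem:sigma} is stated in terms of actual PBW representatives. Lemma~\ref{lem:module-symbol} shows that $\Sigma_h$ is independent of the choice of representative, and this is what makes the two descriptions agree. The exponent bookkeeping for the mode index requires only care.
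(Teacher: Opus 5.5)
Your proposal is correct and follows the paper's proof essentially step for step. You use Lemma~\ref{lem:deepmodes} for the vanishing of both classes, Lemma~\ref{lem:74-76-channels} to restrict to the $r=0,6$ channels, and extract the $z^{2}$ and $z^{4}$ coefficients from \eqref{eq:actual-vacuum-channel}--\eqref{eq:actual-self-channel}. You then read off the classes via Lemma~\ref{lem:sigma} with $\Sigma_0(\mathbf v_0)=\Sigma_{36}(\mathbf v_6)=1$. Your explicit remark that $\mathbf 1\otimes A\mathbf v_0=A\mathbf 1$ and $X\otimes A\mathbf v_6=AX$, together with the well-definedness of $\Sigma_h$, spells out a step the paper leaves implicit.
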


\begin{proof}
By Lemma~\ref{lem:74-76-channels}, only the $r=0$ and $r=6$
channels can contribute at total conformal weights $74$ and $76$.
First consider $X_{-3}X$. Since
$
\wt(X_{-3}X)=36+36+3-1=74,
$
and the mode $-3$ is the coefficient of $z^2$ in $Y(X,z)X$,
the vacuum-channel contribution is obtained from
\eqref{eq:u-series} by
$
-72+2j=2,
$
hence $j=37$. By Lemma~\ref{lem:sigma} and the normalization
$\Sigma_0(\mathbf v_0)=1$, its class in $R(U)$ is
$
\gamma_0u_{37}a^{37}.
$
Similarly, in the self channel,
$
-36+2j=2,
$
so $j=19$, and Lemma~\ref{lem:sigma}, together with
$\Sigma_{36}(\mathbf v_6)=1$, gives the class
$
\gamma_6v_{19}a^{19}x.
$
Therefore
$$
[X_{-3}X]
=\gamma_0u_{37}a^{37}
+\gamma_6v_{19}a^{19}x.
$$
Since $X_{-3}X\in C_2(U)$ by Lemma~\ref{lem:deepmodes},
this class is zero, proving \eqref{eq:X-3}.

The same argument applies to $X_{-5}X$, whose coefficient in
$Y(X,z)X$ is that of $z^4$. The equations
$
-72+2j=4, -36+2j=4
$
give $j=38$ and $j=20$, respectively. Hence
$$
[X_{-5}X]
=\gamma_0u_{38}a^{38}
+\gamma_6v_{20}a^{20}x.
$$
Again $X_{-5}X\in C_2(U)$ by Lemma~\ref{lem:deepmodes}, so this class
vanishes, yielding \eqref{eq:X-5}.
\end{proof}

\subsection{The determinant criterion}

\begin{prop}[Determinant criterion]\label{prop:det-criterion}
Define $D:=u_{37}v_{20}-u_{38}v_{19}$. If $D\neq0$, then $a^{38}=0$ and $a^{20}x=0$. Consequently, $R(U)=\Span_\C\{1,a,\ldots,a^{37},x,ax,\ldots,a^{19}x\}$ and $\dim_\C R(U)\le58$. Hence $U$ is $C_2$-cofinite.
\end{prop}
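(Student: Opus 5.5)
The plan is to treat \eqref{eq:X-3} and \eqref{eq:X-5} from Proposition~\ref{prop:two-symbol-relations} as a linear system in the unknowns $a^{38}$ and $a^{20}x$. The first relation lives one weight lower, so I will first multiply \eqref{eq:X-3} by $a$. This uses only that $R(U)$ is a commutative associative algebra and that $a\cdot(\text{class})$ is again a class, which holds because $[L(-2)u]=a[u]$. Multiplying gives
\[
0=\gamma_0u_{37}a^{38}+\gamma_6v_{19}a^{20}x ,
\]
and together with \eqref{eq:X-5},
\[
0=\gamma_0u_{38}a^{38}+\gamma_6v_{20}a^{20}x .
\]
So the vector $(\gamma_0a^{38},\gamma_6a^{20}x)$ of elements of the $\C$-vector space $R(U)_{76}$ is annihilated by the scalar matrix
\[
\begin{pmatrix}u_{37}&v_{19}\\ u_{38}&v_{20}\end{pmatrix},
\]
whose determinant is $D$. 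If $D\neq0$, this matrix is invertible over $\C$; applying its inverse gives $\gamma_0a^{38}=0$ and $\gamma_6a^{20}x=0$. Lemma~\ref{lem:couplings} gives $\gamma_0,\gamma_6\neq0$, so $a^{38}=0$ and $a^{20}x=0$.

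Next I will invoke Theorem~\ref{thm:rank2}: $R(U)=\C[a]\cdot1+\C[a]\cdot x$. Since $a^{38}=0$, $a^k=0$ for every $k\ge38$; since $a^{20}x=0$, $a^kx=0$ for every $k\ge20$. Hence $R(U)$ is spanned by $1,a,\dots,a^{37}$ together with $x,ax,\dots,a^{19}x$. That is $38+20=58$ vectors, so $\dim_\C R(U)\le58<\infty$, which is exactly $C_2$-cofiniteness of $U$.

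There is no real obstacle in this implication. The only points needing care are the weight bookkeeping and the commutativity of $R(U)$. For the weights: multiplying \eqref{eq:X-3} (weight $74$) by $a$ (weight $2$) lands in weight $76$, the weight of \eqref{eq:X-5}, so both relations are equations in $R(U)_{76}$. One must also check that the determinant of the system is $D=u_{37}v_{20}-u_{38}v_{19}$ as defined, not its negative or a transpose; the sign is in any case irrelevant for nonvanishing. The genuinely hard content, proving $D\neq0$, is not part of this statement and is deferred to the later sections.
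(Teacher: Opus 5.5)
Your proposal is correct and follows the paper's own proof: multiply \eqref{eq:X-3} by $a$ and pair it with \eqref{eq:X-5} to get a $2\times2$ system in $a^{38}$ and $a^{20}x$ with determinant $\gamma_0\gamma_6D\neq0$ (using Lemma~\ref{lem:couplings}). Theorem~\ref{thm:rank2} then gives the $58$-element spanning set.
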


\begin{proof}
Multiply \eqref{eq:X-3} by $a$: $\gamma_0u_{37}a^{38}+\gamma_6v_{19}a^{20}x=0$. Together with \eqref{eq:X-5}, this forms the system
\[
\begin{pmatrix}\gamma_0u_{37}&\gamma_6v_{19}\\\gamma_0u_{38}&\gamma_6v_{20}\end{pmatrix}
\begin{pmatrix}a^{38}\\a^{20}x\end{pmatrix}=0.
\]
The determinant is $\gamma_0\gamma_6D\neq0$ (Lemma~\ref{lem:couplings} and hypothesis), so $a^{38}=0$, $a^{20}x=0$. Theorem~\ref{thm:rank2} gives $R(U)=\C[a]\cdot1+\C[a]\cdot x$; nilpotency implies the stated spanning set of size $38+20=58$, hence $\dim_\C R(U)\le58$, i.e., $C_2$-cofiniteness.
\end{proof}

\begin{rmk}\label{rem:determinant-reduction}
Proposition~\ref{prop:det-criterion} isolates the sole remaining analytic-algebraic input: $D=u_{37}v_{20}-u_{38}v_{19}\neq0$. All prior results follow from $SL_2(\C)\times L(1,0)$ decomposition, binary icosahedral invariant theory, and intrinsic $C_2$-reduction. The remainder is purely within degenerate $c=1$ Virasoro theory: compute $u_{37},u_{38},v_{19},v_{20}$ and verify $D\neq0$.
\end{rmk}

\section{Global first-row forms and local positive-mode reconstruction}
\label{sec:local-regularization}

The generic first-row theory of Section~\ref{subsec:generic-first-row} does not apply directly at $t=1$, $\kappa=4$. We therefore separate the global parameter family, on which generic fibers exist, from its localization at $t=1$. Recall $\Lambda=\C[t,t^{-1}]$ from Section~\ref{subsec:BSA}, and set
\begin{equation}\label{eq:local-ring}
\mathcal R:=\Lambda_{(t-1)},\qquad
\mathcal K:=\operatorname{Frac}(\Lambda)=\C(t).
\end{equation}
For every $t_0\in\C^\times$, evaluation $t\mapsto t_0$ defines a ring homomorphism $\Lambda\to\C$. By contrast, $\mathcal R$ is used only near $t=1$: it consists of rational functions regular there, has maximal ideal $(t-1)\mathcal R$, and does not admit evaluation at $t_0\ne1$ in general. Generic fibers below are therefore fibers of $\Lambda$-modules; elements of localized modules are compared with them only by evaluating finitely many rational PBW coordinates away from their poles. Recall that $c(t)=13-6t-6t^{-1}$, $H(t)=42t^{-1}-6$, and all $\beta_{\mathbf p}(t)$ lie in $\Lambda$.

\subsection{Global Verma modules and first-row quotients}

Let
\(
\Vir_\Lambda:=\Lambda\otimes_\C \Vir
\)
be the scalar extension of the Virasoro Lie algebra from $\C$ to
$\Lambda$, regarded as a Lie algebra over $\Lambda$.  Thus its bracket is
the $\Lambda$-bilinear extension of the usual Virasoro bracket.  Set
\(
\mathfrak n_{-,\Lambda}
:=
\bigoplus_{n\ge1}\Lambda L(-n),
\)
and denote by
\(
U_\Lambda(\mathfrak n_-)
:=
U_\Lambda(\mathfrak n_{-,\Lambda})
\)
the universal enveloping algebra of $\mathfrak n_{-,\Lambda}$ over
$\Lambda$.

For $h(t)\in\Lambda$, let $M_\Lambda(c(t),h(t))$ be the Verma
$\Vir_\Lambda$-module generated by a highest-weight vector $v_{h(t)}$
subject to
\[
L(n)v_{h(t)}=0\qquad(n>0),
\]
\[
L(0)v_{h(t)}=h(t)v_{h(t)},\qquad
Cv_{h(t)}=c(t)v_{h(t)}.
\]
By the PBW theorem,
\(
M_\Lambda(c(t),h(t))
=
U_\Lambda(\mathfrak n_-)v_{h(t)}
\)
is free as a $\Lambda$-module, with basis
\(
\bigl\{
L(-\lambda)v_{h(t)}
\bigm|
\lambda \text{ a partition}
\bigr\},
\)
where, for
$\lambda=(\lambda_1,\ldots,\lambda_\ell)$ with
$\lambda_1\ge\cdots\ge\lambda_\ell\ge1$, we write
\(
L(-\lambda)
:=
L(-\lambda_1)\cdots L(-\lambda_\ell).
\)
Finally, write
\(
v_{12}:=v_{H(t)},
\mathbf 1:=v_0.
\)

\begin{lem}[Global BSA singularity]\label{lem:BSA-local-singular}
$S_{13}(t)v_{12}\in M_\Lambda(c(t),H(t))$ is a nonzero singular vector of level $13$.
\end{lem}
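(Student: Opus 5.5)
Nonvanishing is immediate from the coefficient of $L(-1)^{13}$. The PBW basis of $M_\Lambda(c(t),H(t))$ over $\Lambda$ is free, and by \eqref{eq:BSA-Lminusone-coefficient} the coefficient of $L(-1)^{13}v_{12}$ in $S_{13}(t)v_{12}$ is a unit of $\Lambda$. The other compositions reorder into PBW monomials, but none of them contributes to $L(-1)^{13}$. Indeed, commuting $L(-p_i)$ past one another only creates modes $L(-(p_i+p_j))$, which decreases the number of factors, and the only monomial of length $13$ at level $13$ is $L(-1)^{13}$. Hence that PBW coordinate is exactly $\beta_{(1^{13})}(t)\neq0$. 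Homogeneity of level $13$ is clear from $\sum p_i=13$.

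For singularity, it suffices to show $L(1)S_{13}(t)v_{12}=0$ and $L(2)S_{13}(t)v_{12}=0$, because $L(1)$ and $L(2)$ generate all $L(n)$ with $n>0$. Each vector $L(n)S_{13}(t)v_{12}$ lies in the free $\Lambda$-module $M_\Lambda(c(t),H(t))[13-n]$, so its PBW coordinates are Laurent polynomials $f_\lambda(t)\in\Lambda$. A Laurent polynomial vanishing on an infinite set is zero, so it is enough to show the coordinates vanish at every admissible generic $t_0$, that is, $4t_0\in(0,\infty)\setminus\Q$, which is an infinite set.

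To pass to a fiber, note that evaluation $\Lambda\to\C$ at $t_0$ sends $M_\Lambda(c(t),H(t))\otimes_\Lambda\C$ isomorphically onto $M(c(t_0),H(t_0))$. This identification is compatible with the Virasoro action, since the structure constants involve only $c(t)$ and $H(t)$, and with PBW bases. So the specialization of $L(n)S_{13}(t)v_{12}$ is $L(n)S_{13}(t_0)v_{c(t_0),H(t_0)}$. With $\kappa_0=4t_0$ we have $c(t_0)=c(\kappa_0)$ and $H(t_0)=h(12;\kappa_0)$. By \eqref{eq:BSA-normalization-comparison}, $S_{13}(t_0)$ is the nonzero multiple $t_0^{12}/(12!)^2$ of $S^{\mathrm{KK}}_{12}(\kappa_0)$. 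By \cite[Lemma~4.1 and (4.12)]{KoshidaKytola}, $S^{\mathrm{KK}}_{12}(\kappa_0)v$ is singular, so its images under $L(1)$ and $L(2)$ vanish. Therefore every $f_\lambda(t_0)=0$, hence $f_\lambda=0$, and $S_{13}(t)v_{12}$ is singular over $\Lambda$.

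The main obstacle is making sure the specialization is legitimate. This needs two checks. First, the normalization comparison \eqref{eq:BSA-normalization-comparison} must hold identically in $\Lambda$ rather than only numerically; this follows from the coefficientwise identity already verified. Second, the Koshida--Kyt\"ol\"a formula must be stated for exactly this composition-sum form, which is why the paper records it in that normalization. If one preferred to avoid relying on the external source, the fallback would be to verify $L(1)$ and $L(2)$ annihilation symbolically by computer over $\Q(t)$; this is finite but large, with $2^{12}$ terms.
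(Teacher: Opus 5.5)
Your proof is correct and follows the same route as the paper. You specialize at the infinitely many admissible generic $t_0$ and transfer Koshida--Kyt\"ol\"a's singularity through the unit normalization factor $t^{12}/(12!)^2$. You then conclude that the Laurent-polynomial PBW coordinates of $L(m)S_{13}(t)v_{12}$ vanish identically, and you get nonvanishing from the unit coefficient of $L(-1)^{13}$.

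Two small differences from the paper are harmless. First, you reduce to $L(1)$ and $L(2)$, whereas the paper treats each $1\le m\le 13$ directly. Second, you spell out why no composition other than $(1^{13})$ contributes to the PBW coordinate of $L(-1)^{13}$, namely that reordering never increases length. The paper uses this step without stating it.
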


\begin{proof}
All coefficients of $S_{13}(t)$ lie in
$\Lambda=\C[t,t^{-1}]$.  For every admissible generic
$t_0\in\C^\times$, base change along the evaluation homomorphism
\(
\Lambda\longrightarrow\C, t\longmapsto t_0,
\)
gives a natural identification
\[
M_\Lambda(c(t),H(t))
\otimes_{\Lambda,t\mapsto t_0}\C
\cong
M(c(t_0),H(t_0)).
\]
Let
\(
v_{12}(t_0):=v_{12}\otimes 1
\)
denote the image of the global highest-weight vector in this specialized
Verma module.  Under the same base change, $S_{13}(t)$ specializes to
$S_{13}(t_0)$.  By the normalization comparison
\eqref{eq:BSA-normalization-comparison}, $S_{13}(t_0)$ is a nonzero
scalar multiple of the first-row Benoit--Saint-Aubin singular operator
for $\lambda=12$.  Hence
\(
L(m)S_{13}(t_0)v_{12}(t_0)=0 (m>0)
\)
by \cite[Lemma~4.1 and (4.12)]{KoshidaKytola}.

We now prove that the corresponding vector over $\Lambda$ is singular.
It is enough to consider $1\le m\le13$, since for $m>13$ the operator
$L(m)$ sends a level-$13$ vector to a negative level and therefore acts
by zero.  By PBW,
\[
L(m)S_{13}(t)v_{12}
=
\sum_{\lambda\vdash 13-m}
c_{\lambda,m}(t)L(-\lambda)v_{12},
\qquad
c_{\lambda,m}(t)\in\Lambda.
\]
Specializing this identity at an admissible generic $t_0$ gives
\[
L(m)S_{13}(t_0)v_{12}(t_0)
=
\sum_{\lambda\vdash 13-m}
c_{\lambda,m}(t_0)
L(-\lambda)v_{12}(t_0).
\]
The left-hand side vanishes by the preceding paragraph.  On the other
hand, the vectors
\(
\bigl\{
L(-\lambda)v_{12}(t_0)
\bigm|
\lambda\vdash 13-m
\bigr\}
\)
are linearly independent by the PBW theorem for the specialized Verma
module $M(c(t_0),H(t_0))$.  Therefore
\(
c_{\lambda,m}(t_0)=0
\)
for every partition $\lambda\vdash 13-m$ and every admissible generic
$t_0$.

There are infinitely many admissible generic values of $t_0$, while
each $c_{\lambda,m}(t)$ belongs to the Laurent polynomial ring
$\Lambda=\C[t,t^{-1}]$.  A nonzero Laurent polynomial has only finitely
many zeros in $\C^\times$.  Hence
\(
c_{\lambda,m}(t)=0
\)
identically for every $\lambda$ and $m>0$.  Thus
\(
L(m)S_{13}(t)v_{12}=0 (m>0),
\)
so $S_{13}(t)v_{12}$ is singular.

Finally, the coefficient of $L(-1)^{13}$ in $S_{13}(t)$ is
\(
\frac{t^{12}}{(12!)^2}\in\Lambda^\times.
\)
Since the PBW monomials $L(-\lambda)v_{12}$ form a free
$\Lambda$-basis of $M_\Lambda(c(t),H(t))$, this unit coefficient implies
\(
S_{13}(t)v_{12}\neq0.
\)
Therefore $S_{13}(t)v_{12}$ is a nonzero singular vector of level $13$.
\end{proof}

Define the global quotients
\begin{align}
Q_{12,\Lambda}&:=M_\Lambda(c(t),H(t))/U_\Lambda(\mathfrak n_-)S_{13}(t)v_{12},\label{eq:Q12A}\\
Q_{0,\Lambda}&:=M_\Lambda(c(t),0)/U_\Lambda(\mathfrak n_-)L(-1)\mathbf1.\label{eq:Q0A}
\end{align}
Here \(L(-1)\mathbf1\in M_\Lambda(c(t),0)\) is the nonzero
level-\(1\) singular vector, since
\[
L(1)L(-1)\mathbf1=2L(0)\mathbf1=0,
\qquad
L(n)L(-1)\mathbf1=(n+1)L(n-1)\mathbf1=0
\quad (n\ge2).
\]
Thus the denominators in \eqref{eq:Q12A} and \eqref{eq:Q0A}
are Virasoro submodules, by Lemma~\ref{lem:BSA-local-singular}
and the preceding observation, respectively. For \(t_0\in\C^\times\), let \(\C_{t_0}\) denote \(\C\) regarded as a \(\Lambda\)-algebra via the specialization \(t\mapsto t_0\), and set
$
Q_{\tau,\Lambda}(t_0)
:=Q_{\tau,\Lambda}\otimes_\Lambda \C_{t_0}.
$

By right exactness of base change, \(Q_{\tau,\Lambda}(t_0)\) is naturally identified with the quotient of the specialized Verma module by the image of the corresponding specialized singular-vector submodule. Hence, if \(t_0\) is admissible generic, \eqref{eq:BSA-normalization-comparison} yields
$
Q_{12,\Lambda}(t_0)\cong Q_{12}(4t_0),
$
and similarly
$
Q_{0,\Lambda}(t_0)\cong Q_0(4t_0).
$

\subsection{Global PBW bases and localization}

For a partition $\lambda$, let $m_1(\lambda)$ be the multiplicity of part $1$, and let $p(N)$ be the number of partitions of $N$ ($p(N)=0$ for $N<0$).

\begin{lem}[Freeness]\label{lem:freeness}
For every $N$, the set
\[
\mathcal B_N=\{L(-\lambda)v_{12}\mid\lambda\vdash N,\ m_1(\lambda)\le12\}
\]
is a $\Lambda$-basis of $Q_{12,\Lambda}[N]$, of rank $p(N)-p(N-13)$. Likewise
$\mathcal B_N^{(0)}=\{L(-\lambda)\mathbf1\mid\lambda\vdash N,\ m_1(\lambda)=0\}$
is a $\Lambda$-basis of $Q_{0,\Lambda}[N]$, of rank $p(N)-p(N-1)$.
After localization,
\begin{equation}\label{eq:localized-quotients}
Q_{\tau,\mathcal R}:=\mathcal R\otimes_\Lambda Q_{\tau,\Lambda}
\end{equation}
has the same PBW bases over $\mathcal R$ in every degree.
\end{lem}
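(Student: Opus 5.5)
Write $M_\Lambda:=M_\Lambda(c(t),H(t))$ and $K_N:=(U_\Lambda(\mathfrak n_-)S_{13}(t)v_{12})[N]$, so that $Q_{12,\Lambda}[N]=M_\Lambda[N]/K_N$. The plan is to show that $K_N$ is a free $\Lambda$-submodule with a unimodular triangular basis relative to the PBW basis of $M_\Lambda[N]$, and that the complementary PBW monomials are exactly $\mathcal B_N$.

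\textbf{Step 1: the submodule is free.} Since $U_\Lambda(\mathfrak n_-)$ is a domain and $M_\Lambda$ is free over it of rank one, the map $A\mapsto AS_{13}(t)v_{12}$ is injective. Hence $K_N$ is free with basis $\{L(-\mu)S_{13}(t)v_{12}\mid\mu\vdash N-13\}$, of rank $p(N-13)$.

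\textbf{Step 2: a monomial order making the generators triangular.} I will choose an order on PBW monomials with the following property: for each $\mu\vdash N-13$, the element $L(-\mu)S_{13}(t)v_{12}$ has leading monomial $L(-\mu)L(-1)^{13}v_{12}$. After reordering this is the partition $\mu\cup(1^{13})$, and its leading coefficient is the unit $t^{12}/(12!)^2$ from \eqref{eq:BSA-Lminusone-coefficient}.

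A natural candidate is to order first by the number of parts (length), and then by $m_1$. Reordering commutators in $U(\mathfrak n_-)$ strictly shorten monomials. Moreover, $L(-1)^{13}$ is the unique longest term of $S_{13}(t)$. So, modulo shorter monomials, $L(-\mu)S_{13}(t)=\beta_{(1^{13})}(t)\,L(-\mu)L(-1)^{13}+(\text{terms of length}<\ell(\mu)+13)$. The map $\mu\mapsto\mu\cup(1^{13})$ is a bijection onto the set of partitions of $N$ with $m_1\ge13$, and it is injective on leading monomials.

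\textbf{Step 3: unitriangular elimination.} Because the leading coefficients are units in $\Lambda$, we can reduce row by row. This shows that
\[
M_\Lambda[N]=K_N\oplus\bigoplus_{\lambda\vdash N,\ m_1(\lambda)\le12}\Lambda\,L(-\lambda)v_{12}.
\]
Concretely: the elements $L(-\mu)S_{13}(t)v_{12}$ together with $\mathcal B_N$ form a matrix that is triangular with unit diagonal with respect to the full PBW basis. They therefore form a basis of $M_\Lambda[N]$. Consequently the images of $\mathcal B_N$ form a $\Lambda$-basis of the quotient, of rank $p(N)-p(N-13)$.

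\textbf{Vacuum case.} The same argument applies with $S=L(-1)$, whose leading coefficient is $1$. This yields the basis $\mathcal B^{(0)}_N$, consisting of the monomials with $m_1=0$, of rank $p(N)-p(N-1)$.

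\textbf{Localization.} Localization $\mathcal R\otimes_\Lambda-$ is exact and preserves bases of free modules. Hence $Q_{\tau,\mathcal R}[N]$ is free over $\mathcal R$ with the same PBW bases.

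\textbf{Main obstacle.} The main difficulty is verifying the leading-term claim in Step 2. One must check that lengths never increase under reordering. One must also check that a monomial of maximal length $\ell(\mu)+13$ arises only from the $L(-1)^{13}$ term, with no cancellation. This holds because $[L(-m),L(-n)]=(n-m)L(-m-n)$ drops length by one.

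A secondary difficulty arises within a fixed length. Two different $\mu$ could yield leading monomials that compete, but since $\mu\mapsto\mu\cup(1^{13})$ is injective, these monomials are distinct basis elements. Thus the matrix restricted to maximal-length terms is a permutation times units, which suffices for triangularity after ordering by length.
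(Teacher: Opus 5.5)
Your argument is correct, but it proves linear independence by a different route from the paper.

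\textbf{Spanning.} Here the two proofs agree in spirit. The paper eliminates $L(-1)^{13}v_{12}$ using the unit coefficient $t^{12}/(12!)^2$. It then inducts on the number of $L(-1)$ factors, noting that PBW reordering never creates new $L(-1)$'s.

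\textbf{Independence.} The paper argues indirectly. It specializes a putative nontrivial $\Lambda$-relation at an admissible generic $t_0$ where some coefficient does not vanish. It then contradicts the Koshida--Kyt\"ol\"a count $\dim Q_{12}(4t_0)[N]=p(N)-p(N-13)$, which rests on $S_{13}(t_0)v$ generating the maximal proper submodule of the generic Verma module.

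Your unitriangularity argument works instead with respect to any total order refining length. It produces the decomposition $M_\Lambda[N]=K_N\oplus\bigoplus_{m_1(\lambda)\le12}\Lambda L(-\lambda)v_{12}$ directly over $\Lambda$, giving spanning and independence at once. It uses neither generic fibers, nor the Koshida--Kyt\"ol\"a character formula, nor even the singularity of $S_{13}(t)v_{12}$. It needs only that $S_{13}(t)$ is homogeneous of degree $13$ with a unit $L(-1)^{13}$-coefficient. So it works verbatim over any base where that coefficient is invertible, and it shows that $K_N$ is a free direct summand.

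\textbf{Comparison.} The paper's route ties the rank to the generic first-row modules that it uses later anyway. For this lemma, however, your argument is the more elementary and self-contained one.

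\textbf{Small simplifications.}
\begin{itemize}
\item Your Step~1 (the domain argument) is superfluous once triangularity is established.
\item The secondary ordering by $m_1$ is unnecessary.
\item The leading term needs no reordering at all: $L(-\mu)L(-1)^{13}v_{12}$ is already the PBW monomial $L(-(\mu\cup 1^{13}))v_{12}$. Hence the same-length block is diagonal with unit entries, not merely a permutation times units.
\end{itemize}
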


\begin{proof}
\noindent\textbf{Step 1: spanning over $\Lambda$.} In $S_{13}(t)$, the term indexed by $(1^{13})$ is $c(t)L(-1)^{13}$ with $c(t)=t^{12}/(12!)^2\in\Lambda^\times$; every other PBW monomial has at most $12$ factors $L(-1)$. Thus the quotient relation solves $L(-1)^{13}v_{12}$ as a $\Lambda$-linear combination with fewer $L(-1)$ factors. More precisely, let an ordered PBW monomial contain $r\ge13$ factors $L(-1)$ and write it as $A L(-1)^{13}v_{12}$, where $A$ contains the remaining $r-13$ factors $L(-1)$. After substituting the singular-vector relation, every resulting term contains at most $(r-13)+12=r-1$ factors $L(-1)$. When it is reordered into PBW form, each commutator is of the form $[L(-m),L(-n)]=(n-m)L(-(m+n))$ with $m+n\ge2$, and therefore creates no new $L(-1)$. The number of $L(-1)$ factors thus decreases strictly at every elimination, so the process terminates. It follows that $\mathcal B_N$ spans $Q_{12,\Lambda}[N]$. Its cardinality is $p(N)-p(N-13)$.

\noindent\textbf{Step 2: independence over \(\Lambda\).}
For \(t_0\in\C^\times\), let
$$
\mathcal B_N(t_0)
:=
\{\,L(-\lambda)v_{12}\otimes 1
\mid \lambda\vdash N,\ m_1(\lambda)\le12\,\}
\subset Q_{12,\Lambda}(t_0)[N]
$$
denote the specialization of \(\mathcal B_N\). Since \(\mathcal B_N\) spans \(Q_{12,\Lambda}[N]\) by Step~1, tensoring the corresponding surjection
$
\bigoplus_{b\in\mathcal B_N}\Lambda b
\twoheadrightarrow Q_{12,\Lambda}[N]
$
with \(\C_{t_0}\) shows that
$
\mathcal B_N(t_0)
=\{\,b\otimes1\mid b\in\mathcal B_N\,\}
$
spans \(Q_{12,\Lambda}(t_0)[N]\) for every \(t_0\in\C^\times\).
Suppose, toward a contradiction, that \(\mathcal B_N\) is not \(\Lambda\)-linearly independent. Then there exist Laurent polynomials
$
a_\lambda(t)\in\Lambda,
\lambda\vdash N, m_1(\lambda)\le12,
$
not all zero, such that
$$
\sum_{\substack{\lambda\vdash N\\m_1(\lambda)\le12}}
a_\lambda(t)L(-\lambda)v_{12}=0
\qquad\text{in }Q_{12,\Lambda}[N].
$$
Choose an admissible generic \(t_0\in\C^\times\) such that not all
\(a_\lambda(t_0)\) vanish. After base change along
\(\Lambda\to\C\), \(t\mapsto t_0\), this relation specializes to the nontrivial \(\C\)-linear relation
$$
\sum_{\substack{\lambda\vdash N\\m_1(\lambda)\le12}}
a_\lambda(t_0)\bigl(L(-\lambda)v_{12}\otimes1\bigr)=0
\qquad\text{in }Q_{12,\Lambda}(t_0)[N].
$$
On the other hand, for admissible generic \(t_0\) we have
$
Q_{12,\Lambda}(t_0)\cong Q_{12}(4t_0),
$
and \cite[Lemma~4.1]{KoshidaKytola} gives
$
\dim_\C Q_{12,\Lambda}(t_0)[N]
=p(N)-p(N-13).
$
The spanning family \(\mathcal B_N(t_0)\) is indexed by exactly
\(p(N)-p(N-13)\) partitions. Hence it is a basis of
\(Q_{12,\Lambda}(t_0)[N]\), contradicting the nontrivial specialized relation above. Therefore \(\mathcal B_N\) is \(\Lambda\)-linearly independent. Together with Step~1, this proves that \(\mathcal B_N\) is a \(\Lambda\)-basis of \(Q_{12,\Lambda}[N]\).

\noindent\textbf{Step 3: the vacuum quotient.}
Let
$
\mathcal C_N
:=
\{L(-\lambda)\mathbf1\mid \lambda\vdash N,\ m_1(\lambda)\ge1\}.
$
We first observe that
$
\bigl(U_\Lambda(\mathfrak n_-)L(-1)\mathbf1\bigr)[N]
=
\operatorname{Span}_\Lambda\mathcal C_N.
$
Indeed, if \(m_1(\lambda)\ge1\), then
$
L(-\lambda)\mathbf1
=
L(-\mu)L(-1)\mathbf1
$
for a partition \(\mu\vdash N-1\), so every element of \(\mathcal C_N\) lies in
\(U_\Lambda(\mathfrak n_-)L(-1)\mathbf1\). Conversely, after expressing any
\(D\in U_\Lambda(\mathfrak n_-)\) in the PBW basis, every term of
\(DL(-1)\mathbf1\) is an ordered PBW monomial containing at least one
factor \(L(-1)\). Hence it lies in \(\operatorname{Span}_\Lambda\mathcal C_N\).
Since the full PBW monomials
$
\{L(-\lambda)\mathbf1\mid\lambda\vdash N\}
$
form a \(\Lambda\)-basis of \(M_\Lambda(c(t),0)[N]\), we therefore have the direct-sum decomposition
$$
M_\Lambda(c(t),0)[N]
=
\operatorname{Span}_\Lambda\mathcal B_N^{(0)}
\oplus
\bigl(U_\Lambda(\mathfrak n_-)L(-1)\mathbf1\bigr)[N].
$$
Passing to the quotient shows that the images of
$
\mathcal B_N^{(0)}
=
\{L(-\lambda)\mathbf1\mid\lambda\vdash N,\ m_1(\lambda)=0\}
$
form a \(\Lambda\)-basis of \(Q_{0,\Lambda}[N]\). Their number is
$
p(N)-p(N-1).
$

\noindent\textbf{Step 4: localization.}
Since \(Q_{\tau,\Lambda}[N]\) is free over \(\Lambda\) with the basis obtained above, base change from \(\Lambda\) to \(\mathcal R\) gives
$
Q_{\tau,\mathcal R}[N]
=
\mathcal R\otimes_\Lambda Q_{\tau,\Lambda}[N],
$
and the corresponding vectors \(1\otimes b\), with \(b\) ranging over the appropriate PBW basis above, form an \(\mathcal R\)-basis. Thus \(Q_{\tau,\mathcal R}\) has the asserted PBW bases in every degree.
\end{proof}

For later use, put
\[
\Vir_{\mathcal R}:=\mathcal R\otimes_\Lambda\Vir_\Lambda,\qquad
U_{\mathcal R}(\mathfrak n_-)
:=\mathcal R\otimes_\Lambda U_\Lambda(\mathfrak n_-),\qquad
M_{\mathcal R}(c(t),h(t))
:=\mathcal R\otimes_\Lambda M_\Lambda(c(t),h(t)).
\]
Thus \(Q_{\tau,\mathcal R}\) is the localization of the corresponding global quotient, rather than a new quotient defined independently over \(\mathcal R\). Since localization is flat, applying \(\mathcal R\otimes_\Lambda-\) to the defining quotient sequences gives
$$
Q_{12,\mathcal R}
\cong
M_{\mathcal R}(c(t),H(t))
\big/
U_{\mathcal R}(\mathfrak n_-)S_{13}(t)v_{12},
$$
and
$$
Q_{0,\mathcal R}
\cong
M_{\mathcal R}(c(t),0)
\big/
U_{\mathcal R}(\mathfrak n_-)L(-1)\mathbf1.
$$
In particular, localization preserves the global quotient presentations.

\subsection{The special fiber}

\begin{prop}\label{prop:special-fibers}
There are natural Virasoro-module isomorphisms \begin{equation}\label{eq:special-Q12}
Q_{12,\mathcal R}/(t-1)Q_{12,\mathcal R}
\cong
L(1,36),
\end{equation}
and 
\begin{equation}\label{eq:special-Q0}
Q_{0,\mathcal R}/(t-1)Q_{0,\mathcal R}
\cong
L(1,0).
\end{equation}
\end{prop}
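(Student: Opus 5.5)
The plan is to compute the special fiber by right exactness of base change, and then to identify the resulting specialized quotient with the irreducible module via Milas' exact sequence (Proposition~\ref{prop:Milas-exact}).

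\textbf{Step 1: the special fiber as a Verma quotient.} The map $\mathcal R\to\C$, $t\mapsto1$, has kernel $(t-1)\mathcal R$. Tensoring the localized presentation
\[
Q_{12,\mathcal R}\cong M_{\mathcal R}(c(t),H(t))/U_{\mathcal R}(\mathfrak n_-)S_{13}(t)v_{12}
\]
with $\mathcal R/(t-1)$ gives
\[
Q_{12,\mathcal R}/(t-1)Q_{12,\mathcal R}\cong M(1,36)/U(\mathfrak n_-)S_{13}(1)v_{1,36}.
\]
Here we use $c(1)=1$, $H(1)=36$, and $M_{\mathcal R}/(t-1)M_{\mathcal R}\cong M(1,36)$, which follows from freeness of the PBW basis. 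The image of the submodule is the submodule generated by the specialized vector. The identical argument gives $Q_{0,\mathcal R}/(t-1)\cong M(1,0)/U(\mathfrak n_-)L(-1)v_{1,0}$.

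\textbf{Step 2: the vacuum case.} By Proposition~\ref{prop:Milas-exact} with $m=0$, the maximal proper submodule of $M(1,0)$ is generated by its level-$1$ singular vector, which is a multiple of $L(-1)v_{1,0}$. This immediately gives \eqref{eq:special-Q0}.

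\textbf{Step 3: the level-$13$ singular vector survives specialization.} Specializing $L(m)S_{13}(t)v_{12}=0$ from Lemma~\ref{lem:BSA-local-singular} shows that $S_{13}(1)v_{1,36}$ is annihilated by all $L(m)$ with $m>0$. The coefficient of $L(-1)^{13}$ is $1/(12!)^2\neq0$, so $S_{13}(1)v_{1,36}$ is nonzero in the PBW basis of $M(1,36)$. It is therefore a nonzero singular vector at level $13$. By Proposition~\ref{prop:Milas-exact} with $m=12$, such a vector is unique up to scalar and generates the maximal proper submodule, whose quotient is $L(1,36)$. This proves \eqref{eq:special-Q12}.

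\textbf{Consistency check.} One can confirm the result by comparing ranks. Lemma~\ref{lem:freeness} shows that the special fiber has dimension $p(N)-p(N-13)$ in each degree, which agrees with \eqref{eq:L136-level-dim}.

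\textbf{Main obstacle.} The only delicate point is Step 3: nonvanishing at the non-generic value $t=1$. The unit coefficient of $L(-1)^{13}$ in $\Lambda$ handles this; without it the specialized vector could vanish, or could land in a deeper submodule. Naturality holds because every identification above is induced by the canonical maps on highest-weight vectors.
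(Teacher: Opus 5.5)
Your proposal is correct and follows essentially the same route as the paper: right-exact base change identifies the special fibers with $M(1,36)/U(\mathfrak n_-)S_{13}(1)v_{1,36}$ and $M(1,0)/U(\mathfrak n_-)L(-1)\mathbf 1$. Specializing the global singularity identity of Lemma~\ref{lem:BSA-local-singular}, together with the unit coefficient $1/(12!)^2$ of $L(-1)^{13}$, shows that $S_{13}(1)v_{1,36}$ is a nonzero singular vector, and Proposition~\ref{prop:Milas-exact} with $m=0$ and $m=12$ then finishes both cases. Your rank comparison with Lemma~\ref{lem:freeness} is a harmless extra consistency check that the paper does not use.
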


\begin{proof}
Let
\(
\C_1:=\mathcal R/(t-1)\mathcal R .
\)
Since
\(
c(1)=1, H(1)=36,
\)
the highest-weight relations defining
\(M_{\mathcal R}(c(t),H(t))\) specialize, under
\(
\mathcal R\longrightarrow\C_1\cong\C,
 t\longmapsto1,
\)
to those defining \(M(1,36)\).
Moreover, by the PBW theorem,
\(M_{\mathcal R}(c(t),H(t))\) is free over \(\mathcal R\) with basis
\(
\bigl\{
L(-\lambda)v_{12}
\bigm|
\lambda\text{ a partition}
\bigr\}.
\)
Therefore specialization preserves this PBW basis and yields a
natural isomorphism
\[
M_{\mathcal R}(c(t),H(t))
\otimes_{\mathcal R}\C_1
\cong
M(1,36),
\qquad
v_{12}\otimes1\longmapsto v_{1,36}.
\]
Let
\(
v_{1,36}:=v_{12}\otimes1
\)
denote the image of the global highest-weight vector under this
identification.
We first show that the specialization
\(
S_{13}(1)v_{1,36}\in M(1,36)
\)
is a nonzero singular vector of level \(13\).  Notice that this does
not follow by applying the generic Benoit--Saint-Aubin formula directly
at \(t=1\).  Instead, we use the global identity established in
Lemma~\ref{lem:BSA-local-singular}.  Namely, that lemma proves in the
\(\Lambda\)-Verma module \(M_\Lambda(c(t),H(t))\) that
\(
L(m)S_{13}(t)v_{12}=0
 (m>0).
\)
After extension of scalars from \(\Lambda\) to \(\mathcal R\), the same
identity holds in \(M_{\mathcal R}(c(t),H(t))\).  Specializing at
\(t=1\) therefore gives
\[
L(m)S_{13}(1)v_{1,36}
=
\bigl(L(m)S_{13}(t)v_{12}\bigr)\otimes1
=0
\qquad (m>0).
\]
Hence \(S_{13}(1)v_{1,36}\) is singular.

It remains to check that this vector does not vanish under
specialization.  By \eqref{eq:BSA-Lminusone-coefficient}, the coefficient of
\(L(-1)^{13}\) in \(S_{13}(t)\) is
\(
\frac{t^{12}}{(12!)^2}.
\)
Hence its specialization at \(t=1\) is
\(
\frac{1}{(12!)^2}\neq0.
\)
Since the PBW monomials
\(
\{L(-\lambda)v_{1,36}\mid\lambda\vdash13\}
\)
are linearly independent in \(M(1,36)\), we obtain
\(
S_{13}(1)v_{1,36}\neq0.
\)
Thus \(S_{13}(1)v_{1,36}\) is a nonzero level-\(13\) singular vector
of \(M(1,36)\).

Now Proposition~\ref{prop:Milas-exact}, with \(m=12\), gives the exact
sequence
\[
0\longrightarrow M(1,49)
\longrightarrow M(1,36)
\longrightarrow L(1,36)
\longrightarrow0,
\]
and states that the maximal proper submodule of \(M(1,36)\) is
generated by its unique, up to a nonzero scalar, singular vector at
level \(13\).  Since \(S_{13}(1)v_{1,36}\) is such a vector, we obtain
\[
U(\mathfrak n_-)S_{13}(1)v_{1,36}
=
\operatorname{Ker}\bigl(M(1,36)\twoheadrightarrow L(1,36)\bigr).
\]
Hence
\[
M(1,36)\big/
U(\mathfrak n_-)S_{13}(1)v_{1,36}
\cong L(1,36).
\]
Since
\(
\C_1=\mathcal R/(t-1)\mathcal R,
\)
the standard identification
\(
M\otimes_{\mathcal R}\mathcal R/(t-1)\mathcal R
\cong M/(t-1)M
\)
gives
\[
Q_{12,\mathcal R}/(t-1)Q_{12,\mathcal R}
\cong
Q_{12,\mathcal R}\otimes_{\mathcal R}\C_1.
\]
Put
\(
N_{\mathcal R}
:=
U_{\mathcal R}(\mathfrak n_-)
S_{13}(t)v_{12}.
\)
From the quotient presentation
\(
Q_{12,\mathcal R}
\cong
M_{\mathcal R}(c(t),H(t))/N_{\mathcal R},
\)
right exactness of tensor product gives
\[
Q_{12,\mathcal R}\otimes_{\mathcal R}\C_1
\cong
\frac{
M_{\mathcal R}(c(t),H(t))
\otimes_{\mathcal R}\C_1
}{
\operatorname{Im}
\bigl(
N_{\mathcal R}\otimes_{\mathcal R}\C_1
\to
M_{\mathcal R}(c(t),H(t))
\otimes_{\mathcal R}\C_1
\bigr)
}.
\]
Under the natural identification
\[
M_{\mathcal R}(c(t),H(t))
\otimes_{\mathcal R}\C_1
\cong M(1,36),
\]
the image in the denominator is precisely
\(
U(\mathfrak n_-)S_{13}(1)v_{1,36}.
\)
Indeed, specialization sends
\[
u(t)S_{13}(t)v_{12}
\longmapsto
u(1)S_{13}(1)v_{1,36}
\]
for \(u(t)\in U_{\mathcal R}(\mathfrak n_-)\), and every element of
\(U(\mathfrak n_-)S_{13}(1)v_{1,36}\) arises in this way.
Consequently,
\[
Q_{12,\mathcal R}\otimes_{\mathcal R}\C_1
\cong
M(1,36)\big/
U(\mathfrak n_-)S_{13}(1)v_{1,36}\cong L(1,36).
\]
For the vacuum quotient, the same specialization gives
\[
M_{\mathcal R}(c(t),0)\otimes_{\mathcal R}\C_1
\cong M(1,0),
\]
with \(\mathbf1\otimes1\) identified with the highest-weight vector
\(\mathbf1\) of \(M(1,0)\).  Therefore
\[
Q_{0,\mathcal R}/(t-1)Q_{0,\mathcal R}
\cong
M(1,0)\big/U(\mathfrak n_-)L(-1)\mathbf1.
\]
Proposition~\ref{prop:Milas-exact}, now with \(m=0\), states that
\(L(-1)\mathbf1\) generates the maximal proper submodule of \(M(1,0)\).
Consequently,
\(
M(1,0)\big/U(\mathfrak n_-)L(-1)\mathbf1
\cong L(1,0),
\)
and hence
\(
Q_{0,\mathcal R}/(t-1)Q_{0,\mathcal R}
\cong L(1,0).
\)
\end{proof}

\subsection{The symbol over the local family}

Set
$
\mathfrak n_{-,\mathcal R}
:=
\mathcal R\otimes_\Lambda\mathfrak n_{-,\Lambda}
=
\bigoplus_{n\ge1}\mathcal R L(-n),
$
so that \(U_{\mathcal R}(\mathfrak n_-)\) is canonically the universal enveloping algebra of \(\mathfrak n_{-,\mathcal R}\) over \(\mathcal R\). Define an \(\mathcal R\)-linear map
$
\ell_{\mathcal R}:\mathfrak n_{-,\mathcal R}\longrightarrow \mathcal R[T]
$
by
$
\ell_{\mathcal R}(L(-2))=T,
\ell_{\mathcal R}(L(-n))=0(n\neq2),
$
where \(\mathcal R[T]\) is graded by \(\deg T=2\) (with \(\mathcal R\) in degree \(0\)) and is regarded as an abelian Lie algebra over \(\mathcal R\). Since
$$
[L(-m),L(-n)]=(n-m)L(-(m+n)),
$$
we have \(\ell_{\mathcal R}([L(-m),L(-n)])=0\): indeed, the only possible case in which \(\ell_{\mathcal R}(L(-(m+n)))\neq0\) is \(m+n=2\), which forces \(m=n=1\), and then \(n-m=0\). Thus \(\ell_{\mathcal R}\) is a Lie algebra homomorphism. By the universal property of the enveloping algebra, it extends uniquely to an \(\mathcal R\)-algebra homomorphism
$
\sigma_{\mathcal R}:U_{\mathcal R}(\mathfrak n_-)\longrightarrow\mathcal R[T]
$
satisfying
$
\sigma_{\mathcal R}(L(-2))=T,
\sigma_{\mathcal R}(L(-n))=0(n\neq2).
$

\begin{lem}[Symbol descent]\label{lem:symbol-descent}
Let
$
q_{12,\mathcal R}:
M_{\mathcal R}(c(t),H(t))
\longrightarrow Q_{12,\mathcal R},
q_{0,\mathcal R}:
M_{\mathcal R}(c(t),0)
\longrightarrow Q_{0,\mathcal R}
$
be the quotient maps. Then there exist unique \(\mathcal R\)-linear maps
$$
\Sigma_{12,\mathcal R}:Q_{12,\mathcal R}\longrightarrow\mathcal R[T],
\qquad
\Sigma_{0,\mathcal R}:Q_{0,\mathcal R}\longrightarrow\mathcal R[T]
$$
such that, for every \(D\in U_{\mathcal R}(\mathfrak n_-)\),
$
\Sigma_{12,\mathcal R}
\bigl(q_{12,\mathcal R}(Dv_{12})\bigr)
=
\sigma_{\mathcal R}(D),
$
and
$
\Sigma_{0,\mathcal R}
\bigl(q_{0,\mathcal R}(D\mathbf1)\bigr)
=
\sigma_{\mathcal R}(D).
$

\end{lem}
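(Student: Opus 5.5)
The plan mirrors the proof of Lemma~\ref{lem:module-symbol}, now carried out over the local ring $\mathcal R$. By the PBW theorem, $M_{\mathcal R}(c(t),H(t))$ and $M_{\mathcal R}(c(t),0)$ are free $\mathcal R$-modules with bases $\{L(-\lambda)v_{12}\}$ and $\{L(-\lambda)\mathbf 1\}$. Equivalently, the maps $D\mapsto Dv_{12}$ and $D\mapsto D\mathbf 1$ from $U_{\mathcal R}(\mathfrak n_-)$ are $\mathcal R$-linear isomorphisms. We may therefore define
\[
\widetilde\Sigma_{12,\mathcal R}(Dv_{12}):=\sigma_{\mathcal R}(D),\qquad
\widetilde\Sigma_{0,\mathcal R}(D\mathbf 1):=\sigma_{\mathcal R}(D).
\]
These are well defined and $\mathcal R$-linear. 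It then suffices to show that each map vanishes on the kernel of the corresponding quotient map. Once that is done, the map factors through the quotient. Uniqueness is automatic, since $q_{\tau,\mathcal R}$ is surjective and every element of the Verma module has the form $Dv_{12}$ (respectively $D\mathbf 1$).

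For the kernels we use the quotient presentations recorded after Lemma~\ref{lem:freeness}. These give $\ker q_{12,\mathcal R}=U_{\mathcal R}(\mathfrak n_-)S_{13}(t)v_{12}$ and $\ker q_{0,\mathcal R}=U_{\mathcal R}(\mathfrak n_-)L(-1)\mathbf 1$. An element of the first kernel has the form $DS_{13}(t)v_{12}$, and its image under $\widetilde\Sigma_{12,\mathcal R}$ is $\sigma_{\mathcal R}(DS_{13}(t))=\sigma_{\mathcal R}(D)\sigma_{\mathcal R}(S_{13}(t))$, because $\sigma_{\mathcal R}$ is multiplicative. In the same way, an element $DL(-1)\mathbf 1$ of the second kernel maps to $\sigma_{\mathcal R}(D)\sigma_{\mathcal R}(L(-1))=0$.

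The only point needing care is $\sigma_{\mathcal R}(S_{13}(t))=0$. This follows from a grading argument over $\mathcal R$. The map $\sigma_{\mathcal R}$ is homogeneous, sending a homogeneous element of level $N$ in $U_{\mathcal R}(\mathfrak n_-)$ into $\mathcal R T^{N/2}$ when $N$ is even and to $0$ when $N$ is odd. Indeed, $\ell_{\mathcal R}$ preserves degree, since $L(-2)$ has degree $2$ and $T$ has degree $2$, so the induced algebra map is graded. The target $\mathcal R[T]$ has no elements of odd degree. The operator $S_{13}(t)$ is a sum of monomials $L(-p_1)\cdots L(-p_k)$ with $\sum p_i=13$, so it is homogeneous of odd level $13$. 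Hence $\sigma_{\mathcal R}(S_{13}(t))=0$.

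One could also compute directly: a monomial survives $\sigma_{\mathcal R}$ only if every $p_i=2$, which is impossible because $13$ is odd. Note that the argument requires a generator of the kernel written as $D\cdot(\text{generator})$ with the generator applied last. This is exactly the form $U_{\mathcal R}(\mathfrak n_-)S_{13}(t)v_{12}$ provided by the flat-localized presentation, so no further obstacle arises.
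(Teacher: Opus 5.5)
Your proposal is correct and follows essentially the same route as the paper. Both define $\widetilde\Sigma$ through the PBW isomorphisms $D\mapsto Dv_{12}$ and $D\mapsto D\mathbf 1$, then kill the kernels $U_{\mathcal R}(\mathfrak n_-)S_{13}(t)v_{12}$ and $U_{\mathcal R}(\mathfrak n_-)L(-1)\mathbf 1$ using multiplicativity of $\sigma_{\mathcal R}$, $\sigma_{\mathcal R}(L(-1))=0$, and $\sigma_{\mathcal R}(S_{13}(t))=0$ (odd level $13$). The paper uses the direct monomial version of this last vanishing, which you also give; your graded-homomorphism argument is an equivalent repackaging.
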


\begin{proof}
By the PBW theorem, the maps
$$
U_{\mathcal R}(\mathfrak n_-)
\longrightarrow M_{\mathcal R}(c(t),H(t)),
\qquad
D\longmapsto Dv_{12},
$$
and
$$
U_{\mathcal R}(\mathfrak n_-)
\longrightarrow M_{\mathcal R}(c(t),0),
\qquad
D\longmapsto D\mathbf1,
$$
are isomorphisms of \(\mathcal R\)-modules. Hence the formulas
$$
\widetilde\Sigma_{12,\mathcal R}(Dv_{12})
:=
\sigma_{\mathcal R}(D),
\qquad
\widetilde\Sigma_{0,\mathcal R}(D\mathbf1)
:=
\sigma_{\mathcal R}(D)
$$
define well-defined \(\mathcal R\)-linear maps on the two Verma modules.
We first consider the self channel. By definition,
$$
Q_{12,\mathcal R}
=
M_{\mathcal R}(c(t),H(t))
\big/
U_{\mathcal R}(\mathfrak n_-)S_{13}(t)v_{12}.
$$
It is therefore enough to show that
$
\widetilde\Sigma_{12,\mathcal R}
\left(
U_{\mathcal R}(\mathfrak n_-)S_{13}(t)v_{12}
\right)=0.
$
Recall that
$$
S_{13}(t)
=
\sum_{\mathbf p\models13}
\beta_{\mathbf p}(t)
L(-p_1)\cdots L(-p_k).
$$
For a monomial
\(L(-p_1)\cdots L(-p_k)\)
to have nonzero \(\sigma_{\mathcal R}\)-image, every factor must be
\(L(-2)\), since
$$
\sigma_{\mathcal R}(L(-2))=T,
\qquad
\sigma_{\mathcal R}(L(-n))=0
\quad(n\neq2).
$$
But this is impossible for a composition
\(\mathbf p\models13\), because
\(p_1+\cdots+p_k=13\)
is odd. Consequently,
$
\sigma_{\mathcal R}(S_{13}(t))=0.
$
Thus, for every
\(A\in U_{\mathcal R}(\mathfrak n_-)\),
the multiplicativity of \(\sigma_{\mathcal R}\) gives
$$
\widetilde\Sigma_{12,\mathcal R}
\bigl(A S_{13}(t)v_{12}\bigr)
=
\sigma_{\mathcal R}\bigl(A S_{13}(t)\bigr)
=
\sigma_{\mathcal R}(A)
\sigma_{\mathcal R}(S_{13}(t))
=
0.
$$
Hence \(\widetilde\Sigma_{12,\mathcal R}\) vanishes on the submodule by which we quotient and therefore factors uniquely through an \(\mathcal R\)-linear map
$
\Sigma_{12,\mathcal R}:
Q_{12,\mathcal R}\longrightarrow\mathcal R[T].
$
For the vacuum channel,
$$
Q_{0,\mathcal R}
=
M_{\mathcal R}(c(t),0)
\big/
U_{\mathcal R}(\mathfrak n_-)L(-1)\mathbf1.
$$
Since
$
\sigma_{\mathcal R}(L(-1))=0,
$
for every \(A\in U_{\mathcal R}(\mathfrak n_-)\) we have
$$
\widetilde\Sigma_{0,\mathcal R}
\bigl(A L(-1)\mathbf1\bigr)
=
\sigma_{\mathcal R}\bigl(A L(-1)\bigr)
=
\sigma_{\mathcal R}(A)
\sigma_{\mathcal R}(L(-1))
=
0.
$$
Thus \(\widetilde\Sigma_{0,\mathcal R}\) also factors uniquely through
$
\Sigma_{0,\mathcal R}:
Q_{0,\mathcal R}\longrightarrow\mathcal R[T].
$
The stated formulas follow immediately from the definitions of the two factorizations.
\end{proof}

\begin{cor}[Compatibility with the special fiber]\label{cor:symbol-specialization}
Under the natural identifications of Proposition~\ref{prop:special-fibers},
the reductions of the local symbol maps modulo \((t-1)\) agree with the
symbol maps of Lemma~\ref{lem:sigma}.  Equivalently,
\[
\left.\Sigma_{0,\mathcal R}\right|_{t=1}=\Sigma_0,
\qquad
\left.\Sigma_{12,\mathcal R}\right|_{t=1}=\Sigma_{36},
\]
where the left-hand sides denote the maps induced after tensoring with
\(\mathcal R/(t-1)\mathcal R\cong\C\).
\end{cor}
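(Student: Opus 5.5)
The argument is a diagram chase through the identifications built in the proof of Proposition~\ref{prop:special-fibers}; everything is forced by PBW bases and multiplicativity of the symbols. Write $\C_1:=\mathcal R/(t-1)\mathcal R$ and $\mathrm{ev}_1:\mathcal R[T]\to\C[T]$ for the reduction $t\mapsto1$ on coefficients. First observe that $\mathrm{ev}_1\circ\sigma_{\mathcal R}$ and $\sigma\circ\mathrm{sp}$ agree as maps $U_{\mathcal R}(\mathfrak n_-)\to\C[T]$, where $\mathrm{sp}:U_{\mathcal R}(\mathfrak n_-)\to U(\mathfrak n_-)$ is specialization, $\mathrm{sp}(D)=D(1)$. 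Both composites are ring homomorphisms. Both send $L(-2)$ to $T$ and every other $L(-n)$ to $0$, and both send $f(t)\in\mathcal R$ to $f(1)$. Since $U_{\mathcal R}(\mathfrak n_-)$ is generated as an $\mathcal R$-algebra by the $L(-n)$, the two composites coincide.

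Next, the maps induced after tensoring are well defined. Since $\Sigma_{12,\mathcal R}$ is $\mathcal R$-linear, $\mathrm{ev}_1\circ\Sigma_{12,\mathcal R}$ kills $(t-1)Q_{12,\mathcal R}$. It therefore descends to a linear map
\[
\overline\Sigma_{12}:Q_{12,\mathcal R}/(t-1)Q_{12,\mathcal R}\longrightarrow \C[T],
\]
and $\overline\Sigma_0$ is obtained in the same way. These descended maps are what ``$\left.\Sigma_{12,\mathcal R}\right|_{t=1}$'' and ``$\left.\Sigma_{0,\mathcal R}\right|_{t=1}$'' mean.

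Now compare on generators. Let
\[
\phi:Q_{12,\mathcal R}/(t-1)Q_{12,\mathcal R}\xrightarrow{\ \sim\ }L(1,36)
\]
be the isomorphism of Proposition~\ref{prop:special-fibers}. It is the composite of $M_{\mathcal R}(c(t),H(t))\otimes\C_1\cong M(1,36)$ with the quotient map $q_{36}$, so it sends the class of $q_{12,\mathcal R}(Dv_{12})$ to $q_{36}(D(1)v_{1,36})=D(1)v_{36}$. By Lemma~\ref{lem:module-symbol},
\[
\Sigma_{36}\bigl(D(1)v_{36}\bigr)=\sigma(D(1)).
\]
On the other hand, by Lemma~\ref{lem:symbol-descent},
\[
\overline\Sigma_{12}\bigl(\overline{q_{12,\mathcal R}(Dv_{12})}\bigr)=\mathrm{ev}_1\,\sigma_{\mathcal R}(D).
\]
The first step shows these two values are equal, so $\Sigma_{36}\circ\phi=\overline\Sigma_{12}$ on all such classes. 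The classes $\overline{q_{12,\mathcal R}(Dv_{12})}$ with $D\in U_{\mathcal R}(\mathfrak n_-)$ span the special fiber: $Q_{12,\mathcal R}=U_{\mathcal R}(\mathfrak n_-)v_{12}$, and indeed each PBW monomial $L(-\lambda)v_{12}$ with constant coefficients already suffices. Hence the maps agree everywhere. The vacuum case is identical, using $\Sigma_0$, the identification $\mathbf1\otimes1\mapsto\mathbf1$, and the quotient by $L(-1)\mathbf1$.

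The only delicate point is bookkeeping rather than mathematics. I need the identification in Proposition~\ref{prop:special-fibers} to be the \emph{specific} natural one, induced by $v_{12}\otimes1\mapsto v_{1,36}$ followed by $q_{36}$. Then $v_{12}$ corresponds to $v_{36}$, and the normalizations $\Sigma_{36}(v_{36})=1=\Sigma_{12,\mathcal R}(v_{12})$ match. Any other isomorphism would differ by a nonzero scalar, by Schur's lemma, and would break the equality; so I will make explicit that the proof of Proposition~\ref{prop:special-fibers} constructs exactly this map.
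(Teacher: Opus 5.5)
Your proposal is correct and follows the same route as the paper: first identify the specialization of $\sigma_{\mathcal R}$ at $t=1$ with $\sigma$ by comparing their values on the generators $L(-n)$, then use the natural identification of Proposition~\ref{prop:special-fibers} (highest-weight vector to highest-weight vector, PBW descendants to PBW descendants) to see that the descended maps satisfy the defining formula of Lemma~\ref{lem:module-symbol}. Your version makes explicit three pieces of bookkeeping the paper leaves implicit: the descent through $(t-1)Q_{\tau,\mathcal R}$, the formula for the isomorphism on the classes of $q_{12,\mathcal R}(Dv_{12})$, and the spanning argument.
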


\begin{proof}
The specialization of \(\sigma_{\mathcal R}\) at \(t=1\) is precisely the
\(\C\)-algebra homomorphism \(\sigma\) of Section~7.1, since both send
\(L(-2)\) to \(T\) and every \(L(-n)\), \(n\neq2\), to \(0\).
By Proposition~\ref{prop:special-fibers}, the special fibers of
\(Q_{0,\mathcal R}\) and \(Q_{12,\mathcal R}\) are naturally
\(L(1,0)\) and \(L(1,36)\), respectively, with the highest-weight vectors
and their PBW descendants identified in the evident way.  Hence the
specialized maps satisfy the defining formulas of Lemma~\ref{lem:sigma},
and therefore coincide with \(\Sigma_0\) and \(\Sigma_{36}\), respectively.
\end{proof}

\subsection{Positive-mode rigidity}\label{subsec:positive-mode-rigidity}

For \(\tau\in\{0,12\}\), set \(K_\tau(t)=0\) if \(\tau=0\) and \(K_\tau(t)=H(t)\) if \(\tau=12\), and write \(Q_{\tau,\mathcal R}\) accordingly. For \(N>0\), define
$$
A_N^{(\tau)}(t):
Q_{\tau,\mathcal R}[N]
\longrightarrow
\bigoplus_{m=1}^N Q_{\tau,\mathcal R}[N-m]
$$
by
$$
A_N^{(\tau)}(t)(w)
=
\bigl(L(1)w,\ldots,L(N)w\bigr).
$$
Let
\(
\mathbb C_1:=\mathcal R/(t-1)\mathcal R.
\)
Since $t$ has degree $0$, the submodule
$(t-1)Q_{\tau,\mathcal R}$ is graded. Set
$ Q_{\tau,1} := Q_{\tau,\mathcal R}\otimes_{\mathcal R}\mathbb C_1  $ and \(
Q_{\tau,1}[j]
:=
Q_{\tau,\mathcal R}[j]\otimes_{\mathcal R}\mathbb C_1 .
\)
Hence, for every $j\geq0$,
\[
Q_{\tau,1}[j]=Q_{\tau,\mathcal R}[j]\otimes_{\mathcal R}\mathbb C_1
\cong
Q_{\tau,\mathcal R}[j]/(t-1)Q_{\tau,\mathcal R}[j]
\cong
\bigl(Q_{\tau,\mathcal R}/(t-1)Q_{\tau,\mathcal R}\bigr)[j].
\]
The specialization of \(A_N^{(\tau)}(t)\) at \(t=1\) is the map
$$
A_N^{(\tau)}(1)
:=
A_N^{(\tau)}(t)\otimes_{\mathcal R}\mathbb C_1:
Q_{\tau,1}[N]
\longrightarrow
\bigoplus_{m=1}^N Q_{\tau,1}[N-m],
$$
given explicitly by
$$
A_N^{(\tau)}(1)(\overline w)
=
\bigl(
\overline{L(1)w},
\ldots,
\overline{L(N)w}
\bigr),
$$
where \(\overline w\) denotes the image of \(w\) in the corresponding special fiber. By Proposition~\ref{prop:special-fibers},
$$
Q_{0,\mathcal R}/(t-1)Q_{0,\mathcal R}\cong L(1,0),
\qquad
Q_{12,\mathcal R}/(t-1)Q_{12,\mathcal R}\cong L(1,36),
$$
so, under these identifications, \(A_N^{(\tau)}(1)\) is the positive-mode map on the level-\(N\) subspace of \(L(1,0)\) or \(L(1,36)\), respectively.

\begin{lem}[Positive-mode rigidity]\label{lem:positive-rigidity}
For every \(N>0\) and \(\tau\in\{0,12\}\), the special-fiber map
$$
A_N^{(\tau)}(1):
Q_{\tau,1}[N]
\longrightarrow
\bigoplus_{m=1}^N Q_{\tau,1}[N-m]
$$
is injective.
\end{lem}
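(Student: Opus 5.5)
Via Proposition~\ref{prop:special-fibers}, I will treat $A_N^{(\tau)}(1)$ as the map $w\mapsto(L(1)w,\ldots,L(N)w)$ on the level-$N$ subspace of the irreducible module $L(1,0)$ (for $\tau=0$) or $L(1,36)$ (for $\tau=12$), and argue by irreducibility, as in Step~3 of Lemma~\ref{lem:lowest-coeff}. The one preliminary point is that these identifications are Virasoro-module isomorphisms compatible with the grading. I will check that the isomorphisms in Proposition~\ref{prop:special-fibers} come from specializing the Verma presentation, and that $(t-1)Q_{\tau,\mathcal R}$ is a graded Virasoro submodule. This holds because $t$ is central of degree $0$, and it gives the identification $Q_{\tau,1}[j]\cong L(1,h)[j]$ recorded before the lemma. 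With it, $A_N^{(\tau)}(1)$ is literally the positive-mode map of $L(1,h)$ restricted to level $N$.

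Suppose $w\in L(1,h)[N]$ with $N>0$ and $L(m)w=0$ for $1\le m\le N$. For $m>N$, $L(m)w$ would have level $N-m<0$, so it vanishes. Hence $L(m)w=0$ for all $m>0$, and if $w\ne0$ it is a singular vector at positive level. By PBW, the submodule $U(\Vir)w$ equals $U(\mathfrak n_-)w$, which lives in levels $\ge N>0$. So it is a nonzero proper submodule, as it misses the highest-weight vector at level $0$. This contradicts irreducibility, which holds for $L(1,0)$ and $L(1,36)$ by definition as irreducible quotients, consistent with Proposition~\ref{prop:Milas-exact}. Therefore $w=0$, and the map is injective.

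I expect no real obstacle. The only care needed is the bookkeeping that specialization commutes with the $L(m)$-action and preserves the level decomposition. This follows since everything is defined over $\mathcal R$ with $\mathcal R$-linear $\Vir_{\mathcal R}$-action and free graded pieces (Lemma~\ref{lem:freeness}), so tensoring with $\C_1$ is compatible degreewise.
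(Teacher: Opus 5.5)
Your proposal is correct and follows the paper's proof: you identify the special fiber with $L(1,0)$ or $L(1,36)$ via Proposition~\ref{prop:special-fibers}, note that a kernel vector is annihilated by every $L(m)$ with $m>0$ (those with $m>N$ for degree reasons), and then use that a nonzero positive-level singular vector would generate a proper submodule, contradicting irreducibility. Your additional checks, that the identification respects the grading and that $U(\Vir)w=U(\mathfrak n_-)w$, are valid and only make explicit what the paper leaves implicit.
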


\begin{proof}
The special fiber is $L(1,0)$ or $L(1,36)$ by Proposition~\ref{prop:special-fibers}. If $w\in\ker A_N^{(\tau)}(1)$, then $L(m)w=0$ for $1\le m\le N$; for $m>N$, $L(m)w=0$ automatically. Thus $w$ is singular. If $w\neq0$, the submodule it generates has weights strictly above the highest weight (since $N>0$), contradicting irreducibility. Hence $w=0$.
\end{proof}

Let
$
d_N^{(\tau)}
:=
\operatorname{rank}_{\mathcal R}Q_{\tau,\mathcal R}[N].
$
By Lemma~\ref{lem:freeness}, the graded piece
\(Q_{\tau,\mathcal R}[N]\) is a free \(\mathcal R\)-module of rank
\(d_N^{(\tau)}\), and the PBW bases there give bases for both the
domain and the graded pieces occurring in the codomain of
\(A_N^{(\tau)}(t)\). Thus, with respect to these bases,
\(A_N^{(\tau)}(t)\) is represented by a matrix over \(\mathcal R\)
having \(d_N^{(\tau)}\) columns.

After reduction modulo the maximal ideal
\((t-1)\mathcal R\), this matrix represents the special-fiber map
$$
A_N^{(\tau)}(1):
Q_{\tau,1}[N]
\longrightarrow
\bigoplus_{m=1}^N Q_{\tau,1}[N-m].
$$
With respect to the induced PBW bases on the special fibers, reduction modulo $(t-1)\mathcal R$ amounts to evaluating every matrix entry at $t=1$. Thus the matrix of $A_N^{(\tau)}(1)$ is obtained from that of $A_N^{(\tau)}(t)$ by setting $t=1$. 

By Lemma~\ref{lem:positive-rigidity}, this map is injective. Since
$
\dim_{\mathbb C}Q_{\tau,1}[N]=d_N^{(\tau)},
$
its matrix has column rank \(d_N^{(\tau)}\). Consequently, one can
choose \(d_N^{(\tau)}\) rows whose corresponding
\(d_N^{(\tau)}\times d_N^{(\tau)}\) minor has nonzero determinant
after specialization at \(t=1\).

Let
$$
\pi_N^{(\tau)}:
\bigoplus_{m=1}^N Q_{\tau,\mathcal R}[N-m]
\longrightarrow
\mathcal R^{d_N^{(\tau)}}
$$
be the coordinate projection onto these chosen rows, and set
$
B_N^{(\tau)}
:=
\pi_N^{(\tau)}\circ A_N^{(\tau)}(t).
$
Then \(B_N^{(\tau)}\) is represented by the selected square minor,
and hence
$
\det B_N^{(\tau)}(1)\neq0.
$

Since \(\mathcal R=\Lambda_{(t-1)}\) is local with maximal ideal
\((t-1)\mathcal R\), an element of \(\mathcal R\) is a unit precisely
when its image in
$
\mathcal R/(t-1)\mathcal R\cong\mathbb C
$
is nonzero. Therefore
$
\det B_N^{(\tau)}\in\mathcal R^\times.
$
It follows that
\begin{equation}\label{eq:B-isomorphism}
B_N^{(\tau)}:
Q_{\tau,\mathcal R}[N]
\overset{\sim}{\longrightarrow}
\mathcal R^{d_N^{(\tau)}}
\end{equation}
is an isomorphism of free \(\mathcal R\)-modules.

\subsection{Reconstruction of the highest-vector coefficients}

For each \(\tau\in\{0,12\}\), we shall construct recursively a sequence
\[
W_N^{(\tau)}\in Q_{\tau,\mathcal R}[N]\qquad(N\ge0),
\]
with
\(
W_0^{(0)}=\mathbf1, W_0^{(12)}=v_{12}.
\)

\begin{lem}[Positive-mode Ward identity]\label{lem:positive-Ward}
Let \(c,H,K\in\C\). Let \(M_H\) and \(M_K\) be highest-weight
modules for the Virasoro algebra of central charge \(c\), with highest
weights \(H\) and \(K\), respectively. Fix nonzero highest-weight vectors
\(
v_H\in M_H,
v_K\in M_K.
\)
Let
\[
\mathcal Y\in
I\binom{M_K}{M_H\;M_H}
\]
be a Virasoro intertwining operator. 
Suppose that
\[
\mathcal Y(v_H,z)v_H
=
z^{K-2H}\sum_{N\ge0}W_Nz^N,
\qquad
W_N\in M_K[N].
\]
Then, for every \(m>0\) and \(N\ge0\),
\begin{equation}\label{eq:positive-rec}
L(m)W_N
=
\bigl(N+K-H+m(H-1)\bigr)W_{N-m},
\end{equation}
where \(W_j=0\) for \(j<0\).
\end{lem}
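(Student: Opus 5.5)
The identity \eqref{eq:positive-rec} is the coefficientwise form of the Virasoro commutator formula for a primary field inside an intertwining operator. It is the same computation as in Step~3 of the proof of Lemma~\ref{lem:lowest-coeff}, now done for general $c$, $H$, $K$ and arbitrary highest-weight modules. I would proceed in three steps.

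First, I would derive the commutator formula from the intertwining-operator Jacobi identity with $a=\omega$. Taking $\operatorname{Res}_{x_0}$ and then the coefficient of $x_1^{m+1}$ gives
\[
[L(m),\mathcal Y(w,z)]=\sum_{j\ge0}\binom{m+1}{j}z^{m+1-j}\mathcal Y(L(j-1)w,z).
\]
For $w=v_H$ we have $L(j-1)v_H=0$ when $j\ge2$. Combined with $L(0)v_H=Hv_H$ and the $L(-1)$-derivative property, this yields
\[
[L(m),\mathcal Y(v_H,z)]=z^m\Bigl(z\tfrac{d}{dz}+(m+1)H\Bigr)\mathcal Y(v_H,z).
\]
This is exactly the formula already used in Lemma~\ref{lem:lowest-coeff}.

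Second, I would apply both sides to $v_H$ and use $L(m)v_H=0$ for $m>0$. This leaves
\[
L(m)\mathcal Y(v_H,z)v_H=z^m\Bigl(z\tfrac{d}{dz}+(m+1)H\Bigr)\mathcal Y(v_H,z)v_H.
\]
Substituting the expansion $z^{K-2H}\sum_N W_Nz^N$, the right side becomes
\[
\sum_j (K-2H+j+(m+1)H)\,W_j\,z^{K-2H+j+m}.
\]

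Third, I would compare coefficients of $z^{K-2H+N}$, which means setting $j=N-m$. The scalar factor is then
\[
K-2H+N-m+mH+H=N+K-H+m(H-1),
\]
and the coefficient identity is \eqref{eq:positive-rec}. When $N<m$ there is no matching term on the right, so the convention $W_j=0$ for $j<0$ covers it. Since the expansion is a formal power series times a single power $z^{K-2H}$, the comparison of coefficients is legitimate termwise.

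The only step needing care is the first, in the derivation of the commutator formula. One has to track the binomial expansion $(x_2+x_0)^{m+1}$ correctly and remember that $\operatorname{wt}\omega=2$ shifts mode indices by one. Irreducibility is not used anywhere, so the statement holds for arbitrary highest-weight modules $M_H$ and $M_K$, as claimed.
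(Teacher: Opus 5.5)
Your proposal is correct and follows the paper's proof essentially verbatim: you apply the primary-field commutator formula $[L(m),\mathcal Y(v_H,z)]=z^m\bigl(z\frac{d}{dz}+(m+1)H\bigr)\mathcal Y(v_H,z)$ to $v_H$, use $L(m)v_H=0$, and compare coefficients of $z^{K-2H+N}$; you also derive that formula from the Jacobi identity, which the paper simply quotes. One small slip in that derivation: to isolate $L(m)$ you should take the coefficient of $x_1^{-m-2}$ (equivalently, apply $\operatorname{Res}_{x_1}x_1^{m+1}$), not the coefficient of $x_1^{m+1}$, although the commutator formula you actually display is the correct one.
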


\begin{proof}
Since \(v_H\) is a Virasoro highest-weight vector of conformal weight
\(H\), the intertwining-operator commutator formula gives
\[
[L(m),\mathcal Y(v_H,z)]
=
z^m
\left(
z\frac{d}{dz}+(m+1)H
\right)
\mathcal Y(v_H,z).
\]
Applying both sides to \(v_H\), using \(L(m)v_H=0\) for \(m>0\), and
substituting
\[
\mathcal Y(v_H,z)v_H
=
z^{K-2H}\sum_{N\ge0}W_Nz^N,
\]
we obtain
\[
\sum_{N\ge0}L(m)W_N\,z^{K-2H+N}
=
\sum_{N\ge0}
\bigl(K-H+N+mH\bigr)
W_Nz^{K-2H+N+m}.
\]
Extracting the coefficient of \(z^{K-2H+N}\) yields
\[
L(m)W_N
=
\bigl(N+K-H+m(H-1)\bigr)W_{N-m},
\]
with the convention \(W_j=0\) for \(j<0\).
\end{proof}

In the applications below, the preceding lemma is used only after
fixing a parameter value \(t_0\).  We then take
\[
c=c(t_0),\qquad
H=H(t_0)=\frac{42}{t_0}-6,
\qquad
K=K_\tau(t_0).
\]
The resulting Ward identity is then used
algebraically over \(\mathcal R\) to reconstruct the coefficients
\(W_N^{(\tau)}\).

We now construct the coefficients \(W_N^{(\tau)}\) inductively on
\(N\).  The initial term is fixed by the normalization
\(
W_0^{(0)}=\mathbf1,
W_0^{(12)}=v_{12}.
\)
Suppose that, for some \(N>0\), the vectors
\[
W_j^{(\tau)}\in Q_{\tau,\mathcal R}[j],
\qquad 0\le j<N,
\]
have already been constructed.  Motivated by the positive-mode Ward
identity \eqref{eq:positive-rec}, for each \(1\le m\le N\) define
\[
b_{N,m}^{(\tau)}
:=
\bigl(
N+K_\tau(t)-H(t)+m(H(t)-1)
\bigr)
W_{N-m}^{(\tau)}.
\]
Since \(K_\tau(t),H(t)\in\mathcal R\) and
\(W_{N-m}^{(\tau)}\in Q_{\tau,\mathcal R}[N-m]\), we have
\[
b_{N,m}^{(\tau)}
\in Q_{\tau,\mathcal R}[N-m].
\]
Hence
\[
b_N^{(\tau)}
:=
\bigl(
b_{N,1}^{(\tau)},\ldots,b_{N,N}^{(\tau)}
\bigr)
\in
\bigoplus_{m=1}^N Q_{\tau,\mathcal R}[N-m].
\]
If a vector \(w\in Q_{\tau,\mathcal R}[N]\) were to satisfy the full
positive-mode Ward equations at level \(N\), then it would satisfy
\[
A_N^{(\tau)}(t)w=b_N^{(\tau)}.
\]
At this stage we impose only the square subsystem selected in the
preceding subsection.  Applying the coordinate projection
\(\pi_N^{(\tau)}\) gives
\[
B_N^{(\tau)}w
=
\pi_N^{(\tau)}b_N^{(\tau)},
\qquad
B_N^{(\tau)}
=
\pi_N^{(\tau)}\circ A_N^{(\tau)}(t).
\]
By \eqref{eq:B-isomorphism},
\[
B_N^{(\tau)}:
Q_{\tau,\mathcal R}[N]
\overset{\sim}{\longrightarrow}
\mathcal R^{d_N^{(\tau)}}
\]
is an isomorphism.  Therefore this square system has a unique solution,
and we define
\begin{equation}\label{eq:WNdef}
W_N^{(\tau)}
:=
(B_N^{(\tau)})^{-1}
\pi_N^{(\tau)}b_N^{(\tau)}
\in Q_{\tau,\mathcal R}[N].
\end{equation}
This completes the inductive construction.

Because \(B_N^{(\tau)}\) is invertible over the local ring
\(\mathcal R\), rather than merely over its fraction field
\(\mathcal K\), the PBW coordinates of \(W_N^{(\tau)}\) belong to
\(\mathcal R\).  In particular, no pole at \(t=1\) is introduced by
the reconstruction.

Notice that the definition above a priori enforces only the selected
square subsystem
\[
\pi_N^{(\tau)}
A_N^{(\tau)}(t)W_N^{(\tau)}
=
\pi_N^{(\tau)}b_N^{(\tau)}.
\]
It remains to prove that the remaining positive-mode equations also
hold.  This will be established in the next subsection.

\subsection{Verification of all positive-mode equations}

For uniform notation, set
$\mathcal B_N^{(12)}:=\mathcal B_N,$
while $\mathcal B_N^{(0)}$ is as in Lemma~\ref{lem:freeness}. Thus $\mathcal B_N^{(\tau)}$ denotes the global PBW basis of $Q_{\tau,\mathcal R}[N]$ for $\tau\in\{0,12\}$.
Using the global
PBW bases, write any
$w\in Q_{\tau,\mathcal R}[N]$ uniquely as
\[
w=\sum_{e\in\mathcal B_N^{(\tau)}}f_e(t)e,
\qquad f_e(t)\in\mathcal R\subset\mathcal K.
\]
If \(t_0\in\C^\times\) is not a pole of any of these finitely many rational functions, then for each
\(e\in\mathcal B_N^{(\tau)}\) let
$
e(t_0):=e\otimes 1\in Q_{\tau,\Lambda}(t_0)[N]
$
denote its specialization under the base change \(t\mapsto t_0\). We define the
\emph{rational-coordinate evaluation} of \(w\) at \(t_0\) by
\begin{equation}\label{eq:rational-coordinate-evaluation}
w[t_0]
:=
\sum_{e\in\mathcal B_N^{(\tau)}}
f_e(t_0)e(t_0)
\in Q_{\tau,\Lambda}(t_0)[N].
\end{equation}
The same notation will be used for homomorphisms and matrices between finite
graded pieces: after expressing them in the corresponding global PBW bases,
we evaluate their finitely many rational matrix entries at \(t=t_0\), whenever
all such entries are defined. This operation should not be confused with base
change from the local ring \(\mathcal R\), since evaluation at \(t_0\ne1\) need
not define a homomorphism \(\mathcal R\to\C\). The actual fiber at \(t_0\) is
the \(\Lambda\)-fiber
$
Q_{\tau,\Lambda}(t_0)
=
Q_{\tau,\Lambda}\otimes_\Lambda \C_{t_0}.
$
Since the Virasoro action and the global PBW reduction are defined over
\(\Lambda\), rational-coordinate evaluation commutes with all operations
appearing in the finite Ward systems whenever the relevant rational
coordinates are defined.

\begin{prop}[Full positive-mode reconstruction]\label{prop:full-reconstruction}
For every \(\tau\in\{0,12\}\), \(N\ge0\), and \(m>0\), the recursively defined vectors
$
W_N^{(\tau)}\in Q_{\tau,\mathcal R}[N]
$
satisfy the full positive-mode Ward identity
\begin{equation}\label{eq:full-positive-rec}
L(m)W_N^{(\tau)}
=
\bigl(
N+K_\tau(t)-H(t)+m(H(t)-1)
\bigr)
W_{N-m}^{(\tau)},
\end{equation}
where
$
K_0(t)=0, K_{12}(t)=H(t),
$
and, by convention,
$
W_j^{(\tau)}=0 (j<0).
$
Thus the vectors \(W_N^{(\tau)}\), although constructed recursively from the selected square subsystem
$
B_N^{(\tau)}W_N^{(\tau)}
=
\pi_N^{(\tau)}b_N^{(\tau)},
$
in fact satisfy all positive-mode equations.
\end{prop}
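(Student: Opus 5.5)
We argue by induction on \(N\), comparing the recursively defined \(W_N^{(\tau)}\) with the genuine coefficients of the generic Koshida--Kyt\"ol\"a intertwining operators on a Zariski-dense set of parameters. For \(N=0\) the identity is trivial, since both sides vanish for \(m>0\). Assume that \eqref{eq:full-positive-rec} holds for all levels \(<N\). Write the residual
\[
\rho_N:=A_N^{(\tau)}(t)W_N^{(\tau)}-b_N^{(\tau)}
\in\bigoplus_{m=1}^N Q_{\tau,\mathcal R}[N-m].
\]
By construction \(\pi_N^{(\tau)}\rho_N=0\), and \eqref{eq:full-positive-rec} at level \(N\) for \(1\le m\le N\) is exactly \(\rho_N=0\). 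For \(m>N\) both sides vanish by degree and by the convention \(W_j^{(\tau)}=0\) for \(j<0\). The coordinates of \(\rho_N\) in the PBW bases of Lemma~\ref{lem:freeness} are finitely many elements of \(\mathcal R\subset\C(t)\). So it suffices to show that they vanish at infinitely many \(t_0\), since a nonzero rational function has finitely many zeros.

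\textbf{Step 1: the generic comparison.} Take admissible generic \(t_0\) avoiding the finitely many poles of all PBW coordinates of \(W_j^{(\tau)}\) (\(j\le N\)) and of the entries of \((B_N^{(\tau)})^{-1}\), and also avoiding zeros of \(\det B_N^{(\tau)}\) in \(\C(t)\), which is nonzero because it is a unit of \(\mathcal R\). Such \(t_0\) are infinite in number. At such \(t_0\), \(Q_{\tau,\Lambda}(t_0)\cong Q_\tau(4t_0)\), and the normalized nonzero intertwining operator of \eqref{eq:generic-first-row-fusion} (vacuum or self channel) gives \(\mathcal Y(v_H,z)v_H=z^{K-2H}\sum_N W_N^{\mathrm{gen}}(t_0)z^N\). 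By Lemma~\ref{lem:positive-Ward}, these coefficients satisfy all positive-mode equations. We rescale so that \(W_0^{\mathrm{gen}}(t_0)\) equals the specialized highest-weight vector; the initial coefficient is nonzero by \cite[Proposition~4.12]{KoshidaKytola}, or by the argument of Lemma~\ref{lem:lowest-coeff}, which applies verbatim in the irreducible module \(Q_\tau(4t_0)\).

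\textbf{Step 2: identification of coefficients.} We show by induction on \(j\le N\) that \(W_j^{(\tau)}[t_0]=W_j^{\mathrm{gen}}(t_0)\). Rational-coordinate evaluation commutes with the Virasoro action and the PBW reduction, as remarked before the proposition. Hence \(W_j^{(\tau)}[t_0]\) solves \(B_j^{(\tau)}[t_0]\,w=\pi_j^{(\tau)}b_j[t_0]\). The right-hand side coincides with the generic data by the inductive hypothesis, and \(W_j^{\mathrm{gen}}(t_0)\) solves the same square system because it satisfies the full system. Since \(\det B_j^{(\tau)}(t_0)\neq0\), the two vectors agree. Consequently \(\rho_N[t_0]=A_N[t_0]W_N^{\mathrm{gen}}(t_0)-b_N^{\mathrm{gen}}(t_0)=0\) by Lemma~\ref{lem:positive-Ward}. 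As \(t_0\) ranges over an infinite set, every coordinate of \(\rho_N\) vanishes identically, which gives \eqref{eq:full-positive-rec} at level \(N\).

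\textbf{Main obstacle.} The delicate point is Step~2's bookkeeping. First, the generic fibers must be the \(\Lambda\)-fibers \(Q_{\tau,\Lambda}(t_0)\), not a base change of \(\mathcal R\). Second, \(\det B_j^{(\tau)}(t_0)\neq0\) must hold simultaneously for all \(j\le N\), which requires excluding finitely many points for each \(j\). Third, the value of \(K\) in the generic channel must match \(K_\tau(t)\): \(h(0;4t_0)=0\) for the vacuum channel and \(H(t_0)\) for the self channel. Once these are in place, density closes the argument.
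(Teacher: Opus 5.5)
Your proposal is correct and takes essentially the same route as the paper. At admissible generic $t_0$ outside the finitely many poles and the zeros of $\det B_j^{(\tau)}$, you compare with the normalized generic Koshida--Kyt\"ol\"a intertwining-operator coefficients, identify $W_j^{(\tau)}[t_0]$ with them through the invertible square subsystems by induction on $j\le N$, and conclude because the rational PBW coordinates of the residual vanish on an infinite set. Two cosmetic points: your outer induction on $N$ is never actually used, since the inner induction on $j\le N$ does all the work as in the paper; and your Step~1 exclusion set should already include the zeros of $\det B_j^{(\tau)}$ for every $1\le j\le N$, as you note yourself at the end.
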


\begin{proof}
Fix \(\tau\in\{0,12\}\) and \(N\ge0\). We shall prove that
$$
L(m)W_N^{(\tau)}
=
\bigl(
N+K_\tau(t)-H(t)+m(H(t)-1)
\bigr)W_{N-m}^{(\tau)}
$$
for every \(m>0\), with the convention \(W_j^{(\tau)}=0\) for \(j<0\).

If \(m>N\), then \(L(m)W_N^{(\tau)}=0\) for degree reasons, while
\(W_{N-m}^{(\tau)}=0\) by convention. Hence it remains to consider
\(1\le m\le N\).
We compare the recursively constructed vectors with genuine generic
intertwining-operator coefficients. For the finitely many levels
\(0\le j\le N\), exclude all parameter values at which one of the rational
PBW coordinates occurring in the vectors \(W_j^{(\tau)}\), the vectors
\(b_j^{(\tau)}\), or the matrices \(B_j^{(\tau)}\) is undefined. We also
exclude the zeros of the finitely many nonzero rational functions
$
\det B_j^{(\tau)}(t),
 1\le j\le N.
$
Only finitely many parameter values are excluded. Consequently, there remain
infinitely many generic values \(t_0\) of the type considered in
Section~\ref{subsec:generic-first-row}; in particular, we may take
\(4t_0\in(0,\infty)\setminus\mathbb Q\).

Fix such a \(t_0\). By the generic first-row fusion rule
\eqref{eq:generic-first-row-fusion}, the intertwining space
$$
I\binom{Q_\tau(4t_0)}
        {Q_{12}(4t_0)\;Q_{12}(4t_0)}
$$
is one-dimensional. Using the generic-fiber identifications established
above,
$Q_{\sigma,\Lambda}(t_0)\cong Q_\sigma(4t_0),  \sigma\in\{0,12\},$
choose the unique normalization of a nonzero intertwining operator
$$
\mathcal Y_{\tau,t_0}
\in
I\binom{Q_\tau(4t_0)}
        {Q_{12}(4t_0)\;Q_{12}(4t_0)}
$$
whose leading coefficient agrees with the specialization of our prescribed
level-zero vector \(W_0^{(\tau)}\). Thus
$$
\mathcal Y_{\tau,t_0}
\bigl(v_{12}(t_0),z\bigr)v_{12}(t_0)
=
z^{K_\tau(t_0)-2H(t_0)}
\sum_{j\ge0}\widetilde W_j^{(\tau)}(t_0)z^j,
$$
where
$$
\widetilde W_j^{(\tau)}(t_0)
\in Q_{\tau,\Lambda}(t_0)[j],
\qquad
\widetilde W_0^{(\tau)}(t_0)
=
W_0^{(\tau)}[t_0].
$$
Explicitly, the common level-zero coefficient is the highest-weight vector
of the target: it is \(\mathbf1\) for \(\tau=0\) and \(v_{12}(t_0)\) for
\(\tau=12\).

Lemma~\ref{lem:positive-Ward}, applied at the parameter value \(t_0\), gives
the full positive-mode Ward identities
\begin{equation}\label{eq:generic-positive-rec-proof}
L(m)\widetilde W_j^{(\tau)}(t_0)
=
\bigl(
j+K_\tau(t_0)-H(t_0)+m(H(t_0)-1)
\bigr)
\widetilde W_{j-m}^{(\tau)}(t_0)
\end{equation}
for every \(j\ge0\) and every \(m>0\), where
\(\widetilde W_r^{(\tau)}(t_0)=0\) for \(r<0\).

We now show, successively for \(0\le j\le N\), that
\begin{equation}\label{eq:generic-W-equality-proof}
W_j^{(\tau)}[t_0]
=
\widetilde W_j^{(\tau)}(t_0).
\end{equation}
For \(j=0\) this is exactly our normalization. Suppose that \(j>0\) and that
\eqref{eq:generic-W-equality-proof} has already been proved at every level
strictly below \(j\). By definition,
$$
b_{j,m}^{(\tau)}
=
\bigl(
j+K_\tau(t)-H(t)+m(H(t)-1)
\bigr)
W_{j-m}^{(\tau)},
\qquad 1\le m\le j.
$$
Evaluating at \(t=t_0\) and using the induction hypothesis gives
$$
b_{j,m}^{(\tau)}[t_0]
=
\bigl(
j+K_\tau(t_0)-H(t_0)+m(H(t_0)-1)
\bigr)
\widetilde W_{j-m}^{(\tau)}(t_0).
$$
By \eqref{eq:generic-positive-rec-proof}, the right-hand side is precisely
$
L(m)\widetilde W_j^{(\tau)}(t_0).
$
Hence
$
A_j^{(\tau)}[t_0]\,
\widetilde W_j^{(\tau)}(t_0)
=
b_j^{(\tau)}[t_0],
$
and therefore, after applying the selected coordinate projection,
$
B_j^{(\tau)}[t_0]\,
\widetilde W_j^{(\tau)}(t_0)
=
\pi_j^{(\tau)}b_j^{(\tau)}[t_0].
$
On the other hand, the recursive definition
\eqref{eq:WNdef} of \(W_j^{(\tau)}\), followed by rational-coordinate
evaluation at \(t_0\), gives
$
B_j^{(\tau)}[t_0]\,
W_j^{(\tau)}[t_0]
=
\pi_j^{(\tau)}b_j^{(\tau)}[t_0].
$
By our choice of \(t_0\),
$
\det B_j^{(\tau)}(t_0)\neq0,
$
so \(B_j^{(\tau)}[t_0]\) is invertible. The preceding two equations therefore
have the same unique solution, and hence
$
W_j^{(\tau)}[t_0]
=
\widetilde W_j^{(\tau)}(t_0).
$
This completes the induction on \(j\), and in particular proves
$
W_N^{(\tau)}[t_0]
=
\widetilde W_N^{(\tau)}(t_0).
$

It follows from \eqref{eq:generic-positive-rec-proof} that, for every
\(1\le m\le N\),
$$
L(m)W_N^{(\tau)}[t_0]
=
\bigl(
N+K_\tau(t_0)-H(t_0)+m(H(t_0)-1)
\bigr)
W_{N-m}^{(\tau)}[t_0].
$$
Thus, if we set
$$
\Delta_{N,m}^{(\tau)}
:=
L(m)W_N^{(\tau)}
-
\bigl(
N+K_\tau(t)-H(t)+m(H(t)-1)
\bigr)
W_{N-m}^{(\tau)}
\in Q_{\tau,\mathcal R}[N-m],
$$
then
$
\Delta_{N,m}^{(\tau)}[t_0]=0
$
for every \(t_0\) in the infinite generic set chosen above. 
Finally, expand \(\Delta_{N,m}^{(\tau)}\) in the global PBW basis:
$$
\Delta_{N,m}^{(\tau)}
=
\sum_{e\in\mathcal B_{N-m}^{(\tau)}}g_e(t)e,
\qquad
g_e(t)\in\mathcal R\subset\mathbb C(t).
$$
By Lemma~\ref{lem:freeness}, $\mathcal B_{N-m}^{(\tau)}$ is a $\Lambda$-basis of $Q_{\tau,\Lambda}[N-m]$. Hence, after base change $t\mapsto t_0$, the specialized vectors
$\{e(t_0):e\in\mathcal B_{N-m}^{(\tau)}\}$
form a basis of $Q_{\tau,\Lambda}(t_0)[N-m]$. Hence
\(\Delta_{N,m}^{(\tau)}[t_0]=0\) implies
$
g_e(t_0)=0$
for every 
$e\in\mathcal B_{N-m}^{(\tau)}.
$
Each \(g_e(t)\) is a rational function and vanishes at infinitely many
values of \(t_0\), so \(g_e(t)=0\) identically. Thus
$
\Delta_{N,m}^{(\tau)}=0.
$
This proves the full positive-mode Ward identity for every
\(1\le m\le N\); together with the case \(m>N\) considered at the beginning,
the result follows for all \(m>0\).
\end{proof}

\subsection{Identification with the \texorpdfstring{$c=1$}{c=1} blocks}

We now identify the specialization at the closed point $t=1$ of the
locally reconstructed coefficients with the coefficients of the actual
$c=1$ Virasoro intertwining operators. Recall from the positive-mode
rigidity subsection that
\[
Q_{\tau,1}
=
Q_{\tau,\mathcal R}\otimes_{\mathcal R}\mathbb C_1,
\qquad
\mathbb C_1=\mathcal R/(t-1)\mathcal R,
\]
and that
\[
Q_{\tau,1}[N]
=
Q_{\tau,\mathcal R}[N]\otimes_{\mathcal R}\mathbb C_1.
\]
By Proposition~\ref{prop:special-fibers}, there are natural Virasoro-module
identifications
\begin{equation}\label{eq:special-fiber-identifications-for-blocks}
Q_{0,1}\cong L(1,0),
\qquad
Q_{12,1}\cong L(1,36),
\end{equation}
under which the image of $v_{12}$ in $Q_{12,1}$ is the highest-weight
vector $v_{1,36}$ introduced above.

For $w\in Q_{\tau,\mathcal R}[N]$, we write
\begin{equation}\label{eq:specialization-at-one-notation}
w(1):=w\otimes_{\mathcal R}1\in Q_{\tau,1}[N]
\end{equation}
for its specialization at the closed point $t=1$. In particular,
\[
W_N^{(\tau)}(1)
:=
W_N^{(\tau)}\otimes_{\mathcal R}1
\in Q_{\tau,1}[N].
\]
This notation denotes genuine specialization of the local
$\mathcal R$-family and should be distinguished from the
rational-coordinate evaluation $w[t_0]$ used above for generic
$t_0\ne1$.

\begin{prop}[Special-fiber identification]\label{prop:special-identification}
For each $\tau\in\{0,12\}$, there is a unique normalized nonzero
intertwining operator
\[
\mathcal Y_\tau
\in
I\binom{L(1,K_\tau(1))}
        {L(1,36)\;L(1,36)}
\]
whose highest-vector expansion is normalized by
\(
\widetilde W_0^{(0)}=\mathbf1,
\widetilde W_0^{(12)}=v_{1,36},
\)
where
\begin{equation}\label{eq:c1-block-expansion}
\mathcal Y_\tau(v_{1,36},z)v_{1,36}
=
z^{K_\tau(1)-2H(1)}
\sum_{N\ge0}\widetilde W_N^{(\tau)}z^N,
\qquad
\widetilde W_N^{(\tau)}\in L(1,K_\tau(1))[N].
\end{equation}
Under the identifications
\eqref{eq:special-fiber-identifications-for-blocks}, one has
\begin{equation}\label{eq:special-coefficient-identification}
W_N^{(\tau)}(1)=\widetilde W_N^{(\tau)}
\qquad(N\ge0).
\end{equation}
Thus $W_N^{(0)}(1)$ is the level-$N$ coefficient of the normalized
vacuum-channel intertwining operator, while $W_N^{(12)}(1)$ is the
level-$N$ coefficient of the normalized self-channel intertwining
operator.
\end{prop}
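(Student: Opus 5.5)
The argument has three parts: existence and uniqueness of the normalized operator, a Ward-identity characterization of its coefficients, and a comparison with the specialized local family.

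\textbf{Existence and normalization.} By \eqref{eq:fusion-vacuum}, each space $I\binom{L(1,K_\tau(1))}{L(1,36)\;L(1,36)}$ is one-dimensional; here $K_0(1)=0$ and $K_{12}(1)=H(1)=36$. Take a nonzero element. Lemma~\ref{lem:lowest-coeff} says its highest-vector expansion begins at $z^{K_\tau(1)-2H(1)}$ with a nonzero multiple of the target highest-weight vector. Rescaling by the inverse of that constant gives the normalization $\widetilde W_0^{(0)}=\mathbf1$ and $\widetilde W_0^{(12)}=v_{1,36}$. Any other normalized operator is a scalar multiple of this one, and the normalization forces the scalar to be $1$. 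This gives uniqueness.

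\textbf{Ward identities at $t=1$.} Apply Lemma~\ref{lem:positive-Ward} with $c=1$, $H=36$, $K=K_\tau(1)$. The coefficients $\widetilde W_N^{(\tau)}$ then satisfy
\[
L(m)\widetilde W_N^{(\tau)}=\bigl(N+K_\tau(1)-H(1)+m(H(1)-1)\bigr)\widetilde W_{N-m}^{(\tau)}\qquad(m>0).
\]
Next, specialize Proposition~\ref{prop:full-reconstruction} at $t=1$. The Virasoro action on $Q_{\tau,\mathcal R}$ is $\mathcal R$-linear and the scalars $K_\tau(t),H(t)$ lie in $\mathcal R$, so tensoring with $\C_1=\mathcal R/(t-1)\mathcal R$ is legitimate. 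Under the identifications \eqref{eq:special-fiber-identifications-for-blocks} coming from Proposition~\ref{prop:special-fibers}, this shows that the vectors $W_N^{(\tau)}(1)$ satisfy exactly the same recursion. They also have the same initial term, since $W_0^{(0)}(1)=\mathbf1$ and $W_0^{(12)}(1)=v_{12}\otimes1=v_{1,36}$.

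\textbf{Comparison by induction on $N$.} Set $\delta_N:=W_N^{(\tau)}(1)-\widetilde W_N^{(\tau)}\in L(1,K_\tau(1))[N]$, so $\delta_0=0$. Assume $\delta_j=0$ for all $j<N$. Subtracting the two recursions gives $L(m)\delta_N=0$ for $1\le m\le N$, that is, $A_N^{(\tau)}(1)\delta_N=0$. Lemma~\ref{lem:positive-rigidity} says $A_N^{(\tau)}(1)$ is injective, so $\delta_N=0$, which proves \eqref{eq:special-coefficient-identification}.

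\textbf{Main obstacle.} The delicate point is making sure that the specialization of the local family at $t=1$ is compatible with the identifications in Proposition~\ref{prop:special-fibers}. Concretely, one must check that $L(m)$ on $Q_{\tau,1}$ is the $c=1$ Virasoro action on $L(1,K_\tau(1))$, and that the image of $v_{12}$ is the highest-weight vector used to normalize $\mathcal Y_\tau$. I would verify this directly from the construction in Proposition~\ref{prop:special-fibers}. That construction is an isomorphism of $\Vir$-modules obtained by base change of the PBW presentation, with $c(1)=1$ and $H(1)=36$, and it sends $v_{12}\otimes1$ to $v_{1,36}$. No passage through generic $t_0$ is needed at this stage, because Proposition~\ref{prop:full-reconstruction} already holds as an identity over $\mathcal R$.
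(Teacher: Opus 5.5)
Your proposal is correct and matches the paper's proof step for step. It uses the one-dimensionality of the fusion spaces together with Lemma~\ref{lem:lowest-coeff} to get the unique normalized operator, derives the Ward identities from Lemma~\ref{lem:positive-Ward} and from specializing Proposition~\ref{prop:full-reconstruction} at $t=1$, and closes an induction on $N$ with the injectivity in Lemma~\ref{lem:positive-rigidity}; the paper settles your compatibility concern the same way, through the Virasoro-module identifications of Proposition~\ref{prop:special-fibers} that send $v_{12}\otimes1$ to $v_{1,36}$.
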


\begin{proof}
Fix $\tau\in\{0,12\}$. We first justify the existence and uniqueness of
the normalized intertwining operator in the statement.
Since
\(
L(1,36)=L\!\left(1,\frac{12^2}{4}\right),
\)
Proposition~\ref{prop:Milas-fusion}, with $m=n=12$, gives
\(
\dim_{\mathbb C}
I\binom{L(1,0)}{L(1,36)\;L(1,36)}=1
\)
and
\(
\dim_{\mathbb C}
I\binom{L(1,36)}{L(1,36)\;L(1,36)}=1.
\)
Indeed, the corresponding values $r=0$ and $r=12$ both belong to
\(
\{0,2,\ldots,24\}.
\)
Thus the relevant intertwining space is one-dimensional in either
channel.

It remains to check that a nonzero intertwining operator in either
channel has a nonzero level-zero highest-vector coefficient. Apply
Lemma~\ref{lem:lowest-coeff} with
\[
s=t=6,
\qquad
r=
\begin{cases}
0,&\tau=0,\\
6,&\tau=12.
\end{cases}
\]
Since $s^2=t^2=36$ and $r^2=K_\tau(1)$, every nonzero intertwining
operator in the required space has an expansion
\[
\mathcal Y(v_{1,36},z)v_{1,36}
=
z^{K_\tau(1)-72}
\left(c\,v_{K_\tau(1)}+\sum_{N\ge1}w_Nz^N\right),
\qquad c\ne0,
\]
where $v_{K_\tau(1)}$ denotes the highest-weight vector of the target.
Therefore the scalar freedom in the one-dimensional intertwining space
can be fixed uniquely by requiring the level-zero coefficient to be
$\mathbf1$ in the vacuum channel and $v_{1,36}$ in the self channel.
This gives the uniquely normalized operator $\mathcal Y_\tau$ and the
expansion \eqref{eq:c1-block-expansion}.

We next compare the coefficients
$\widetilde W_N^{(\tau)}$ with the specialization of the locally
reconstructed coefficients. Since $H(1)=36$, Lemma~\ref{lem:positive-Ward}
applied to $\mathcal Y_\tau$ gives, for every $N\ge0$ and every $m>0$,
\begin{equation}\label{eq:c1-block-positive-rec}
L(m)\widetilde W_N^{(\tau)}
=
\bigl(
N+K_\tau(1)-H(1)+m(H(1)-1)
\bigr)
\widetilde W_{N-m}^{(\tau)},
\end{equation}
where, by convention,
$\widetilde W_j^{(\tau)}=0$ for $j<0$.

On the other hand, Proposition~\ref{prop:full-reconstruction} gives over
$\mathcal R$
\[
L(m)W_N^{(\tau)}
=
\bigl(
N+K_\tau(t)-H(t)+m(H(t)-1)
\bigr)
W_{N-m}^{(\tau)}
\]
for every $N\ge0$ and $m>0$. Since all coefficients in this identity
belong to $\mathcal R$, tensoring with
$\mathbb C_1=\mathcal R/(t-1)\mathcal R$ is legitimate. The Virasoro
action commutes with this base change, and hence specialization at
$t=1$ yields
\begin{equation}\label{eq:specialized-local-positive-rec}
L(m)W_N^{(\tau)}(1)
=
\bigl(
N+K_\tau(1)-H(1)+m(H(1)-1)
\bigr)
W_{N-m}^{(\tau)}(1),
\end{equation}
again with $W_j^{(\tau)}(1)=0$ for $j<0$. Thus the two sequences
\[
\{W_N^{(\tau)}(1)\}_{N\ge0}
\qquad\text{and}\qquad
\{\widetilde W_N^{(\tau)}\}_{N\ge0}
\]
satisfy the same full system of positive-mode Ward identities.

We now prove \eqref{eq:special-coefficient-identification} by induction
on $N$. At level $N=0$, the locally reconstructed sequence is normalized
by
\(
W_0^{(0)}=\mathbf1,
W_0^{(12)}=v_{12}.
\)
After specialization and the identifications
\eqref{eq:special-fiber-identifications-for-blocks}, this gives
\(
W_0^{(0)}(1)=\mathbf1=\widetilde W_0^{(0)},
W_0^{(12)}(1)=v_{1,36}=\widetilde W_0^{(12)}.
\)
Hence the assertion holds at level zero.
Now let $N>0$ and suppose inductively that
\begin{equation}\label{eq:special-identification-induction}
W_j^{(\tau)}(1)=\widetilde W_j^{(\tau)}
\qquad(0\le j<N).
\end{equation}
Set
\(
\delta_N
:=
W_N^{(\tau)}(1)-\widetilde W_N^{(\tau)}
\in Q_{\tau,1}[N],
\)
where we use \eqref{eq:special-fiber-identifications-for-blocks} to
regard both terms as vectors in the same special-fiber module.

For $1\le m\le N$, subtracting
\eqref{eq:c1-block-positive-rec} from
\eqref{eq:specialized-local-positive-rec} gives
\[
L(m)\delta_N
=
\bigl(
N+K_\tau(1)-H(1)+m(H(1)-1)
\bigr)
\left(
W_{N-m}^{(\tau)}(1)
-
\widetilde W_{N-m}^{(\tau)}
\right).
\]
Because $0\le N-m<N$, the induction hypothesis
\eqref{eq:special-identification-induction} makes the right-hand side
zero. Therefore
\(
L(m)\delta_N=0
(1\le m\le N).
\)
For $m>N$, one also has $L(m)\delta_N=0$ for degree reasons, since
$\delta_N$ lies at level $N$.

In particular, by the definition of the special-fiber positive-mode
map,
\[
A_N^{(\tau)}(1)(\delta_N)
=
\bigl(L(1)\delta_N,\ldots,L(N)\delta_N\bigr)
=
0.
\]
Lemma~\ref{lem:positive-rigidity} asserts that
\[
A_N^{(\tau)}(1):
Q_{\tau,1}[N]
\longrightarrow
\bigoplus_{m=1}^NQ_{\tau,1}[N-m]
\]
is injective. Hence $\delta_N=0$, and therefore
\(
W_N^{(\tau)}(1)=\widetilde W_N^{(\tau)}.
\)
This completes the induction on $N$ and proves
\eqref{eq:special-coefficient-identification} for all $N\ge0$.
\end{proof}

\subsection{The dense-parameter principle and the null equation}

\begin{lem}[Dense-parameter principle]\label{lem:dense-parameter}
Let $F_\Lambda$ be a finite free $\Lambda$-module with a fixed
$\Lambda$-basis
\(
e_1,\ldots,e_r.
\)
For $t_0\in\C^\times$, let $\C_{t_0}$ denote $\C$ regarded as a
$\Lambda$-algebra via the specialization $t\mapsto t_0$, and set
\[
F_\Lambda(t_0)
:=
F_\Lambda\otimes_\Lambda \C_{t_0},
\qquad
e_i(t_0)
:=
e_i\otimes 1
\in F_\Lambda(t_0).
\]
Then
\(
e_1(t_0),\ldots,e_r(t_0)
\)
is a $\C$-basis of $F_\Lambda(t_0)$.

Put
\(
F_{\mathcal R}
:=
\mathcal R\otimes_\Lambda F_\Lambda.
\)
For
\[
w=\sum_{i=1}^r f_i(t)e_i\in F_{\mathcal R},
\qquad
f_i(t)\in\mathcal R\subset\mathcal K,
\]
and for any $t_0\in\C^\times$ at which all the rational functions
$f_i(t)$ are defined, define the rational-coordinate evaluation
\(
w[t_0]
:=
\sum_{i=1}^r f_i(t_0)e_i(t_0)
\in F_\Lambda(t_0).
\)
If
\(
w[t_0]=0
\)
for infinitely many such values $t_0$, then $w=0$.

More generally, the same conclusion applies coefficientwise to a
formal series whenever each fixed formal coefficient lies in a
localized finite free $\Lambda$-module and, with respect to a fixed
$\Lambda$-basis of that module, is given by finitely many rational
coordinate functions.  Namely, if the rational-coordinate evaluation
of such a fixed coefficient vanishes for infinitely many values of
$t_0$ at which all of its coordinates are defined, then that
coefficient is identically zero.
\end{lem}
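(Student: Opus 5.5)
The first claim is pure base change. Since $F_\Lambda$ is free with basis $e_1,\ldots,e_r$, we have $F_\Lambda\cong\Lambda^r$. Tensor product commutes with finite direct sums, so
\[
F_\Lambda(t_0)\cong(\Lambda\otimes_\Lambda\C_{t_0})^r\cong\C^r .
\]
Under this isomorphism $e_i(t_0)=e_i\otimes1$ goes to the $i$-th standard basis vector. Hence the $e_i(t_0)$ form a $\C$-basis of $F_\Lambda(t_0)$. No information about the Virasoro structure is needed here; it is the same argument already used in Lemma~\ref{lem:freeness} and Proposition~\ref{prop:full-reconstruction}.

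For the vanishing statement, I would first use that $\mathcal R\otimes_\Lambda F_\Lambda$ is free over $\mathcal R$ with basis $1\otimes e_i$, so the coordinates $f_i\in\mathcal R\subset\mathcal K$ of $w$ are uniquely determined. Now take any admissible $t_0$ with $w[t_0]=0$. By the basis property just proved, $w[t_0]=0$ forces $f_i(t_0)=0$ for every $i$.

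Each $f_i$ is then a rational function in $\C(t)$ vanishing at infinitely many points of $\C^\times$ where it is defined. Write $f_i=g_i/h_i$ with $g_i,h_i\in\C[t]$ and $h_i\neq0$. At each such point $h_i(t_0)\ne0$, so $g_i(t_0)=0$. Thus $g_i$ has infinitely many zeros, so $g_i=0$ and hence $f_i=0$ in $\mathcal K$. Since $\mathcal R\to\mathcal K$ is injective ($\Lambda$ is a domain), this gives $f_i=0$ in $\mathcal R$, and therefore $w=0$ by uniqueness of coordinates.

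The formal-series extension applies the same argument to each coefficient separately. Two points need to be stated explicitly:
\begin{enumerate}
\item Each coefficient's set of admissible $t_0$ may differ, but only finitely many poles are excluded per coefficient. The hypothesis is therefore applied coefficient by coefficient, never requiring a single infinite set to work for all coefficients simultaneously.
\item The main subtlety is that evaluation at $t_0\neq1$ is not a ring map on $\mathcal R$. The argument must therefore avoid asserting that $w\mapsto w[t_0]$ is a homomorphism, and instead work purely with the uniquely determined coordinates in the fixed basis.
\end{enumerate}
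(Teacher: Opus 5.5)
Your proposal is correct and follows essentially the same route as the paper. You use base change of the free module to get $F_\Lambda(t_0)\cong\C^r$ with basis $e_i(t_0)$, read off $f_i(t_0)=0$ from $w[t_0]=0$, conclude $f_i=0$ because a nonzero rational function has only finitely many zeros, and apply this coefficient by coefficient for the formal-series version. Your extra details are harmless refinements of the paper's one-line finiteness-of-zeros step: writing $f_i=g_i/h_i$ (take this in lowest terms, so that ``defined at $t_0$'' means $h_i(t_0)\neq0$), and the injectivity of $\mathcal R\to\mathcal K$.
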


\begin{proof}
Since
\(
F_\Lambda\cong\bigoplus_{i=1}^r\Lambda e_i,
\)
base change along $\Lambda\to\C_{t_0}$ gives
\[
F_\Lambda(t_0)
=
F_\Lambda\otimes_\Lambda\C_{t_0}
\cong
\bigoplus_{i=1}^r\C\,e_i(t_0).
\]
Thus
\(
e_1(t_0),\ldots,e_r(t_0)
\)
form a $\C$-basis of $F_\Lambda(t_0)$.

Suppose now that $w[t_0]=0$ for infinitely many values of $t_0$ at
which all $f_i(t)$ are defined.  For each such $t_0$, the linear
independence of the vectors $e_i(t_0)$ gives
\(
f_i(t_0)=0
(1\le i\le r).
\)
Hence each rational function $f_i(t)\in\mathcal K=\C(t)$ has
infinitely many zeros.  A nonzero rational function has only finitely
many zeros, so
\(
f_i(t)=0
(1\le i\le r).
\)
Therefore $w=0$.

For the coefficientwise assertion, fix one formal coefficient.  By
assumption, it belongs to some localized finite free
$\Lambda$-module and has only finitely many rational coordinates with
respect to a fixed $\Lambda$-basis.  Applying the preceding argument
to those coordinates shows that vanishing of its
rational-coordinate evaluation for infinitely many admissible
specializations forces every coordinate to vanish identically.
Hence the fixed formal coefficient itself is zero.
\end{proof}

Before defining the local descendant operation, we record the standard
Ward identity that motivates it.  This identity concerns an actual
intertwining operator and will later be specialized coefficientwise to
the local family.

\begin{prelem}[Descendant Ward identity]
Let $W_1,W_2,W_3$ be modules for a Virasoro vertex operator algebra,
and let
\(
\mathcal Y\in I\binom{W_3}{W_1\;W_2}
\)
be an intertwining operator.  Suppose that $v_H\in W_2$ is a
highest-weight vector of weight $H$.
For $u\in W_1$, set
\(
F_u(z):=\mathcal Y(u,z)v_H.
\)
Then, for every integer $m\ge1$,
\begin{equation}\label{eq:pre-symbol-Ward}
  \begin{aligned}
F_{L(-m)u}(z)
={}&\sum_{k\ge0}\binom{1-m}{k}(-z)^kL(-m-k)F_u(z)\\
&+(-1)^m z^{-m}
\left(
(m-1)H F_u(z)+zL(-1)F_u(z)
-z\frac{d}{dz}F_u(z)
\right).
\end{aligned}
\end{equation}

\end{prelem}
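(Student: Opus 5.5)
The plan is to derive \eqref{eq:pre-symbol-Ward} directly from the FHL Jacobi identity for $\mathcal Y$ with the conformal vector $\omega$ in the first slot. Since $L(-m)=\omega_{1-m}$, I will take $\operatorname{Res}_{x_0}x_0^{1-m}$ of the Jacobi identity applied to $\omega$, $u$ and $v_H$. This is the standard iterate formula
\[
\mathcal Y(\omega_{n}u,z)v_H
=\operatorname{Res}_{x_1}\Bigl((x_1-z)^{n}Y(\omega,x_1)\mathcal Y(u,z)
-(-z+x_1)^{n}\mathcal Y(u,z)Y(\omega,x_1)\Bigr)v_H,
\qquad n=1-m .
\]
The binomials are expanded in nonnegative powers of $z$ in the first term and of $x_1$ in the second.

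\textbf{First term.} Expanding $(x_1-z)^{1-m}=\sum_{k\ge0}\binom{1-m}{k}(-z)^kx_1^{1-m-k}$ and using $\operatorname{Res}_{x_1}x_1^{1-m-k}Y(\omega,x_1)=\omega_{1-m-k}=L(-m-k)$ gives exactly $\sum_{k\ge0}\binom{1-m}{k}(-z)^kL(-m-k)F_u(z)$. I will check that this infinite sum is well defined coefficientwise. The coefficient of a fixed power $z^p$ receives, for each $k$, the vector $L(-m-k)$ applied to the $z^{p-k}$-coefficient of $F_u(z)$. Since $u_nv_H=0$ for $n\gg0$, the powers of $z$ in $F_u$ are bounded below, so only finitely many $k$ contribute.

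\textbf{Second term.} Expanding $(-z+x_1)^{1-m}=\sum_{j\ge0}\binom{1-m}{j}(-z)^{1-m-j}x_1^{j}$ gives $\operatorname{Res}_{x_1}x_1^jY(\omega,x_1)v_H=L(j-1)v_H$. Because $v_H$ is a highest-weight vector, only $j=0$ (giving $L(-1)v_H$) and $j=1$ (giving $L(0)v_H=Hv_H$) survive. The second term is therefore
\[
-(-z)^{1-m}\mathcal Y(u,z)L(-1)v_H-(1-m)(-z)^{-m}H\,F_u(z).
\]
I will then rewrite $\mathcal Y(u,z)L(-1)v_H$ using the commutator $[L(-1),\mathcal Y(u,z)]=\frac{d}{dz}\mathcal Y(u,z)$, which is a consequence of the intertwining Jacobi identity and the $L(-1)$-derivative property. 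This gives $\mathcal Y(u,z)L(-1)v_H=L(-1)F_u(z)-\frac{d}{dz}F_u(z)$.

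\textbf{Collecting terms.} Using $-(-z)^{1-m}=(-1)^mz\cdot z^{-m}$ and $-(1-m)(-z)^{-m}=(-1)^m(m-1)z^{-m}$, the second term becomes
\[
(-1)^mz^{-m}\Bigl((m-1)HF_u(z)+zL(-1)F_u(z)-z\tfrac{d}{dz}F_u(z)\Bigr),
\]
which is the claimed expression. The only delicate points are the expansion conventions for the two binomials and the coefficientwise finiteness of the $k$-sum, both handled above. No genuine obstacle is expected, and the sign bookkeeping in the final step is the main place to be careful.
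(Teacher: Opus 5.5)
Your proposal is correct and takes essentially the paper's route. The paper quotes the iterate formula for $\mathcal Y(L(-m)u,z)$ from Koshida--Kyt\"ol\"a, Corollary~4.7(1), whereas you derive it yourself from the Jacobi identity via $\operatorname{Res}_{x_0}x_0^{1-m}$. After that the steps coincide: on $v_H$ only the $L(-1)$ and $L(0)$ terms survive, and $[L(-1),\mathcal Y(u,z)]=\frac{d}{dz}\mathcal Y(u,z)$ rewrites $\mathcal Y(u,z)L(-1)v_H$. Your sign bookkeeping matches the stated identity.
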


\begin{proof}
Taking the Virasoro mode $n=-m$ in the standard intertwining-operator
identity \cite[Corollary~4.7(1)]{KoshidaKytola} gives
\begin{align*}
  \mathcal Y(L(-m)u,z)
&=
\sum_{k\ge0}\binom{1-m}{k}(-z)^k
L(-m-k)\mathcal Y(u,z)\\
&\quad-
\sum_{k\ge0}\binom{1-m}{k}(-z)^{1-m-k}
\mathcal Y(u,z)L(k-1).
\end{align*}
After applying this identity to $v_H$, only $k=0,1$ survive in the
second sum.  Using $L(0)v_H=Hv_H$ and
\[
\mathcal Y(u,z)L(-1)v_H
=
L(-1)F_u(z)-\frac{d}{dz}F_u(z),
\]
we obtain \eqref{eq:pre-symbol-Ward}.
\end{proof}

Motivated by \eqref{eq:pre-symbol-Ward}, we now formulate explicitly
the coefficientwise local descendant operation needed below.  This
avoids presupposing the existence of a complete intertwining operator
over \(\mathcal R\). Fix \(\tau\in\{0,12\}\) and set
$
\delta_\tau(t):=K_\tau(t)-2H(t).
$
For \(\ell\ge0\), let
$$
\mathscr F_{\tau,\ell}
:=
\left\{
\sum_{N\ge0}a_N z^{\delta_\tau(t)+N-\ell}
\;\middle|\;
a_N\in Q_{\tau,\mathcal R}[N]
\right\}.
$$
The monomials \(z^{\delta_\tau(t)+r}\), \(r\in\mathbb Z\), are regarded as formal basis symbols, with
$$
z^s z^{\delta_\tau(t)+r}
=
z^{\delta_\tau(t)+r+s},
\qquad
z\frac{d}{dz}z^{\delta_\tau(t)+r}
=
\bigl(\delta_\tau(t)+r\bigr)
z^{\delta_\tau(t)+r}
$$
for \(r,s\in\mathbb Z\).
For \(m\ge1\), define
$$
\mathfrak D_m:\mathscr F_{\tau,\ell}
\longrightarrow
\mathscr F_{\tau,\ell+m}
$$
coefficientwise by
\begin{equation}\label{eq:local-descendant-operator}
\begin{aligned}
\mathfrak D_m F(z):={}&
\sum_{k\ge0}\binom{1-m}{k}(-z)^kL(-m-k)F(z)\\
&+(-1)^m z^{-m}
\left((m-1)H(t)F(z)+zL(-1)F(z)
-z\frac{d}{dz}F(z)\right).
\end{aligned}
\end{equation}
This operation is well defined coefficientwise and indeed maps
$\mathscr F_{\tau,\ell}$ into $\mathscr F_{\tau,\ell+m}$. To see this,
write
\[
F(z)=\sum_{N\ge0}a_Nz^{\delta_\tau(t)+N-\ell},
\qquad a_N\in Q_{\tau,\mathcal R}[N].
\]
Since
\(
L(-r)Q_{\tau,\mathcal R}[N]
\subseteq Q_{\tau,\mathcal R}[N+r],
\)
the term
\(
z^kL(-m-k)a_N\,z^{\delta_\tau(t)+N-\ell}
\)
has coefficient in $Q_{\tau,\mathcal R}[N+m+k]$ and exponent
\(
\delta_\tau(t)+N+k-\ell
=\delta_\tau(t)+(N+m+k)-(\ell+m).
\)
Thus it has precisely the grading required for
$\mathscr F_{\tau,\ell+m}$. For a fixed final degree $M$, such a term
can occur only when $N+m+k=M$, so only finitely many pairs $(N,k)$
contribute. The remaining terms in \eqref{eq:local-descendant-operator}
have the same grading property, using
\(
L(-1)Q_{\tau,\mathcal R}[N]
\subseteq Q_{\tau,\mathcal R}[N+1]
\)
and the formal differentiation rule above. Hence every coefficient of
$\mathfrak D_mF(z)$ is a finite sum in the corresponding finite free
homogeneous component of $Q_{\tau,\mathcal R}$.

For each $\tau\in\{0,12\}$, define the locally reconstructed series
\begin{equation}\label{eq:local-Phi-series}
\Phi_\tau(z)
:=
z^{\delta_\tau(t)}\sum_{N\ge0}W_N^{(\tau)}z^N
=
z^{K_\tau(t)-2H(t)}\sum_{N\ge0}W_N^{(\tau)}z^N.
\end{equation}
Since $W_N^{(\tau)}\in Q_{\tau,\mathcal R}[N]$, we have
\(
\Phi_\tau(z)\in\mathscr F_{\tau,0}.
\) For $\ell\ge0$ and
\[
F(z)
=
\sum_{N\ge0}
a_N z^{\delta_\tau(t)+N-\ell}
\in
\mathscr F_{\tau,\ell},
\qquad
a_N\in Q_{\tau,\mathcal R}[N],
\]
let $t_0\in\C^\times$.  We say that $F(z)$ is \emph{regular at $t_0$}
if, for every $N$, all rational PBW coordinates of $a_N$ are regular
at $t=t_0$.  In that case define its coefficientwise
rational-coordinate evaluation by
\begin{equation}\label{eq:series-rational-coordinate-evaluation}
F(z)[t_0]
:=
\sum_{N\ge0}
a_N[t_0]\,
z^{\delta_\tau(t_0)+N-\ell}
\in
z^{\delta_\tau(t_0)-\ell}Q_{\tau,\Lambda}(t_0)[[z]].
\end{equation}
When only a fixed coefficient is under consideration, the same notation
is used coefficientwise provided that this coefficient is regular at
$t_0$ in the rational PBW coordinates.

\begin{prop}[Local descendant and null-vector identities]
\label{prop:local-descendant-null}
Let $\mathbf p=(p_1,\ldots,p_k)$ be an ordered composition, and put
\(
|\mathbf p|:=p_1+\cdots+p_k.
\)
Define
\begin{equation}\label{eq:local-descendant-series}
F_{\mathbf p,\tau}(z)
:=
\mathfrak D_{p_1}\cdots\mathfrak D_{p_k}\Phi_\tau(z),
\end{equation}
with the rightmost operator acting first. Then
\(
F_{\mathbf p,\tau}(z)\in\mathscr F_{\tau,|\mathbf p|}.
\)
Write
\[
F_{\mathbf p,\tau}(z)
=
\sum_{N\ge0}
F_{\mathbf p,\tau;N}
z^{\delta_\tau(t)+N-|\mathbf p|}
\]
with \(F_{\mathbf p,\tau;N}\in Q_{\tau,\mathcal R}[N]\). 
For every admissible generic $t_0$, the series
$F_{\mathbf p,\tau}(z)$ is regular at $t_0$, and
\begin{equation}\label{eq:generic-descendant-identification}
F_{\mathbf p,\tau}(z)[t_0]
=
\mathcal Y_{\tau,t_0}
\bigl(
L(-p_1)\cdots L(-p_k)v_{12}(t_0),z
\bigr)v_{12}(t_0).
\end{equation}
Consequently, using the level-$13$ BSA operator
\eqref{eq:BSA-beta}, the following local BSA null identity holds
coefficientwise over $\mathcal R$:
\begin{equation}\label{eq:local-BSA-null}
\sum_{\mathbf p\models13}
\beta_{\mathbf p}(t)F_{\mathbf p,\tau}(z)=0.
\end{equation}
\end{prop}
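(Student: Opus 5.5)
The plan is to upgrade the finitely-many-exceptions comparison of Proposition~\ref{prop:full-reconstruction} to a statement at \emph{every} admissible generic $t_0$. I will show that the reconstructed coefficients $W_N^{(\tau)}$ have no poles at any such point. Once regularity is known everywhere on the generic locus, the descendant and null identities follow by evaluation plus the dense-parameter principle (Lemma~\ref{lem:dense-parameter}).

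\textbf{Step 1: regularity of $W_N^{(\tau)}$ at an arbitrary admissible generic $t_0$.} Fix an admissible generic $t_0$. Then $Q_{\tau,\Lambda}(t_0)\cong Q_\tau(4t_0)$ is irreducible, being the quotient of a Verma module by its maximal proper submodule \cite[Lemma~4.1]{KoshidaKytola}. The argument of Lemma~\ref{lem:positive-rigidity} therefore shows that the fiber map $A_N^{(\tau)}[t_0]$ is injective for every $N>0$. The matrix of $A_N^{(\tau)}(t)$ in the global PBW bases has entries in $\Lambda$, because the Virasoro action and the PBW reduction of Lemma~\ref{lem:freeness} are defined over $\Lambda$. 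Hence we can choose a square subsystem $B'_N=\pi'_N\circ A_N^{(\tau)}(t)$, depending on $t_0$, whose determinant is a Laurent polynomial not vanishing at $t_0$.

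By Proposition~\ref{prop:full-reconstruction}, $W_N^{(\tau)}$ satisfies the full system $A_N^{(\tau)}(t)W_N^{(\tau)}=b_N^{(\tau)}$ over $\mathcal K$. In particular $W_N^{(\tau)}=(B'_N)^{-1}\pi'_Nb_N^{(\tau)}$, and Cramer's rule expresses its coordinates as $\Lambda$-combinations of the coordinates of $b_N^{(\tau)}$ divided by $\det B'_N$.

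I then induct on $N$. The vector $b_N^{(\tau)}$ is built from $W_j^{(\tau)}$ with $j<N$ and from $H(t)=42t^{-1}-6$ and $K_\tau(t)$, which are regular on $\C^\times$. So regularity of all lower $W_j^{(\tau)}$ at $t_0$ gives regularity of $W_N^{(\tau)}$ at $t_0$. This is the step I expect to be the main obstacle. The earlier construction only guaranteed the selected minor $B_N^{(\tau)}$ to be invertible near $t=1$; the point is that the pole-freeness must come from re-choosing the minor for each $t_0$, and that is legitimate only because Proposition~\ref{prop:full-reconstruction} already gives the \emph{full} system over $\mathcal K$.

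\textbf{Step 2: identification at $t_0$.} Evaluate the full Ward system at $t_0$. The sequence $W_N^{(\tau)}[t_0]$ then satisfies the same positive-mode identities \eqref{eq:generic-positive-rec-proof} as the coefficients $\widetilde W_N^{(\tau)}(t_0)$ of the normalized generic operator $\mathcal Y_{\tau,t_0}$, and it has the same level-zero term. Consider the difference at the first level where the two sequences differ. It is killed by all $L(m)$ with $m>0$, so it lies in the kernel of $A_N^{(\tau)}[t_0]$ and vanishes by the injectivity from Step 1. Hence
\[
\Phi_\tau(z)[t_0]=\mathcal Y_{\tau,t_0}\bigl(v_{12}(t_0),z\bigr)v_{12}(t_0).
\]

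\textbf{Step 3: descendants and the null identity.} The operators $\mathfrak D_m$ in \eqref{eq:local-descendant-operator} have coefficients in $\Lambda$, namely binomials, $H(t)$, and modes acting over $\Lambda$. They also treat $z^{\delta_\tau(t)}$ formally, compatibly with $z\frac{d}{dz}z^{\delta_\tau(t_0)+r}$. Consequently $\mathfrak D_m$ preserves regularity at $t_0$ and commutes with coefficientwise evaluation. Evaluated at $t_0$, $\mathfrak D_m$ is exactly the right-hand side of the Descendant Ward identity \eqref{eq:pre-symbol-Ward} for $\mathcal Y_{\tau,t_0}$ with $H=H(t_0)$. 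Induction on the length $k$ of $\mathbf p$, applying \eqref{eq:pre-symbol-Ward} with $u=L(-p_{i+1})\cdots L(-p_k)v_{12}(t_0)$, gives \eqref{eq:generic-descendant-identification}. The grading claim $F_{\mathbf p,\tau}\in\mathscr F_{\tau,|\mathbf p|}$ follows by iterating the mapping property $\mathfrak D_m:\mathscr F_{\tau,\ell}\to\mathscr F_{\tau,\ell+m}$.

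Finally, fix one coefficient of $\sum_{\mathbf p\models13}\beta_{\mathbf p}(t)F_{\mathbf p,\tau}(z)$. It lies in a finite free graded piece of $Q_{\tau,\mathcal R}$, and its evaluation at any admissible generic $t_0$ is the corresponding coefficient of
\[
\mathcal Y_{\tau,t_0}\bigl(S_{13}(t_0)v_{12}(t_0),z\bigr)v_{12}(t_0).
\]
This vanishes, because $S_{13}(t_0)v_{12}(t_0)=0$ in $Q_{12}(4t_0)$ by \eqref{eq:BSA-normalization-comparison} and the definition of $Q_{12}(4t_0)$. There are infinitely many admissible generic $t_0$, so Lemma~\ref{lem:dense-parameter} applied coefficientwise yields \eqref{eq:local-BSA-null} over $\mathcal R$.
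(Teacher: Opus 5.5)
Your proposal is correct and follows essentially the same route as the paper. For each admissible generic $t_0$ you re-choose a square minor of the full positive-mode matrix that is invertible at $t_0$, which is legitimate because Proposition~\ref{prop:full-reconstruction} already gives the full system; you induct on $N$ to get regularity and the identification with $\mathcal Y_{\tau,t_0}$, propagate through $\mathfrak D_m$ via the descendant Ward identity, and descend the BSA relation by Lemma~\ref{lem:dense-parameter}. The only difference is cosmetic: you run the regularity and identification inductions separately and identify values through injectivity of the full fiber map, whereas the paper does both in one induction using the specialized square subsystem, and the two are equivalent.
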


\begin{proof}
Fix \(\tau\in\{0,12\}\) and an ordered composition
$
\mathbf p=(p_1,\ldots,p_k).
$
We divide the proof into three parts.

\medskip
\noindent
\textbf{Step 1: grading and coefficientwise finiteness.}
By \eqref{eq:local-Phi-series},
$$
\Phi_\tau(z)
=
z^{\delta_\tau(t)}
\sum_{N\ge0}W_N^{(\tau)}z^N
\in
\mathscr F_{\tau,0}.
$$
As verified immediately before the statement of the proposition, for every
\(m\ge1\) and every \(\ell\ge0\), the operator
$
\mathfrak D_m:
\mathscr F_{\tau,\ell}
\longrightarrow
\mathscr F_{\tau,\ell+m}
$
is well defined coefficientwise.  Since the rightmost operator in
\eqref{eq:local-descendant-series} acts first, repeated application gives
$
\Phi_\tau
\in\mathscr F_{\tau,0},
$
$
\mathfrak D_{p_k}\Phi_\tau
\in\mathscr F_{\tau,p_k},
$
$
\mathfrak D_{p_{k-1}}\mathfrak D_{p_k}\Phi_\tau
\in
\mathscr F_{\tau,p_{k-1}+p_k},
$
and, continuing in this way,
$$
F_{\mathbf p,\tau}(z)
=
\mathfrak D_{p_1}\cdots\mathfrak D_{p_k}\Phi_\tau(z)
\in
\mathscr F_{\tau,|\mathbf p|}.
$$
We also record explicitly the finiteness needed below.  At each application
of \(\mathfrak D_m\), the coefficient of any fixed final degree receives
contributions from only finitely many pairs of an initial degree and an
index \(k\) in the first sum of
\eqref{eq:local-descendant-operator}.  Consequently, after the finitely
many operations
\(\mathfrak D_{p_k},\ldots,\mathfrak D_{p_1}\), every fixed coefficient
\(F_{\mathbf p,\tau;N}\) is a finite
\(\mathcal R\)-linear combination of finitely many Virasoro descendants
of finitely many vectors \(W_j^{(\tau)}\).
All terms lie in the single homogeneous component
\(Q_{\tau,\mathcal R}[N]\), which is finite free over \(\mathcal R\) by
Lemma~\ref{lem:freeness}.  Hence, relative to the fixed global PBW basis
of this homogeneous component,
\(F_{\mathbf p,\tau;N}\) has only finitely many coordinates, and each of
these coordinates belongs to
\(\mathcal R\subset\mathbb C(t)\).

\medskip
\noindent
\textbf{Step 2: identification with the generic descendant coefficients.}
Fix an admissible generic value \(t_0\).  We first compare the
highest-vector coefficients
\(W_N^{(\tau)}\) with those of the normalized generic intertwining
operator.

By the generic first-row fusion rule
\eqref{eq:generic-first-row-fusion}, there is, up to scalar, a unique
nonzero intertwining operator
$$
\mathcal Y_{\tau,t_0}
\in
I\binom{Q_\tau(4t_0)}
        {Q_{12}(4t_0)\;Q_{12}(4t_0)}.
$$
Normalize it so that its level-zero coefficient agrees with
\(W_0^{(\tau)}\).  Write
\begin{equation}\label{eq:generic-Phi-proof-810}
\mathcal Y_{\tau,t_0}
\bigl(v_{12}(t_0),z\bigr)v_{12}(t_0)
=
z^{\delta_\tau(t_0)}
\sum_{N\ge0}
\widetilde W_N^{(\tau)}(t_0)z^N.
\end{equation}
Thus
$$ 
\widetilde W_0^{(0)}(t_0)=\mathbf1,
\qquad
\widetilde W_0^{(12)}(t_0)=v_{12}(t_0).
$$
Proposition~\ref{prop:full-reconstruction} already identifies
$W_N^{(\tau)}[t_0]$ with the normalized generic intertwining-operator
coefficient on a Zariski-dense set of generic parameters at which the
auxiliary minors $B_j^{(\tau)}$ are nonzero. We now strengthen this
statement to every admissible generic parameter $t_0$: for every
$N\ge0$, the rational PBW coordinates of $W_N^{(\tau)}$ are regular
at $t=t_0$, and
\begin{equation}\label{eq:generic-W-all-proof-810}
W_N^{(\tau)}[t_0]
=
\widetilde W_N^{(\tau)}(t_0).
\end{equation}
We prove this simultaneously by induction on \(N\).

For \(N=0\), this is exactly the chosen normalization.
Assume now that \(N>0\) and that
\eqref{eq:generic-W-all-proof-810} has already been established at all
levels \(j<N\).  In particular, the PBW coordinates of the vectors
\(W_j^{(\tau)}\), \(j<N\), are regular at \(t_0\).
Hence the recursively defined right-hand side
$$
b_N^{(\tau)}
=
\bigl(
b_{N,1}^{(\tau)},\ldots,b_{N,N}^{(\tau)}
\bigr),
$$
where
$$
b_{N,m}^{(\tau)}
=
\bigl(
N+K_\tau(t)-H(t)+m(H(t)-1)
\bigr)
W_{N-m}^{(\tau)},
$$
also has rational PBW coordinates regular at \(t_0\).

Recall the full positive-mode map \(A_N^{(\tau)}(t)\) introduced in the subsection on positive-mode rigidity.  At the admissible generic fiber \(t=t_0\), under the identification
$
Q_{\tau,\Lambda}(t_0)\cong Q_\tau(4t_0),
$
it becomes
$$
A_N^{(\tau)}(t_0):
Q_{\tau,\Lambda}(t_0)[N]
\longrightarrow
\bigoplus_{m=1}^N Q_{\tau,\Lambda}(t_0)[N-m],
\qquad
w\longmapsto
\bigl(L(1)w,\ldots,L(N)w\bigr).
$$
Since \(Q_\tau(4t_0)\) is irreducible, the same singular-vector argument as in Lemma~\ref{lem:positive-rigidity} shows that \(A_N^{(\tau)}(t_0)\) is injective.  Indeed, a vector in its kernel is annihilated by \(L(m)\) for \(1\le m\le N\), and by \(L(m)\) for \(m>N\) for degree reasons; hence it is a positive-level singular vector and must vanish.

Let
$
d_N^{(\tau)}
=
\dim_{\mathbb C}Q_{\tau,\Lambda}(t_0)[N].
$
Since \(A_N^{(\tau)}(t_0)\) is injective, its matrix has column rank
\(d_N^{(\tau)}\).  We may therefore choose \(d_N^{(\tau)}\) rows whose
corresponding square minor, say \(C_N^{(\tau)}(t)\), satisfies
$
\det C_N^{(\tau)}(t_0)\ne0.
$

Let
\(\rho_N^{(\tau)}\)
denote the corresponding row projection.  Because the entries of the
global positive-mode matrix are rational functions of \(t\), the inverse
\((C_N^{(\tau)}(t))^{-1}\) is defined in a neighborhood of \(t_0\) in
the sense of rational functions regular at \(t_0\).  Hence
\begin{equation}\label{eq:local-solution-810}
W_{N,t_0}^{\mathrm{loc}}
:=
\bigl(C_N^{(\tau)}(t)\bigr)^{-1}
\rho_N^{(\tau)}b_N^{(\tau)}
\end{equation}
has PBW coordinates regular at \(t_0\).

On the other hand, Proposition~\ref{prop:full-reconstruction} gives,
over \(\mathcal R\),
$
A_N^{(\tau)}(t)W_N^{(\tau)}
=
b_N^{(\tau)}.
$
After applying \(\rho_N^{(\tau)}\), we obtain over the rational function
field \(\mathbb C(t)\),
$
C_N^{(\tau)}(t)W_N^{(\tau)}
=
\rho_N^{(\tau)}b_N^{(\tau)}.
$
Since \(\det C_N^{(\tau)}(t)\) is not the zero rational function,
comparison with \eqref{eq:local-solution-810} gives
$
W_N^{(\tau)}
=
W_{N,t_0}^{\mathrm{loc}}
$
as vectors over \(\mathbb C(t)\).
Thus the rational PBW coordinates of \(W_N^{(\tau)}\) are regular at
\(t_0\).

It remains to identify their values.  Lemma~\ref{lem:positive-Ward},
applied to the generic intertwining operator
\(\mathcal Y_{\tau,t_0}\), gives
$$
L(m)\widetilde W_N^{(\tau)}(t_0)
=
\bigl(
N+K_\tau(t_0)-H(t_0)+m(H(t_0)-1)
\bigr)
\widetilde W_{N-m}^{(\tau)}(t_0)
$$
for \(1\le m\le N\).
By the induction hypothesis,
$
\widetilde W_{N-m}^{(\tau)}(t_0)
=
W_{N-m}^{(\tau)}[t_0].
$
Therefore
\(\widetilde W_N^{(\tau)}(t_0)\)
satisfies the specialized square subsystem
$
C_N^{(\tau)}(t_0)
\widetilde W_N^{(\tau)}(t_0)
=
\rho_N^{(\tau)}b_N^{(\tau)}[t_0].
$

The matrix \(C_N^{(\tau)}(t_0)\) is invertible, so this subsystem has a
unique solution.  By \eqref{eq:local-solution-810}, that solution is
\(W_N^{(\tau)}[t_0]\).  Hence
$
W_N^{(\tau)}[t_0]
=
\widetilde W_N^{(\tau)}(t_0).
$
This completes the induction and proves
\eqref{eq:generic-W-all-proof-810} at every admissible generic \(t_0\).
Notice that this argument does not require the particular auxiliary
minor \(B_N^{(\tau)}\) used in the construction of
\(W_N^{(\tau)}\) to remain nonzero at \(t_0\).

Since all coefficients \(W_N^{(\tau)}\) are now known to be regular at
\(t=t_0\), and since \(H(t)\), \(K_\tau(t)\), and
\(\delta_\tau(t)\) are regular on \(\C^\times\), the coefficientwise
finiteness established in Step~1 shows that \(\Phi_\tau(z)\) and every
iterated descendant series obtained from it by the operators
\(\mathfrak D_m\) are regular at \(t_0\).  Hence all coefficientwise
evaluations used below are well defined.  From
\eqref{eq:generic-W-all-proof-810} we obtain
\begin{equation}\label{eq:Phi-generic-identification-810}
\Phi_\tau(z)[t_0]
=
\mathcal Y_{\tau,t_0}
\bigl(v_{12}(t_0),z\bigr)v_{12}(t_0).
\end{equation}

We now pass from the highest vector to arbitrary Virasoro descendants.
For an actual intertwining operator, put
$
F_u(z):=\mathcal Y_{\tau,t_0}(u,z)v_{12}(t_0).
$
Let \(\mathfrak D_{m,t_0}\) denote the operator obtained from
\eqref{eq:local-descendant-operator} by replacing \(H(t)\) with
\(H(t_0)\) and letting the Virasoro operators act on the fiber
\(Q_{\tau,\Lambda}(t_0)\).  Specializing the descendant Ward
identity \eqref{eq:pre-symbol-Ward} at \(t=t_0\) says exactly that, for
every \(m\ge1\),
\[
F_{L(-m)u}(z)=\mathfrak D_{m,t_0}F_u(z).
\]
For every fixed homogeneous coefficient, only finitely many terms in
the first sum of \eqref{eq:local-descendant-operator} contribute.
Therefore rational-coordinate evaluation commutes coefficientwise with
the Virasoro operators and with \(z\,d/dz\), and for the descendant
series considered here
\begin{equation}\label{eq:D-evaluation-compatibility-810}
\bigl(\mathfrak D_mF\bigr)(z)[t_0]
=
\mathfrak D_{m,t_0}\bigl(F(z)[t_0]\bigr).
\end{equation}
Starting from \eqref{eq:Phi-generic-identification-810}, repeated use of
\eqref{eq:D-evaluation-compatibility-810} and the descendant Ward
identity gives
\begin{equation}\label{eq:whole-descendant-identification-810}
F_{\mathbf p,\tau}(z)[t_0]
=
\mathcal Y_{\tau,t_0}
\bigl(
L(-p_1)\cdots L(-p_k)v_{12}(t_0),z
\bigr)v_{12}(t_0).
\end{equation}
This proves \eqref{eq:generic-descendant-identification}.

\medskip
\noindent
\textbf{Step 3: descent of the BSA null relation to \(\mathcal R\).}
Define
$
G_\tau(z)
:=
\sum_{\mathbf p\models13}
\beta_{\mathbf p}(t)F_{\mathbf p,\tau}(z).
$
Since every composition occurring here has total size \(13\),
Step~1 gives
$
F_{\mathbf p,\tau}(z)
\in
\mathscr F_{\tau,13}.
$
Hence
$
G_\tau(z)
=
\sum_{N\ge0}
G_{\tau;N}
z^{\delta_\tau(t)+N-13},
$
where
\begin{equation}\label{eq:GtauN-810}
G_{\tau;N}
=
\sum_{\mathbf p\models13}
\beta_{\mathbf p}(t)
F_{\mathbf p,\tau;N}
\in
Q_{\tau,\mathcal R}[N].
\end{equation}
The sum is finite, since there are only finitely many ordered
compositions of \(13\).  Moreover,
\(\beta_{\mathbf p}(t)\in\Lambda\) is regular on \(\C^\times\).
By Step~2, every \(F_{\mathbf p,\tau}(z)\) is regular at every
admissible generic \(t_0\).  Hence \(G_\tau(z)\) is also regular at
such \(t_0\), and coefficientwise linearity gives
\[
G_\tau(z)[t_0]
=
\sum_{\mathbf p\models13}
\beta_{\mathbf p}(t_0)F_{\mathbf p,\tau}(z)[t_0].
\]
Using \eqref{eq:generic-descendant-identification} and the definition
\eqref{eq:BSA-beta} of \(S_{13}(t)\), we therefore obtain
\begin{align*}
G_\tau(z)[t_0]
&=
\sum_{\mathbf p\models13}
\beta_{\mathbf p}(t_0)
\mathcal Y_{\tau,t_0}
\bigl(
L(-p_1)\cdots L(-p_k)v_{12}(t_0),z
\bigr)v_{12}(t_0)\\
&=
\mathcal Y_{\tau,t_0}
\bigl(
S_{13}(t_0)v_{12}(t_0),z
\bigr)v_{12}(t_0).
\end{align*}
For admissible generic \(t_0\), the first source of
\(\mathcal Y_{\tau,t_0}\) is the quotient
\(Q_{12}(4t_0)\), and
\(S_{13}(t_0)v_{12}(t_0)\)
is precisely the level-\(13\) singular vector killed in this quotient;
see Lemma~\ref{lem:BSA-local-singular} and the generic-fiber
identification above.  Hence
$
\mathcal Y_{\tau,t_0}
\bigl(
S_{13}(t_0)v_{12}(t_0),z
\bigr)v_{12}(t_0)
=
0.
$
Thus
\(
G_\tau(z)[t_0]=0
\)
coefficientwise for every admissible generic \(t_0\).  Fix \(N\ge0\).
By the definition of coefficientwise evaluation,
\(
G_{\tau;N}[t_0]=0
\)
for every admissible generic \(t_0\), and hence for infinitely many
values of \(t_0\).

By Lemma~\ref{lem:freeness},
\(Q_{\tau,\Lambda}[N]\)
is finite free over \(\Lambda\), and
$
Q_{\tau,\mathcal R}[N]
=
\mathcal R\otimes_\Lambda Q_{\tau,\Lambda}[N].
$
Relative to its fixed global PBW basis, the vector \(G_{\tau;N}\) has
finitely many rational coordinate functions.  Since its
rational-coordinate evaluation vanishes at infinitely many admissible
generic values of \(t_0\), Lemma~\ref{lem:dense-parameter} implies
$
G_{\tau;N}=0
$ in $Q_{\tau,\mathcal R}[N].
$
Because \(N\ge0\) was arbitrary, every coefficient of \(G_\tau(z)\)
vanishes.  Therefore
$
\sum_{\mathbf p\models13}
\beta_{\mathbf p}(t)F_{\mathbf p,\tau}(z)
=
0
$
coefficientwise over \(\mathcal R\), which is
\eqref{eq:local-BSA-null}.  This completes the proof.
\end{proof}

\begin{thm}[Local regularization]\label{thm:local-regularization}
For $\tau\in\{0,12\}$, let
$\{W_N^{(\tau)}\}_{N\ge0}$ be the sequences constructed in
\eqref{eq:WNdef}. Then these are the unique sequences with the prescribed
initial terms satisfying the full positive-mode Ward identities
\begin{equation}\label{eq:local-regularization-Ward}
L(m)W_N^{(\tau)}
=
\bigl(
N+K_\tau(t)-H(t)+m(H(t)-1)
\bigr)
W_{N-m}^{(\tau)}.
\end{equation}
Moreover:

{\rm(1)} each $W_N^{(\tau)}$ is regular at $t=1$, and its specialization
agrees with the corresponding normalized $c=1$ intertwining-operator
coefficient;

{\rm(2)} the descendant series defined in
\eqref{eq:local-descendant-series} satisfy the level-$13$ BSA relation
$$\sum_{\mathbf p\models13} \beta_{\mathbf p}(t)F_{\mathbf p,\tau}(z)=0;$$

{\rm(3)} the $C_2$-symbols
$\Sigma_{\tau,\mathcal R}(W_N^{(\tau)})$ are regular at $t=1$.

\end{thm}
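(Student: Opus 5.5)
The theorem is essentially an assembly of results already proved in this section, so the plan is to collect them in order. First, existence: the sequences $W_N^{(\tau)}$ of \eqref{eq:WNdef} satisfy the full Ward identities \eqref{eq:local-regularization-Ward} by Proposition~\ref{prop:full-reconstruction}.

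For uniqueness, suppose $\{W_N'\}$ is another sequence in $Q_{\tau,\mathcal R}$ with $W_0'=W_0^{(\tau)}$ satisfying \eqref{eq:local-regularization-Ward}. We induct on $N$ and set $\delta_N:=W_N'-W_N^{(\tau)}$. Given agreement below level $N$, we get $A_N^{(\tau)}(t)\delta_N=0$. Hence $B_N^{(\tau)}\delta_N=\pi_N^{(\tau)}A_N^{(\tau)}(t)\delta_N=0$. Since $B_N^{(\tau)}$ is an isomorphism of free $\mathcal R$-modules by \eqref{eq:B-isomorphism}, it follows that $\delta_N=0$. No torsion issue arises, because $Q_{\tau,\mathcal R}[N]$ is free by Lemma~\ref{lem:freeness}.

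For (1), regularity at $t=1$ is built into the construction. The PBW coordinates of $W_N^{(\tau)}$ lie in $\mathcal R$ because $\det B_N^{(\tau)}\in\mathcal R^\times$ and the right-hand sides $b_N^{(\tau)}$ have coefficients in $\mathcal R$ by induction. The identification of the specialization $W_N^{(\tau)}(1)$ with the normalized $c=1$ coefficient $\widetilde W_N^{(\tau)}$ is exactly Proposition~\ref{prop:special-identification}, via the identifications \eqref{eq:special-fiber-identifications-for-blocks}. For (2), we cite \eqref{eq:local-BSA-null} of Proposition~\ref{prop:local-descendant-null}.

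For (3), we note that $\Sigma_{\tau,\mathcal R}$ from Lemma~\ref{lem:symbol-descent} is $\mathcal R$-linear with values in $\mathcal R[T]$. Therefore $\Sigma_{\tau,\mathcal R}(W_N^{(\tau)})\in\mathcal R[T]$, and each coefficient is regular at $t=1$. Corollary~\ref{cor:symbol-specialization} then gives that its specialization equals $\Sigma_0$ or $\Sigma_{36}$ applied to $\widetilde W_N^{(\tau)}$. The same argument applies to the descendant coefficients $F_{\mathbf p,\tau;N}$, whose coordinates lie in $\mathcal R$ by Step 1 of Proposition~\ref{prop:local-descendant-null}.

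The only point needing care is uniqueness. The Ward system is overdetermined, so uniqueness must come from injectivity of $A_N^{(\tau)}(t)$ over $\mathcal R$ rather than from solvability. The plan is to derive this injectivity from the invertible square subsystem $B_N^{(\tau)}$, which in turn rests on the special-fiber rigidity of Lemma~\ref{lem:positive-rigidity} together with the fact that $\mathcal R$ is local.
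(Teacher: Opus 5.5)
Your proposal is correct and follows the paper's proof essentially step for step: existence from Proposition~\ref{prop:full-reconstruction}, uniqueness by induction on $N$ using that the selected square subsystem $B_N^{(\tau)}=\pi_N^{(\tau)}\circ A_N^{(\tau)}(t)$ is an isomorphism over $\mathcal R$, and items (1)--(3) read off from Proposition~\ref{prop:special-identification}, Proposition~\ref{prop:local-descendant-null}, and the $\mathcal R[T]$-valuedness of $\Sigma_{\tau,\mathcal R}$ from Lemma~\ref{lem:symbol-descent}. Your difference formulation $\delta_N=W_N'-W_N^{(\tau)}$ in the uniqueness step is only a rephrasing of the paper's comparison $B_N^{(\tau)}\widehat W_N^{(\tau)}=\pi_N^{(\tau)}b_N^{(\tau)}=B_N^{(\tau)}W_N^{(\tau)}$.
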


\begin{proof}
By \eqref{eq:WNdef} and the isomorphism
\eqref{eq:B-isomorphism}, the coefficients
\(W_N^{(\tau)}\in Q_{\tau,\mathcal R}[N]\) are uniquely determined
recursively by the selected square Ward subsystem.  Proposition~
\ref{prop:full-reconstruction} shows that the resulting sequences satisfy
the full positive-mode Ward identities
\eqref{eq:local-regularization-Ward}.

It remains only to check uniqueness for the full Ward system.
Suppose that
\(\{\widehat W_N^{(\tau)}\}_{N\ge0}\) is another sequence with the same
initial term and satisfying \eqref{eq:local-regularization-Ward}.
We argue by induction on \(N\).  The case \(N=0\) is immediate.
Assume that
$
\widehat W_j^{(\tau)}=W_j^{(\tau)}
 (0\le j<N).
$
Then, for \(1\le m\le N\), the Ward identities and the induction
hypothesis give
$
L(m)\widehat W_N^{(\tau)}=b_{N,m}^{(\tau)}.
$
Hence
$
A_N^{(\tau)}(t)\widehat W_N^{(\tau)}=b_N^{(\tau)},
$
and therefore, after applying the selected coordinate projection,
$$
B_N^{(\tau)}\widehat W_N^{(\tau)}
=
\pi_N^{(\tau)}b_N^{(\tau)}
=
B_N^{(\tau)}W_N^{(\tau)}.
$$
Since \(B_N^{(\tau)}\) is an isomorphism over \(\mathcal R\), we obtain
\(\widehat W_N^{(\tau)}=W_N^{(\tau)}\).  This proves uniqueness.

Assertion~{\rm(1)} follows from
\(W_N^{(\tau)}\in Q_{\tau,\mathcal R}[N]\), together with
Proposition~\ref{prop:special-identification}, which gives
$
W_N^{(\tau)}(1)=\widetilde W_N^{(\tau)}.
$
Assertion~{\rm(2)} is Proposition~
\ref{prop:local-descendant-null}.  Finally, assertion~{\rm(3)} follows
immediately from Lemma~\ref{lem:symbol-descent}, since
$
W_N^{(\tau)}\in Q_{\tau,\mathcal R}
$ and $
\Sigma_{\tau,\mathcal R}(Q_{\tau,\mathcal R})
\subseteq\mathcal R[T].
$
\end{proof}

\section{The scalar Benoit--Saint-Aubin recurrence}
\label{sec:BSA-recurrence}

We now apply the local $C_2$-symbol to the highest-vector coefficients constructed in Section~\ref{sec:local-regularization}, converting the level-$13$ Benoit--Saint-Aubin null relation into a finite-width scalar recurrence. For $\tau\in\{0,12\}$, recall $K_\tau(t)=0$ if $\tau=0$ and $H(t)$ if $\tau=12$, with $H(t)=42/t-6$. In a fixed channel, abbreviate $H:=H(t)$, $K:=K_\tau(t)$, $\delta:=K-2H$. Recall the locally reconstructed series $\Phi_\tau(z)$ from \eqref{eq:local-Phi-series}. We extend $\Sigma_{\tau,\mathcal R}$ coefficientwise to every local formal series
\[
F(z)=\sum_{N\ge0}a_N z^{\delta_\tau(t)+N-\ell}\in\mathscr F_{\tau,\ell}
\]
by
\begin{equation}\label{eq:Sigma-coefficientwise-extension}
\Sigma_{\tau,\mathcal R}(F(z))
:=
\sum_{N\ge0}\Sigma_{\tau,\mathcal R}(a_N)
 z^{\delta_\tau(t)+N-\ell}.
\end{equation}
The powers $z^{\delta_\tau(t)+r}$ and the formal operator $z\,d/dz$ are understood with the convention fixed in Section~\ref{sec:local-regularization}. Since $\Sigma_{\tau,\mathcal R}$ is defined on every homogeneous coefficient in $Q_{\tau,\mathcal R}$, this extension is well defined. For $\Phi_\tau(z)$, odd-level coefficients have zero symbol and level-$2n$ coefficients have symbols proportional to $T^n$, so there exist unique $c_n^{(\tau)}(t)\in\mathcal R$ ($n\ge0$) with
\begin{equation}\label{eq:Phi-symbol}
\Phi_\tau(z,T;t):=\Sigma_{\tau,\mathcal R}(\Phi_\tau(z))=z^{\delta_\tau(t)}\sum_{n\ge0}c_n^{(\tau)}(t)(Tz^2)^n,
\end{equation}
where $\delta_\tau(t)=K_\tau(t)-2H(t)$ and $c_0^{(\tau)}(t)=1$. Introducing $w:=Tz^2$, we have $\Phi_\tau(z,T;t)=z^{\delta_\tau(t)}\sum_{n\ge0}c_n^{(\tau)}(t)w^n$. By Proposition~\ref{prop:special-identification} and Section~\ref{sec:symbol-determinant},
\begin{equation}\label{eq:uv-specialization}
u_n=c_n^{(0)}(1),\qquad v_n=c_n^{(12)}(1).
\end{equation}

\subsection{The descendant operator on the \texorpdfstring{$C_2$}{C2}-symbol}

We now pass entirely to the local formal-series setting.  The genuine
intertwining operators used in Section~\ref{sec:local-regularization}
were needed there to establish the local descendant and BSA null
identities.  From this point on, only the explicitly defined local
operators $\mathfrak D_m$ and the coefficientwise extension
\eqref{eq:Sigma-coefficientwise-extension} are used.

\begin{lem}[Descendant operator on the $C_2$-symbol]
\label{lem:symbol-descendant-Ward}
Fix $\tau\in\{0,12\}$.  For every integer $m\ge1$, define the
$\mathcal R$-linear formal differential operator
\begin{equation}\label{eq:Dhat}
\widehat D_m
:=
\delta_{m,2}T
+(-1)^m z^{-m}
\left((m-1)H-z\frac{d}{dz}\right)
\end{equation}
on the $\mathcal R$-module
\[
\bigoplus_{\ell\ge0}z^{K-2H-\ell}\mathcal R[[w]],
\qquad w=Tz^2.
\]
For every $\ell\ge0$, it restricts to
\begin{equation}\label{eq:Dhat-level-shift}
\widehat D_m\!\left(
 z^{K-2H-\ell}\mathcal R[[w]]
\right)
\subseteq
 z^{K-2H-\ell-m}\mathcal R[[w]].
\end{equation}
Moreover, for every $F(z)\in\mathscr F_{\tau,\ell}$,
$\Sigma_{\tau,\mathcal R}(F(z))$ belongs to
$z^{K-2H-\ell}\mathcal R[[w]]$, and
\begin{equation}\label{eq:local-Dhat-compatibility}
\Sigma_{\tau,\mathcal R}\bigl(\mathfrak D_mF(z)\bigr)
=
\widehat D_m\,\Sigma_{\tau,\mathcal R}(F(z)).
\end{equation}
In particular, both sides are well defined coefficientwise.
\end{lem}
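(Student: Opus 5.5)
The plan is to compute $\Sigma_{\tau,\mathcal R}$ term by term on the definition \eqref{eq:local-descendant-operator} of $\mathfrak D_m$. Everything reduces to one key identity: for a homogeneous $a\in Q_{\tau,\mathcal R}[N]$ and $r\ge1$,
\[
\Sigma_{\tau,\mathcal R}(L(-r)a)=\delta_{r,2}\,T\,\Sigma_{\tau,\mathcal R}(a).
\]
To prove it, lift $a$ to $Dv$, with $D\in U_{\mathcal R}(\mathfrak n_-)$ and $v$ the highest-weight vector $v_{12}$ or $\mathbf 1$. Then $L(-r)a$ is the image of $L(-r)Dv$. By Lemma~\ref{lem:symbol-descent} and the multiplicativity of $\sigma_{\mathcal R}$, its symbol is $\sigma_{\mathcal R}(L(-r))\sigma_{\mathcal R}(D)$, and $\sigma_{\mathcal R}(L(-r))$ equals $T$ if $r=2$ and $0$ otherwise. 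Lemma~\ref{lem:symbol-descent} also guarantees that the result is independent of the lift.

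The membership claim comes next. Take $F=\sum_N a_Nz^{\delta+N-\ell}\in\mathscr F_{\tau,\ell}$. For odd $N$ the symbol of $a_N$ vanishes. For $N=2n$ it lies in $\mathcal R T^n$; this is the $\mathcal R$-analogue of Remark~\ref{rem:sigma-level}, obtained by expanding a lift in PBW monomials of level $N$, where only $L(-2)^n$ survives. Hence $\Sigma_{\tau,\mathcal R}(F)=z^{\delta-\ell}\sum_n s_nT^nz^{2n}$ with $s_n\in\mathcal R$, which lies in $z^{\delta-\ell}\mathcal R[[w]]$.

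Now apply $\Sigma_{\tau,\mathcal R}$ coefficientwise to $\mathfrak D_mF$. For a fixed output degree only finitely many terms contribute, so the symbol may be taken term by term.
\begin{itemize}
\item In the first sum, the key identity kills every $L(-m-k)$ with $m+k\ne2$. Since $m\ge1$, the only surviving term is $m=2$, $k=0$, with coefficient $\binom{-1}{0}=1$. It contributes $T\,\Sigma_{\tau,\mathcal R}(F)$, which is the $\delta_{m,2}T$ term.
\item The term $zL(-1)F$ has zero symbol, because $\sigma_{\mathcal R}(L(-1))=0$.
\item The terms $(m-1)HF$ and $z\frac{d}{dz}F$ commute with $\Sigma_{\tau,\mathcal R}$. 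The symbol is $\mathcal R$-linear on coefficients, and $z\,d/dz$ acts on the formal exponents $z^{\delta+r}$ by the scalar $\delta+r\in\mathcal R$, so it commutes with a coefficientwise map.
\end{itemize}
Putting the three items together gives \eqref{eq:local-Dhat-compatibility}.

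For the level shift \eqref{eq:Dhat-level-shift}, compute on a monomial $z^{\delta-\ell}w^n=T^nz^{\delta-\ell+2n}$:
\begin{itemize}
\item $z\frac{d}{dz}$ multiplies it by $\delta-\ell+2n\in\mathcal R$;
\item $z^{-m}$ moves it into $z^{\delta-\ell-m}\mathcal R[[w]]$;
\item $T\cdot T^nz^{\delta-\ell+2n}=z^{-2}\,T^{n+1}z^{\delta-\ell+2(n+1)}$, which lies in $z^{\delta-\ell-2}\mathcal R[[w]]$, matching the shift $m=2$.
\end{itemize}
Since $\widehat D_m$ acts on each monomial with coefficients in $\mathcal R$, it extends termwise to $\mathcal R[[w]]$.

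The main obstacle is bookkeeping rather than mathematics. One has to check that the key identity holds in the quotient $Q_{\tau,\mathcal R}$, which Lemma~\ref{lem:symbol-descent} handles because $\Sigma$ was defined via lifts. One also has to justify that coefficientwise symbol extraction commutes with the infinite sums in $\mathfrak D_m$; the finiteness per degree established before Proposition~\ref{prop:local-descendant-null} handles that.
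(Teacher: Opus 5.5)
Your proposal is correct and follows essentially the same approach as the paper's proof. Both rest on the identity $\Sigma_{\tau,\mathcal R}(L(-r)q)=\delta_{r,2}T\,\Sigma_{\tau,\mathcal R}(q)$, obtained via PBW lifts, multiplicativity of $\sigma_{\mathcal R}$, and descent to the quotient. Both then get membership from the parity and grading of the symbol, and apply the symbol termwise to $\mathfrak D_m F$ using per-degree finiteness.

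One justification needs fixing. In the first sum, $m\ge1$ does not rule out $(m,k)=(1,1)$, which also has $m+k=2$ and so survives the key identity. That term drops out only because its coefficient is $\binom{1-m}{k}=\binom{0}{1}=0$, which is exactly how the paper disposes of it.
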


\begin{proof}
First fix $\ell\ge0$ and put $A:=K-2H-\ell$.  Since
$w=Tz^2$, the formal operator $z\,d/dz$ preserves
$z^A\mathcal R[[w]]$.  Also,
\[
T\,z^A\mathcal R[[w]]
=
z^{A-2}w\mathcal R[[w]]
\subseteq
z^{A-2}\mathcal R[[w]].
\]
It follows directly from \eqref{eq:Dhat} that
\(
\widehat D_m\bigl(z^A\mathcal R[[w]]\bigr)
\subseteq
z^{A-m}\mathcal R[[w]],
\)
which proves \eqref{eq:Dhat-level-shift} and also shows that
$\widehat D_m$ is well defined on the displayed direct sum.

Now write
\[
F(z)=\sum_{N\ge0}a_Nz^{\delta_\tau(t)+N-\ell},
\qquad a_N\in Q_{\tau,\mathcal R}[N].
\]
By Lemma~\ref{lem:symbol-descent} and the grading of
$\sigma_{\mathcal R}$,
\[
\Sigma_{\tau,\mathcal R}(a_N)\in
\begin{cases}
\mathcal R\,T^{N/2},&N\text{ even},\\
0,&N\text{ odd}.
\end{cases}
\]
Since $\delta_\tau(t)=K-2H$ and $w=Tz^2$, it follows that
\[
\Sigma_{\tau,\mathcal R}(F(z))
\in
z^{K-2H-\ell}\mathcal R[[Tz^2]]
=
z^{K-2H-\ell}\mathcal R[[w]],
\]
so the right-hand side of
\eqref{eq:local-Dhat-compatibility} is defined by
\eqref{eq:Dhat} and the restriction property
\eqref{eq:Dhat-level-shift}.

By the coefficientwise finiteness proved after
\eqref{eq:local-descendant-operator}, every coefficient of
$\mathfrak D_mF(z)$ is a finite sum of vectors in a homogeneous
component of $Q_{\tau,\mathcal R}$.  Hence
$\Sigma_{\tau,\mathcal R}(\mathfrak D_mF(z))$ is defined by
\eqref{eq:Sigma-coefficientwise-extension}.

We next record the effect of the target symbol on a negative Virasoro
mode.  If $q\in Q_{\tau,\mathcal R}$, Lemma~\ref{lem:symbol-descent}
and the multiplicativity of $\sigma_{\mathcal R}$ give
\begin{equation}\label{eq:Sigma-negative-mode}
\Sigma_{\tau,\mathcal R}(L(-r)q)
=
\begin{cases}
T\,\Sigma_{\tau,\mathcal R}(q),& r=2,\\
0,& r\neq2.
\end{cases}
\end{equation}
Indeed, this follows first for a PBW representative of $q$ and is
independent of the representative because
$\Sigma_{\tau,\mathcal R}$ already descends to
$Q_{\tau,\mathcal R}$.

Apply $\Sigma_{\tau,\mathcal R}$ coefficientwise to
\eqref{eq:local-descendant-operator}.  In its first sum,
\eqref{eq:Sigma-negative-mode} kills every term unless $m+k=2$.
The case $(m,k)=(2,0)$ contributes
$T\Sigma_{\tau,\mathcal R}(F(z))$, whereas the only other possibility,
$(m,k)=(1,1)$, has coefficient $\binom{0}{1}=0$.  Thus the first sum
contributes precisely
\(
\delta_{m,2}T\,\Sigma_{\tau,\mathcal R}(F(z)).
\)
Moreover, \eqref{eq:Sigma-negative-mode} with $r=1$ gives
$\Sigma_{\tau,\mathcal R}(L(-1)q)=0$.  Since the symbol acts only on
the coefficients, it commutes with multiplication by elements of
$\mathcal R$, with integral powers of $z$, and with the formal operator
$z\,d/dz$.  Therefore the remaining terms give
\[
(-1)^m z^{-m}
\left((m-1)H-z\frac{d}{dz}\right)
\Sigma_{\tau,\mathcal R}(F(z)).
\]
Combining the two contributions proves
\eqref{eq:local-Dhat-compatibility}.
\end{proof}

\begin{rmk}\label{rem:D1}
For $m=1$, $\widehat D_1=d/dz$, recovering on the symbol side the
formal $L(-1)$-derivative operation.  For $m=2$,
\[
\widehat D_2=T+z^{-2}\left(H-z\frac{d}{dz}\right).
\]
Moreover,
$[\widehat D_1,\widehat D_2]=\widehat D_3$, in agreement with
$[L(-1),L(-2)]=L(-3)$.  No intertwining operator over $\mathcal R$ is
asserted or required here; the statement concerns only the local
formal-series operators constructed in Section~\ref{sec:local-regularization}.
\end{rmk}

\subsection{Removal of the \texorpdfstring{$z$}{z}-powers}

We now factor out the common $z$-power in the action of
$\widehat D_m$ and restrict the induced operator to the polynomial
subspace $\mathcal R[w]\subset\mathcal R[[w]]$.  Recall that, after
descendant operators of total level $\ell$ have been applied, the
corresponding coefficientwise symbol is a linear combination of terms
of the form
\(
z^{K-2H-\ell}w^n,
 n\ge0.
\)
Thus the common $z$-power may be factored out, leaving an induced
differential operator on the polynomial variable $w$.

\begin{lem}\label{lem:Dmell}
For $m\ge1$ and $\ell\ge0$, define an $\mathcal R$-linear operator
\[
\mathscr D_{m,\ell}^{(K)}:\mathcal R[w]\longrightarrow\mathcal R[w]
\]
by
\begin{equation}\label{eq:Dmell}
\mathscr D_{m,\ell}^{(K)}
:=
\delta_{m,2}\,w
+
(-1)^m
\left(
(m+1)H-K+\ell
-
2w\frac{d}{dw}
\right).
\end{equation}
Then, for every $f(w)\in\mathcal R[w]$,
\begin{equation}\label{eq:Dhat-factor}
\widehat D_m
\left(
z^{K-2H-\ell}f(w)
\right)
=
z^{K-2H-\ell-m}
\mathscr D_{m,\ell}^{(K)}f(w).
\end{equation}
In particular, for every $n\ge0$,
\begin{equation}\label{eq:Dmell-monomial}
\mathscr D_{m,\ell}^{(K)}(w^n)
=
\delta_{m,2}w^{n+1}
+
(-1)^m
\bigl(
(m+1)H-K+\ell-2n
\bigr)w^n.
\end{equation}
\end{lem}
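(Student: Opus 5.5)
This is a direct computation. I will apply $\widehat D_m$ from \eqref{eq:Dhat} to a single monomial $z^{A}w^n$, where $A:=K-2H-\ell$ and $w=Tz^2$, and then extend by $\mathcal R$-linearity. Because $\widehat D_m$ is $\mathcal R$-linear and every $f\in\mathcal R[w]$ is a finite $\mathcal R$-combination of monomials, \eqref{eq:Dhat-factor} for general $f$ follows from the monomial formula \eqref{eq:Dmell-monomial}. That formula in turn is just $\mathscr D_{m,\ell}^{(K)}$ of \eqref{eq:Dmell} evaluated on $w^n$, using $2w\frac{d}{dw}w^n=2nw^n$. So the plan reduces to checking
\[
\widehat D_m\bigl(z^{A}w^n\bigr)
=z^{A-m}\Bigl(\delta_{m,2}w^{n+1}+(-1)^m\bigl((m+1)H-K+\ell-2n\bigr)w^n\Bigr).
\]

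First I handle the Euler operator. Writing $z^Aw^n=T^nz^{A+2n}$ and following the formal-power convention of Section~\ref{sec:local-regularization}, $z\frac{d}{dz}$ multiplies this term by $A+2n$. Here $T$ is a formal symbol with no $z$-dependence, so this is exactly the convention used in the proof of Lemma~\ref{lem:symbol-descendant-Ward}. It follows that
\[
(-1)^mz^{-m}\Bigl((m-1)H-z\tfrac{d}{dz}\Bigr)z^Aw^n=(-1)^mz^{A-m}\bigl((m-1)H-A-2n\bigr)w^n.
\]
Substituting $A=K-2H-\ell$ turns the bracket into $(m-1)H-K+2H+\ell-2n=(m+1)H-K+\ell-2n$, which is the required scalar.

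Second, I handle the $\delta_{m,2}T$ term. It only matters when $m=2$. In that case $T\cdot z^Aw^n=z^{A-2}\,Tz^2\,w^n=z^{A-m}w^{n+1}$, and this is the $\delta_{m,2}w^{n+1}$ contribution.

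Adding the two pieces gives the monomial identity. It also shows that $\mathscr D_{m,\ell}^{(K)}$ preserves $\mathcal R[w]$, since the coefficients $H=42t^{-1}-6$ and $K$ lie in $\mathcal R$. Consequently $\mathscr D_{m,\ell}^{(K)}$ is a well-defined $\mathcal R$-linear endomorphism of $\mathcal R[w]$.

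The only point where I expect any difficulty is bookkeeping. The exponent of $z$ has to be read as $A+2n$ rather than $A$, because $w$ itself carries $z^2$. Getting this wrong would drop the $-2n$ term.
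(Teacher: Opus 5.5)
Your proposal is correct and is essentially the paper's computation. The only difference is that the paper applies $z\frac{d}{dz}\bigl(z^Af(w)\bigr)=z^A\bigl(Af(w)+2wf'(w)\bigr)$ to a general $f$ and then specializes to $w^n$, whereas you work on monomials and extend by $\mathcal R$-linearity; the substitution $(m-1)H-A=(m+1)H-K+\ell$ and the treatment of the $\delta_{m,2}T$ term via $Tz^A=z^{A-2}w$ match exactly.
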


\begin{proof}
Set
\(
A:=K-2H-\ell.
\)
Since $w=Tz^2$, with $T$ independent of $z$, we have
\(
z\frac{d}{dz}w=2w.
\)
Hence, for every $f(w)\in\mathcal R[w]$,
\[
z\frac{d}{dz}
\left(
z^A f(w)
\right)
=
z^A
\left(
Af(w)+2w\frac{d}{dw}f(w)
\right).
\]
Using the definition \eqref{eq:Dhat}, the second term of $\widehat D_m$
therefore gives
\begin{align*}
&(-1)^m z^{-m}
\left(
(m-1)H-z\frac{d}{dz}
\right)
\left(
z^A f(w)
\right)
\\
&\qquad
=
(-1)^m z^{A-m}
\left(
(m-1)H-A
-
2w\frac{d}{dw}
\right)f(w).
\end{align*}
Since
\[
(m-1)H-A
=
(m-1)H-(K-2H-\ell)
=
(m+1)H-K+\ell,
\]
this becomes
\[
(-1)^m z^{A-m}
\left(
(m+1)H-K+\ell
-
2w\frac{d}{dw}
\right)f(w).
\]
It remains to treat the first term of \eqref{eq:Dhat}.  This term is
present only when $m=2$, and in that case
\[
Tz^A f(w)
=
z^{A-2}(Tz^2)f(w)
=
z^{A-2}wf(w).
\]
Combining the two contributions yields
\[
\widehat D_m
\left(
z^A f(w)
\right)
=
z^{A-m}
\left[
\delta_{m,2}w
+
(-1)^m
\left(
(m+1)H-K+\ell
-
2w\frac{d}{dw}
\right)
\right]f(w),
\]
which is precisely \eqref{eq:Dhat-factor} and \eqref{eq:Dmell}.
Finally, taking $f(w)=w^n$ and using
\(
w\frac{d}{dw}w^n=nw^n
\)
gives \eqref{eq:Dmell-monomial}.
\end{proof}

\subsection{Ordered compositions and the width-six operator}

For $\mathbf p=(p_1,\ldots,p_k)\models13$, define suffix levels $\ell_j:=p_{j+1}+\cdots+p_k$ ($\ell_k=0$). For $n\ge0$, set
\begin{equation}\label{eq:Ppn}
P_{\mathbf p,n}^{(K)}(w):=\mathscr D_{p_1,\ell_1}^{(K)}\mathscr D_{p_2,\ell_2}^{(K)}\cdots\mathscr D_{p_k,0}^{(K)}(w^n),
\end{equation}
with rightmost acting first. For $F(w)\in\mathcal R[w]$, write $[w^j]F(w)$ for the coefficient of $w^j$. For $n,q\ge0$, define
\begin{equation}\label{eq:Cq}
C_q^{(K)}(n;t)
:=
\sum_{\mathbf p\models13}
\beta_{\mathbf p}(t)
[w^{n+q}]P_{\mathbf p,n}^{(K)}(w).
\end{equation}
Since
$
P_{\mathbf p,n}^{(K_\tau)}(w)\in\mathcal R[w],
$
we have
$
[w^{n+q}]P_{\mathbf p,n}^{(K_\tau)}(w)\in\mathcal R.
$
Together with $\beta_{\mathbf p}(t)\in\mathcal R$, this implies
$
C_q^{(K_\tau)}(n;t)\in\mathcal R.
$

\begin{lem}[Width bound]\label{lem:width-bound}
For every $n\ge0$, $C_q^{(K)}(n;t)=0$ for $q>6$.
\end{lem}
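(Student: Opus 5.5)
By \eqref{eq:Dmell-monomial}, each operator $\mathscr D_{m,\ell}^{(K)}$ sends $w^j$ into $\C w^j+\C w^{j+1}$, and the raising term $w^{j+1}$ occurs only when $m=2$; for $m\neq2$ the operator is diagonal on monomials. The plan is therefore a degree count. For a fixed composition $\mathbf p=(p_1,\ldots,p_k)\models13$, write $P_{\mathbf p,n}^{(K)}(w)$ as the composite in \eqref{eq:Ppn}. Each factor raises the $w$-degree by at most one, and does so only when $p_j=2$. An induction on the number of factors (rightmost first) shows
\[
P_{\mathbf p,n}^{(K)}(w)\in\bigoplus_{j=n}^{n+e(\mathbf p)}\mathcal R\,w^j,
\qquad e(\mathbf p):=\#\{j\mid p_j=2\}.
\]
The inductive step is just \eqref{eq:Dmell-monomial} applied termwise, using $\mathcal R$-linearity of $\mathscr D_{m,\ell}^{(K)}$.

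Next I would bound $e(\mathbf p)$. Since $\sum_j p_j=13$ and each part equal to $2$ contributes $2$, we get $2e(\mathbf p)\le13$, so $e(\mathbf p)\le6$ because $13$ is odd. Hence $[w^{n+q}]P_{\mathbf p,n}^{(K)}(w)=0$ whenever $q>6$, for every composition $\mathbf p\models13$. Summing against the coefficients $\beta_{\mathbf p}(t)$ in \eqref{eq:Cq} then gives $C_q^{(K)}(n;t)=0$ for $q>6$, uniformly in $n\ge0$ and in $K\in\{K_0,K_{12}\}$.

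None of these steps is a real obstacle, since the argument is pure bookkeeping. The only point to check is that the suffix levels $\ell_j$ and the scalar $(m+1)H-K+\ell-2n$ never affect the degree, which is immediate because they enter only as coefficients of $w^n$ and not of $w^{n+1}$. I would also note in passing that the bound is attained only by compositions with exactly six parts equal to $2$ and one part equal to $1$, which is why the recurrence has width six.
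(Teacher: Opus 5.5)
Your proposal is correct and follows the paper's proof essentially verbatim. The paper also shows that $\mathscr D_{m,\ell}^{(K)}$ maps $\mathcal R[w]_{\le d}$ into $\mathcal R[w]_{\le d+\delta_{m,2}}$ and that the number of parts equal to $2$ in a composition of $13$ is at most $6$; it then sums against $\beta_{\mathbf p}(t)$. The only slip is cosmetic: the coefficients in $\mathscr D_{m,\ell}^{(K)}(w^j)$ lie in $\mathcal R$, not $\C$.
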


\begin{proof}
For $d\ge0$, let
$$
\mathcal R[w]_{\le d}
:=
\operatorname{span}_{\mathcal R}\{1,w,\ldots,w^d\}.
$$
By \eqref{eq:Dmell-monomial},
$$
\mathscr D_{m,\ell}^{(K)}(w^r)
=
\delta_{m,2}w^{r+1}
+
(-1)^m
\bigl((m+1)H-K+\ell-2r\bigr)w^r.
$$
Hence
$$
\mathscr D_{m,\ell}^{(K)}
\bigl(\mathcal R[w]_{\le d}\bigr)
\subseteq
\mathcal R[w]_{\le d+\delta_{m,2}}.
$$
In particular, applying an operator with $m\ne2$ does not increase the
$w$-degree, whereas an operator with $m=2$ increases the possible
$w$-degree by at most one.
Now fix $\mathbf p=(p_1,\ldots,p_k)\models13$, and let
$
r_2(\mathbf p):=\#\{j: p_j=2\}.
$
Starting from $w^n$ and applying the operators in
\eqref{eq:Ppn} from right to left, the preceding inclusion gives
inductively
$
P_{\mathbf p,n}^{(K)}(w)
\in
\mathcal R[w]_{\le n+r_2(\mathbf p)}.
$
Since $p_1+\cdots+p_k=13$ and all $p_j\ge1$, we have
$
2r_2(\mathbf p)\le13,
$
and therefore $r_2(\mathbf p)\le6$. Thus
$
P_{\mathbf p,n}^{(K)}(w)\in\mathcal R[w]_{\le n+6},
$
so that
$
[w^{n+q}]P_{\mathbf p,n}^{(K)}(w)=0
(q>6).
$
This holds for every composition $\mathbf p\models13$. Hence, by
\eqref{eq:Cq},
$
C_q^{(K)}(n;t)
=
\sum_{\mathbf p\models13}
\beta_{\mathbf p}(t)
[w^{n+q}]P_{\mathbf p,n}^{(K)}(w)
=0
(q>6).
$
\end{proof}

\subsection{The scalar null-vector recurrence}

\begin{prop}[Width-six recurrence]\label{prop:width6}
The coefficients of \eqref{eq:Phi-symbol} satisfy, for every $N\ge1$,
\begin{equation}\label{eq:width6}
\sum_{q=0}^{\min(6,N)}C_q^{(K)}(N-q;t)c_{N-q}^{(\tau)}(t)=0.
\end{equation}
Equivalently,
\begin{equation}\label{eq:solved-recurrence}
C_0^{(K)}(N;t)c_N^{(\tau)}(t)=-\sum_{q=1}^{\min(6,N)}C_q^{(K)}(N-q;t)c_{N-q}^{(\tau)}(t).
\end{equation}
\end{prop}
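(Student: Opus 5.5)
The plan is to apply the coefficientwise symbol map $\Sigma_{\tau,\mathcal R}$ to the local BSA null identity \eqref{eq:local-BSA-null} of Proposition~\ref{prop:local-descendant-null}. This identity states that $\sum_{\mathbf p\models13}\beta_{\mathbf p}(t)F_{\mathbf p,\tau}(z)=0$ coefficientwise over $\mathcal R$. Applying the $\mathcal R$-linear coefficientwise extension \eqref{eq:Sigma-coefficientwise-extension} gives
\[
\sum_{\mathbf p\models13}\beta_{\mathbf p}(t)\,\Sigma_{\tau,\mathcal R}\bigl(F_{\mathbf p,\tau}(z)\bigr)=0 .
\]

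Next I compute each summand. Write $F_{\mathbf p,\tau}=\mathfrak D_{p_1}\cdots\mathfrak D_{p_k}\Phi_\tau$, and note that the intermediate series $\mathfrak D_{p_{j+1}}\cdots\mathfrak D_{p_k}\Phi_\tau$ lies in $\mathscr F_{\tau,\ell_j}$ by Step~1 of Proposition~\ref{prop:local-descendant-null}. Iterating Lemma~\ref{lem:symbol-descendant-Ward} therefore gives
\[
\Sigma_{\tau,\mathcal R}(F_{\mathbf p,\tau})=\widehat D_{p_1}\cdots\widehat D_{p_k}\,\Phi_\tau(z,T;t).
\]
I then expand $\Phi_\tau(z,T;t)=z^{K-2H}\sum_n c_n^{(\tau)}w^n$ from \eqref{eq:Phi-symbol}. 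Lemma~\ref{lem:Dmell}, applied termwise with the correct level $\ell_j$ at each stage, converts the $\widehat D$'s into the polynomial operators $\mathscr D$. This yields
\[
\Sigma_{\tau,\mathcal R}(F_{\mathbf p,\tau})=z^{K-2H-13}\sum_{n\ge0}c_n^{(\tau)}(t)\,P^{(K)}_{\mathbf p,n}(w).
\]
There is one technical point here. Lemma~\ref{lem:Dmell} is stated on $\mathcal R[w]$, whereas $\Phi_\tau$ lies in $\mathcal R[[w]]$. This is harmless because each $\widehat D_m$ acts monomial-by-monomial and raises the $w$-degree by at most one, so any fixed $w$-coefficient of the result involves only finitely many $c_n$. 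I will spell this out explicitly, since it is the only place where care is needed: the operator identities must hold coefficientwise on infinite sums, and the $\ell$-indexing must match the suffix levels $\ell_j$.

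With this in hand, I multiply by $\beta_{\mathbf p}$, sum over compositions, cancel the common invertible factor $z^{K-2H-13}$ (a formal basis symbol), and extract the coefficient of $w^N$. The coefficient of $w^N$ in $\sum_n c_n\sum_{\mathbf p}\beta_{\mathbf p}P_{\mathbf p,n}$ equals $\sum_{n}C^{(K)}_{N-n}(n;t)\,c_n$, where only $0\le N-n$ contributes. The degree argument of Lemma~\ref{lem:width-bound} also shows $P_{\mathbf p,n}\in\mathcal R[w]_{\le n+6}$ and, since no operator lowers degree, $P_{\mathbf p,n}$ has no monomials below $w^n$. Hence the contributing terms have $q=N-n\in[0,\min(6,N)]$, which gives \eqref{eq:width6}.

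The statement is claimed for $N\ge1$, but the same extraction also works at $N=0$. Rearranging the $q=0$ term to the left-hand side gives \eqref{eq:solved-recurrence}. I expect no serious obstacle: the main work is bookkeeping, namely checking the low-degree vanishing (each $\mathscr D$ maps $w^r$ into $\operatorname{span}\{w^r,w^{r+1}\}$) and confirming that $\Sigma$ commutes with the formal $z$-powers and $z\,d/dz$, which Lemma~\ref{lem:symbol-descendant-Ward} already provides.
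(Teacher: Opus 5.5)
Your proposal is correct and follows essentially the same route as the paper. You apply the coefficientwise symbol to the local BSA null identity, convert each $\mathfrak D_m$ into $\widehat D_m$ via Lemma~\ref{lem:symbol-descendant-Ward}, factor out $z^{K-2H-13}$ with Lemma~\ref{lem:Dmell} using the suffix levels $\ell_j$, and extract $[w^N]$ using the width bound; your explicit treatment of termwise application on $\mathcal R[[w]]$, and of the absence of monomials below $w^n$ in $P^{(K)}_{\mathbf p,n}$, makes precise two points the paper leaves implicit.
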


\begin{proof}
The local null identity \eqref{eq:local-BSA-null} of
Proposition~\ref{prop:local-descendant-null}, together with
\eqref{eq:local-Dhat-compatibility}, gives after applying the target
symbol
\begin{equation}\label{eq:null-Dhat}
0=\sum_{\mathbf p\models13}\beta_{\mathbf p}(t)\widehat D_{p_1}\cdots\widehat D_{p_k}\Phi_\tau(z,T;t).
\end{equation}
Substituting $\Phi_\tau=z^{K-2H}\sum_{n\ge0}c_n^{(\tau)}(t)w^n$ and applying Lemma~\ref{lem:Dmell} repeatedly yields $$\widehat D_{p_1}\cdots\widehat D_{p_k}(z^{K-2H}w^n)=z^{K-2H-13}P_{\mathbf p,n}^{(K)}(w).$$ Removing the common $z$-factor gives
\begin{equation}\label{eq:null-polynomial}
0=\sum_{n\ge0}c_n^{(\tau)}(t)\sum_{\mathbf p\models13}\beta_{\mathbf p}(t)P_{\mathbf p,n}^{(K)}(w).
\end{equation}
Extracting $[w^N]$ and using Lemma~\ref{lem:width-bound} (so $0\le N-n\le6$), set $q=N-n$:
\[
\sum_{q=0}^{\min(6,N)}c_{N-q}^{(\tau)}(t)\sum_{\mathbf p\models13}\beta_{\mathbf p}(t)[w^N]P_{\mathbf p,N-q}^{(K)}(w)=\sum_{q=0}^{\min(6,N)}C_q^{(K)}(N-q;t)c_{N-q}^{(\tau)}(t)=0,
\]
proving \eqref{eq:width6}. Isolating $q=0$ gives \eqref{eq:solved-recurrence}.
\end{proof}

\begin{rmk}[The level-zero equation]\label{rem:indicial-equation}
The $w^0$ coefficient gives $C_0^{(K)}(0;t)c_0^{(\tau)}(t)=0$; since $c_0^{(\tau)}(t)=1$, $C_0^{(K)}(0;t)=0$. This indicial condition holds for both channels; recursion begins at $N=1$.
\end{rmk}

\subsection{The leading coefficient and resonant levels}

The coefficient \(C_0^{(K)}(N;t)\) admits a finite explicit expression. Indeed, by \eqref{eq:Dmell-monomial}, each operator \(\mathscr D_{m,\ell}^{(K)}\) preserves the \(w\)-degree or, when \(m=2\), increases it by one, and never decreases it. Thus the coefficient of \(w^N\) in \(P_{\mathbf p,N}^{(K)}(w)\) can arise only by selecting the degree-preserving term at every stage.

\begin{lem}\label{lem:C0-explicit}
For every \(N\ge0\),
\begin{equation}\label{eq:C0-explicit}
C_0^{(K)}(N;t)
=
\sum_{\mathbf p=(p_1,\ldots,p_k)\models13}
\beta_{\mathbf p}(t)
\prod_{j=1}^{k}
(-1)^{p_j}
\bigl((p_j+1)H-K+\ell_j-2N\bigr),
\end{equation}
where
$
\ell_j=p_{j+1}+\cdots+p_k
 (1\le j\le k),
$
so that \(\ell_k=0\).
\end{lem}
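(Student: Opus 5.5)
The formula follows from the definition \eqref{eq:Cq} with $q=0$: we compute $[w^N]P_{\mathbf p,N}^{(K)}(w)$ for each fixed composition $\mathbf p\models13$ and then sum against $\beta_{\mathbf p}(t)$. The plan is a triangularity argument with respect to the $w$-degree. By \eqref{eq:Dmell-monomial}, each $\mathscr D_{m,\ell}^{(K)}$ is upper triangular on the monomial basis $\{w^r\}$ of $\mathcal R[w]$. It sends $w^r$ to a diagonal part $(-1)^m\bigl((m+1)H-K+\ell-2r\bigr)w^r$, plus, only when $m=2$, the raising term $w^{r+1}$. No term lowers the degree.

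First, fix $\mathbf p=(p_1,\ldots,p_k)$ and expand the composite \eqref{eq:Ppn} applied to $w^N$. Order the factors from the rightmost one, $\mathscr D_{p_k,\ell_k}^{(K)}$, to the leftmost one, $\mathscr D_{p_1,\ell_1}^{(K)}$. Expanding each factor into its diagonal and raising parts writes $P_{\mathbf p,N}^{(K)}(w)$ as a sum over choices, at each stage, of one of these parts. A choice that uses the raising term at least once produces a monomial $w^{N+s}$ with $s\ge1$, since no later factor can lower the degree. Hence the only contribution to $[w^N]$ comes from choosing the diagonal part at every stage.

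Second, note that the degree stays exactly $N$ along this all-diagonal path. So at stage $j$ the diagonal eigenvalue is evaluated at $r=N$ with shift $\ell=\ell_j$, giving the factor $(-1)^{p_j}\bigl((p_j+1)H-K+\ell_j-2N\bigr)$. Multiplying these factors over $j=1,\ldots,k$ gives $[w^N]P_{\mathbf p,N}^{(K)}(w)$ as the displayed product; the scalars commute, so their order is irrelevant. Summing over $\mathbf p\models13$ with weights $\beta_{\mathbf p}(t)$ yields \eqref{eq:C0-explicit}.

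The only point needing care is the bookkeeping of the suffix levels. The operator applied at position $j$ must carry $\ell_j=p_{j+1}+\cdots+p_k$, which is the total level already applied to its right. This matches \eqref{eq:Ppn} and, via Lemma~\ref{lem:Dmell}, the exponent $z^{K-2H-\ell}$ at that step. Apart from this there is no real obstacle; the argument is a direct consequence of Lemma~\ref{lem:Dmell} and the degree monotonicity already used in Lemma~\ref{lem:width-bound}.
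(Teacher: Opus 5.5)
Your proposal is correct and follows essentially the same route as the paper: since no $\mathscr D_{p_j,\ell_j}^{(K)}$ lowers the $w$-degree, only the all-diagonal path contributes to $[w^N]P_{\mathbf p,N}^{(K)}(w)$. That path gives the product of the factors $(-1)^{p_j}\bigl((p_j+1)H-K+\ell_j-2N\bigr)$, which you then sum against $\beta_{\mathbf p}(t)$.
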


\begin{proof}
Fix an ordered composition
\(\mathbf p=(p_1,\ldots,p_k)\models13\).
By \eqref{eq:Dmell-monomial}, for every \(r\ge0\),
$$
\mathscr D_{p_j,\ell_j}^{(K)}(w^r)
=
\delta_{p_j,2}w^{r+1}
+
(-1)^{p_j}
\bigl((p_j+1)H-K+\ell_j-2r\bigr)w^r.
$$
Hence each \(\mathscr D_{p_j,\ell_j}^{(K)}\) either preserves the
\(w\)-degree or increases it by one, and in particular never decreases
the degree.
Recall that
$$
P_{\mathbf p,N}^{(K)}(w)
=
\mathscr D_{p_1,\ell_1}^{(K)}
\mathscr D_{p_2,\ell_2}^{(K)}
\cdots
\mathscr D_{p_k,0}^{(K)}(w^N),
$$
with the rightmost operator acting first.  Starting from \(w^N\), if
the degree-increasing term is chosen at any stage, the resulting
monomial has degree at least \(N+1\); since none of the subsequent
operators can decrease the \(w\)-degree, such a term cannot contribute
to \([w^N]P_{\mathbf p,N}^{(K)}(w)\).

Therefore the coefficient of \(w^N\) is obtained uniquely by selecting
the degree-preserving term at every stage.  Along this unique choice,
the intermediate \(w\)-degree remains equal to \(N\) throughout.
Consequently, the factor contributed by
\(\mathscr D_{p_j,\ell_j}^{(K)}\) is
$$
(-1)^{p_j}
\bigl((p_j+1)H-K+\ell_j-2N\bigr),
$$
and hence
$$
[w^N]P_{\mathbf p,N}^{(K)}(w)
=
\prod_{j=1}^{k}
(-1)^{p_j}
\bigl((p_j+1)H-K+\ell_j-2N\bigr).
$$
Finally, taking \(q=0\) in \eqref{eq:Cq} gives
$$
C_0^{(K)}(N;t)
=
\sum_{\mathbf p\models13}
\beta_{\mathbf p}(t)
[w^N]P_{\mathbf p,N}^{(K)}(w),
$$
and substitution of the preceding identity proves
\eqref{eq:C0-explicit}.
\end{proof}

With only \(2^{12}=4096\) compositions of \(13\),
\eqref{eq:C0-explicit} is an exact finite calculation over \(\Q(t)\).

\begin{lem}[Resonant levels]\label{lem:resonances}
For the vacuum channel,
\begin{equation}\label{eq:vac-res}
C_0^{(0)}(N;1)=0\iff N\in\{2,8,18,32\},\quad 1\le N\le38.
\end{equation}
For the self channel,
\begin{equation}\label{eq:self-res}
C_0^{(H)}(N;1)=0\iff N=14,\quad 1\le N\le20.
\end{equation}
Every zero is simple in $t$:
\begin{align}
\left.\frac{\partial}{\partial t}C_0^{(0)}(2;t)\right|_{t=1}&=\frac{91}{22},&
\left.\frac{\partial}{\partial t}C_0^{(0)}(8;t)\right|_{t=1}&=\frac{3640}{99},\nonumber\\
\left.\frac{\partial}{\partial t}C_0^{(0)}(18;t)\right|_{t=1}&=\frac{4641}{11},&
\left.\frac{\partial}{\partial t}C_0^{(0)}(32;t)\right|_{t=1}&=\frac{100776}{11},\label{eq:vac-derivatives}
\end{align}
and
\begin{equation}\label{eq:self-derivative}
\left.\frac{\partial}{\partial t}C_0^{(H)}(14;t)\right|_{t=1}=\frac{41990}{11}.
\end{equation}
All five derivatives are nonzero.
\end{lem}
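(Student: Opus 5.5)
This lemma is a finite exact computation over $\Q(t)$, and I would organize it around the explicit formula of Lemma~\ref{lem:C0-explicit}. For each $\tau$, write $K=K_\tau(t)$ with $H=H(t)=42t^{-1}-6$. By \eqref{eq:C0-explicit}, $C_0^{(K)}(N;t)$ is a sum over the $4096$ compositions of $13$ of $\beta_{\mathbf p}(t)$ times a product of affine functions of $N$ and $H$. Hence, for fixed $t$, it is a polynomial in $N$ of degree at most $13$, with coefficients in $\Q[t,t^{-1}]$.

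\textbf{Step 1: closed form.} Rather than summing all terms, I would use a dynamic program over the suffix level $\ell$. Since $\ell_j=p_{j+1}+\dots+p_k$ and $Q_j=\ell_j$, the weight $\beta_{\mathbf p}$ factorizes as $\prod_{j<k} t/(\ell_j(\ell_j-13))$. Define $G(\ell)$ as the sum over compositions of $\ell$ of the partial products. This gives the recursion
\[
G(\ell)=\sum_{p=1}^{\ell}\frac{t}{(\ell-p)(\ell-p-13)}\,(-1)^p\bigl((p+1)H-K+\ell-p-2N\bigr)\,G(\ell-p),
\]
with the weight factor omitted when $\ell-p=0$, and $G(0)=1$. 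Then $C_0^{(K)}(N;t)=G(13)$. This yields $C_0$ exactly as an element of $\Q[t,t^{-1},N]$.

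\textbf{Step 2: specialize at $t=1$.} Here $H=36$ and $K\in\{0,36\}$. The specialization $C_0^{(K)}(N;1)$ is a univariate polynomial in $N$ over $\Q$. I would factor it and check that its integer roots in the range $1\le N\le38$ (respectively $1\le N\le20$) are exactly $\{2,8,18,32\}$ for $K=0$ and $\{14\}$ for $K=36$.

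A structural heuristic supports this. The zeros should be the levels where the recursion for $c_N$ meets a null vector of the target module. For the vacuum channel, this happens at levels $2N=(2j)^2$ of the degenerate vectors $L(1,j^2)\subset L(1,0)$, which gives $N=2,8,18,32$. For the self channel, $L(1,36)$ contains a degenerate $L(1,64)$ at level $28$, which gives $N=14$. I would use this only to predict the roots; the actual proof is the exact factorization.

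\textbf{Step 3: simplicity.} Differentiate the closed form in $t$, either symbolically or by carrying first-order jets $t=1+\varepsilon$ through the dynamic program. Then evaluate at $t=1$ and at each resonant $N$, and confirm the five rational values stated in the lemma.

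The main obstacle is that none of this can reasonably be done by hand. The proof must rest on an exact rational computation, which must be made reproducible via the program in the appendix. I would cross-check it in two ways: against a brute-force sum over all $4096$ compositions at a few values of $N$, and against the indicial identity $C_0^{(K)}(0;t)=0$ from Remark~\ref{rem:indicial-equation}, which serves as a consistency test of the implementation.
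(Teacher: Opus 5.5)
Your proposal is correct and follows essentially the same route as the paper: both substitute $H(1)=36$, $K(1)\in\{0,36\}$ into \eqref{eq:C0-explicit} and sum exactly over the $4096$ compositions, and both differentiate in $t$ to obtain the five nonzero values. The paper's program likewise uses dynamic programming, and the paper additionally records the degree-$13$ factorizations of $C_0^{(K)}(N;1)$ in $N$ as a certificate for the zero sets.

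One small correction to your heuristic, which the proof does not rely on: $L(1,0)$ and $L(1,36)$ are irreducible, so the resonances reflect singular vectors of the Verma modules $M(1,0)$ and $M(1,36)$ at target weights $K+2N=(2j)^2$. This matches the linear factors $N-2j^2$ (vacuum channel) and $N-(2j^2-18)$ (self channel) in the paper's factorizations.
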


\begin{proof}
Substitute $H(1)=36$, $K(1)=0$ or $36$ into \eqref{eq:C0-explicit}; $\beta_{\mathbf p}(1)\in\Q$. Exact summation over $4096$ compositions yields the stated zero sets. Differentiating with $H'(1)=-42$ and $K'(1)=0$ ($\tau=0$) or $-42$ ($\tau=12$) gives the rational values in \eqref{eq:vac-derivatives} and \eqref{eq:self-derivative}, all nonzero. Computation is purely rational.
\end{proof}

For an independently inspectable resonance certificate, interpolation of the exact values of \eqref{eq:C0-explicit} gives the following formulas. Indeed, each summand in \eqref{eq:C0-explicit} is a product of at most $13$ affine-linear factors in $N$, so $C_0^{(K)}(N;1)$ has degree at most $13$ in $N$. Consequently, agreement at any $14$ distinct exact values of $N$ determines each displayed polynomial identity.
\begin{align}
C_0^{(0)}(N;1)
&=\frac{N(N-2)(N-8)(N-18)}{28008121680000}\nonumber\\
&\quad\times(N-32)(N-50)(N-72)\nonumber\\
&\quad\times\left(N^6-143N^5+\frac{28743}{4}N^4-\frac{308737}{2}N^3\right.\nonumber\\
&\hspace{35mm}\left.+\frac{21967231}{16}N^2-\frac{64408383}{16}N+\frac{108056025}{64}\right),
\label{eq:C0-vac-factorization}\\
C_0^{(H)}(N;1)
&=\frac{(N+18)(N+16)(N+10)N}{28008121680000}\nonumber\\
&\quad\times(N-14)(N-32)(N-54)\nonumber\\
&\quad\times\left(N^6-35N^5-\frac{3297}{4}N^4+\frac{32651}{2}N^3\right.\nonumber\\
&\hspace{35mm}\left.+\frac{3856495}{16}N^2-\frac{11787075}{16}N-\frac{516891375}{64}\right).
\label{eq:C0-self-factorization}
\end{align}
Direct substitution shows that the residual sextic factors are nonzero at every integer in the required ranges $1\le N\le38$ and $1\le N\le20$, respectively. Thus \eqref{eq:C0-vac-factorization}--\eqref{eq:C0-self-factorization} provide a compact exact certificate for the resonance lists, independently of the recurrence solver.

\subsection{Removable singularities at resonant levels}

At nonresonant levels, \eqref{eq:solved-recurrence} specializes directly to
$t=1$.  At a resonant level the leading coefficient vanishes, so direct
division produces an apparent $0/0$.  The local regularity established above
shows that this singularity is removable.  Define
\begin{equation}\label{eq:R-N-def}
R_N^{(\tau)}(t)
:=
\sum_{q=1}^{\min(6,N)}
C_q^{(K)}(N-q;t)c_{N-q}^{(\tau)}(t),
\end{equation}
so that the recurrence reads
\begin{equation}\label{eq:resonant-recurrence}
C_0^{(K)}(N;t)c_N^{(\tau)}(t)=-R_N^{(\tau)}(t).
\end{equation}

\begin{prop}[Removal of the resonances]\label{prop:removable-resonances}
Fix $\tau\in\{0,12\}$, and let $N$ be resonant in the $\tau$-channel, that is,
\(
C_0^{(K)}(N;1)=0.
\)
Equivalently, by \eqref{eq:vac-res}--\eqref{eq:self-res},
\[
N\in\{2,8,18,32\}\quad\text{if }\tau=0,
\qquad
N=14\quad\text{if }\tau=12.
\]
Then $R_N^{(\tau)}(1)=0$, and the quotient
\(
-\frac{R_N^{(\tau)}(t)}{C_0^{(K)}(N;t)}
\)
extends regularly to $t=1$, with value $c_N^{(\tau)}(1)$.  Moreover,
\begin{equation}\label{eq:resonant-lhopital}
c_N^{(\tau)}(1)
=
-\frac{
\left.\dfrac{d}{dt}R_N^{(\tau)}(t)\right|_{t=1}
}{
\left.\dfrac{d}{dt}C_0^{(K)}(N;t)\right|_{t=1}
}.
\end{equation}
\end{prop}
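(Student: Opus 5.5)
The plan is to work entirely in the local ring $\mathcal R=\Lambda_{(t-1)}$, where every quantity in \eqref{eq:resonant-recurrence} is already known to be regular. First I record the regularity of all ingredients. By Theorem~\ref{thm:local-regularization}(3), $\Sigma_{\tau,\mathcal R}(W_N^{(\tau)})\in\mathcal R[T]$, so each symbol coefficient $c_n^{(\tau)}(t)$ in \eqref{eq:Phi-symbol} lies in $\mathcal R$. The coefficients $C_q^{(K)}(n;t)$ lie in $\mathcal R$, as noted after \eqref{eq:Cq}, since $\beta_{\mathbf p}(t)\in\Lambda$ and $H(t),K_\tau(t)\in\Lambda$. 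Hence $R_N^{(\tau)}(t)\in\mathcal R$. Proposition~\ref{prop:width6} gives \eqref{eq:resonant-recurrence} as a genuine identity in $\mathcal R$, not merely over $\mathcal K$: it follows from the local null identity \eqref{eq:local-BSA-null} and the compatibility \eqref{eq:local-Dhat-compatibility}, all of which hold over $\mathcal R$.

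Next I reduce the identity modulo the maximal ideal $(t-1)\mathcal R$. Evaluation at $t=1$ is a ring homomorphism $\mathcal R\to\C$, so it gives $C_0^{(K)}(N;1)\,c_N^{(\tau)}(1)=-R_N^{(\tau)}(1)$. The resonance hypothesis $C_0^{(K)}(N;1)=0$ then forces $R_N^{(\tau)}(1)=0$. This is the first assertion. Note that $c_N^{(\tau)}(1)$ is a finite number precisely because of the local reconstruction in Section~\ref{sec:local-regularization}. This regularity is the only nonformal input, and it has already been established.

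For the quotient, I use Lemma~\ref{lem:resonances}: $C_0^{(K)}(N;t)$ has a simple zero at $t=1$, with nonzero derivative given in \eqref{eq:vac-derivatives} or \eqref{eq:self-derivative}. So I can write $C_0^{(K)}(N;t)=(t-1)\gamma(t)$ with $\gamma\in\mathcal R^\times$; the element $\gamma(t)$ is a unit because $\gamma(1)=\partial_tC_0^{(K)}(N;t)|_{t=1}\neq0$. Since $R_N^{(\tau)}(1)=0$ and $\mathcal R$ is a DVR with uniformizer $t-1$, I can likewise write $R_N^{(\tau)}(t)=(t-1)\rho(t)$ with $\rho\in\mathcal R$.

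Cancelling the nonzerodivisor $t-1$ in $\mathcal R$ gives $\gamma(t)c_N^{(\tau)}(t)=-\rho(t)$, and hence $c_N^{(\tau)}(t)=-\rho(t)/\gamma(t)=-R_N^{(\tau)}(t)/C_0^{(K)}(N;t)$ in $\mathcal K$. Thus the quotient is an element of $\mathcal R$, i.e. it extends regularly to $t=1$, and its value there is $c_N^{(\tau)}(1)$. Evaluating at $t=1$ gives $c_N^{(\tau)}(1)=-\rho(1)/\gamma(1)$. Finally, $\rho(1)=\frac{d}{dt}R_N^{(\tau)}|_{t=1}$ and $\gamma(1)=\frac{d}{dt}C_0^{(K)}|_{t=1}$, because $\mathcal R$ consists of rational functions and differentiating $(t-1)f(t)$ at $t=1$ returns $f(1)$. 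This proves \eqref{eq:resonant-lhopital}. The only delicate point is the simplicity of the zero, which is the computed content of Lemma~\ref{lem:resonances}; everything else is the formal consequence of regularity over $\mathcal R$ established in Theorem~\ref{thm:local-regularization}.
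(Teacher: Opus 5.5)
Your proposal is correct and follows essentially the paper's argument: it uses that $c_N^{(\tau)}(t)$, $C_0^{(K)}(N;t)$ and $R_N^{(\tau)}(t)$ are regular in $\mathcal R$, reduces the recurrence modulo $(t-1)\mathcal R$, and applies the simple-zero input from Lemma~\ref{lem:resonances}. The only cosmetic difference is that you factor $t-1$ out of $C_0^{(K)}(N;t)$ and $R_N^{(\tau)}(t)$ and cancel it, whereas the paper differentiates the recurrence with the product rule and evaluates at $t=1$; the two are equivalent.
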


\begin{proof}
By \eqref{eq:Cq}, $C_0^{(K)}(N;t)\in\mathcal R$, while
Theorem~\ref{thm:local-regularization} gives
$c_N^{(\tau)}(t)\in\mathcal R$.  Since
$C_0^{(K)}(N;1)=0$ and the maximal ideal of $\mathcal R$ is
$(t-1)\mathcal R$, we have
\(
C_0^{(K)}(N;t)\in(t-1)\mathcal R.
\)
Equation~\eqref{eq:resonant-recurrence} therefore gives
\(
R_N^{(\tau)}(t)\in(t-1)\mathcal R,
\)
and hence $R_N^{(\tau)}(1)=0$.  Moreover, in $\mathcal K$,
\(
-\frac{R_N^{(\tau)}(t)}{C_0^{(K)}(N;t)}
=c_N^{(\tau)}(t)\in\mathcal R.
\)
Thus the apparent pole at $t=1$ is removable, and the regular extension has
value $c_N^{(\tau)}(1)$.

Differentiating \eqref{eq:resonant-recurrence} gives
\[
\frac{d}{dt}C_0^{(K)}(N;t)c_N^{(\tau)}(t)
+
C_0^{(K)}(N;t)\frac{d}{dt}c_N^{(\tau)}(t)
=
-\frac{d}{dt}R_N^{(\tau)}(t).
\]
Evaluating at $t=1$ and using $C_0^{(K)}(N;1)=0$ yields
\[
\left.\frac{d}{dt}C_0^{(K)}(N;t)\right|_{t=1}
 c_N^{(\tau)}(1)
=
-\left.\frac{d}{dt}R_N^{(\tau)}(t)\right|_{t=1}.
\]
By Lemma~\ref{lem:resonances}, the zero of
$C_0^{(K)}(N;t)$ at $t=1$ is simple, so the coefficient on the left is
nonzero.  Division gives \eqref{eq:resonant-lhopital}.
\end{proof}

\begin{rmk}\label{rem:resonance-logic}
The regularity of $c_N^{(\tau)}(t)$ was established independently by the
positive-mode reconstruction over $\mathcal R$.  The scalar recurrence is used
only afterwards to show that every apparent $0/0$ at a resonant level is
removable.  Thus the specialization to $c=1$ does not rely on directly
specializing generic conformal blocks.
\end{rmk}

\begin{rmk}\label{rem:exact-recurrence}
Equations \eqref{eq:Dmell}, \eqref{eq:Cq}, and \eqref{eq:width6} give a finite
exact algorithm for the four coefficients
\[
u_{37}=c_{37}^{(0)}(1),\qquad
u_{38}=c_{38}^{(0)}(1),\qquad
v_{19}=c_{19}^{(12)}(1),\qquad
v_{20}=c_{20}^{(12)}(1)
\]
needed in Proposition~\ref{prop:det-criterion}.  At resonant levels one works
with $\varepsilon=t-1$; all operations are exact over $\Q$, with no
finite-field or numerical approximation.
\end{rmk}

\section{Exact rational evaluation and completion of the proof}
\label{sec:exact-completion}

We now complete the proof. By Proposition~\ref{prop:det-criterion}, it remains only to verify $D=u_{37}v_{20}-u_{38}v_{19}\neq0$. This is the computer-assisted part of the proof. The scalar recurrence of Section~\ref{sec:BSA-recurrence} reduces the assertion to a finite calculation in exact characteristic zero, implemented by the independently executable file \texttt{bsa\_exact\_verification.py}; its download link is recorded in Appendix~\ref{app:exact-computation}. In the exact computation below, we set $t=1+\varepsilon$ and work in suitable truncated power-series rings over $\Q$; no floating-point approximation or finite-field calculation enters the proof.

\subsection{Exact evaluation of the four symbol coefficients}

Fix $\tau\in\{0,12\}$ and write $K:=K_\tau(t)$, as in Section~\ref{sec:BSA-recurrence}. Recall $u_n=c_n^{(0)}(1)$ and $v_n=c_n^{(12)}(1)$. The coefficients $c_n^{(\tau)}(t)$ satisfy
\begin{equation}\label{eq:recurrence-final}
C_0^{(K)}(N;t)c_N^{(\tau)}(t)=-\sum_{q=1}^{\min(6,N)}C_q^{(K)}(N-q;t)c_{N-q}^{(\tau)}(t),
\end{equation}
where each $C_q^{(K)}(n;t)$ is given by \eqref{eq:Cq}. The definitions give $C_q^{(K)}(n;t)\in\Q(t)$. Starting from $c_0^{(\tau)}(t)=1$, the recurrence and Lemma~\ref{lem:resonances} show inductively, through the terminal levels needed here, that $c_N^{(\tau)}(t)\in\Q(t)$: at each $N\ge1$, the leading coefficient $C_0^{(K)}(N;t)$ is a nonzero rational function. On the other hand, the construction of the symbol coefficients in \eqref{eq:Phi-symbol}, based on Theorem~\ref{thm:local-regularization}, gives $c_N^{(\tau)}(t)\in\mathcal R$. Together with $c_N^{(\tau)}(t)\in\Q(t)$, this implies that $c_N^{(\tau)}(t)$ is a rational function over $\Q$ regular at $t=1$, and hence its expansion at $t=1$, after setting $t=1+\varepsilon$, belongs to $\Q[[\varepsilon]]$.

\begin{lem}[Finite-jet computation]\label{lem:finite-jet}
Let $N_0<N_1<\cdots<N_r$ be all resonant levels before a prescribed terminal level, each resonance simple. Then the constant terms $c_N^{(\tau)}(1)$ up to that level are determined by \eqref{eq:recurrence-final} using Taylor expansions modulo $\varepsilon^{r+2}$. Each passage through a resonant level loses at most one order of $\varepsilon$-adic precision.
\end{lem}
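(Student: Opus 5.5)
We argue by induction on the level $N$, tracking the $\varepsilon$-adic precision of the truncated Taylor expansions $c_N^{(\tau)}(1+\varepsilon)\in\Q[[\varepsilon]]$ modulo a moving power of $\varepsilon$. Regularity is already available: by Theorem~\ref{thm:local-regularization} and the discussion preceding the lemma, every $c_N^{(\tau)}(t)$ lies in $\mathcal R\cap\Q(t)$, so its expansion at $t=1$ is a genuine element of $\Q[[\varepsilon]]$. The coefficients $C_q^{(K)}(n;t)$ lie in $\Lambda$ (indeed $H,K,\beta_{\mathbf p}\in\Lambda$), so they have exact polynomial expansions in $\varepsilon$ and can be truncated at any order without loss. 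The only issue is therefore how much precision the division step in \eqref{eq:recurrence-final} consumes.

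Define $\pi(N)$ to be the number of resonant levels $N_i\le N$. The inductive claim is that if $c_0,\dots,c_{N-1}$ are known modulo $\varepsilon^{r+2-\pi(N-1)}$, then $c_N$ is determined modulo $\varepsilon^{r+2-\pi(N)}$. Starting from $c_0=1$, which is exact, we begin with precision $r+2$.

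At a nonresonant level, $C_0^{(K)}(N;1)\neq0$, so $C_0^{(K)}(N;1+\varepsilon)$ is a unit in $\Q[[\varepsilon]]$. The right-hand side $-R_N^{(\tau)}$ from \eqref{eq:R-N-def} is a finite $\Q[\varepsilon]$-combination of at most six earlier $c$'s. It is therefore known to the minimum precision among those $c$'s, which is at least $r+2-\pi(N-1)$, and multiplying by the inverse of the unit preserves this precision.

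At a resonant level $N_i$, Lemma~\ref{lem:resonances} gives $C_0^{(K)}(N_i;1+\varepsilon)=\varepsilon\,u(\varepsilon)$ with $u$ a unit. Proposition~\ref{prop:removable-resonances} gives $R_{N_i}\in\varepsilon\Q[[\varepsilon]]$. The exact quotient $c_{N_i}=-\varepsilon^{-1}u^{-1}R_{N_i}$ then costs exactly one order: knowing $R_{N_i}$ modulo $\varepsilon^{p}$ determines $c_{N_i}$ modulo $\varepsilon^{p-1}$. The constant term of $R_{N_i}$ is forced to be zero, and this can be used as a consistency check on the computation.

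After the final resonance $N_r$ the precision is at least $r+2-(r+1)=1$. The constant terms $c_N^{(\tau)}(1)$ are therefore determined at every level up to the terminal one. At the terminal levels $37,38$ for $\tau=0$ and $19,20$ for $\tau=12$, Lemma~\ref{lem:resonances} shows these levels are nonresonant, so no precision is lost there.

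The main subtlety is the window issue: an earlier value of lower precision could feed into a later level through the width-six window. This is resolved by propagating the minimum precision over the window, and because precision is monotone nonincreasing in $N$, that minimum is simply the precision at level $N-1$. The induction therefore closes cleanly.
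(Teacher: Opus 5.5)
Your proposal is correct and follows essentially the same argument as the paper: at nonresonant levels you divide by a unit of $\Q[[\varepsilon]]$ and keep full precision, while at each simple resonance you cancel the common factor $\varepsilon$ between $C_0^{(K)}(N;1+\varepsilon)$ and $R_N^{(\tau)}(1+\varepsilon)$ (using Proposition~\ref{prop:removable-resonances}), losing one order, so $r+2-(r+1)=1$ order remains for the constant terms. You make two points explicit that the paper leaves implicit: the width-six window is handled by monotonicity of precision, and the terminal levels $37,38$ and $19,20$ are nonresonant. The only slip is calling the $\varepsilon$-expansions of $C_q^{(K)}(n;t)\in\Q[t,t^{-1}]$ polynomial; because $t^{-1}=(1+\varepsilon)^{-1}$ they are power series, but they are still exactly computable to any order, so nothing in the argument changes.
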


\begin{proof}
As above, $c_N^{(\tau)}(t)\in\Q(t)\cap\mathcal R$ through every level under consideration, so $c_N^{(\tau)}(1+\varepsilon)\in\Q[[\varepsilon]]$. At a nonresonant level $N$, one has $C_0^{(K)}(N;1)\neq0$, hence $C_0^{(K)}(N;1+\varepsilon)$ is a unit in $\Q[[\varepsilon]]$ and division by it preserves $\varepsilon$-adic precision.

At a resonant level, Lemma~\ref{lem:resonances} gives a simple zero
\[
C_0^{(K)}(N;1+\varepsilon)=\varepsilon U_N(\varepsilon),
\qquad U_N(0)\neq0.
\]
With $R_N^{(\tau)}$ as in \eqref{eq:R-N-def}, Proposition~\ref{prop:removable-resonances} gives
\(
R_N^{(\tau)}(1+\varepsilon)=\varepsilon V_N(\varepsilon)
\)
for some $V_N(\varepsilon)\in\Q[[\varepsilon]]$. Cancelling the common factor gives
\(
c_N^{(\tau)}(1+\varepsilon)
=-\frac{V_N(\varepsilon)}{U_N(\varepsilon)}.
\)
Thus, if the data entering the recurrence are known modulo $\varepsilon^M$ with $M\ge2$, passage through a nonresonant level preserves precision, whereas passage through a resonant level determines the next coefficient modulo $\varepsilon^{M-1}$. Starting modulo $\varepsilon^{r+2}$, after crossing $j$ resonant levels the required coefficients are therefore still determined modulo $\varepsilon^{r+2-j}$. Since there are $r+1$ resonant levels in total, after the last one they are determined modulo $\varepsilon$, which is enough to determine their constant terms. This proves the claim.
\end{proof}

For the vacuum channel, the resonant levels below $38$ are
$2,8,18,32$. Lemma~\ref{lem:finite-jet} shows that computation
modulo $\varepsilon^5$ already suffices for the required constant
terms. For the self channel, the only resonant level below $20$ is
$N=14$, and computation modulo $\varepsilon^2$ already suffices.
The archived implementation uses the uniform safety precision
$\texttt{PREC}=16$ for both channels.
The calculation is finite and exact, involving only rational
arithmetic, inversion of units in $\Q[[\varepsilon]]$, and
cancellation of explicit $\varepsilon$ factors.

Low-order checks:
\begin{equation}\label{eq:lowchecks}
u_1=72,\qquad u_2=\frac{1448}{3},\qquad v_1=\frac{6516}{20449}.
\end{equation}
The value $u_1=72$ agrees with the universal vacuum-channel coefficient $2h/c$ for $h=36$, $c=1$.

Iterating \eqref{eq:recurrence-final} yields the following exact reduced fractions:
\begin{center}
\resizebox{0.96\textwidth}{!}{$
u_{37}=\dfrac{151263443583192657816671715030186994700903477084314677248}{2882600776450368744635282416475520501925952726159180517626585800175145677947998046875}.
$}
\end{center}
\begin{center}
\resizebox{0.96\textwidth}{!}{$
u_{38}=\dfrac{416407995333748762833621605276412963013510875028372514816}{319415226836912459855546374004819475777410969680246474797166719345807542282061767578125}.
$}
\end{center}
\begin{center}
\resizebox{0.92\textwidth}{!}{$
v_{19}=\dfrac{3681867191643718569047819232838269056}{778381304340824441834206161488913991594789560118793865234375}.
$}
\end{center}
\begin{center}
\resizebox{0.92\textwidth}{!}{$
v_{20}=\dfrac{604984096354415173700324126916891008}{4539519766915688144777090333803346398980812714612805822046875}.
$}
\end{center}
These are entirely determined by \eqref{eq:Dmell}, \eqref{eq:Cq}, and \eqref{eq:recurrence-final}; Appendix~\ref{app:exact-computation} gives the download link for an exact rational implementation.

\begin{prop}[Computer-assisted exact determinant verification]\label{prop:exactD}
The determinant in Proposition~\ref{prop:det-criterion} is nonzero. Precisely,
\begin{equation}\label{eq:exactD}
\resizebox{0.98\textwidth}{!}{$
D=u_{37}v_{20}-u_{38}v_{19}=\dfrac{16135324094778900831378704137779176869876953125000000000000000000}{19515012831345836592129306577900341712659600405172507924685763411145479750629588782589268504523397502912150424738336023}>0.
$}
\end{equation}
Its numerator and denominator have the following exact prime factorizations:
\begin{align*}
\operatorname{num}(D)
&=2^{18}5^{27}\cdot2239\cdot53479\cdot297269173\cdot1275865699\cdot181909527549271,\\
\operatorname{den}(D)
&=3^{34}7^{13}11^{14}13^{14}17^9 19^7 23^5 29^4 31^3 37^3
\cdot41\cdot43^2\cdot47^2\cdot53\cdot59\cdot61\cdot67\cdot71\cdot73.
\end{align*}
In particular, $D\approx8.26816\times10^{-55}$; the factorization and scale give compact checks on its sign and normalization.
\end{prop}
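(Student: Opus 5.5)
The statement is a computer-assisted exact identity, so the plan is to make the computation fully specified and independently checkable, and to use structural certificates wherever possible. I would implement the scalar recurrence of Proposition~\ref{prop:width6} over truncated power series in $\varepsilon=t-1$ with rational coefficients. The steps are as follows.

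\emph{Substitution.} Substitute $t=1+\varepsilon$ in $H(t)=42t^{-1}-6$, $K_\tau(t)$ and $\beta_{\mathbf p}(t)$ from \eqref{eq:BSA-beta-rewritten}, expanding each modulo $\varepsilon^{P}$. By Lemma~\ref{lem:finite-jet}, $P=5$ suffices for the vacuum channel and $P=2$ for the self channel; I would use a safety margin such as $P=16$.

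\emph{Coefficients.} Compute $C_q^{(K)}(n;1+\varepsilon)$ for $0\le q\le6$ and all needed $n\le38$ by a dynamic program over the $4096$ ordered compositions of $13$. The state is the pair (suffix level $\ell$, accumulated $\beta$-prefix data). Apply the operators $\mathscr D_{m,\ell}^{(K)}$ of \eqref{eq:Dmell-monomial} right to left to the monomial $w^n$, tracking only the degree shift $0\le q\le6$ (Lemma~\ref{lem:width-bound}). Because $\beta_{\mathbf p}$ factorizes over prefix sums $P_j$, the transitions are local.

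\emph{Recurrence and resonances.} Iterate \eqref{eq:recurrence-final} from $c_0^{(\tau)}=1$, inverting $C_0^{(K)}(N;1+\varepsilon)$ as a unit at nonresonant levels. At the resonant levels $N\in\{2,8,18,32\}$ (vacuum) and $N=14$ (self), the plan exploits Proposition~\ref{prop:removable-resonances}: first check that the constant term of $R_N^{(\tau)}$ vanishes, then divide numerator and denominator by $\varepsilon$. The nonvanishing of the resulting $\varepsilon$-coefficient of $C_0$ is certified by \eqref{eq:vac-derivatives} and \eqref{eq:self-derivative}. The vanishing of $R_N^{(\tau)}(1)$ serves as a built-in consistency check on the implementation.

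\emph{Determinant.} Read off $u_{37},u_{38},v_{19},v_{20}$ as constant terms, form $D=u_{37}v_{20}-u_{38}v_{19}$ in exact rational arithmetic, and reduce to lowest terms. Finally, verify the displayed prime factorizations by multiplying them out and comparing with the numerator and denominator. Since the numerator is a nonzero product of primes and the denominator is positive, this gives $D>0$, in particular $D\neq0$.

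\emph{Cross-checks.} To guard against implementation error, I would confirm the low-order values \eqref{eq:lowchecks}: $u_1=72=2h/c$ is forced by the universal vacuum-block coefficient. I would also check that $C_0^{(K)}(N;1)$ matches the interpolated factorizations \eqref{eq:C0-vac-factorization} and \eqref{eq:C0-self-factorization}, and rerun the computation at a larger precision $P$ to confirm that the constant terms are stable.

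\emph{Main obstacle.} The difficulty is not conceptual, since regularity at $t=1$ is already guaranteed by Theorem~\ref{thm:local-regularization}. It lies in correctly handling the $0/0$ cancellations at the resonant levels without losing precision, and in controlling the size of the rationals, whose denominators grow like products of $P_jQ_j$ factors. The truncated-series approach with cancellation of one explicit $\varepsilon$ factor per resonance addresses the first issue. Exact big-integer fractions suffice for the second at this scale.
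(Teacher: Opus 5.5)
Your plan matches the paper's proof. The archived program runs the width-six recurrence with $t=1+\varepsilon$ in exact rational arithmetic, using a dynamic program over the $4096$ compositions. It inverts $C_0^{(K)}$ as a unit at nonresonant levels and cancels one factor of $\varepsilon$ at each simple resonance, then checks the resonance sets, the five derivatives, the four fractions and the reduced value of $D$. Your extra cross-checks (the low-order values, the interpolated $C_0$ factorizations, stability under higher precision) replace only the paper's audit, which compares the dynamic program with direct enumeration at $n=0,1$.
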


\begin{proof}
The program linked in Appendix~\ref{app:exact-computation} evaluates the $4096$ BSA compositions by dynamic programming using only \texttt{fractions.Fraction}. Each coefficient is obtained from the finite exact recurrence. At nonresonant levels the leading coefficient is a unit in $\Q[[\varepsilon]]$; at resonant levels Lemma~\ref{lem:resonances} gives a simple zero of the leading coefficient, while Proposition~\ref{prop:removable-resonances} gives divisibility of the right-hand side by $\varepsilon$, so one common factor $\varepsilon$ is cancelled before continuing. The program verifies the two resonance sets and the five corresponding nonzero derivatives, the four displayed reduced fractions, and the reduced value of $D$. It also compares the dynamic-programming BSA sum with direct enumeration of all ordered compositions at the small test levels $n=0,1$. This comparison is an audit check on the implementation, not a second complete proof of the determinant calculation. All assertions pass, giving \eqref{eq:exactD}. Since its numerator and denominator are positive integers, $D>0$.
\end{proof}

\begin{proof}[Proof of Theorem~\ref{thm:main}]
Proposition~\ref{prop:exactD} gives $D\neq0$, so Theorem~\ref{thm:main} follows immediately from Proposition~\ref{prop:det-criterion}.
\end{proof}

\begin{proof}[Proof of Corollary~\ref{cor:strong-rational}]
The lattice $L_2=\Z\alpha$ with $(\alpha,\alpha)=2$ is positive definite and even; $V_{L_2}$ is strongly rational (regular, hence rational and $C_2$-cofinite) \cite{DLMRegularity,ABD}. The finite group $A_5\le\Aut(V_{L_2})$ has fixed-point algebra $V_{L_2}^{A_5}$, which is $C_2$-cofinite by Theorem~\ref{thm:main}. McRae \cite[Corollary~4.23]{McRae} proves that if $V$ is strongly rational, $G$ is finite, and $V^G$ is $C_2$-cofinite, then $V^G$ is strongly rational. Applying this with $V=V_{L_2}$, $G=A_5$ gives that $V_{L_2}^{A_5}$ is strongly rational, proving Corollary~\ref{cor:strong-rational}.
\end{proof}

\subsection{Classification of the irreducible modules}
\label{subsec:irreducible-classification}

\begin{thm}\label{thm:irreducible-classification}
Let
\(
U=V_{L_2}^{A_5}, L_2=\Z\alpha, (\alpha,\alpha)=2,
\)
and set
\(
\beta=2\alpha, \gamma=3\alpha, \mu=5\alpha.
\)
Then $U$ has exactly $37$ pairwise inequivalent irreducible modules. In the notation of Wu--Zhang \cite[Section~6]{WuZhang}, they are the following four families:
\begin{align*}
& T_1,\ T_2^0,\ T_2^1,\ T_3^0,\ T_3^1,\ T_4^0,\ T_4^1,\ T_5,\ T_6;\\
& V_{\Z\beta+\beta/8},\qquad V_{\Z\beta+3\beta/8};\\
& V_{\Z\gamma\pm r\gamma/18},
   \qquad r\in\{1,2,4,5,7,8\};\\
& V_{\Z\mu\pm t\mu/50},
   \qquad 1\le t\le24,\qquad 5\nmid t.
\end{align*}
The relevant normalizer pairs the positive and negative cyclic-sector modules; following Wu--Zhang, the resulting irreducible modules are denoted by $V_{\mathbb Z\gamma\pm r\gamma/18}$ and $V_{\mathbb Z\mu\pm t\mu/50}$, respectively.
 The four families contain respectively
\(
9, 2, 6, 20
\)
isomorphism classes, so that $9+2+6+20=37$.
\end{thm}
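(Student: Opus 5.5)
The classification will be assembled from two independent inputs: a completeness theorem, which says every irreducible $U$-module occurs in some $g$-twisted $V_{L_2}$-module, and Wu--Zhang's explicit enumeration of the pieces that occur. First, by Corollary~\ref{cor:strong-rational}, $U=V_{L_2}^{A_5}$ is strongly rational. Since $V_{L_2}$ is strongly rational and $A_5$ acts faithfully by automorphisms, we apply the orbifold completeness theorem of Dong--Ren--Xu \cite{DongRenXu}. It states that every irreducible $U$-module appears as a $U$-submodule of some irreducible $g$-twisted $V_{L_2}$-module, for some $g\in A_5$. The usual positivity hypothesis on twisted-module weights is not needed here, because $V_{L_2}$ and $U$ are self-contragredient, as remarked in the introduction. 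The same theorem also gives the standard orbifold decomposition. The irreducible $U$-modules are in bijection with pairs $([g],\lambda)$, where $[g]$ is a conjugacy class of $A_5$, $M$ runs over $C(g)$-orbit representatives of irreducible $g$-twisted $V_{L_2}$-modules, and $\lambda$ is an irreducible projective character of the stabilizer $G_M$ with the appropriate cocycle.

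Next, I will run over the conjugacy classes of $A_5$: the identity, the involutions, the elements of order $3$, and the two classes of elements of order $5$. Each twisted sector is then matched with the corresponding family in Wu--Zhang \cite[Section~6]{WuZhang}, who constructed these modules explicitly.
\begin{itemize}
\item \emph{Untwisted sector.} The irreducible $V_{L_2}$-modules are $V_{L_2}$ and $V_{L_2+\alpha/2}$. Decomposing them under $A_5$, together with the order-$2$ contributions that Wu--Zhang assign to the same family, gives $T_1,\dots,T_6$ with multiplicities, i.e.\ $9$ classes.
\item \emph{Order-$2$ sector.} An involution in $PSL_2(\C)$ is conjugate to the lattice automorphism $e^{\pi i\alpha(0)/2}$. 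Its twisted modules are therefore realized as $V_{\Z\beta+\beta/8}$ and $V_{\Z\beta+3\beta/8}$ with $\beta=2\alpha$, giving $2$ classes.
\item \emph{Order-$3$ sector.} Similarly, an element of order $3$ gives the modules $V_{\Z\gamma\pm r\gamma/18}$ with $\gamma=3\alpha$ and $3\nmid r$, giving $6$ classes after the normalizer identification.
\item \emph{Order-$5$ sectors.} The two classes of order $5$ together give $V_{\Z\mu\pm t\mu/50}$ with $\mu=5\alpha$ and $5\nmid t$, giving $20$ classes.
\end{itemize}
Pairwise inequivalence follows from Wu--Zhang, who distinguish these modules by conformal weights and characters. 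Alternatively, it follows from the injectivity of the bijection in the orbifold theorem.

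The main obstacle is showing that the list is exhaustive: the twisted-sector enumeration must correspond exactly to the orbit data $([g],M,\lambda)$. This is where the completeness theorem is essential, and where Wu--Zhang's argument was previously conditional. I would make the count explicit using the quantum-dimension identity $\sum_{W}\operatorname{qdim}(W)^2=|A_5|^2\cdot\operatorname{glob}(V_{L_2})=3600\cdot 2$, now valid because $U$ is strongly rational. I would check that the $37$ listed modules, with the quantum dimensions Wu--Zhang computed, already exhaust this sum. This confirms that no module is missing, and that the two order-$5$ classes, which are exchanged by the outer automorphism of $A_5$ but not conjugate in $A_5$, are correctly combined into a single family of $20$.
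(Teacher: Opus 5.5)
Your overall route matches the paper's proof. You take strong rationality from Corollary~\ref{cor:strong-rational}, then Dong--Ren--Xu completeness without the positivity hypothesis, then Wu--Zhang's enumeration for both the list and pairwise inequivalence. Finally you use $120+2\cdot30^2+6\cdot20^2+20\cdot12^2=7200=|A_5|^2\operatorname{glob}(V_{L_2})$ as a consistency check.

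Your order-$2$, order-$3$ and order-$5$ bullets agree with this count:
\begin{itemize}
\item for an involution, the two twisted modules form a single orbit under $C_{A_5}(g)\cong\Z_2\times\Z_2$ with stabilizer $\langle g\rangle$, giving $2$ modules;
\item for order $3$ there are $2\cdot3=6$;
\item for each of the two order-$5$ classes there are $2\cdot5=10$.
\end{itemize}
Two small corrections. The spaces $V_{\Z\beta+\beta/8}$ and $V_{\Z\beta+3\beta/8}$ are the resulting $U$-modules, not the twisted $V_{L_2}$-modules $V_{L_2+\alpha/4}$ and $V_{L_2+3\alpha/4}$ themselves. Also, the orbit data are triples $([g],M,\lambda)$, not pairs.

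The untwisted bullet, however, is wrong, and it is exactly where you say the orbit data must match the list. No order-$2$ contributions enter the type-one family; adding them would double count against the separate order-$2$ family. Nothing occurs ``with multiplicities'' either. All nine type-one modules come from $g=1$:
\begin{itemize}
\item For $M=V_{L_2}$ one has $G_M=A_5$ with trivial cocycle. This gives the five irreducible representations of $A_5$, of dimensions $1,3,3,4,5$.
\item For $M=V_{L_2+\alpha/2}$ again $G_M=A_5$, but $V_{L_2+\alpha/2}\cong\bigoplus_{m\ge0}W_{2m+2}\otimes L(1,(m+\tfrac12)^2)$ contains only even-dimensional $SL_2(\C)$-modules.
\item Hence $-1\in I\cong SL(2,5)$ acts by $-1$. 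So $A_5$ acts only projectively, and $\alpha_M$ is the nontrivial Schur-cover class.
\item The multiplicity spaces of $V_{L_2+\alpha/2}$ are therefore indexed by the four spin representations of $SL(2,5)$, of dimensions $2,2,4,6$.
\end{itemize}
This is the Schur-cover description the paper invokes. It gives $5+4=9$ modules and accounts for the term $120=1+9+9+16+25+4+4+16+36$ in the quantum-dimension sum.

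Finally, as the paper stresses, that sum only checks the list. Completeness comes from Dong--Ren--Xu, not from the identity itself.
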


\begin{proof}
By Theorem~\ref{thm:main} and Corollary~\ref{cor:strong-rational}, $U$ is  strongly rational. In particular, $U$ is rational, $C_2$-cofinite, of CFT type, and self-contragredient. Hence Dong--Ren--Xu, Theorem 3.3, applies to the finite $A_5$-action on $V_{L_2}$ \cite{DongRenXu}. In particular, the completeness argument does not require the additional twisted-module conformal-weight positivity hypothesis in this self-dual setting: every irreducible $U$-module is isomorphic to a multiplicity space $M_\lambda$ occurring in the decomposition $M=\bigoplus_\lambda W_\lambda\otimes M_\lambda$ of an irreducible $g$-twisted $V_{L_2}$-module $M$ for some $g\in A_5$, where $\lambda$ ranges over the relevant irreducible projective representations of the stabilizer $G_M$ (with cocycle $\alpha_M$). Thus, once the twisted sectors are enumerated, no additional irreducible $U$-modules can occur.

Wu and Zhang carry out this enumeration explicitly \cite[Lemmas~6.6--6.7, Remark~6.8, and Theorem~6.9]{WuZhang}. The type-one (untwisted) part consists of the nine modules
\[
T_1,\ T_2^0,\ T_2^1,\ T_3^0,\ T_3^1,\ T_4^0,\ T_4^1,\ T_5,\ T_6.
\]
Through the Schur-cover description of Wu--Zhang, these modules are indexed by the nine irreducible representations of $SL(2,5)$. The type-two sectors associated with elements of orders $2$, $3$, and $5$ give respectively the remaining $2$, $6$, and $20$ modules displayed in the theorem. For the order-$3$ and order-$5$ sectors, the relevant normalizers pair the positive and negative cyclic-sector modules, yielding respectively the $6$ and $20$ irreducible modules listed above. Thus no double counting occurs in the $\pm$ notation. These $37$ modules are pairwise inequivalent, and Dong--Ren--Xu completeness shows that they exhaust all irreducible $U$-modules.

As a numerical consistency check, the squares of the quantum dimensions in the four families sum to
\[
120+2\cdot30^2+6\cdot20^2+20\cdot12^2=7200
=|A_5|^2\,\operatorname{glob}(V_{L_2}),
\]
in agreement with the orbifold global-dimension formula. This identity is only a check on the already established exhaustive list, not a substitute for the orbifold completeness argument.
\end{proof}

\subsection{Concluding remark on the \texorpdfstring{$c=1$}{c=1} classification problem}

The significance of the preceding results extends beyond the classification of irreducible modules. The orbifold $V_{L_2}^{A_5}$ is the icosahedral exceptional object in the $c=1$ rational VOA classification program. The preceding results remove the finiteness and semisimplicity obstruction: the algebra is unconditionally strongly rational, and Theorem~\ref{thm:irreducible-classification} gives its complete list of $37$ irreducible modules. Thus, for the $A_5$ branch of the $c=1$ classification program, the remaining problem is the intrinsic characterization of $V_{L_2}^{A_5}$ rather than the representation-theoretic exhaustion of its irreducible modules.

\subsection*{AI Use Statement}

OpenAI's GPT-5.6 Sol model was used as a research assistance tool during the preparation of this manuscript, primarily for literature exploration, conceptual brainstorming, and language polishing. All mathematical arguments, statements, computations, and conclusions presented in the manuscript were independently examined and verified by the authors. The authors take full responsibility for the accuracy, originality, and integrity of the final manuscript.

\appendix
\section{Exact-verification program}
\label{app:exact-computation}

The independently executable exact-verification program is archived with the following metadata:
\begin{center}
  \url{https://xudashun123.github.io/data/bsa_exact_verification.py}
\end{center}
\noindent
The SHA-256 digest of this file is
\[
\texttt{29E1DD7741F0BCCF8FB8C5C6942EEE31EFE7FA05330B683AAA95D5FE2ED53958}.\]
To run the program, execute \texttt{python bsa\_exact\_verification.py} from a terminal.
A successful run exits with code $0$ and prints \texttt{all exact checks passed.}
The program depends only on the Python standard library and uses exact arithmetic via \texttt{fractions.Fraction}.

\end{document}